\newcommand{\dotminus}{\mathbin{\text{\@dotminus}}}
\newcommand{\@dotminus}{%
  \ooalign{\hidewidth\raise1ex\hbox{.}\hidewidth\cr$\m@th-$\cr}%
}
\newtheorem{definition}{Definition}[section]
\newtheorem*{theorem*}{Theorem}
\newtheorem*{corollary*}{Corollary}
\newtheorem{example}[definition]{Example}
\newtheorem{remark}[definition]{Remark}
\title{Predicative Ordinal Recursion on the Constructive Veblen Hierarchy}
\author{Amirhossein Akbar Tabatabai, Vitor Greati,\\ Revantha Ramanayake}
\date{\small Bernoulli Institute, University of Groningen, Netherlands}
\begin{document}

\maketitle
\begin{abstract}
Inspired by Leivant's~\cite{leivant1991foundational} work on absolute predicativism, Bellantoni and Cook~\cite{bellantoni92thesis,bellantonicook1992} in 
1992 introduced a structurally restricted form of recursion called \emph{predicative recursion}. Using this recursion scheme on the inductive structures of natural numbers and binary strings, they provide a structural and machine-independent characterization of the classes of linear-space and polynomial-time computable functions, respectively. This recursion scheme can be applied to any well-founded or inductive structure, and its underlying principle, \emph{predicativization}, extends naturally to other computational frameworks, such as higher-order functionals and nested recursion.

In this paper, we initiate a systematic project to gauge the computational power of predicative recursion
on
arbitrary well-founded structures. 
As a natural measuring stick for well-foundedness, we use \emph{constructive ordinals}.
More precisely, for any downset $\mathsf{A}$ of constructive ordinals, we define a class $\PredFuncClass{\mathsf{A}}$ of predicative ordinal recursive functions that are permitted to employ a suitable form of predicative recursion on the ordinals in $\mathsf{A}$.
We focus on the case that $\mathsf{A}$ is a downset of constructive ordinals below
$\bm{\phi}_{\bm{\omega}}(\bm{0}) = \bigcup_{k=0}^{\infty} \bm{\phi}_k(\bm{0})$, where $\{\bm{\phi}_k\}_{k=0}^{\infty}$ are the functions in the Veblen hierarchy with finite index.
We give a complete classification of $\PredFuncClass{\mathsf{A}}$---for those downsets that contain at least one infinite ordinal---in terms of the Grzegorczyk hierarchy $\{\mathcal{E}_k\}_{k=2}^{\omega}$.
In this way, we extend
Bellantoni-Cook's characterization of $\mathcal{E}_2$ (the class of linear-space computable functions) to obtain a machine-independent and structural characterization of the entire Grzegorczyk hierarchy.

\end{abstract}

%% \linenumbers

\tableofcontents

\section{Introduction}
\label{sec:introduction}
Predicativism is a philosophical stance that rejects \emph{impredicative} definitions, i.e., definitions involving either a direct or indirect form of \emph{self-reference}. Classic examples include Russell’s definition of the ``set of all sets that do not include themselves'' and Berry’s definition of ``the least positive integer not definable in under sixty letters.'' However, impredicativity is not limited to paradoxes; many standard mathematical definitions are also impredicative.  
Typically, an impredicative definition arises when an object $a$ is defined by a formula $\phi(x)$ whose quantifiers range over a domain $D$ that already contains $a$, even though $D$ itself has not yet been constructed.
For example, defining $a$ as the least element of an ordered set $(A,\leq)$ relies on the formula
$$
\phi(x) \equiv \big(x \in A \wedge \forall y \in A \, (x \leq y)\big),
$$
where the universal quantifier ranges over $A$, a domain that includes $a$ itself. If one does not accept $A$ as a pre-existing totality, such a definition is impredicative.
Many familiar mathematical notions follow this pattern. For instance, the definition of ``the \emph{least} upper bound of a bounded set of real numbers'' is impredicative if one does not accept the set of real numbers as a completed structure. Likewise, the usual definition of the set of natural numbers $\mathbb{N}$ is considered impredicative if one rejects the powerset axiom. This is because that definition characterizes $\mathbb{N}$ as the \emph{least} inductive \emph{subset} of a set $I$ provided by the axiom of infinity\footnote{In $\mathsf{ZFC}$, the axiom of infinity guarantees the existence of a set $I$ containing $0$ (the empty set) and closed under the successor operation $s(x) = x \cup \{x\}$. The natural numbers are then defined as the smallest subset of $I$ containing $0$ and closed under~$s$.}, and hence one must regard the powerset of $I$ as the completed domain on which the least element is considered.

Although the set of natural numbers is technically impredicative, predicative mathematics, following Poincaré and Weyl, typically accepts it as self-evident and intuitive. Predicativity is then defined \emph{relative} to the set of natural numbers, with restrictions beginning at the level of analysis \cite{Weyl}. This choice is pragmatic: without the natural numbers, the axiom of induction and the justification for recursion lose their foundation, and ``it is obvious that such a state of affairs renders elementary mathematics impossible" \cite[Russell 1908, p.~167]{FromFregeToGodel}.

However, \emph{absolute} predicativism, i.e., the rejection of even the set of natural numbers as a definite entity, is neither sterile nor irrelevant. Quite unexpectedly, this seemingly niche philosophical stance plays a natural and foundational role in understanding the \emph{structure} of low-complexity computation. 
This point will become apparent in the following subsection, where we  outline two approaches for identifying acceptable functions in absolute predicative mathematics, and then discuss their relationship to computational complexity. This takes us to the realm of predicative recursion, a topic with many interesting results and applications, and the main focus of this work.

\subsection{Predicativism and computational complexity}

The first approach to identifying predicatively acceptable functions is based on the observation that, although the set of natural numbers is defined impredicatively, its initial segments are predicatively acceptable because they are all finite. Accordingly, restricting induction to formulas with bounded quantifiers \cite{buss1985bounded,Jan} and limiting computation to \emph{bounded recursion} is considered predicatively acceptable, where bounded recursion refers to defining a function by recursion 
provided it is bounded by a function that has already been defined. 
%subject to a given bound.
This creates a clear connection to complexity theory:
%The connection to complexity theory is then immediate, 
bounded recursion provides a machine-independent framework for capturing bounded resources, 
and the latter is a main player in
complexity theory. Indeed, bounded recursion has been used to give machine-independent characterizations of various complexity classes, including the classes of polynomial-time \cite{Cobham} and linear- and polynomial-space computable functions \cite{CloteSurvey}, as well as many others \cite{CloteSurvey}.

However, the elegant machine-independent characterizations mentioned above have a significant downside, namely that the bounds were \emph{manually} and \emph{explicitly} prescribed in the bounded recursion.
Therefore, although these characterizations improve upon machine-based definitions, they do not reveal the true \emph{structure} of the low-complexity computation.

A second, more foundational approach towards predicatively acceptable functions has been developed in various forms by Nelson \cite{Nelson}, Leivant and Marion \cite{leivant1991foundational,LeivantI,LeivantII,LeivantIII}, and Bellantoni and Cook \cite{bellantoni92thesis,bellantonicook1992}. The central idea is to adopt the usual definition of natural numbers via the \emph{subsets} of $I$, while rejecting the assumption that they form a set, thereby avoiding self-reference.\footnote{Here, $I$ is again an infinite set provided by the axiom of infinity in $\mathsf{ZFC}$.} Concretely, we define the predicate
\[
\mathbb{N}(x) := \forall X \, (\mathrm{Ind}(X) \to x \in X),
\]
which formalizes ``$x$ is a natural number,'' where $X$ ranges over subsets of $I$, and
\[
\mathrm{Ind}(X) := (0 \in X) \wedge \forall a \, (a \in X \to s(a) \in X).
\]
However, the \emph{class} $\mathbb{N}:=\{x \in I \mid \mathbb{N}(x)\}$ is no longer guaranteed to be a \emph{set}.\footnote{Formally, this requires moving from $\mathsf{ZFC}$ to second-order logic with a constant $0$ and a unary function symbol $s$. Since comprehension is restricted to positive first-order existential formulas, and $\mathbb{N}(x)$ is a $\Pi^1_1$ predicate, weak comprehension cannot justify the existence of the set of all natural numbers~\cite{leivant1991foundational}.}

Although there is no set of natural numbers, the framework still supports a form of weak recursion. Roughly speaking, this recursion allows defining total functions over $\mathbb{N}$ by recursion on total functions over $I$, provided the latter map every inductive subset $X \subseteq I$ into itself.\footnote{The actual form of recursion is more complicated, as it allows parameters from both $\mathbb{N}$ and $I$. As our primary goal here is to motivate this new recursion, there is no harm in considering this special case.}
To illustrate, consider the simple case of iteration. Let $a_0 \in I$ and let $g : I \to I$ be a total function such that, for every inductive set $X \subseteq I$, we have $a_0 \in X$ and $g(x) \in X$, if $x \in X$.  
Define a partial function $f : I \to I$ by
\[
f(0) = a_0, \qquad f(s(x)) = g(f(x)).
\]
In general, there is no guarantee that $f$ is total on $I$. However, we claim that $f$ is defined on all of $\mathbb{N}$, with values lying in any inductive $X \subseteq I$. To see this, define
\[
X_f = \{\,x \in I \mid \exists y \in X \ (f(x) = y)\,\}. \qquad \qquad (*)
\]
Clearly, $0 \in X_f$ since $a_0 \in X$. Moreover, if $a \in X_f$, then $f(a) \in X$ is defined, and by the closure of $X$ under $g$, so is $f(s(a)) \in X$. Hence $s(a) \in X_f$, and therefore $X_f \subseteq I$ is inductive. By the definition of $\mathbb{N}$, it follows that $f(x) \in X$ is defined for every $x \in \mathbb{N}$. Finally, since $X$ was arbitrary, we conclude that $f(x) \in \mathbb{N}$, i.e., $f : \mathbb{N} \to \mathbb{N}$ is a total function. 

The above argument ensures the availability of a weak form of iteration. However, by rejecting $\mathbb{N}$ as a set, the usual type of strong iteration is no longer justified. Indeed, if we simply start with $a_0 \in \mathbb{N}$ and $g : \mathbb{N} \to \mathbb{N}$, we would need to use $\exists y \in \mathbb{N}$ in $(*)$, while the domain of $y$ must be a set such as $X$, not a class such as $\mathbb{N}$.\footnote{Technically, using $\mathbb{N}$ makes the defining formula of $X_f$ to be $\Pi^1_1$, for which we have no comprehension.} Therefore, treating $\mathbb{N}$ predicatively as a class rather than a set blocks recursion in its usual form, while still permitting certain restricted forms.

To have some examples, consider the function $d(x) = 2x$, recursively defined by
\[
d(0) = 0, \qquad d(s(x)) = ss(d(x)).
\]
Since any inductive $X \subseteq I$ contains $0$ and is closed under the map $a \mapsto ss(a)$, the above argument—taking $0$ and $ss$ to play the roles of $a_0$ and $g$, respectively—ensures that $d : \mathbb{N} \to \mathbb{N}$ is total. However, if we attempt to repeat this process and define exponentiation $\mathrm{exp}(z) = 2^z$ by
\[
\mathrm{exp}(0) = s(0), \qquad \mathrm{exp}(s(z)) = d(\mathrm{exp}(z)),
\]
the argument above no longer works, since we require that $d$ is total on $I$ but we only have that it is total on $\mathbb{N}$. Establishing the totality of $d$ on $I$ requires a form of inductivity that is absent in $I$.
The inability to define total exponentiation in this manner is significant. Exponentiation, after all, serves as the prime example of a computational explosion. Consequently, this observation already suggests a connection between this weak form of recursion and low-complexity classes of computation.

To isolate this particular form of recursion from the logical argument on which it is based, and thereby develop a purely recursion-theoretic calculus for predicatively acceptable functions, Bellantoni and Cook~\cite{bellantoni92thesis,bellantonicook1992} introduced a syntactic technique to distinguish between two types of inputs involved in weak recursion: the \emph{normal} inputs from $\mathbb{N}$, on which recursion is allowed, and the \emph{safe} inputs from $I$, on which recursion is disallowed.\footnote{Leivant~\cite{LeivantI,LeivantII,LeivantIII} also introduced a similar recursion scheme using tiers to implement this separation.} Using this technique, Bellantoni and Cook introduced a purely recursion-theoretic calculus of \emph{predicative recursive functions} (see Definition \ref{dfn:ClassB}).

Connecting this calculus to computational complexity, they showed that the class of functions constructible in their calculus coincides with the class $\mathcal{E}_2$ of the Grzegorczyk hierarchy, which consists of the linear-space computable functions. They also adapted the same ideas from natural numbers to binary strings to design a calculus capturing the class of polynomial-time computable functions. These characterizations then paved the way for a plethora of similar results.

For instance, Bellantoni~\cite{bellantoni92thesis} introduced a predicative form of minimization to capture all levels of the polynomial-time hierarchy; Arai and Eguchi~\cite{AraiExp} used nested predicative recursion to characterize exponential time; Leivant~\cite{LeivantIII} employed predicative recursion on finite-type functionals to describe the class $\mathcal{E}_3$ of elementary functions; Wirz~\cite{wirz} extended predicative recursion to include more types of inputs to classify the entire Grzegorczyk hierarchy; and Arai~\cite{Arai} formalized feasible functions over arbitrary sets using a predicative variant of $\in$-recursion. This list of applications is far from exhaustive; see, e.g., \cite{Other1,Other2,Other3,Other4,LeivantPoly,CloteExp,CloteSurvey}.

The primary computational feature of these characterizations lies in their \emph{structural} nature, which allows the identification of complexity classes without relying on machines or imposing explicit bounds on recursion. This structural perspective offers several advantages. Mathematically, it enables the application of categorical and type-theoretical techniques to study complexity classes~\cite{HofmannHabil,HofmannBCK,HofmannLinear,Cockett}. Philosophically, it provides a foundational justification for the central role of certain complexity classes, such as polynomial-time computable functions. Moreover, a coherent foundational approach to complexity theory can inspire new concepts by identifying predicative counterparts to classical notions in recursion theory and translating them into the computational complexity framework.

\subsection{Our contribution}

The widespread use of predicative recursion, as discussed in the previous subsection, motivates studying this operation in its most general form, namely as predicative recursion over an arbitrary well-founded structure~\cite{curzi2022cyclic}. While several compelling individual results connect predicativism with computation, a systematic exploration of this relationship remains underdeveloped. The aim of this paper is to address this gap through a comprehensive investigation of the computational potential of predicative recursion.

To take such a systematic approach, we focus on ordinals among all well-founded structures, since they provide a natural yardstick for measuring well-foundedness. Technically, we work with \emph{constructive ordinals}, i.e., rooted well-founded trees with either single branchings (successors) or $\mathbb{N}$-indexed branchings (limits) at each non-leaf node. Constructive ordinals serve as an abstract representation system for set-theoretic ordinals with explicit construction, making them better suited to our computational goals than their set-theoretic counterparts.  

For the purposes of this paper, we are specifically interested in constructive ordinals corresponding to set-theoretic ordinals below $\bm{\phi}_{\bm{\omega}}(\bm{0})=\bigcup_{i=0}^{\infty}\bm{\phi}_i(\bm{0})$, where $\{\bm{\phi}_i\}_{i=0}^{\infty}$ are the finite levels of the Veblen hierarchy. To this end, we introduce constructive versions of these functions, denoted by $\{\phi_i\}_{i=0}^{\infty}$, and define $\Phi_{\omega}$ (resp.\ $\Phi_k$ for $k \geq 0$) as the set of constructive ordinals generated from $0$ by addition and the $\phi_i$ (resp.\ the $\phi_i$ with $i<k$). The set $\Phi_{\omega}$ (resp.\ $\Phi_k$) then corresponds to the set-theoretic ordinals below $\bm{\phi}_{\bm{\omega}}(\bm{0})$ (resp.\ $\bm{\phi}_k(\bm{0})$). Similarly, we introduce $\Psi_{\omega}$ as the set of constructive ordinals corresponding to set-theoretic ordinals below $\bm{\omega}^{\bm{\omega}}$. We then investigate the properties of these classes in an extensive and detailed manner, which may be of independent interest, as the details of such a theory are, to the best of our knowledge, absent from the literature. Consequently, parts of the present paper may serve as a complete and self-contained introduction to constructive ordinals and constructive Veblen functions.

Then, we turn to strengthening Bellantoni--Cook's class of functions by generalizing its predicative recursion on numbers to constructive ordinals. More precisely, let $\mathsf{A}$ be a given downset of constructive ordinals, i.e., a set closed under the subtree order. We define a class $\mathcal{C}_{\mathsf{A}}$ of functions with numeral outputs (interpreted as elements of $I$), and with three distinct types of inputs: new ordinal inputs from $\mathsf{A}$, used in a suitable form of predicative recursion on the subtree order; together with the original numeral inputs in normal and safe positions, interpreted as variables in $\mathbb{N}$ and $I$, respectively. To assess the computational power of $\mathcal{C}_{\mathsf{A}}$, we define $\PredFuncClass{\OrdClass}$ as the subset of $\mathcal{C}_{\mathsf{A}}$ consisting of functions with only normal inputs.

Our main result is a complete classification of $\PredFuncClass{\mathsf{A}}$ for any downset $\mathsf{A} \subseteq \Phi_{\omega}$ that contains at least one infinite ordinal. To this end, we introduce a number, called the \emph{level} of $\mathsf{A}$, to measure how large its ordinals are. First, if there exists $k \in \mathbb{N}$ such that $\mathsf{A} \subseteq \Phi_k$, we call $\mathsf{A}$ \emph{bounded}. For any bounded $\mathsf{A}$, if $\mathsf{A} \subseteq \Psi_{\omega}$, we define $l(\mathsf{A}) = 0$; otherwise, we define $l(\mathsf{A})$ as the least $k \geq 1$ such that $\mathsf{A} \subseteq \Phi_k$. We then prove:

\begin{theorem*}[Main theorem]\label{the:main-theorem-characterization}%
    Let $\mathsf{A} \subseteq \Phi_{\omega}$ be a downset of ordinals that contains at least one infinite ordinal. Then:
    \begin{itemize}
        \item[$(i)$] 
        If $\mathsf{A}$ is bounded, then $\PredFuncClass{\mathsf{A}} = \GrzClass{l(\mathsf{A})+2}$,
        \item[$(ii)$] 
        If $\mathsf{A}$ is unbounded, then $\PredFuncClass{\mathsf{A}} = \PR$,
    \end{itemize}
    where $\{\mathcal{E}_k\}_{k=2}^{\infty}$ is the Grzegorczyk hierarchy and $\PR$ is the set of all primitive recursive functions. 
\end{theorem*}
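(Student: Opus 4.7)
The plan is to establish each equality as two inclusions — soundness $\PredFuncClass{\mathsf{A}} \subseteq \mathcal{F}$ and completeness $\mathcal{F} \subseteq \PredFuncClass{\mathsf{A}}$, where $\mathcal{F}$ stands for $\mathcal{E}_{l(\mathsf{A})+2}$ or $\PR$ — with the heart of the argument being a precise correspondence between the Veblen rank of the ordinals available in $\mathsf{A}$ and a concrete growth-rate bound on the values of functions in $\mathcal{C}_{\mathsf{A}}$. The normal/safe/ordinal tier discipline should play here the same insulating role that the normal/safe discipline plays in the original Bellantoni--Cook calculus, preventing a recursion from feeding its own output back into a recursion-controlling position.

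For soundness in the bounded case, I would induct on the build-up of $f \in \PredFuncClass{\mathsf{A}}$. Initial functions admit trivial majorants, composition composes bounds, and the critical case is the predicative ordinal-recursion scheme. I would assign to each constructive ordinal $\alpha \in \Phi_k$ a majorant $F_\alpha$ drawn from a suitably chosen fast-growing or Hardy-style hierarchy, and prove by subtree induction that any function defined by predicative recursion along $\alpha$ is bounded by applying $F_\alpha$ to its numerical inputs. The bookkeeping should then give that ordinals in $\Psi_\omega$ yield $\mathcal{E}_2$ bounds, while each additional Veblen level $\phi_k$ raises the Grzegorczyk index by exactly one — matching the assignment $l(\mathsf{A}) \mapsto l(\mathsf{A}) + 2$ and thereby recovering the Bellantoni--Cook bound when $l(\mathsf{A})=0$.

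For completeness in the bounded case, it suffices to simulate a single canonical generator of $\mathcal{E}_{l(\mathsf{A})+2}$, since closure under composition and bounded recursion can already be implemented via safe recursion on numbers in the Bellantoni--Cook style. I would select an ordinal in $\mathsf{A}$ of Veblen rank exactly $l(\mathsf{A})$ — guaranteed by the definition of $l$ and the presence of an infinite ordinal in $\mathsf{A}$ — and use predicative recursion along it, with the fundamental sequences supplied earlier in the paper, to construct a function that dominates the generator of $\mathcal{E}_{l(\mathsf{A})+2}$. Packaging this dominator with bounded recursion, realized predicatively via the tier discipline, should then capture all of $\mathcal{E}_{l(\mathsf{A})+2}$. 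The unbounded case reduces to case (i) by a union argument: any single definition uses only finitely many ordinals and so sits in some bounded $\PredFuncClass{\mathsf{A} \cap \Phi_k} \subseteq \mathcal{E}_{k+2} \subseteq \PR$, while conversely an unbounded $\mathsf{A}$ must contain ordinals of every Veblen rank, so $\bigcup_k \mathcal{E}_{k+2} = \PR \subseteq \PredFuncClass{\mathsf{A}}$.

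The principal difficulty I expect is the fine calibration in the soundness direction: choosing majorants so that predicative recursion along a tree of Veblen rank $k$ does not quietly escalate to $\mathcal{E}_{k+3}$, and verifying that the tier discipline is strong enough to forbid any accidental unbounded iteration of recursion across levels. This will hinge on a careful normal-form lemma for predicatively defined functions together with the right fundamental sequences for the constructive Veblen ordinals introduced earlier in the paper; getting these two ingredients to mesh without losing the crisp $l(\mathsf{A})+2$ indexing is the main technical hurdle.
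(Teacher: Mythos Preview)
Your union argument for the unbounded case matches the paper's exactly (via the continuity lemma $\PredFuncClass{\bigcup_i \mathsf{A}_i} = \bigcup_i \PredFuncClass{\mathsf{A}_i}$ applied to $\mathsf{A}_k = \mathsf{A} \cap \Phi_k$).

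For completeness ($\mathcal{E}_{l(\mathsf{A})+2} \subseteq \PredFuncClass{\mathsf{A}}$), the paper takes a different and cleaner route than implementing bounded recursion predicatively. It first secures $\mathcal{E}_2 \subseteq \PredFuncClass{\mathsf{A}}$ by simulating Bellantoni--Cook's class $B$ (the dichotomy theorem supplies some $o(r)+\omega \in \mathsf{A}$, which is enough to emulate number recursion by ordinal recursion). Then it uses a clock trick: any $f \in \mathcal{E}_{l(\mathsf{A})+2}$ is computable in $\mathcal{E}_{l(\mathsf{A})+2}$-time, so $f(\bar n) = g(I(\bar n), \bar n)$ where $g$ is a linear-space bounded-step simulator (hence in $\mathcal{E}_2 \subseteq \PredFuncClass{\mathsf{A}}$) and $I \in \PredFuncClass{\mathsf{A}}$ is any function dominating the time bound. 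The dominator is provided not by a Hardy-style function but by the \emph{slow-growing} function $G_\alpha$ for a suitable $\alpha \in \mathsf{A}$, together with a key inequality $h_{k}(n) \leq G_{\phi_{k-2}(0)}(3n+2)$ and a lemma guaranteeing that some $\alpha \in \mathsf{A}$ makes $G_\alpha$ dominate $G_{\phi_{l(\mathsf{A})-1}(0)}$. This sidesteps entirely the need to reproduce the $\mathcal{E}_k$ closure conditions inside the tiered calculus.

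For soundness ($\PredFuncClass{\mathsf{A}} \subseteq \mathcal{E}_{l(\mathsf{A})+2}$), your majorant-only plan has a genuine gap: a value bound does not by itself yield membership in $\mathcal{E}_k$, because the function is defined by \emph{transfinite} recursion and you must exhibit an $\mathcal{E}_k$-resource algorithm for it. The paper's solution is to encode ordinals in $\Phi_\omega$ (resp.\ $\Psi_\omega$) as strings over a finite alphabet $\Sigma$, prove that the coded predecessor, the descent enumerator $R(i,\mu,n)$, and the descent length $L_\mu(n)$ are all computable in the alphabet-version $\mathcal{E}^\Sigma_k$ of the target class, and then simulate each predicative ordinal recursion by a \emph{length-bounded primitive recursion} on $i$ along the coded descent sequence $\mu(i) = R(i,\mu,q(\bar n))$. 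The calibration you anticipate --- Veblen rank $k$ matching Grzegorczyk index $k+2$ --- lives in the upper bound on $L_\mu(n)$ (the number of recursion steps), not in a majorant for $f$ itself. A value-bounding lemma of the form you describe is indeed also proved, but only to verify the length-bound hypothesis of bounded primitive recursion; without the encoding and the computability of $R$ and $L$ within $\mathcal{E}^\Sigma_k$ it would not close the argument.
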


Here are some key observations about this result. First, it provides a complete classification of $\PredFuncClass{\OrdClass}$ for any $\mathsf{A} \subseteq \Phi_{\omega}$ that contains an infinite ordinal.  
This result represents a significant milestone in the program of investigating the computational potential of predicative recursion. As a next natural step, we aim to establish a precise connection between predicative recursion on well-founded structures and their corresponding ordinals, thereby enabling a complete assessment of the computational power of predicative recursion on any well-founded structure whose associated ordinal is below $\bm{\phi}_{\bm{\omega}}(\bm{0})$.

Second, it is important to highlight that the two sides of each equality in this theorem correspond to the two distinct approaches to predicatively acceptable computation that we have outlined: the left-hand side arises from predicative recursion, while the right-hand side arises from bounded recursion. In this way, the theorem formalizes the intuition that these two approaches are, in essence, two sides of the same coin.
Third, we can derive the following special cases:

\begin{corollary*}[Main corollary]\label{cor:main-cor-characterization}
$\GrzClass{2}=\PredFuncClass{\Psi_{\omega}}$,
    $\GrzClass{k}=\PredFuncClass{\Phi_{k-2}}$, for any $k \geq 3$, and 
    $
    \PR=\PredFuncClass{\Phi_{\omega}}
    $.
\end{corollary*}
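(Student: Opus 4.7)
The corollary is a direct application of the Main Theorem: each of its three claims is obtained by specializing $\mathsf{A}$ to $\Psi_\omega$, $\Phi_{k-2}$, or $\Phi_\omega$, and then checking the three hypotheses (downset, contains an infinite ordinal, bounded or unbounded) together with the value of the level $l(\mathsf{A})$. The plan is therefore to address the three cases in turn, with the only substantive content concentrated in the computation of $l$ in the middle case.

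For $\mathsf{A} = \Psi_\omega$, the downset property and the containment of the infinite ordinal $\omega$ are properties of $\Psi_\omega$ already established in the paper's development of constructive ordinals, and boundedness is immediate since $\Psi_\omega \subseteq \Phi_1$. By the very definition of $l$, we have $l(\Psi_\omega) = 0$, so part $(i)$ of the Main Theorem yields $\PredFuncClass{\Psi_\omega} = \GrzClass{0+2} = \GrzClass{2}$. Symmetrically, for $\mathsf{A} = \Phi_\omega$ the first two hypotheses are trivial, and unboundedness follows because $\Phi_\omega \subseteq \Phi_k$ would force $\phi_k(0) \in \Phi_{k+1} \subseteq \Phi_\omega \subseteq \Phi_k$, contradicting the standard (and paper-internal) fact that $\phi_k(0)$ is not expressible by addition and the $\phi_i$ for $i < k$. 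Part $(ii)$ then gives $\PredFuncClass{\Phi_\omega} = \PR$.

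The main case is $\mathsf{A} = \Phi_{k-2}$ for $k \geq 3$. Here $\Phi_{k-2}$ is a downset containing infinite ordinals (e.g.\ $\omega \in \Phi_1 \subseteq \Phi_{k-2}$) and is trivially bounded. The crux is the identity $l(\Phi_{k-2}) = k-2$. The upper bound $l(\Phi_{k-2}) \leq k-2$ is immediate from $\Phi_{k-2} \subseteq \Phi_{k-2}$; the lower bound requires the \emph{strict non-collapsing} chain $\Psi_\omega \subsetneq \Phi_1 \subsetneq \Phi_2 \subsetneq \cdots$. This is the only real obstacle: I would establish it by exhibiting, for each $j \geq 0$, the witness $\phi_j(0) \in \Phi_{j+1} \setminus \Phi_j$ (together with $\omega^\omega \notin \Psi_\omega$), using the fact that $\phi_j(0)$ is the least common fixed point of the $\phi_i$ for $i<j$ and hence cannot be generated from $0$ by addition and these lower Veblen functions. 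This non-collapse property is a standard feature of the constructive Veblen hierarchy and is expected to already be in place by the time the corollary is stated. Granted it, part $(i)$ of the Main Theorem gives $\PredFuncClass{\Phi_{k-2}} = \GrzClass{(k-2)+2} = \GrzClass{k}$, completing the proof.
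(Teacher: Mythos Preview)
Your proposal is correct and follows essentially the same route as the paper: verify that each of $\Psi_\omega$, $\Phi_{k-2}$, $\Phi_\omega$ is a downset containing $\omega$ (hence $\nsubseteq \mathsf{D}_\omega$), determine boundedness, compute $l$, and invoke the Main Theorem. The paper packages the level computations and the strict chain $\Psi_\omega \subsetneq \Phi_1 \subsetneq \Phi_2 \subsetneq \cdots$ into Example~\ref{ExamplesOfLevels} and Corollary~\ref{Properness}, the latter proved via the Unique Representation Theorem rather than the set-theoretic fixed-point intuition you sketch, but you correctly anticipate that this non-collapse is already available.
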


This corollary provides a machine- and resource-independent characterization of the entire Grzegorczyk hierarchy, extending Bellantoni and Cook's characterization of $\mathcal{E}_2$. Roughly speaking, it first shows that even if predicative ordinal recursion extends beyond $\bm{\omega}$ and is used up to any ordinal below $\bm{\omega}^{\bm{\omega}}$, the resulting functions remain in $\mathcal{E}_2$. It then states that the predicative ordinal recursive functions using ordinals below $\bm{\phi}_{k}(\bm{0})$ (resp.\ $\bm{\phi}_{\bm{\omega}}(\bm{0})$) coincide with the functions in $\mathcal{E}_k$ (resp.\ primitive recursive functions) for $k \geq 3$.

Fourth, a notable special case of the corollary is the equality $\GrzClass{3} = \PredFuncClass{\Phi_1}$. Recall that $\Phi_1$ is the set of constructive ordinals corresponding to the set-theoretic ordinals below $\bm{\phi}_1(\bm{0}) = \bm{\epsilon_0}$. By Leivant's~\cite{LeivantIII} result, which establishes the equality between the numeral predicative recursive finite-type functionals $\mathbf{RRec^{\omega}}$ and elementary functions, we obtain $\PredFuncClass{\Phi_1} = \mathbf{RRec^{\omega}}$. This equality is a predicative analogue of the well-known classical equivalence between $\bm{\epsilon_0}$-recursive functions and numeral primitive recursive finite-type functionals. 

Fifth, Leivant~\cite{LeivantPoly} introduced the type system $\mathbf{SF_2}$ as a predicative version of Girard's~\cite{girardF2} and Reynolds’s~\cite{reynoldsF2} second-order lambda calculus $\mathbf{F_2}$, stratified by finite levels to avoid the self-referential behavior of types. He shows that the numerical functions represented in $\mathbf{SF_2}$ correspond to the class $\mathcal{E}_4$. Using a special case of the main corollary, we have $\GrzClass{4} = \PredFuncClass{\Phi_2}$, where $\Phi_2$ is the constructive counterpart of the set of set-theoretic ordinals below $\bm{\phi}_2(\bm{0})$. This observation suggests that if an ordinal analysis of $\mathbf{SF_2}$ were carried out, the ordinal assigned to the system would be $\bm{\phi}_2(\bm{0})$.

\subsection*{Outline of the paper}

The paper is organized as follows. In Section \ref{sec:preliminaries}, we recall some basic preliminaries, and in Section \ref{sec:pred-rec-func}, we review Bellantoni-Cook's class of predicative recursive functions. Then, in Section \ref{sec:const-ordinals}, we introduce constructive ordinals, their arithmetic, and some notable functions on them. In Section~\ref{sec:pred-ord-rec-functions}, we define the class~$\ClassName{\OrdClass}$ of predicative ordinal recursive functions and its subclass $\PredFuncClass{\OrdClass}$ consisting of functions that operate solely over $\mathbb{N}$. We also prove some basic properties of $\PredFuncClass{\mathsf{A}}$.
In Section~\ref{sec:const-veblen}, we define the Veblen functions on constructive ordinals and introduce the classes $\Phi_{\omega}$ and $\Psi_{\omega}$ of constructive ordinals, together with several of their subclasses. We also prove several important properties of these classes. In Section \ref{sec:main-theorem}, we present our main theorem, which uses the Grzegorczyk hierarchy to classify $\PredFuncClass{\mathsf{A}}$ for any $\mathsf{A} \subseteq \Phi_{\omega}$ containing an infinite ordinal. We also discuss the immediate consequences of this theorem and present our strategy for proving it, which is followed throughout the remainder of the paper. Finally, in Section \ref{sec:ReductionToG}, we simulate functions in the Grzegorczyk hierarchy using predicative ordinal recursive functions, and in Section \ref{sec:UpperBoundLength}, we implement the converse simulation to complete the proofs of the main theorem.

%\noindent \textbf{Acknowledgment:} 

\section{Preliminaries}
\label{sec:preliminaries}
In this section, we introduce some basic notions and notations that will be used throughout the paper. Let $X$ and $Y$ be two sets, and let $\bar{x} = x_1, \ldots, x_k$ denote a finite sequence of variables. We write $\bar{x} \in X$ to mean $\bar{x} \in X^k$, i.e., $x_i \in X$ for every $1 \leq i \leq k$.  
For any function $f : X \to Y$, we write $f(\bar{x})$ to denote the tuple $(f(x_1), \ldots, f(x_k)) \in Y^k$. Similarly, for any relation $R \subseteq X \times Y$, the notation $\bar{x} R y$ means $x_i R y$ for all $1 \leq i \leq k$. The same convention applies for $x R \bar{y}$, where $x \in X$ and $\bar{y} \in Y^k$.  
If $\bar{f} = f_1, \ldots, f_m$ is a finite sequence of functions from $X$ to $Y$, then $\bar{f}(x)$ denotes the tuple $(f_1(x), \ldots, f_m(x)) \in Y^m$, for any $x \in X$.  

The set of natural numbers is denoted by $\NatSet$, and $\NatSet \setminus \{0\}$ is denoted by $\NatSet^{\geq 1}$. Similarly, we denote the set of real numbers by $\mathbb{R}$, and the set $\{x \in \mathbb{R} \mid x \geq 2\}$ by $\mathbb{R}^{\geq 2}$.
Functions with numeral outputs are called \emph{numeral functions}. Numeral functions whose inputs are also numbers are referred to as \emph{functions over numbers}.  
A function $f : \NatSet^k \to \NatSet$ is said to be \emph{monotone} if $f(\bar{x}) \leq f(\bar{y})$ whenever $x_i \leq y_i$ for all $1 \leq i \leq k$.  
It is said to be \emph{strictly monotone} if $f(\bar{x}) < f(\bar{y})$ whenever $x_i \leq y_i$ for all $1 \leq i \leq k$ and $x_j < y_j$ for some $1 \leq j \leq k$.
The function $f$ is said to be \emph{expansive} if $f(\bar{x}) \geq \max_i x_i$ for all $\bar{x} \in \NatSet^k$.  
We write $\log(n)$ to denote the base-2 logarithm of $n$. Finally, unless otherwise specified, whenever we refer to linear functions or polynomials, we assume their coefficients are natural numbers, and thus they are always monotone.

We now define the \emph{Grzegorczyk hierarchy}~\cite{grzegorczyk1953some} as a hierarchy of classes of primitive recursive functions. There are many equivalent formulations of this hierarchy in the literature; here, we follow the one presented in~\cite[Ch.~2]{rose1984}. The first step is to introduce a family of \emph{fast-growing functions}.

\begin{definition}
\label{def:fast-growing-hie}
~\cite[Ch.~2]{rose1984}
The \emph{fast-growing hierarchy}
	$\{h_k : \NatSet \to \NatSet \}_{k=1}^{\infty}$
	is defined by $h_1(n) := n^2+2$ and 
	$h_{k+1}(n) := h_{k}^{(n)}(2)$.
\end{definition}
It is straightforward to verify that each $h_k$ is strictly monotone and expansive~\cite[Ch.~2]{rose1984}. For the second step, recall that a numeral function $f$ is said to be defined by \emph{bounded primitive recursion} from functions $g$ and $h$, with bounding function $u$, if
\[
f(0,\overline x) = g(\overline x),
\]
\[
f(y+1,\overline x) = h(y,\overline x,f(y,\overline x)),
\]
and moreover $f(y,\overline x) \leq u(y,\overline x)$ for all $y, \overline x \in \NatSet$.

\begin{definition}\label{def:Grz-Hie}~\cite[Ch.~2]{rose1984}
For any $k \geq 2$, let $\mathcal{I}_k$ be the set consisting of the constant zero function, the successor function, all projection functions, and $h_1, \ldots, h_{k-1}$.  
Define $\GrzClass{k}$ as the closure of $\mathcal{I}_k$ under composition and bounded primitive recursion.  
The hierarchy $\{ \GrzClass{k} \}_{k=2}^{\infty}$ is called the \emph{Grzegorczyk hierarchy}.
\end{definition}
%$\GrzClass{3}$ is the set of  (Kalmár) elementary functions.
It is well known that the class of primitive recursive functions
$\PR$ coincides with $\bigcup_{k \geq 2} \GrzClass{k}$~\cite[VIII.9.16]{odifreddi1999crtv2}.

Given a function $f : \NatSet^l \to \NatSet$ and a position $1 \leq i \leq l$, define by primitive recursion the
\emph{iteration of $f$ at argument $i$}, denoted by $\ItFunc{f}{i}$, as
\begin{align*}
\ItFunc{f}{i}(0,\overline x) &:= x_i,\\
\ItFunc{f}{i}(n+1,\overline x) &:= f(x_1,\ldots,x_{i-1},\ItFunc{f}{i}(n,\overline x),x_{i+1},\ldots,x_l).
\end{align*}
When the argument position $i$ is clear from the context, we may write $f^{y}(\overline x)$ or $f^{(y)}(\overline x)$ for $\ItFunc{f}{i}(y,\overline x)$.

\begin{lemma}\label{lem:grz-properties}
	The following hold:
	\begin{itemize}
        \item if $f(\bar x) \in \GrzClass{2}$,
		then there is 
        a polynomial 
        $p(\bar x)$ with natural coefficients
        such that
        $f(\bar x) \leq p(\bar x)$
        for all $\bar x \in \NatSet$.
         \item if $f(\bar x) \in \GrzClass{3}$,
		then there is 
        $M \in \mathbb{N}$
        such that
        $f(\bar x) \leq 2^{2^{\iddots^{\max_j x_j}}}$, for any $\bar x \in \NatSet$, where the number of twos in the tower is $M$.
		\item 
       if $f(x_1, \ldots, x_l) \in \GrzClass{k}$,
		then there is $M \in \NatSet$ such that 
        $f(x_1, \ldots, x_l) \leq h_{k-1}^M(\max_{i=1}^l x_i)$, for any $\bar{x} \in \mathbb{N}$
        and $k \geq 3$.
		\item if $f(x_1, \ldots, x_l) \in \GrzClass{k}$,
		then $\ItFunc{f}{i} \in \GrzClass{k+1}$
		for any $1 \leq i \leq l$
        and $k \geq 2$.
	\end{itemize}
\end{lemma}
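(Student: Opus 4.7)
The plan is to handle items (1)--(3) by a uniform structural induction on the construction of $f$ within the relevant Grzegorczyk class, and then to derive item (4) from item (3) via a pointwise estimate that converts $n$ iterations of $h_{k-1}$ into a single application of $h_k$. Throughout, the workhorse facts are monotonicity and expansiveness of each $h_j$, together with the composition law $h_{k-1}^{a} \circ h_{k-1}^{b} = h_{k-1}^{a+b}$.

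For items (1)--(3), I induct on the derivation of $f \in \GrzClass{k}$. The base cases are immediate: zero, successor, and projections are linear, $h_1(n) = n^2 + 2$ is itself a polynomial (giving (1)), $h_2$ is bounded by a tower of fixed height (giving (2)), and more generally each $h_j$ with $1 \leq j \leq k-1$ is dominated by $h_{k-1}^{M_j}(\max_i x_i)$ for some $M_j$ using monotonicity (giving (3)). For composition, if $f \leq h_{k-1}^{M}(\max)$ and $g_j \leq h_{k-1}^{M_j}(\max)$, then monotonicity together with the composition law yields the bound $h_{k-1}^{M + \max_j M_j}(\max_i y_i)$; the arguments for items (1) and (2) are analogous, using that compositions of polynomials are polynomials and compositions of towers of bounded height are towers of bounded height. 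For bounded primitive recursion, the schema itself supplies $f \leq u$, and the induction hypothesis handles $u$.

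For item (4), fix $f \in \GrzClass{k}$ and let $M$ be the constant from item (3) for $k \geq 3$, or from item (1) for $k = 2$ (observing that a polynomial is dominated by $h_1^{M}$ for some $M$). A routine induction on $n$, using monotonicity and expansiveness of $h_{k-1}$, yields
\[
\ItFunc{f}{i}(n, \bar{x}) \leq h_{k-1}^{nM}(\max_j x_j).
\]
The crucial estimate is
\[
h_{k-1}^{(m)}(y) \leq h_k(m + y),
\]
which follows from $h_k(m+y) = h_{k-1}^{(m+y)}(2) = h_{k-1}^{(m)}\bigl(h_{k-1}^{(y)}(2)\bigr) \geq h_{k-1}^{(m)}(y)$ by expansiveness of $h_{k-1}$. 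Substituting gives $\ItFunc{f}{i}(n, \bar{x}) \leq h_k(nM + \max_j x_j)$, a bound visibly in $\GrzClass{k+1}$ since $h_k$ is an initial function of $\GrzClass{k+1}$ and scaling by a constant lies in $\GrzClass{2} \subseteq \GrzClass{k+1}$. Thus $\ItFunc{f}{i}$ is definable by bounded primitive recursion from $f$ and a projection inside $\GrzClass{k+1}$.

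The main subtlety is threading item (4) uniformly through $k = 2$ and $k \geq 3$, since item (3) is stated only for $k \geq 3$; the fix is to reinterpret the polynomial bound of item (1) as a bound of the form $h_1^{M}$, so that the same iteration-to-$h_k$ conversion applies. Beyond this single point, the entire argument is a careful bookkeeping exercise about how the constants $M$ grow under composition and iteration, which is routine once the key estimate $h_{k-1}^{(m)}(y) \leq h_k(m + y)$ is in hand.
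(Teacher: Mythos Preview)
Your proposal is correct and follows the standard textbook route; the paper itself does not give a proof but simply cites Rose~[Lem.~2.7, 2.9, 2.10] and Odifreddi~[VIII.7.8, VIII.7.21(b)] for the four items, so there is nothing substantive to compare against. Two small points worth tightening: in your conversion $h_{k-1}^{(y)}(2)\geq y$ you invoke ``expansiveness,'' but expansiveness alone would only give $h_{k-1}^{(y)}(2)\geq 2$; what you actually need (and have) is $h_{k-1}(m)\geq m+2$, which follows from $h_{k-1}(0)=2$ and strict monotonicity. Likewise, for $k=2$ you assert that every polynomial is dominated by some $h_1^M$; this is true but deserves a one-line justification (e.g.\ $h_1^M(n)\geq n^{2^M}$ together with $h_1^M(0)\to\infty$).
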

\begin{proof}
Respectively by
\cite[Lem.~2.7]{rose1984},
a modification of
\cite[VIII.7.8 and VIII.7.21(b)]{odifreddi1999crtv2},
\cite[Lem.~2.9]{rose1984},
and \cite[Lem.~2.10]{rose1984}.
\qedhere
\end{proof}

\begin{remark}\label{RemarkOnEk}
By Lemma \ref{lem:grz-properties}, any function in $\GrzClass{k}$ with $k \geq 2$ is bounded by a function in $\GrzClass{k}$ that is both expansive and monotone.
\end{remark}

The Grzegorczyk hierarchy is originally defined for functions over natural numbers. To extend it to functions over an arbitrary finite alphabet, we first introduce a complexity-theoretic, machine-dependent characterization of this hierarchy. For this purpose, we begin by fixing some notations and definitions.

Let $\Gamma$ be a fixed finite alphabet, and let $\Gamma^*$ denote the set of all finite strings over $\Gamma$. For any $w \in \Gamma^*$, we write $\RepLen{w}$ for the length of $w$. By a \emph{computation}, we mean one carried out by a multitape Turing machine over $\Gamma$.  
Let $f : (\Gamma^\ast)^m \to \Gamma^\ast$ with $m \ge 0$. If there exists a Turing machine that computes $f(\bar w)$ using at most $S(\RepLen{\bar w})$ cells on its work tapes (resp.\ at most $T(\RepLen{\bar w})$ computation steps) for every input $\bar w \in (\Gamma^*)^m$, then $f$ is said to be \emph{computable in space $S$} (resp.\ \emph{time $T$}).
For functions over numbers, we perform computations over the alphabet $\{0,1\}$ using the binary expansion of numbers. We let $\IntermCode{n}$ denote the binary representation of $n$, and $\RepLen{n}$ the length of $\IntermCode{n}$. Note that for all $n \in \mathbb{N}$,
\[
\log(n+1) \leq \RepLen{n} \leq \log(n+1) + 1 \leq n+1
\quad\text{and}\quad
n < 2^{\RepLen{n}}.
\]
Returning to the computational characterization of the Grzegorczyk hierarchy,  
Ritchie~\cite{ritchie1963} showed that $\GrzClass{2}$ coincides with the class of all functions over numbers computable in linear space.  
For higher levels of the hierarchy, he established the following computational characterization:

\begin{theorem}[{\cite{ritchie1963},\cite[VIII.8.14]{odifreddi1999crtv2}}]
\label{the:elem-characterizatons}
	 Let $k \geq 3$. Then,
     the following are equivalent:
     \begin{itemize}
         \item $f \in \GrzClass{k}$. 
         \item $f$ is computable in time $T$, for some $T \in \GrzClass{k}$.
          \item $f$ is computable in space $S$, for some $S \in \GrzClass{k}$.
    \end{itemize}
\end{theorem}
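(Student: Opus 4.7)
The plan is to establish a cycle of implications $(i) \Rightarrow (ii) \Rightarrow (iii) \Rightarrow (i)$, where $(i)$ is membership in $\GrzClass{k}$, $(ii)$ is time computability by a $\GrzClass{k}$ bound, and $(iii)$ is space computability by a $\GrzClass{k}$ bound. Note that one direction of the cycle (space $\Rightarrow \GrzClass{k}$) is where the real work lies; the other two steps are by structural induction and by the trivial fact that space usage is at most time usage.

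For $(i) \Rightarrow (ii)$, I would proceed by induction on the construction of $f$ in $\GrzClass{k}$. The initial functions—zero, successor, projections, and $h_1, \ldots, h_{k-1}$—are computable within time bounds that are themselves in $\GrzClass{k}$ (for the $h_j$, using Lemma~\ref{lem:grz-properties} to control the output magnitude, hence the representation length). Composition is handled by combining the running time of the outer function with the running times and output lengths of the inner ones; closure of $\GrzClass{k}$ under composition, together with the expansive/monotone bounding functions supplied by Remark~\ref{RemarkOnEk}, keeps the total time in $\GrzClass{k}$. For bounded primitive recursion $f(y,\overline x) = \ldots$ with bound $u \in \GrzClass{k}$, the $y$-step iteration of $h$ operates on intermediate values of size bounded by $\RepLen{u(y,\overline x)}$, so the per-step time and the total time are obtained by composition of $\GrzClass{k}$ functions, hence in $\GrzClass{k}$. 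The implication $(ii) \Rightarrow (iii)$ is immediate since a machine running in time $T$ visits at most $T$ work-tape cells.

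The main obstacle, and the core of the argument, is $(iii) \Rightarrow (i)$: to simulate a $\GrzClass{k}$-space Turing machine by a function in $\GrzClass{k}$. I would encode a configuration (state, tape contents, head positions) as a single natural number via a mixed-radix scheme, so that the code of a configuration using space $S(\RepLen{\overline w})$ has size bounded by $c \cdot S(\RepLen{\overline w})$ for some constant $c$ depending on the machine; since $k \geq 3$, Lemma~\ref{lem:grz-properties} ensures $2^{c\, S(\RepLen{\overline w})}$ lies in $\GrzClass{k}$, as does the count $N(\overline w)$ of all configurations of that size. The one-step transition $\delta$ on codes is polynomial-time computable, hence in $\GrzClass{2} \subseteq \GrzClass{k}$. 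The key observation is that a halting computation cannot repeat a configuration, so it terminates within $N(\overline w)$ steps; therefore $f(\overline w)$ can be read off from $\ItFunc{\delta}{1}(N(\overline w), c_0(\overline w))$, where $c_0$ encodes the initial configuration. This definition is a bounded primitive recursion (using $N(\overline w) \in \GrzClass{k}$ as the bound on both the recursion index and, after rescaling, on the intermediate codes), so the resulting $f$ belongs to $\GrzClass{k}$. The only delicate point is verifying that the auxiliary functions—decoding the output from the halting code, recognizing halting, packing and unpacking tape contents—are all in $\GrzClass{2}$, which follows from the standard fact that such pairing, projection, and bit-access operations lie in the linear-space class.
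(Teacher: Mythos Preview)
The paper does not prove this theorem at all: it is stated with citations to Ritchie~\cite{ritchie1963} and Odifreddi~\cite[VIII.8.14]{odifreddi1999crtv2} and used as a black box. Your sketch is a faithful outline of the standard argument found in those references (induction on the $\GrzClass{k}$-construction for the time bound, the trivial space~$\leq$~time step, and configuration counting plus bounded primitive recursion for the simulation), so there is nothing to compare against in the paper itself.
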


Now, we can use the above-mentioned computational characterization to define the Grzegorczyk hierarchy for functions defined on any finite alphabet $\Gamma$. For any $k \geq 2$, define $\GrzClassSig{\Gamma, k}$ as the class of functions  
$f : (\Gamma^\ast)^m \to \Gamma^\ast$ ($m \geq 0$) computable in space $S$, where $S$ is a linear function if $k=2$, and $S \in \GrzClass{k}$ if $k > 2$.  Notice that, for $k \geq 3$, as space can be simulated by an exponential growth in time, it is equivalent to define $\GrzClassSig{\Gamma, k}$ as the class of functions computable in time in $\GrzClass{k}$.

Later in the paper, to encode ordinals, we prefer to use the specific alphabet $\Sigma \Def \{0,1,(,),;,\bot\}$ rather than the usual binary alphabet $\{0,1\}$. As usual, we employ $\bot$ as a symbol for ``undefined.''  
With this symbol, partial functions can be made total in the standard way. Since $\Sigma$ is the only non-standard alphabet we use, we simply drop $\Sigma$ from $\GrzClassSig{\Sigma, k}$ and denote it by $\GrzClassSig{k}$.  
As $\{0,1\} \subseteq \Sigma$, any function over numbers can be canonically lifted to a function over $\Sigma^*$:

\begin{definition}\label{def:coded-func-on-naturals}
Given a function $f:\NatSet^m \to \NatSet$ $(m \geq 0)$, define its \emph{lift} $\CodedFunc{f} : (\Sigma^\ast)^m \to \Sigma^\ast$ by:
\[
\CodedFunc{f}(\bar{u}) :=
\begin{cases}
\IntermCode{f(\bar{n})} & \exists \bar{n} \in \mathbb{N}\, (\bar{v} = \IntermCode{\bar{n}}), \\
\bot & \text{otherwise.}
\end{cases}
\]
\end{definition}

\begin{remark}\label{rem:equiv-E-sig-E}
We observe that $f \in \GrzClass{k}$ if and only if $\CodedFunc{f} \in \GrzClassSig{k}$, for any $k \geq 2$ and any function $f$ over numbers.  
For $k \geq 3$, assume $f \in \GrzClass{k}$. By Theorem~\ref{the:elem-characterizatons}, $f$ is computable in space $\GrzClass{k}$. Since checking whether a string in $\Sigma^*$ is the binary expansion of a number can be done in linear space, it follows that $\CodedFunc{f}$ is computable in space $\GrzClass{k}$. Hence, by definition, $\CodedFunc{f} \in \GrzClassSig{k}$.  
Conversely, if $\CodedFunc{f} \in \GrzClassSig{k}$, then by definition it is computable in space $\GrzClass{k}$. As the alphabet $\Sigma$ can be encoded into the binary alphabet with only a linear-space overhead, the same computation of $\CodedFunc{f}$ can be carried out on binary strings. Therefore, $f$ is computable in space $\GrzClass{k}$, and by Theorem~\ref{the:elem-characterizatons} we conclude that $f \in \GrzClass{k}$.  
For $k=2$, the same argument applies, replacing ``space $\GrzClass{k}$'' with ``linear space'' throughout.
\end{remark}

The following gives the closure properties of the classes $\GrzClassSig{k}$ under two computational schemes that will be used later in the paper.

\begin{theorem}\label{the:Esigma-closure}
Let $k \geq 2$. Then, the following holds:
\begin{itemize} 
    \item[$(i)$] \emph{(closure under length-bounded primitive recursion)}
   Let $g,h \in \GrzClassSig{k}$ and let $u$ be a function over numbers that is linear if $k=2$ and belongs to $\GrzClass{k}$ if $k \geq 3$.  
Define the function $f$ by
\[
f(z, \bar w) =
\begin{cases}
    g(\bar w) & \text{if } z = \IntermCode{0}, \\[4pt]
    h(\IntermCode{y}, \bar w, f(\IntermCode{y}, \bar w))
    & \text{if } z = \IntermCode{y+1}, \\[4pt]
    \bot & \text{otherwise}.
\end{cases}
\]

If for all $z, \bar w \in \Sigma^\ast$ we have
$
\RepLen{f(z,\bar w)} \;\leq\; u(\RepLen{z}, \RepLen{\bar w}),
$
then $f \in \GrzClassSig{k}$.  
In this case, we say that $f$ is defined by \emph{length-bounded primitive recursion} from $g$ and $h$ with bounding function $u$.

\item[$(ii)$] \emph{(closure under bounded $\Sigma$-minimization)}
Let $g \in \GrzClassSig{k}$ be a function, $c \in \Sigma^\ast$ be a constant, and $u$ be a function over numbers of the form $u(\bar x) = 2^{v(\bar x)}$, where $v$ is linear if $k=2$, and $v \in \GrzClass{k}$ if $k \geq 3$.  
Define the function $f$ by setting $f(\bar w) = \IntermCode{i}$, where $i \in \mathbb{N}$ is the least number bounded by $u(\RepLen{\bar w})$ such that $g(\IntermCode{i}, \bar w) = c$, if such a number exists, and $i = u(\RepLen{\bar w})+1$ otherwise. Then $f \in \GrzClassSig{k}$.  
In this case, we say that $f$ is defined by \emph{bounded $\Sigma$-minimization} from $g$ with bounding function $u$.  
We also write
$f(\bar w) \;=\; \Bmin_{i \leq u(\RepLen{\bar w})} \; [\, g(\IntermCode{i},\bar w) = c \,]$.
\end{itemize}
\end{theorem}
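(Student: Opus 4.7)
The plan is to prove both closure properties by direct Turing machine simulation, exploiting the space-based characterization of $\GrzClassSig{k}$: linear space for $k=2$ by Ritchie's theorem, and $\GrzClass{k}$-space for $k \geq 3$ by Theorem~\ref{the:elem-characterizatons}. For both items, the strategy is to design a machine whose overall space usage is bounded by the sum of a few contributions, each of which is either linear (when $k=2$) or lies in $\GrzClass{k}$ (when $k \geq 3$), and then to invoke closure of these function classes under composition and addition.

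For part $(i)$, I would design a machine that iteratively simulates the recursion. Given input $(z,\bar w)$, it first verifies in linear space that $z = \IntermCode{y}$ for some $y \in \NatSet$, outputting $\bot$ otherwise. It then maintains a counter $i$ initialized to $\IntermCode{0}$ and a register $r$ initialized to $g(\bar w)$ (computed using the machine for $g$ in its allotted scratch space), and repeatedly updates $r \leftarrow h(\IntermCode{i},\bar w,r)$ and $i \leftarrow i+1$ until $i = z$. At every stage, the length of $i$ is at most $\RepLen{z}$ and, by the stated length bound, the length of $r$ is at most $u(\RepLen{z},\RepLen{\bar w})$; the scratch space needed for each call to $g$ or $h$ is governed by their $\GrzClassSig{k}$ space bounds applied to the current argument lengths, and is reused across iterations. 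Summing these three contributions gives the required overall space bound.

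For part $(ii)$, the machine scans $i = 0, 1, \ldots, u(\RepLen{\bar w})$ in order, computes $g(\IntermCode{i},\bar w)$ in scratch space and compares the result with $c$, halting and outputting $\IntermCode{i}$ on the first match, or $\IntermCode{u(\RepLen{\bar w})+1}$ if no such $i$ exists. Since $u(\RepLen{\bar w}) = 2^{v(\RepLen{\bar w})}$, the counter $i$ has binary length at most $v(\RepLen{\bar w})+1$, so incrementing $i$, comparing it against the precomputed bound, and storing the final output all fit in space linear in $v(\RepLen{\bar w})$. The scratch space for each call to $g$ is bounded by its $\GrzClassSig{k}$-space function applied to a length at most $v(\RepLen{\bar w}) + \RepLen{\bar w}$. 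Both contributions lie in $\GrzClass{k}$ (respectively are linear) in $\RepLen{\bar w}$, so the total space is of the required form.

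The main obstacle I anticipate is making the bounds compose cleanly in the $k=2$ case, where every component must remain linear in $\RepLen{z} + \RepLen{\bar w}$ (respectively $\RepLen{\bar w}$). Here the argument is tight but goes through because $u$ and $v$ are linear by hypothesis, the counter and register lengths are then linear, and the space consumed by $g$ and $h$ on inputs of linear length is itself linear; the sum of finitely many linear functions is linear. For $k \geq 3$, the analogous summation is absorbed by closure of $\GrzClass{k}$ under composition and addition, and an appeal to Theorem~\ref{the:elem-characterizatons} concludes the argument.
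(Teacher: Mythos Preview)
Your proposal is correct and follows essentially the same approach as the paper: both argue directly via space-bounded Turing machine simulation, iteratively unfolding the recursion (respectively, linearly searching the range $0,\ldots,u(\RepLen{\bar w})$), reusing scratch space between iterations, and bounding the total space as a sum of a counter/register contribution and the space for the subroutine calls. One minor point: the space-based description of $\GrzClassSig{k}$ is its \emph{definition} in the paper, not a consequence of Ritchie's theorem or Theorem~\ref{the:elem-characterizatons}; and for $k\geq 3$ you should, as the paper does, first pass to a monotone majorant $b \in \GrzClass{k}$ of $u$ (via Remark~\ref{RemarkOnEk}) so that the bound on the intermediate register lengths composes cleanly.
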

\begin{proof}
We present algorithms for computing the functions constructed in $(i)$ and $(ii)$ and show that these algorithms run in space $\GrzClass{k}$ when $k \geq 3$, and in linear space when $k=2$.  
This establishes that these functions indeed belong to $\GrzClassSig{k}$, as required.

For $(i)$, consider first the case $k \geq 3$. 
Without loss of generality, assume that all the space functions mentioned below are monotone and that $u(\bar x) \leq b(\bar x)$ for some monotone $b \in \GrzClass{k}$. 
Since $g, h \in \GrzClassSig{k}$, by definition, they are computable in space $S_g, S_h \in \GrzClass{k}$. 
The algorithm for $f$ first checks, in linear space, whether $z = \IntermCode{n}$ for some $n \in \NatSet$. If not, it outputs $\bot$ and halts. Otherwise, it simulates the recursion, starting by computing $f(\IntermCode{0},\bar w) = g(\bar w)$ in space $S_g(\RepLen{\bar w})$, and then, for each $1 \leq i < n$, it computes $f(\IntermCode{i+1},\bar w)$ from $f(\IntermCode{i},\bar w)$ in space  
\[
S_h(\RepLen{i+1}, \RepLen{\bar w}, \RepLen{f(\IntermCode{i},\bar w)}) 
\leq S_h(\RepLen{z}, \RepLen{\bar w}, b(\RepLen{z},\RepLen{\bar w})).
\]
The algorithm can reuse space in each recursive step, and it only needs to remember the stage $i$. Therefore, it requires the space  
\[
\RepLen{z}+S_g(\RepLen{\bar w}) + S_h(\RepLen{z},\RepLen{\bar w}, b(\RepLen{z},\RepLen{\bar w})),
\]  
which belongs to $\GrzClass{k}$. Hence, $f \in \GrzClassSig{k}$, by definition.  
For $k=2$, all these space bounds and $b$ are linear, and since linear functions are closed under composition, the same algorithm applies and runs in linear space, as desired.

For $(ii)$, we again first consider the case $k \geq 3$,
and assume that all the space functions mentioned below are monotone and that $u(\bar x) \leq b(\bar x)$ for some monotone $b \in \GrzClass{k}$. 
The algorithm for $f$ simply searches for
the least $0 \leq i \leq u(\RepLen{\bar w})$
satisfying the equality
$g(\IntermCode{i}, \bar w) = c$.
That is, it
computes
$u(\RepLen{\bar w})$
in space $\GrzClass{k}$, writes it into memory,
and, starting from $i=0$
up to $u(\RepLen{\bar w})$,
computes $g(\IntermCode{i},\bar w)$ in space 
$S_g(\RepLen{i},\RepLen{\bar w}) \leq S_g(b(\RepLen{\bar w})+1,\RepLen{\bar w})$, where
$S_g \in \GrzClass{k}$,
stopping at the first $i$ (if it exists) such that $g(\IntermCode{i},\bar w) = c$. 
If this test fails for all such $i$'s, the algorithm outputs 
$\IntermCode{u(\RepLen{\bar w})+1}$. 
Note that the algorithm also needs to keep the current $i$ at each step, whose length is bounded by $\RepLen{u(\RepLen{\bar w})} \leq 1+b(\RepLen{\bar w})$, which is in $\GrzClass{k}$.
Therefore, the whole algorithm
requires space in $\GrzClass{k}$, as desired.
If $k=2$,
we have $u(\RepLen{\bar w}) = 2^{v(\RepLen{\bar w})}$, for $v$ a linear function.
It is clear that $u(\RepLen{\bar w})$
can be computed in space $O(v(\RepLen{\bar w}))$, i.e., in linear space,
and thus the space required to store the values of $i$ is also linear in $\RepLen{\bar w}$.
Since $g$ is assumed to be computable in linear space, the entire algorithm,
by an argument similar to the above, runs in linear space.
\end{proof}

\section{Predicative Recursive Functions}
\label{sec:pred-rec-func}
As explained in the introduction, Bellantoni and Cook~\cite{bellantoni92thesis,bellantonicook1992} introduced a structurally restricted form of recursion to capture low complexity classes of numeral functions.  The key idea is to distinguish between two types of inputs to a function: \emph{normal} and \emph{safe}. To reflect this distinction, functions are written in the form $f(\bar{x}; \bar{a})$, where $\bar{x}$ denotes variables in the normal position and $\bar{a}$ denotes variables in the safe position. A useful intuition is that normal inputs are used for recursion, while safe inputs are for composition.
The formal definition of the class of these recursive functions appears below.

\begin{definition}\label{dfn:ClassB}
	Let $B$ be the smallest class of functions over numbers 
	containing (1)-(5) and closed under (6) and (7):

	\begin{enumerate}
		\item (constant) 0
		\item (projections) $\pi_j^{n,m}(x_1,\ldots,x_n;x_{n+1},\ldots,x_{n+m}) = x_j$ for $1 \leq j \leq n+m$
		\item (successor) $s(;a) = a + 1$
		\item (predecessor) $p(;0) = 0$ and $p(;a+1) = a$
		\item (conditional) 
		$
		C(;a,b,c)= b$ if $a=0$ and $C(;a, b, c)=c$, otherwise
		\item (predicative recursion) Given $g, h \in  B$, define $f$ by
			 \begin{align*}
             f(0, \bar x; \bar a) &= g(\bar x; \bar a)\\
            f(y+1,\bar x; \bar a) &= h(y,\bar x;\bar a,f(y,\bar x;\bar a))
        \end{align*}
		\item (safe composition) Given $h, \bar r, \bar t \in B$, define $f$ by 
        $$f(\bar x; \bar a) =  h(\bar{r}(\bar x;);\bar{t}(\bar x;\bar a))$$
		\end{enumerate}
Define $\mathrm{PredR}$ as the following class of numeral functions: 
\[
\{ f : \NatSet^k \to \NatSet \mid \text{there is } f' \in B  \text{ such that } f(\bar n) = f'(\bar n;), \text{ for any } \bar n \in \NatSet^k\}.
\]
We call any function in $\mathrm{PredR}$ a \emph{predicative recursive function}.
\end{definition}

\begin{remark}
Here are some remarks on Definition \ref{dfn:ClassB}. First, all basic functions are defined on safe variables. Second, the projection functions operate on all inputs, both normal and safe. Third, in safe composition, functions $\bar{r}(\bar{x};)$ do not have access to the safe inputs $\bar a$. Fourth,
in the definition of $f(y, \bar{x};\bar{a})$ by predicative recursion, the recursion is on the input $y$ which is in the normal position and in the recursive call, the value $f(y, \bar{x};\bar{a})$ is only available in the safe position of $h$. These constraints on safe composition and predicative recursion are the structural restrictions we need to put recursion on a predicatively acceptable ground on the one hand and harness the power of primitive recursion on the other.
\end{remark}

\begin{example}\label{ex:add-mult-bc}
Addition and multiplication are in $\mathrm{PredR}$. For addition, first, consider the function $\mathrm{sum}_0(x;a)$ with the following definition:
\begin{equation*}
			\mathrm{sum}_0(0; a) = a
            \qquad
			\mathrm{sum}_0(x+1; a) = s(; \mathrm{sum}_0(x;a)).
		\end{equation*}
It is clear that $\mathrm{sum}_0(x;a)=x+a$. Then, by using projections and safe composition, define: 
\[
\mathrm{sum}(x, y;)=\mathrm{sum}_0(\pi^{2,0}_1(x, y; ); \pi_2^{2,0}(x,y;))
\]
and observe that $\mathrm{sum}(x, y;)=\mathrm{sum}_0(x; y)=x+y$.
For multiplication, consider the function $\mathrm{mult}(x, y;)$ with the following definition:
\begin{equation*}
			\mathrm{mult}(0, y; ) = 0
            \qquad
			\mathrm{mult}(x+1, y; ) = \mathrm{sum}_0(y; \mathrm{mult}(x, y;)).
		\end{equation*}
Observe that in the recursive definition, in both cases, the value of the functions $\mathrm{sum}_0(x;a)$ and $\mathrm{mult}(x, y;)$ on a lower input appears in the safe position.

As for non-examples, the canonical recursive definition of the function $\mathrm{exp}(x) = 2^x$, namely, $\mathrm{exp}(0;)=1$ and $\mathrm{exp}(x+1; ) = \mathrm{sum}_0(\mathrm{exp}(x;);\mathrm{exp}(x;))$, 
defined via predicative recursion, is not acceptable because that scheme does not permit the value $\mathrm{exp}(x;)$ to be placed in normal position.
{Of course, this alone does not show that $\exp \notin \mathrm{PredR}$.} 
However, by Theorem \ref{thm:cb}, we will see that the structural restrictions indeed block the construction of the exponentiation function.
\end{example}

The following theorem interestingly shows that the \emph{predicatively acceptable} notion of computation and a certain type of \emph{low-complexity} computation coincide. One can also interpret this theorem as expressing the equivalence between two distinct approaches to predicative computation, as outlined in the introduction.
\begin{theorem}[Bellantoni-Cook,{\cite[Ch.~5]{bellantoni92thesis}}]\label{thm:cb}
$\mathrm{PredR}= \GrzClass{2}$.
\end{theorem}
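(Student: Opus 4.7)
The plan is to prove the two inclusions separately. The upper bound $\mathrm{PredR} \subseteq \GrzClass{2}$ is the more routine direction; the lower bound $\GrzClass{2} \subseteq \mathrm{PredR}$ is where the genuine technical work lies.

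For the upper bound, I would first establish a polynomial bounding lemma: for every $f(\bar x;\bar a) \in B$ there is a polynomial $p$ with natural coefficients such that $f(\bar x;\bar a) \leq p(\bar x) + \max_i a_i$. The proof is by structural induction on $B$. The base cases are immediate: successor adds one to a safe input; predecessor, conditional, and projections are bounded by $\max_i x_i + \max_j a_j$. In safe composition, the arguments $r_i(\bar x;)$ in normal position have no safe inputs, so by IH their polynomial bounds depend only on $\bar x$, keeping the outer polynomial in $\bar x$, while safe-position arguments contribute only additively through $\max_i a_i$. For predicative recursion, the bound on $h$ together with the fact that the previous value enters the safe $\max$ gives, by induction on $y$, $f(y,\bar x;\bar a) \leq p_g(\bar x) + y \cdot p_h(y,\bar x) + \max_i a_i$, still polynomial in the normal inputs. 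A second structural induction then shows $B \subseteq \GrzClass{2}$: the basic functions are in $\GrzClass{2}$, safe composition is ordinary composition, and predicative recursion becomes a bounded primitive recursion whose bound is the polynomial provided by the lemma (polynomials lie in $\GrzClass{2}$ since that class contains addition and multiplication, built from $h_1(n)=n^2+2$ and bounded recursion).

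For the lower bound, I would proceed by induction on the definition of $\GrzClass{2}$, producing for each $f \in \GrzClass{2}$ a $B$-witness $f^B(\bar x;)$ with all inputs in normal position satisfying $f^B(\bar x;) = f(\bar x)$. The basic Grzegorczyk functions --- zero, successor, projections, and $h_1(n) = n^2 + 2$ --- are handled directly, using the addition and multiplication of Example \ref{ex:add-mult-bc} together with successor for $h_1$. Composition in $\GrzClass{2}$ is realised by safe composition with every subfunction placed in the normal position.

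The main obstacle is closure under bounded primitive recursion. Given $f$ defined from $g,h,u \in \GrzClass{2}$, the natural approach in $B$ is a predicative recursion $F(y+1,\bar x;a) = H(y,\bar x;a,F(y,\bar x;a))$, which demands an $H \in B$ satisfying $H(y,\bar x;a,v) = h(y,\bar x,v)$ with $v$ in the \emph{safe} position. The IH only yields $h^B(y,\bar x,v;)$ with $v$ in the normal position, and the polynomial bounding lemma rules out any unconditional ``move normal to safe'' operation (e.g.\ $xy$ cannot be represented with both arguments safe). The hard part will be to strengthen the inductive hypothesis to a placement lemma: for any $\GrzClass{2}$ function $h(\bar y,\bar v)$ and any $B$-function $b(\bar y;)$ with $\bar v \leq b(\bar y)$ on all intended inputs, there is a $B$-representative of $h$ in which $\bar v$ sits in safe position. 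This is proved by a further structural induction on $\GrzClass{2}$, carefully tracking how the supplied polynomial bound propagates through composition and bounded recursion --- this is exactly the step that consumes the bound $u$ coming from bounded primitive recursion. Once the placement lemma is in hand, the required $H$ exists, and $f^B(y,\bar x;) := F(y,\bar x;0)$, with the constant $0$ supplied by safe composition from a basic function, completes the simulation.
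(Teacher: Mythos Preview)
The paper does not prove this theorem; it simply cites Bellantoni's thesis (Chapter~5). So there is no in-paper proof to compare against, and your proposal should be judged on its own.

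Your upper bound $\mathrm{PredR}\subseteq\GrzClass{2}$ is correct and standard: the polynomial bounding lemma with the additive $\max_i a_i$ term is exactly the right invariant, and it feeds directly into bounded primitive recursion.

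For the lower bound, your placement-lemma strategy is plausible, but there is a case you do not address. When you run the structural induction on $\GrzClass{2}$ and reach an $h$ that is itself defined by bounded primitive recursion \emph{on the very variable $v$ you are trying to move into safe position}, you cannot translate that recursion directly into a predicative recursion, since predicative recursion acts only on normal inputs. This is precisely where the bound must be spent: one iterates in normal position up to $p(\bar y)$, uses a safe equality or comparison test against $v$ to detect the correct stage, and reads off the answer (a ``lookup'' construction). Your sketch says the bound $u$ is ``consumed'' through the induction but does not name this mechanism; without it the induction does not close.

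It is also worth noting that Bellantoni's own argument for $\GrzClass{2}\subseteq\mathrm{PredR}$ takes a different route: rather than a general placement lemma, he goes through Ritchie's characterisation of $\GrzClass{2}$ as the linear-space computable functions and simulates a linear-space Turing machine inside $B$. That approach sidesteps the difficulty above, because one only needs a single fixed transition function with the configuration in safe position, and the step count (a polynomial in the normal inputs) drives the predicative recursion. Your direct inductive approach is conceptually cleaner if the placement lemma is made to work, but the machine-simulation route is what the cited source actually does.
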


In this paper, we aim to extend this result by generalizing the predicative recursive functions in Definition~\ref{dfn:ClassB} from numbers (equivalently, finite ordinals) to infinite ordinals, and by identifying the corresponding complexity classes within the Grzegorczyk hierarchy.

\section{Constructive Ordinals}
\label{sec:const-ordinals}
\newcommand{\ConsOrd}{\Omega}
\newcommand{\StConsOrd}{\Omega^{\mathsf{S}}}
In this section, we introduce constructive ordinals, their relationship with traditional set-theoretic ordinals, and various ordinal and numeral functions defined over both constructive and set-theoretic ordinals.

A countable \emph{constructive ordinal} (or simply an \emph{ordinal}) \cite{fairtlough1998ptchapter} is a rooted, well-founded tree in which each non-leaf node has either a single branch or an $\mathbb{N}$-indexed branching. We denote the set of all ordinals by $\Omega$. The single-node tree is denoted by $0$ and called the \emph{zero} ordinal; the result of adding one node below the tree $\alpha$ is denoted by $\alpha + 1$ and is called a \emph{successor} ordinal; and the result of adding one node below the sequence $\{\alpha_i\}_{i \in \mathbb{N}}$ of trees is denoted by $\langle \alpha_i \rangle_{i \in \mathbb{N}}$ and is called a \emph{limit} ordinal. If there is no risk of confusion, we denote limit ordinals simply as $\langle \alpha_i \rangle_i$ or $\langle \alpha_i \rangle$:
\vspace{5pt}

\begin{figure}[h!]
\centering
\begin{tikzpicture}[
    % Style for the nodes (small black circles)
    node style/.style={circle, draw, fill=black, inner sep=1.5pt},
    % Style for the triangles representing subtrees (upside down)
    tree style/.style={
        draw, 
        fill=gray!20, 
        regular polygon, 
        regular polygon sides=3, 
        minimum size=1.5cm, 
        inner sep=0pt, 
        shape border rotate=180 % Rotates the triangle to be upside down
    }
]

% --- Tree 1: "0" ---
\node[node style] at (0, 0) {};
% The y-coordinate is changed from -1 to -0.7 to move the name closer
\node at (0, -0.7) {$0$};

% --- Tree 2: "alpha + 1" (growing upwards) ---
\begin{scope}[xshift=3.5cm] 
    \node[node style] (root2) at (0, 0) {};
    \node[tree style, label=center:$\alpha$] (alpha_subtree) at (0, 1.2) {};
    \draw (root2) -- (alpha_subtree.south);
    % The y-coordinate is changed from -1 to -0.7
    \node at (0, -0.7) {$\alpha+1$};
\end{scope}

% --- Tree 3: "<alpha_i>" (growing upwards) ---
\begin{scope}[xshift=8cm]
    \node[node style] (root3) at (0, 0) {};
    \node[tree style, label=center:$\alpha_0$] (a0) at (-1.5, 1.2) {};
    \node[tree style, label=center:$\alpha_1$] (a1) at (0, 1.2) {};
    \node[tree style, label=center:$\alpha_2$] (a2) at (1.5, 1.2) {};
    \node at (2.5, 1.2) {$\dots$};
    
    \draw (root3) -- (a0.south);
    \draw (root3) -- (a1.south);
    \draw (root3) -- (a2.south);
    
    % The y-coordinate is changed from -1 to -0.7
    \node at (0, -0.7) {$\langle \alpha_i \rangle_{i \in \mathbb{N}}$};
\end{scope}

\end{tikzpicture}
\caption{Zero, successor and limit constructive ordinals.}
\label{fig:ordinal_trees_final}
\end{figure}
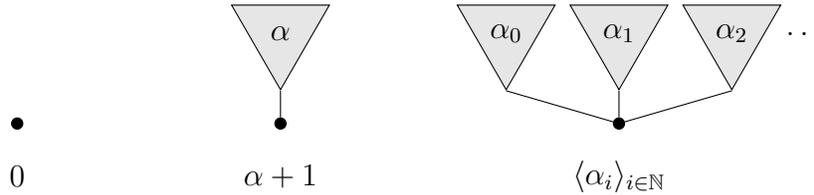

Equivalently, we can define $\Omega$ as the set of infinitary expressions constructed from the following grammar: 
\begin{itemize}
    \item $0 \in \ConsOrd$;  
    \item If $\alpha \in \ConsOrd$, then $\alpha + 1 \in \ConsOrd$;  
    \item If $\alpha_i \in \ConsOrd$ for all $i \in \mathbb{N}$, then $\langle \alpha_i \rangle_{i \in \mathbb{N}} \in \ConsOrd$.
\end{itemize}
Note that $\Omega$ supports a form of \emph{structural induction}, whereby one can prove a property for all elements of $\Omega$ by verifying that it holds for $0$ and is preserved under the successor and limit operations, as defined above.

The \emph{sub-tree ordering} $\prec$ on $\Omega$ is defined as the transitive closure of the following relations:  
\begin{itemize}
    \item $\alpha \prec \alpha + 1$, for all $\alpha \in \ConsOrd$;  
    \item $\alpha_m \prec \langle \alpha_i \rangle$, for all $\langle \alpha_i \rangle \in \ConsOrd$ and $m \in \mathbb{N}$.  
\end{itemize}
The relation $\prec$ is clearly well-founded. We define $\alpha \preceq \beta$ to mean that either $\alpha \prec \beta$ or $\alpha = \beta$. Some properties of $\prec$ follow directly from its definition. For instance, for any $\alpha \in \Omega$, we have $\alpha \nprec 0$. Moreover, for any $\alpha, \beta \in \Omega$, we have $\alpha \prec \beta + 1$ if and only if $\alpha = \beta$ or $\alpha \prec \beta$. Finally, $\alpha \prec \langle \beta_i \rangle$ if and only if there exists $i \in \mathbb{N}$ such that $\alpha \prec \beta_i$. A set $\mathsf{A} \subseteq \Omega$ of ordinals is called a \emph{downset} if, for any $\alpha \in \mathsf{A}$ and $\beta \prec \alpha$, we have $\beta \in \mathsf{A}$.  
A \emph{principal downset} is a downset of the form $\mathsf{D}_{\alpha} = \{\beta \in \Omega \mid \beta \prec \alpha\}$, where $\alpha \in \Omega$ is an ordinal.
%Given a $k$-tuple of ordinals $\overline{\beta}$ and an ordinal $\eta$, we write $\overline{\beta} \prec \eta$ to mean $\beta_i \prec \eta$ for all $1 \leq i \leq k$.

Any natural number can be transformed into an ordinal by iteratively applying the successor operation. Formally, we define the map $o: \mathbb{N} \to \Omega$ by:
\[
o(0_N) \Def 0 \quad \text{and} \quad o(s_N(n)) \Def o(n) + 1,
\]
where $0_N$ and $s_N$ denote the number zero and the numeral successor function, respectively. Throughout this paper, we omit the subscript $N$ and simply write $o(n)$ as $n$ whenever there is no risk of confusion.

Next, we define the ordinal
$
\omega \Def \langle o(i+1) \rangle_i,
$
and observe that $\{\alpha \mid \alpha \prec \omega\}$ precisely corresponds to the range of $o$. One might initially expect to define $\omega$ as $\langle o(i) \rangle_i$, but the chosen definition is preferred in the literature due to its nice structural properties. For further details, see \cite{fairtlough1998ptchapter}.

To compare constructive ordinals with set-theoretic ordinals, let $\mathbf{On}$ be the class of all set-theoretic ordinals, with $\mathbf{0} \in \mathbf{On}$ as its least element and $\bm{\alpha} \mapsto \bm{\alpha+1}$ as its successor operation. There is a canonical mapping $[\![-]\!]: \Omega \to \mathbf{On}$ from constructive ordinals to set-theoretic ordinals defined as follows: $[\![0]\!] = \bm{0}$, $[\![\alpha + 1]\!] = [\![\alpha]\!] \bm{+ 1}$, and $[\![\langle \alpha_i \rangle]\!] = \bigcup_i [\![\alpha_i]\!]$. It is easy to see that the range of $[\![-]\!]$ consists precisely of the countable set-theoretic ordinals. 
However, this map is not injective, as a limit set-theoretic ordinal $\bm{\alpha}$ may correspond to different sequences converging to it. For instance, $[\![\langle o(i) \rangle]\!] = [\![\langle o(i + 1) \rangle]\!]$, yet $\langle o(i) \rangle \neq \langle o(i + 1) \rangle$.
Indeed, one can interpret constructive ordinals as the \emph{constructive} counterpart of set-theoretic ordinals, where each limit element is equipped with a canonical converging sequence.

It is straightforward to see that if $\alpha \preceq \beta$, then $[\![\alpha]\!] \subseteq [\![\beta]\!]$. However, the converse is not true as $\langle o(i) \rangle \npreceq \langle o(i + 1) \rangle$ while $[\![\langle o(i) \rangle]\!] = [\![\langle o(i + 1) \rangle]\!]$. Let $\mathbf{o}(n)$ denote the set-theoretic ordinal corresponding to $n \in \mathbb{N}$, and define $\bm{\omega} \in \mathbf{On}$ as the set-theoretic ordinal $\{\mathbf{o}(0), \mathbf{o}(1), \ldots\}$. Then, for any $n \in \mathbb{N}$, we have $[\![o(n)]\!] = \mathbf{o}(n)$, and $[\![\omega]\!] = \bm{\omega}$. Similar to the constructive case, when there is no risk of confusion, we denote $\bm{o}(n)$ by $\bm{n}$, for any $n \in \mathbb{N}$.

\subsection{Functions on constructive ordinals}
\label{sec:functions-const-ordinals}

In this subsection, we introduce a number of useful functions with (set-theoretic) ordinal inputs or outputs, and examine some of their properties. We shall refer to functions that take natural numbers and (set-theoretic) ordinals as inputs and return (set-theoretic) ordinals as \emph{(set-theoretic) ordinal functions}.

First, we define addition, multiplication, and exponentiation on constructive ordinals using their standard recursive definitions:
\begin{align*}
\alpha + 0 &\Def \alpha, &
\alpha + (\beta+1) &\Def (\alpha + \beta) + 1, &
\alpha + \langle \beta_i \rangle &\Def \langle \alpha + \beta_i \rangle, \\
\alpha \cdot 0 &\Def 0, &
\alpha \cdot (\beta+1) &\Def (\alpha \cdot \beta) + \alpha, &
\alpha \cdot \langle \beta_i \rangle &\Def \langle \alpha \cdot \beta_i \rangle, \\
\alpha^0 &\Def 1, &
\alpha^{(\beta+1)} &\Def (\alpha^\beta) \cdot \alpha, &
\alpha^{\langle \beta_i \rangle} &\Def \langle \alpha^{\beta_i} \rangle.
\end{align*}
For example, $\omega \cdot 2 = \omega + \omega = \langle \omega + 1, \omega + 2, \ldots \rangle$, $\omega^2 = \langle \omega, \omega \cdot 2, \ldots \rangle$, and $\omega^\omega = \omega^{\langle 1, 2, \ldots \rangle} = \langle \omega, \omega^2, \ldots \rangle$.

\begin{remark}
The expression $\alpha + 1$ has two interpretations: it can be read either as the successor of $\alpha$, or as the sum of $\alpha$ and $o(1)$. Fortunately, by the definition of addition, these two interpretations coincide. To see this, let us temporarily denote the successor of an ordinal $\beta$ by $s(\beta)$. Then, using the definition of addition, we have $\alpha + o(1) = \alpha + s(0) = s(\alpha + 0) = s(\alpha)$.
\end{remark}

For the set-theoretic counterpart, addition, multiplication, and exponentiation on set-theoretic ordinals can be defined in the usual way:
\begin{align*}
\bm{\alpha + 0} &\Def \bm{\alpha}, &
\bm{\alpha + (\beta+1)} &\Def \bm{(\alpha + \beta) + 1}, &
\bm{\alpha + \beta} &\Def \bigcup_{\bm{\gamma} \subset \bm{\beta}}\, \bm{\alpha + \gamma}, \\
\bm{\alpha \cdot 0} &\Def \bm{0}, &
\bm{\alpha \cdot (\beta+1)} &\Def \bm{(\alpha \cdot \beta) + \alpha}, &
\bm{\alpha \cdot \beta} &\Def \bigcup_{\bm{\gamma} \subset \bm{\beta}}\, \bm{\alpha \cdot \gamma}, \\
\bm{\alpha^0} &\Def \bm{1}, &
\bm{\alpha^{(\beta+1)}} &\Def \bm{(\alpha^\beta) \cdot \alpha}, &
\bm{\alpha^\beta} &\Def \bigcup_{\bm{\gamma} \subset \bm{\beta}}\, \bm{\alpha^\gamma}.
\end{align*}
where, in the third column, $\bm{\beta}$ is a non-zero limit set-theoretic ordinal. It is easy to see that $[\![\alpha+\beta]\!]=[\![\alpha]\!]\bm{+}[\![\beta]\!]$, $[\![\alpha \cdot \beta]\!]=[\![\alpha]\!] \bm{\cdot} [\![\beta]\!]$, and $[\![\alpha^\beta]\!]=[\![\alpha]\!]^{[\![\beta]\!]}$, for any $\alpha, \beta \in \Omega$.

Set-theoretic ordinals enjoy many interesting properties. For example, addition and multiplication are associative, multiplication distributes over addition on the left, and addition, multiplication, and exponentiation are all monotone with respect to $\subseteq$. Furthermore, addition is expansive; that is, $\bm{\alpha} \subseteq \bm{\alpha + \beta}$ and $\bm{\beta} \subseteq \bm{\alpha + \beta}$.
Some of these properties carry over to constructive ordinals, but not all. For instance, although $\alpha \preceq \alpha + \beta$ holds for all $\alpha, \beta \in \Omega$, it need not be the case that $\beta \preceq \alpha + \beta$. For example,  $1 + \omega \npreceq \omega$, because, as trees, we have $\langle o(i+2) \rangle \npreceq \langle o(i+1) \rangle$. Another counter-intuitive example is that although $\bm{0 \cdot \omega}=\bm{0}$, the constructive ordinal $0 \cdot \omega$ is the sequence of zeros, which is not equal to zero.
Typically, such anomalies are avoided by working with a restricted subclass known as the \emph{structured} constructive ordinals. However, as we will explain in Remark~\ref{WhyStructurednessFails}, the use of unstructured ordinals in this paper is unavoidable. We must therefore proceed with care regarding the specific properties we rely on. In what follows, we summarize the properties of constructive ordinals that are used throughout this paper.

\begin{lemma}\label{AdditionProp}
For any $\alpha, \beta, \gamma \in \Omega$, the following hold:
\begin{itemize}
    \item[$(i)$] 
  $0 + \alpha = \alpha$, $1 \cdot \alpha=\alpha \cdot 1=\alpha$, and $\alpha^1=\alpha$.
    \item[$(ii)$] 
    $\alpha + (\beta + \gamma) = (\alpha + \beta) + \gamma$,
    \item[$(iii)$] 
    If $\beta \neq 0$, then $\alpha \prec \alpha + \beta$. Hence, $\alpha \preceq \alpha+\beta$.
    \item[$(iv)$] 
    If $\beta \prec \gamma$, then $\alpha+\beta \prec \alpha + \gamma$. The same also holds for $\preceq$.
    \item[$(v)$]
    If $\alpha+\beta = 0$, then $\alpha=\beta=0$.
     \item[$(vi)$]
    If $\alpha\cdot\beta = 0$, then either $\alpha=0$ or $\beta=0$.
     \item[$(vii)$]
   If $\alpha \neq 0$, then $\alpha^{\beta} \neq 0$.
\end{itemize}
\end{lemma}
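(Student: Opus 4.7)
The plan is to prove each item by structural induction, exploiting the fact that addition, multiplication, and exponentiation are all defined by recursion on the right argument; so the natural induction in each case is on the rightmost ordinal. The proof is largely a routine case analysis (zero/successor/limit), so I will highlight the inductive scheme and the key dependencies rather than work through every case.

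For (i), each identity is a short structural induction on its free variable. For example, $0+\alpha=\alpha$ is proved by induction on $\alpha$: base $0+0=0$; successor $0+(\alpha+1)=(0+\alpha)+1=\alpha+1$ by the IH; limit $0+\langle\alpha_i\rangle = \langle 0+\alpha_i\rangle = \langle\alpha_i\rangle$ by the IH. The identity $1\cdot\alpha=\alpha$ is analogous. The remaining two identities then follow by one unfolding combined with what has just been shown: $\alpha\cdot 1 = \alpha\cdot 0 + \alpha = 0+\alpha = \alpha$, and $\alpha^1 = \alpha^0\cdot\alpha = 1\cdot\alpha = \alpha$. Associativity (ii) is also a direct structural induction on $\gamma$, the base and successor cases being immediate from the defining equations, while the limit case reduces to a pointwise application of the IH inside $\langle\cdot\rangle$.

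Item (iv) I would prove first among the order-theoretic clauses, by induction on $\gamma$. The base $\gamma=0$ is vacuous since nothing is $\prec 0$. If $\gamma = \gamma'+1$, then $\beta \prec \gamma'+1$ splits into $\beta=\gamma'$ (handled by $\alpha+\gamma' \prec (\alpha+\gamma')+1 = \alpha+\gamma$) or $\beta \prec \gamma'$ (handled by the IH and transitivity of $\prec$). If $\gamma = \langle\gamma_i\rangle$, unrolling the transitive-closure definition of $\prec$ produces an index $i$ with $\beta = \gamma_i$ or $\beta \prec \gamma_i$; either case combines the immediate-child relation $\alpha+\gamma_i \prec \langle\alpha+\gamma_j\rangle = \alpha+\gamma$ with (if necessary) the IH and transitivity. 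The $\preceq$-version is then immediate by reflexivity. Item (iii) falls out of (iv) together with the short preliminary fact that $0 \prec \beta$ for every $\beta \neq 0$ (another easy induction on $\beta$, using $\beta_0 \prec \langle\beta_i\rangle$ in the limit case); applying (iv) yields $\alpha = \alpha+0 \prec \alpha+\beta$.

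Items (v)--(vii) are short consequences of the structural form of $\alpha+\beta$, $\alpha\cdot\beta$, and $\alpha^\beta$. For (v), if $\beta \neq 0$ then $\alpha+\beta$ is either a successor tree or a limit tree and hence not equal to $0$; so $\beta=0$, and $\alpha+0 = \alpha = 0$ then forces $\alpha=0$. For (vi), induct on $\beta$: the base is trivial; if $\beta = \gamma+1$, then $\alpha\cdot\gamma + \alpha = 0$ forces $\alpha = 0$ by (v); and if $\beta = \langle\beta_i\rangle$, then $\alpha\cdot\beta = \langle \alpha\cdot\beta_i\rangle$ is a limit tree, hence non-zero, so this case does not arise. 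Item (vii) is an analogous induction on $\beta$ using $\alpha^{\gamma+1} = \alpha^\gamma\cdot\alpha$, the IH, the hypothesis $\alpha\neq 0$, and the contrapositive of (vi). The only step demanding genuine care is the limit case of (iv), where one must correctly unfold $\beta \prec \langle\gamma_i\rangle$ through the transitive-closure definition of $\prec$; all remaining cases are routine.
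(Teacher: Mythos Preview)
Your proposal is correct and follows essentially the same route as the paper: structural induction on the rightmost argument throughout, with the same case analyses. The only organizational differences are that the paper proves (iii) directly by induction on $\beta$ rather than via (iv) and a preliminary ``$0 \prec \beta$ for $\beta \neq 0$'' lemma, and it derives (v) from (iii) rather than from the structural form of $\alpha+\beta$; both variants are equally valid and your treatment of the limit case in (iv) is in fact slightly more careful than the paper's.
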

\begin{proof}
For $(i)$, we first use an induction on $\alpha$ to prove $0+\alpha=\alpha$. If $\alpha=0$, since $0+0=0$, there is nothing to prove. If $\alpha=\alpha'+1$ is a successor, then by the definition of addition and the induction hypothesis, we have: 
\[
0+\alpha = 0+(\alpha'+1) = (0+\alpha')+1 = \alpha'+1 = \alpha.
\]
If $\alpha = \langle \alpha_i \rangle$ is a limit, then again by the definition of addition and the induction hypothesis, we have 
$0+\alpha = \langle 0+\alpha_i \rangle = \langle \alpha_i \rangle = \alpha$. To prove $\alpha \cdot 1=\alpha$, by definition of multiplication and $0+\alpha=\alpha$, we have
$
\alpha \cdot 1=\alpha \cdot 0 + \alpha=0+\alpha=\alpha
$.

To prove $1 \cdot \alpha=\alpha$, we again use an induction on $\alpha$. If $\alpha=0$, we have $1 \cdot \alpha=1 \cdot 0 =0=\alpha$. If $\alpha=\alpha'+1$ is a successor, by the induction hypothesis, we have:
\[
1 \cdot \alpha=1 \cdot (\alpha'+1)=1 \cdot \alpha'+1=\alpha'+1=\alpha.
\]
If $\alpha=\langle \alpha_i \rangle$ is a limit, by the induction hypothesis, we have $1 \cdot \alpha=\langle 1\cdot \alpha_i \rangle=\langle \alpha_i \rangle=\alpha$. Finally, to show $\alpha^1=\alpha$, by the definition of exponentiation and $1 \cdot \alpha=\alpha$, we have
\[
\alpha^1=\alpha^0 \cdot \alpha=1 \cdot \alpha=\alpha.
\]

For $(ii)$, we use induction on $\gamma$. If $\gamma=0$, we have $\alpha+(\beta+0)=\alpha+\beta=(\alpha+\beta)+0$. If $\gamma=\gamma'+1$ is a successor, using the definition of addition and the induction hypothesis, we have
\[
\alpha+(\beta+(\gamma'+1))=\alpha+((\beta+\gamma')+1)=(\alpha+(\beta+\gamma'))+1
\]
\[
=((\alpha+\beta)+\gamma')+1=(\alpha+\beta)+(\gamma'+1).
\]
If $\gamma=\langle \gamma_i \rangle$ is a limit, then by the definition of addition and the induction hypothesis, we have
\[
\alpha+(\beta+\langle \gamma_i \rangle)=\alpha+\langle \beta+\gamma_i \rangle= \langle \alpha+(\beta+\gamma_i) \rangle=\langle (\alpha+\beta)+\gamma_i \rangle=(\alpha+\beta)+\langle \gamma_i \rangle.
\]

For $(iii)$, for the first part, we use induction on $\beta$. For $\beta=0$, there is nothing to prove. If $\beta=\beta'+1$ is a successor, then either $\beta'=0$ or $\beta' \neq 0$. If $\beta'=0$, then $\beta=1$ and we have $\alpha \prec \alpha+1=\alpha+\beta$. If $\beta' \neq 0$, then by the induction hypothesis, we have 
\[
\alpha \prec \alpha + \beta' \prec (\alpha+ \beta')+1=\alpha +(\beta'+1)=\alpha+\beta.
\]
If $\beta=\langle \beta_i \rangle$ is a limit, then either $\beta_0=0$ or $\beta_0 \neq 0$. If $\beta_0=0$, then 
\[
\alpha=\alpha+\beta_0 \prec \langle \alpha+\beta_i \rangle=\alpha+\beta.
\]
If $\beta_0 \neq 0$, then by the induction hypothesis, we have
\[
\alpha \prec \alpha+\beta_0 \prec \langle \alpha + \beta_i \rangle=\alpha+\langle \beta_i \rangle=\alpha+\beta.
\]
For the second part, it is enough to use the first part and notice that if $\beta =0$, we have $\alpha=\alpha+\beta$.

For $(iv)$, for the first part, we use induction on $\gamma$. If $\gamma=0$, then as $\beta \prec 0$ is impossible, there is nothing to prove. If $\gamma=\gamma'+1$ is a successor, then as $\beta \prec \gamma'+1$, either $\beta \prec \gamma'$ or $\beta=\gamma'$. %\CHAT{RR}{Is latter immediate?}. 
In the first case, by the induction hypothesis, we have 
\[
\alpha+\beta \prec \alpha+\gamma' \prec (\alpha+\gamma')+1= \alpha+(\gamma'+1)=\alpha+\gamma.
\]
In the second case, we have $\alpha+\beta=\alpha+\gamma' \prec (\alpha+\gamma')+1=\alpha+(\gamma'+1)$.
If $\gamma=\langle \gamma_i \rangle $ is a limit, then as $\beta \prec \langle \gamma_i \rangle$, there is $i \in \mathbb{N}$ such that $\beta \prec \gamma_i$. By the induction hypothesis, we have $\alpha+\beta \prec \alpha+\gamma_i \prec \langle \alpha+\gamma_i \rangle=\alpha+\gamma$. For the second part, it is enough to use the first part and notice that $\beta=\gamma$ implies $\alpha+\beta=\alpha+\gamma$.

For $(v)$, if $\beta \neq 0$, by $(iii)$, we obtain $\alpha \prec \alpha + \beta = 0$, which is impossible. 
Therefore, $\beta=0$ which implies $0=\alpha+\beta=\alpha+0=\alpha$.

For $(vi)$, we prove the claim by induction on $\beta$. If $\beta = 0$, there is nothing to prove. If $\beta = \beta' + 1$ is a successor, then
$
0 =\alpha \cdot \beta = \alpha \cdot (\beta' + 1) = \alpha \cdot \beta' + \alpha.
$
Therefore, by $(v)$, we get $\alpha = 0$. If $\beta = \langle \beta_i \rangle$ is a limit, then $0=\alpha \cdot \beta = \langle \alpha \cdot \beta_i \rangle$ is a limit which is a contradiction.

For $(vii)$, we use induction on $\beta$. For $\beta = 0$, we have $\alpha^{\beta} = 1 \neq 0$. If $\beta = \beta' + 1$ is a successor, then
$
0 = \alpha^{\beta + 1} = \alpha^{\beta} \cdot \alpha.
$
By $(vi)$, either $\alpha^{\beta} = 0$ or $\alpha = 0$. The first is impossible by the induction hypothesis, and the second contradicts the assumption. 
If $\beta = \langle \beta_i \rangle$ is a limit, then $0=\alpha^\beta = \langle \alpha^{\beta_i} \rangle$ is a limit which is a contradiction.
\end{proof}

\begin{remark}
By Lemma~\ref{AdditionProp}, addition on constructive ordinals is associative. Hence, we may use the usual summation notation $\sum_{i=1}^m \alpha_i$ without parentheses. 
\end{remark}

We introduce several functions and hierarchies of functions that will be used later. Define the \emph{predecessor function} $p(\alpha, n)$ recursively as follows:
\[
p(0, n) \Def 0, \quad p(\alpha + 1, n) \Def \alpha, \quad \text{and} \quad p(\langle \alpha_i \rangle, n) \Def \alpha_n.
\]
In terms of trees, if $\alpha$ is a single node, then so is $p(\alpha, n)$. However, if $\alpha$ is a successor (resp.\ limit) ordinal, $p(\alpha, n)$ returns the only child (resp.\ the $n$-th child) of $\alpha$. Fixing $n \in \mathbb{N}$, we define the \emph{$n$-predecessor} function as the map $\alpha \mapsto p(\alpha, n)$ on $\Omega$. Next, define the \emph{iterated predecessor} function $R^b(i, \alpha, n)$ by:
\[
R^b(0, \alpha, n) = \alpha, \quad R^b(i+1, \alpha, n) = p(R^b(i, \alpha, n), n).
\]
Clearly, $R^b(i, \alpha, n)$ is the result of $i$ applications of the $n$-predecessor function to $\alpha$. Since $\prec$ is well-founded, there exists an $l \in \mathbb{N}$ such that $R^b(l, \alpha, n) = 0$:
% https://q.uiver.app/#q=WzAsNSxbMCwwLCIwPVJeYihsLCBcXG11LCBuKSAiXSxbMiwwLCJcXGxkb3RzICJdLFszLDAsIlJeYigxLCBcXG11LCBuKSAiXSxbNCwwLCJSXmIoMCwgXFxtdSwgbik9XFxtdS4iXSxbMSwwLCJSXmIobC0xLCBcXG11LCBuKSJdLFszLDIsInAoLSwgbikiLDIseyJjdXJ2ZSI6NX1dLFsxLDIsIlxccHJlYyIsMyx7InN0eWxlIjp7ImJvZHkiOnsibmFtZSI6Im5vbmUifSwiaGVhZCI6eyJuYW1lIjoibm9uZSJ9fX1dLFsyLDMsIlxccHJlYyIsMyx7InN0eWxlIjp7ImJvZHkiOnsibmFtZSI6Im5vbmUifSwiaGVhZCI6eyJuYW1lIjoibm9uZSJ9fX1dLFswLDQsIlxccHJlYyIsMyx7InN0eWxlIjp7ImJvZHkiOnsibmFtZSI6Im5vbmUifSwiaGVhZCI6eyJuYW1lIjoibm9uZSJ9fX1dLFs0LDEsIlxccHJlYyIsMyx7InN0eWxlIjp7ImJvZHkiOnsibmFtZSI6Im5vbmUifSwiaGVhZCI6eyJuYW1lIjoibm9uZSJ9fX1dLFs0LDAsInAoLSwgbikiLDIseyJjdXJ2ZSI6NX1dXQ==
\[\footnotesize\begin{tikzcd}
	{0=R^b(l, \alpha, n) } & {R^b(l-1, \alpha, n)} & {\ldots } & {R^b(1, \alpha, n) } & {R^b(0, \alpha, n)=\alpha}
	\arrow["\prec"{marking, allow upside down}, draw=none, from=1-1, to=1-2]
	\arrow["{p(-, n)}"', curve={height=30pt}, from=1-2, to=1-1]
	\arrow["\prec"{marking, allow upside down}, draw=none, from=1-2, to=1-3]
	\arrow["\prec"{marking, allow upside down}, draw=none, from=1-3, to=1-4]
	\arrow["\prec"{marking, allow upside down}, draw=none, from=1-4, to=1-5]
	\arrow["{p(-, n)}"', curve={height=30pt}, from=1-5, to=1-4]
\end{tikzcd}\]
We call the decreasing sequence $\{R^b(i, \alpha, n)\}_{i=0}^l$ the \emph{sequence of $n$-predecessors of $\alpha$}, where $l \in \mathbb{N}$ is the least number satisfying $R^b(l, \alpha, n)=0$. In terms of trees, this sequence consists of subtrees of $\alpha$, starting from $\alpha$ itself and proceeding along the unique branch at successor nodes and the $n$-th branch at limit nodes, until reaching a leaf.

We now introduce two ordinal-indexed hierarchies of numeral functions:

\begin{definition}
	\label{def:slow-growing-hierarchy}
	The
\emph{slow-growing hierarchy} is
the family $\{ G_\alpha: \mathbb{N} \to \mathbb{N} \}_{\alpha \in \ConsOrd}$ of functions defined by
\[
G_0(n) := 0, \quad
G_{\alpha+1}(n) := G_\alpha(n) + 1, \quad \text{and} \quad
G_{\langle \alpha_i \rangle}(n) := G_{\alpha_n}(n).
\]
The \emph{length hierarchy} is
the family $\{ L_\alpha: \mathbb{N} \to \mathbb{N} \}_{\alpha \in \ConsOrd}$ of functions defined by
\[
L_0(n) := 0, \quad
L_{\alpha+1}(n) := L_\alpha(n) + 1, \quad \text{and} \quad
L_{\langle \alpha_i \rangle}(n) := L_{\alpha_n}(n) + 1.
\]
For convenience, we sometimes write $G(\alpha,n)$ and $L(\alpha,n)$ instead of $G_{\alpha}(n)$ and $L_{\alpha}(n)$, in order to emphasize that $\alpha$ also serves as an input to the function.
\end{definition}
By induction on $\alpha \in \Omega$, one can easily show that $L_{\alpha}(n)$ is the least number $l$ such that $R^b(l, \alpha, n) = 0$. This justifies the term ``length hierarchy'' that we use. In terms of trees, $L_{\alpha}(n)$ is the total number of successor and limit ordinals in the sequence of $n$-predecessors of $\alpha$, while $G_{\alpha}(n)$ counts only the number of successor ordinals in that sequence. Therefore, $G_{\alpha}(n) \leq L_{\alpha}(n)$ for any $n \in \mathbb{N}$.

Finally, the last function we introduce in this section is the function $R(i, \alpha, n)$ defined by:
\[
R(i, \alpha, n) \Def R^b(L_{\alpha}(n) \dotminus i, \alpha, n)
\]
where $\dotminus$ is the truncated difference operation, i.e., 
$a \dotminus b = a - b$
if $b \leq a$, and $a \dotminus b = 0$ otherwise.
Clearly, $\dotminus \in \GrzClass{2}$.
In words, $R(i, \alpha, n)$ enumerates the sequence of $n$-predecessors of $\alpha$ in a forward manner, that is, from $0$ up to $\alpha$. In particular, we have $R(0, \alpha, n) = 0$ and $R(L_{\alpha}(n), \alpha, n) = \alpha$:
% https://q.uiver.app/#q=WzAsOCxbMCwwLCIwPVJeYihMX3tcXG11fShuKSwgXFxtdSwgbikgIl0sWzEsMCwiXFxsZG90cyAiXSxbMiwwLCJSXmIoMSwgXFxtdSwgbikgIl0sWzMsMCwiUl5iKDAsIFxcbXUsIG4pPVxcbXUiXSxbMCwxLCIwPVIoMCwgXFxtdSwgbikiXSxbMSwxLCJcXGxkb3RzIl0sWzIsMSwiUihMX3tcXG11fShuKS0xLCBcXG11LCBuKSJdLFszLDEsIlIoTF97XFxtdX0obiksIFxcbXUsIG4pPVxcbXUiXSxbMSwyLCJcXHByZWMiLDMseyJzdHlsZSI6eyJib2R5Ijp7Im5hbWUiOiJub25lIn0sImhlYWQiOnsibmFtZSI6Im5vbmUifX19XSxbMiwzLCJcXHByZWMiLDMseyJzdHlsZSI6eyJib2R5Ijp7Im5hbWUiOiJub25lIn0sImhlYWQiOnsibmFtZSI6Im5vbmUifX19XSxbMCwxLCJcXHByZWMiLDMseyJzdHlsZSI6eyJib2R5Ijp7Im5hbWUiOiJub25lIn0sImhlYWQiOnsibmFtZSI6Im5vbmUifX19XSxbNCw1LCJcXHByZWMiLDMseyJzdHlsZSI6eyJib2R5Ijp7Im5hbWUiOiJub25lIn0sImhlYWQiOnsibmFtZSI6Im5vbmUifX19XSxbNSw2LCJcXHByZWMiLDMseyJzdHlsZSI6eyJib2R5Ijp7Im5hbWUiOiJub25lIn0sImhlYWQiOnsibmFtZSI6Im5vbmUifX19XSxbNiw3LCJcXHByZWMiLDMseyJzdHlsZSI6eyJib2R5Ijp7Im5hbWUiOiJub25lIn0sImhlYWQiOnsibmFtZSI6Im5vbmUifX19XSxbMCw0LCI9IiwzLHsic3R5bGUiOnsiYm9keSI6eyJuYW1lIjoibm9uZSJ9LCJoZWFkIjp7Im5hbWUiOiJub25lIn19fV0sWzIsNiwiPSIsMyx7InN0eWxlIjp7ImJvZHkiOnsibmFtZSI6Im5vbmUifSwiaGVhZCI6eyJuYW1lIjoibm9uZSJ9fX1dLFszLDcsIj0iLDMseyJzdHlsZSI6eyJib2R5Ijp7Im5hbWUiOiJub25lIn0sImhlYWQiOnsibmFtZSI6Im5vbmUifX19XV0=
\[\small\begin{tikzcd}
	{0=R^b(L_{\alpha}(n), \alpha, n) } & {\ldots } & {R^b(1, \alpha, n) } & {R^b(0, \alpha, n)=\alpha} \\
	{0=R(0, \alpha, n)} & \ldots & {R(L_{\alpha}(n)-1, \alpha, n)} & {R(L_{\alpha}(n), \alpha, n)=\alpha}
	\arrow["\prec"{marking, allow upside down}, draw=none, from=1-1, to=1-2]
	\arrow["{=}"{marking, allow upside down}, draw=none, from=1-1, to=2-1]
	\arrow["\prec"{marking, allow upside down}, draw=none, from=1-2, to=1-3]
	\arrow["\prec"{marking, allow upside down}, draw=none, from=1-3, to=1-4]
	\arrow["{=}"{marking, allow upside down}, draw=none, from=1-3, to=2-3]
	\arrow["{=}"{marking, allow upside down}, draw=none, from=1-4, to=2-4]
	\arrow["\prec"{marking, allow upside down}, draw=none, from=2-1, to=2-2]
	\arrow["\prec"{marking, allow upside down}, draw=none, from=2-2, to=2-3]
	\arrow["\prec"{marking, allow upside down}, draw=none, from=2-3, to=2-4]
\end{tikzcd}\]
Even more generally, $R^b(j,\alpha,n) = 0$ and
$R(j,\alpha,n) = \alpha$, for any $j \geq L_\alpha(n)$.

In the following, we state some properties of the functions in the slow-growing and length hierarchies:

\begin{lemma}\label{lem:g-properties}
For any $\alpha, \beta \in \ConsOrd$ and $n \in \mathbb{N}$, the following hold: 
\begin{itemize}
\item[$(i)$]
	$G_{\alpha+\beta}(n) = G_\alpha(n) + G_\beta(n)$ and $
	G_{\alpha \cdot \beta}(n) = G_\alpha(n) \cdot G_\beta(n)$.
    \item[$(ii)$] 
    If $G_{\alpha}(n) \geq 1$, then $
	G_{\alpha^\beta}(n) = G_\alpha(n)^{G_\beta(n)}$.
    \item[$(iii)$] 
If $\alpha \neq 0$, then $L_{\alpha}(n) \geq 1$.
    \item[$(iv)$]
    $L_{\alpha+\beta}(n) = L_\alpha(n) + L_\beta(n)$.
     \item[$(v)$]
If $\alpha \neq 0$, then  $L_{\alpha \cdot \beta}(n) \leq L_\alpha(n) \cdot L_\beta(n)$.
    \item[$(vi)$]
    If $L_{\alpha}(n) \geq 2$, then $L_{\alpha^\beta}(n) \leq L_\alpha(n)^{L_\beta(n)}$.
\end{itemize}
\end{lemma}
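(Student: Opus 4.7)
The plan is to prove each item by a routine structural induction on one of the ordinal arguments, using the recursive definitions of ordinal arithmetic together with the recursive clauses for $G_\alpha$ and $L_\alpha$. For parts $(i)$, $(ii)$, $(iv)$, $(v)$, $(vi)$ I will induct on $\beta$, while for $(iii)$ I will induct on $\alpha$. In each inductive step the successor case just adds $1$ to both sides and the limit case unfolds $\langle\cdot\rangle$ by picking the $n$-th component, so the proofs essentially transport the algebraic identity from $\beta'$ or $\beta_n$ to $\beta'+1$ or $\langle\beta_i\rangle$.

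For $(i)$ and $(iv)$, all three cases of the induction match up cleanly because $G$ on a limit $\langle\beta_i\rangle$ simply returns $G_{\beta_n}(n)$ (no $+1$), while $L$ on a limit returns $L_{\beta_n}(n)+1$, and this extra $+1$ is exactly accounted for by the corresponding extra $+1$ coming from $L_{\alpha+\langle\beta_i\rangle}(n) = L_{\langle\alpha+\beta_i\rangle}(n) = L_{\alpha+\beta_n}(n)+1$. For the multiplicative identity $G_{\alpha\cdot\beta}(n) = G_\alpha(n)\cdot G_\beta(n)$ in $(i)$, the successor step uses the already-established additive identity: $G_{\alpha\cdot(\beta'+1)}(n)=G_{\alpha\cdot\beta'+\alpha}(n)=G_{\alpha\cdot\beta'}(n)+G_\alpha(n)$. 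Item $(iii)$ is immediate by case analysis on $\alpha$: a successor or limit both contribute a $+1$ in the defining clause of $L$.

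Item $(ii)$ is a routine induction on $\beta$ using $(i)$ to handle the successor step $\alpha^{\beta'+1}=\alpha^{\beta'}\cdot\alpha$, and the limit step is immediate since both sides unfold to the $n$-th component; the hypothesis $G_\alpha(n)\geq 1$ is only needed to keep the exponent identity well-behaved at $G_\beta(n)=0$ and to justify $G_\alpha(n)^{G_{\beta_n}(n)}\leq G_\alpha(n)^{G_{\langle\beta_i\rangle}(n)}$ implicitly when needed.

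The main obstacles are $(v)$ and $(vi)$, where genuine inequalities appear and one must exploit the assumptions $\alpha\neq 0$ and $L_\alpha(n)\geq 2$. For $(v)$, in the limit step $\beta=\langle\beta_i\rangle$, unfolding gives $L_{\alpha\cdot\langle\beta_i\rangle}(n)=L_{\alpha\cdot\beta_n}(n)+1\leq L_\alpha(n)\cdot L_{\beta_n}(n)+1$, and I need to absorb the extra $+1$ into $L_\alpha(n)\cdot(L_{\beta_n}(n)+1)$; this works precisely because $L_\alpha(n)\geq 1$ by $(iii)$. For $(vi)$, the successor step combines the inductive hypothesis with $(v)$ applied to $\alpha^{\beta'}\neq 0$ (using Lemma~\ref{AdditionProp}$(vii)$ and $\alpha\neq 0$, which follows from $L_\alpha(n)\geq 2$) to obtain $L_{\alpha^{\beta'+1}}(n)=L_{\alpha^{\beta'}\cdot\alpha}(n)\leq L_{\alpha^{\beta'}}(n)\cdot L_\alpha(n)\leq L_\alpha(n)^{L_{\beta'+1}(n)}$; the limit step needs the strict bound $L_\alpha(n)\geq 2$ in order to absorb the spurious $+1$ via $L_\alpha(n)^{L_{\beta_n}(n)}+1\leq 2\cdot L_\alpha(n)^{L_{\beta_n}(n)}\leq L_\alpha(n)^{L_{\beta_n}(n)+1}$. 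These are the only places where the side conditions matter; everything else is pure bookkeeping over the three grammar clauses defining $\Omega$.
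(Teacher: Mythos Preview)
Your proposal is correct and follows essentially the same approach as the paper: structural induction on $\beta$ for all parts except $(iii)$, with the key observations being the use of $(iii)$ to absorb the extra $+1$ in the limit case of $(v)$, and the use of $L_\alpha(n)\geq 2$ together with Lemma~\ref{AdditionProp}$(vii)$ in the successor and limit cases of $(vi)$. One minor remark: your aside in $(ii)$ about needing the hypothesis to justify an inequality $G_\alpha(n)^{G_{\beta_n}(n)}\leq G_\alpha(n)^{G_{\langle\beta_i\rangle}(n)}$ is off---the limit case is an exact equality since $G_{\langle\beta_i\rangle}(n)=G_{\beta_n}(n)$, so no inequality arises there.
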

\begin{proof}
For $(i)$ and $(ii)$, it is enough to use an induction on $\beta$. The proof can also be found in \cite{fairtlough1998ptchapter}. 
For $(iii)$, if $\alpha \neq 0$, then either $\alpha$ is a successor or a limit. If $\alpha = \alpha' + 1$ is a successor, then $L_\alpha(n) = L_{\alpha'}(n) + 1 \geq 1$. If $\alpha = \langle \alpha_i \rangle$ is a limit, then $L_\alpha(n) = L_{\alpha_n}(n) + 1 \geq 1$. 

The proof of $(iv)$ is easy and similar to that of $(i)$.  
For $(v)$, we use an induction on $\beta$. For $\beta = 0$, as $\alpha \cdot \beta = 0$ and $L_0(n) = 0$, there is nothing to prove. 
If $\beta = \beta' + 1$ is a successor, then using part $(iv)$ and the induction hypothesis, we have:
\[
L_{\alpha \cdot (\beta' + 1)}(n) = L_{\alpha \cdot \beta' + \alpha}(n) = L_{\alpha \cdot \beta'}(n) + L_\alpha(n) 
\leq L_\alpha(n) L_{\beta'}(n) + L_\alpha(n)
\]
\[
= L_\alpha(n)(L_{\beta'}(n) + 1) = L_\alpha(n) L_{\beta' + 1}(n).
\]
If $\beta = \langle \beta_i \rangle$ is a limit, then since $\alpha \neq 0$, we have $L_\alpha(n) \geq 1$, by part $(iii)$. Therefore, using the induction hypothesis, we obtain:
\[
L_{\alpha \cdot \langle \beta_i \rangle}(n) = L_{\langle \alpha \cdot \beta_i \rangle}(n) = L_{\alpha \cdot \beta_n}(n) + 1 
\leq L_\alpha(n) L_{\beta_n}(n) + 1 \leq L_\alpha(n) L_{\beta_n}(n) + L_\alpha(n)
\]
\[
\leq L_\alpha(n)(L_{\beta_n}(n) + 1) = L_\alpha(n) L_{\langle \beta_i \rangle}(n).
\]

For $(vi)$, we use an induction on $\beta$. For $\beta = 0$, as $\alpha^{\beta} = 1$, $L_\alpha(n) \geq 2$ and $L_\beta(n) = 0$, there is nothing to prove. 
If $\beta = \beta' + 1$ is a successor, then as $L_{\alpha}(n) \geq 2$, we have $\alpha\neq 0$. Hence, by Lemma \ref{AdditionProp}, part $(vii)$, we have $\alpha^{\beta'} \neq 0$. Therefore, by part $(v)$ and the induction hypothesis, we have:
\[
L_{\alpha^{(\beta' + 1)}}(n) = L_{\alpha^{\beta'} \cdot \alpha}(n) \leq L_{\alpha^{\beta'}}(n) L_\alpha(n) 
\leq L_\alpha(n)^{L_{\beta'}(n)} L_\alpha(n)
\]
\[
= L_\alpha(n)^{L_{\beta'}(n) + 1} = L_\alpha(n)^{L_{\beta' + 1}(n)}.
\]
\noindent If $\beta = \langle \beta_i \rangle$ is a limit, then using $L_\alpha(n) \geq 2$ and the induction hypothesis, we reach:
\[
L_{\alpha^{\langle \beta_i \rangle}}(n) = L_{\langle \alpha^{\beta_i} \rangle}(n) = L_{\alpha^{\beta_n}}(n) + 1 
\leq L_\alpha(n)^{L_{\beta_n}(n)} + 1 \leq 2L_\alpha(n)^{L_{\beta_n}(n)}
\]
\[
\leq L_\alpha(n)^{L_{\beta_n}(n) + 1} = L_\alpha(n)^{L_{\langle \beta_i \rangle}(n)}.
\qedhere
\]
\end{proof}

\section{Predicative Ordinal Recursive Functions}
\label{sec:pred-ord-rec-functions}
In this section, we introduce the class $\PredFuncClass{\OrdClass}$ of predicative ordinal-recursive functions up to the ordinals in $\mathsf{A}$, where $\mathsf{A} \subseteq \Omega$ is a downset of ordinals. This generalizes $\PredFuncClass{}$ by extending recursion from finite numbers to the infinite ordinals in $\mathsf{A}$. We also provide illustrative examples and establish several properties of $\PredFuncClass{\OrdClass}$.

We first define the class $\mathcal{C}_{\mathsf{A}}$ of functions with natural number outputs and three distinct types of inputs: ordinal variables instantiated by the ordinals in $\OrdClass$, numeral variables in \emph{parameter} (also called \emph{normal}) positions, and numeral variables in \emph{safe} positions.
We use the notation $f(\bar{\NormalOrdA}, \bar{\SafeOrdA}; \bar{\SafeNumbA})$ to represent these inputs. Lowercase Greek letters denote ordinals, lowercase Latin letters such as $m$, $n$, and $p$ are used for parameters, and the letters $a$, $b$, and $c$ are reserved for the safe variables. The semicolon separates the tuple $\bar{\SafeNumbA}$ of safe variables from the rest.

\begin{definition}[The class $\ClassName{\OrdClass}$]
Let $\mathsf{A} \subseteq \Omega$ be a downset of ordinals. We define $\ClassName{\OrdClass}$ as the smallest class of numeral functions with the inputs from $\mathsf{A}$ and $\mathbb{N}$, containing the initial functions (1)--(5) and closed under (6)--(9):
	\begin{enumerate}
		\item (constant)
        the number $0$ as a nullary function
		\item (projections)
$\pi^{k,l}_j(\bar{\SafeOrdA};\bar{\SafeNumbA})=\xi_j$ where  
$\bar{\SafeOrdA}=\SafeOrdA_{1},\ldots,\SafeOrdA_{k}$,
$\bar{\SafeNumbA}=\SafeNumbA_{k+1},\ldots,\SafeNumbA_{k+l}$
and $\xi \in \{ \SafeOrdA, \SafeNumbA \}$ depending on $j$ ($1 \leq j \leq k+l$).    
		\item (successor) 
        $s(;\SafeNumbA)=\SafeNumbA+1$
		\item (predecessor) 
        $\Pred(;0) = 0$ and
        $\Pred(;\SafeNumbA+1) = \SafeNumbA$
		\item (conditional)
        $
		\Cond(;\SafeNumbA,\SafeNumbB,\SafeNumbC)= \SafeNumbB$ if $\SafeNumbA=0$ and $\Cond(;\SafeNumbA,\SafeNumbB,\SafeNumbC)=\SafeNumbC$, otherwise
		\item (predicative ordinal recursion)
        for $g,h_{\mathsf{suc}},h_{\mathsf {lim}},q(\bar{\SafeOrdA};)\in\ClassName{\OrdClass}$,
		define $f$ by
        \begin{align*}
			f(0, \bar \NormalOrdB, \bar \SafeOrdA; \bar \SafeNumbA)
			&= g(\bar \NormalOrdB,
			\bar \SafeOrdA; \bar \SafeNumbA)\\
			f(\NormalOrdA+1,\bar \NormalOrdB, \bar \SafeOrdA; \bar \SafeNumbA) &= h_{\mathsf{suc}}(\NormalOrdA, \bar \NormalOrdB,\bar \SafeOrdA; 
			f(\NormalOrdA,\bar \NormalOrdB, \bar \SafeOrdA; \bar \SafeNumbA), \bar \SafeNumbA)\\
			f(\langle \NormalOrdA_i \rangle_i, \bar \NormalOrdB,\bar \SafeOrdA;\bar \SafeNumbA)
			&= h_{\mathsf{lim}}(\langle \NormalOrdA_i \rangle_i, \bar \NormalOrdB, \bar \SafeOrdA;
			f(\NormalOrdA_{q(\bar \SafeOrdA;)}, \bar \NormalOrdB, \bar \SafeOrdA; \bar \SafeNumbA),
			\bar \SafeNumbA
			)
		\end{align*}
		The function $q(\bar \SafeOrdA;)$ is called the \emph{selector} of the recursion as it selects the smaller ordinal $\NormalOrdA_{q(\bar \SafeOrdA;)}$ that is used instead of 
        $\langle \NormalOrdA_i \rangle_i$ in the limit ordinal case.
		\item (safe composition)
		for $h,
        \bar s,
		 \bar t\in\ClassName{\OrdClass}$,
		define the new function $f$ by
        \[
		f(\bar \NormalOrdA, \bar \SafeOrdA; \bar \SafeNumbA)
		= h(\bar{\NormalOrdA},\bar s(\bar \NormalOrdA, \bar \SafeOrdA;); \bar t(\bar \NormalOrdA, \bar\SafeOrdA; \bar \SafeNumbA))
		\]
        \item (constant substitution)
        for any $k \geq 1$, any $1 \leq i \leq k$, any function $g(\NormalOrdA_1, \ldots, \NormalOrdA_k, \bar \SafeOrdA; \bar \SafeNumbA) \in \ClassName{\OrdClass}$,
        and any ordinal
        $\alpha \in \OrdClass$,
        define the new function $f \in \ClassName{\OrdClass}$
        by
        \[
        f(\NormalOrdA_1,\ldots,\NormalOrdA_{i-1},\NormalOrdA_{i+1},\ldots,\NormalOrdA_{k}, \bar \SafeOrdA; \bar \SafeNumbA)
        =
        g(\NormalOrdA_1,\ldots,\NormalOrdA_{i-1},\alpha,\NormalOrdA_{i+1},\ldots,\NormalOrdA_{k},\bar\SafeOrdA;\bar\SafeNumbA)
        \]
        \item (structural rules)
        \begin{enumerate}[(i)]
            \item (exchange)
            for $g \in \ClassName{\OrdClass}$, define the new function $f$ by
            \[
f(\bar{\NormalOrdA},\NormalOrdB_1,\NormalOrdB_2,\bar{\NormalOrdC},\bar \SafeOrdA; \bar \SafeNumbA)
            =g(\bar{\NormalOrdA},\NormalOrdB_2,\NormalOrdB_1,\bar{\NormalOrdC},\bar \SafeOrdA; \bar \SafeNumbA)
            \]
            \item (weakening)
            for $g \in \ClassName{\OrdClass}$, define the new function $f$ by
            \[
f(\bar{\NormalOrdA},\NormalOrdB,\bar{\NormalOrdC},\bar \SafeOrdA; \bar \SafeNumbA)
            =g(\bar{\NormalOrdA},\bar{\NormalOrdC},\bar \SafeOrdA; \bar \SafeNumbA)
            \]
            \item (contraction)
            for $g \in \ClassName{\OrdClass}$, define the new function $f$ by
            \[
f(\bar{\NormalOrdA},\NormalOrdB,\bar{\NormalOrdC},\bar \SafeOrdA; \bar \SafeNumbA)
            =g(\bar{\NormalOrdA},
            \NormalOrdB,\NormalOrdB,\bar{\NormalOrdC},\bar \SafeOrdA; \bar \SafeNumbA)
            \]
        \end{enumerate}
	\end{enumerate}
We denote $\mathcal{C}_{\mathsf{D}_{\alpha}}$ by $\mathcal{C}_{\alpha}$, for any $\alpha \in \Omega$.
\end{definition}

\begin{remark}\label{rem:pred-rec-ord-funcs}
First, all basic functions are numeral functions with no ordinal input. Specifically, given the constraint that all the functions in the class $\mathcal{C}_{\mathsf{A}}$ output numbers, the projections do not have any ordinal input. However, using weakening, we can have projections with dummy ordinal variables that we never project over. Second, when defining $f$ by predicative ordinal recursion, one can only use the lower values of $f$ in the safe position. Moreover, in the limit case, we use the function $q(\bar{n};)$ to specify the lower value of $f$ that the recursion call uses. Note that only functions without any ordinal or safe input are allowed for $q(\bar{n};)$.
Third, in safe composition, no composition is allowed in ordinal positions, as there is no way to construct functions that output ordinals. Moreover, for the normal positions, one can only use functions with no safe inputs while for the safe position, all numeral variables are allowed. Therefore, parameters are allowed anywhere in the composition. This explains the intuition behind the normal and the safe positions. The safe variable is used for composition as before with Bellantoni-Cook characterization. But now the recursion operates on ordinals and the normal variables are there to be used in the selector function $q(\bar{n}; )$. 
Fourth, one can substitute any number of ordinal variables by constants from $\mathsf{A}$. Intuitively, this ensures that all recursions proceed only up to ordinals contained in $\mathsf{A}$. Moreover, since numerals can be constructed as nullary functions by successive composition of the successor function with the zero function, any numeral variable can be made a constant using safe composition. Therefore, no separate constant substitution is needed for numbers.
Fifth, structural rules guarantee that one can define functions 
by structural changes in ordinal inputs, namely by
exchanging input positions, by adding dummy variables and by removing duplicates. These rules are already admissible for numeral positions since  they admit composition with projections. However, since functions with ordinal outputs are not allowed, ordinal variables cannot be handled in the same way. Therefore, structural rules are necessary to provide analogous flexibility for ordinal variables. Sixth, in predicative ordinal recursion, the recursion is always performed on the first ordinal variable $\mu$. However, by using the exchange rule, it is clear that recursion on other ordinal variables is also possible. Seventh, in safe composition, we assumed that the ordinal variables in all the functions involved in the composition are the same. Typically, composition is defined in a more relaxed way, allowing different functions to use different variables. This relaxed version is still possible, as one can apply the weakening rule to introduce dummy variables, thereby equalizing the sets of ordinal variables. 
For numeral variables, a similar adjustment can be made by composing with projections.
\end{remark}

\begin{remark}\label{RemOnCr}
Let $r \in \mathbb{N}$ be a fixed natural number. Consider the following function as a generalization of the conditional function $C$:
\[
C_r(; a, b_0, \dots, b_r, c) = \begin{cases} 
    b_0 & \text{if } a = 0, \\ 
    b_1 & \text{if } a = 1, \\        
    \;\vdots & \;\;\quad\vdots \\
    b_r & \text{if } a = r, \\ 
    c   & \text{if } a > r.
\end{cases} 
\]
In fact, $C = C_0$. Using safe composition of the conditional function with itself and with the predecessor function, it is easy to see that $C_r \in \ClassName{\OrdClass}$. For instance, one can define $C_1(; a, b_0, b_1, c)$ as
$C(; a, b_0,\,
    C(; \Pred(; a), b_1, c)
)$.
\end{remark}

In the following, we will see some examples of functions in the class $\mathcal{C}_{\mathsf{A}}$.

\begin{example}\label{ex:grz-def}
	Define the numeral function
	$G(\NormalOrdA,\SafeOrdA;) \in \ClassName{\OrdClass}$ by $G(0, \SafeOrdA;) = 0$,
		$G(\NormalOrdA+1, \SafeOrdA;) = s(; G(\NormalOrdA, \SafeOrdA;))$, and
		$G(\langle \NormalOrdA_i \rangle, \SafeOrdA;) =G(\NormalOrdA_n, \SafeOrdA;)$.
    Observe that $G$ has a single ordinal input, and a single number as normal input (no input in safe position).
Moreover, note that $G(\NormalOrdA, \SafeOrdA;)=G_{\mu}(n)$, for any $\mu \in \OrdClass$ and any $n \in \mathbb{N}$. Similarly, one can define the numeral function
	$L(\NormalOrdA,\SafeOrdA;) \in \ClassName{\OrdClass}$ by $L(0, \SafeOrdA;) = 0$,
		$L(\NormalOrdA+1, \SafeOrdA;) = s(; L(\NormalOrdA, \SafeOrdA;))$, and
		$L(\langle \NormalOrdA_i \rangle, \SafeOrdA;) =s( ;L(\NormalOrdA_n, \SafeOrdA;))$.
\end{example}

In the next example, we present an instance of a general technique for simulating predicative recursion on numbers using predicative ordinal recursion; see Lemma~\ref{lem:ord-to-nat-and-back} for the full technique.

\begin{example}\label{ExampleOfUsualRecbyOrdRec}
Let $\mathsf{A}$ be a downset of ordinals such that $\omega \in \mathsf{A}$. We show that the function $\mathrm{sum}_0(n; a)=n+a$ is in $\mathcal{C}_{\mathsf{A}}$.
First, using predicative ordinal recursion, define the function $f(\mu, n; a)$ by:
\[
f(0, n; a) = a, \quad f(\mu + 1, n; a) = s(; f(\mu, n; a)), \quad f(\langle \mu_i \rangle, n; a) = f(\mu_n, n; a).
\]
It is easy to see that for any natural number $l \in \mathbb{N}$, we have $f(o(l), n; a) = l + a$. Moreover, recalling that $\omega$ is defined as $\langle o(i+1) \rangle_i$, we obtain:
\[
f(\omega, n; a) = f(o(n+1), n; a) = n + a + 1.
\]
Now, by constant substitution and the assumption that $\omega \in \mathsf{A}$, we may define
$\mathrm{sum}_0(n; a) = \Pred(; f(\omega, n; a))$.
\end{example}

To gauge the power of predicative ordinal recursion, we isolate the part of $\ClassName{\OrdClass}$ that only operates over~$\mathbb{N}$. This way, we can compare these functions to known classes of numeral functions. 
	\begin{definition}[$\PredFuncClass{\OrdClass}$]
		For any downset $\OrdClass \subseteq \ConsOrd$ of ordinals,
        let
		$
		\PredFuncClass{\OrdClass}
		$ be the class of all functions
        $f : \NatSet^k \to \NatSet$ such that there is $\hat f \in \ClassName{\OrdClass}$
        satisfying
		$f(\bar n) = \hat f(\bar{n};)$
        for any $\bar{n} \in \mathbb{N}$. We denote $\PredFuncClass{\mathsf{D}_{\alpha}}$ by $\PredFuncClass{\alpha}$, for any $\alpha \in \Omega$.
	\end{definition}

\begin{example}\label{Exam:GInPred}
Let $\mathsf{A} \subseteq \Omega$ be a downset of ordinals. Example~\ref{ex:grz-def} shows that for any $\mu \in \OrdClass$, we have $G_{\mu}, L_{\mu} \in \PredFuncClass{\OrdClass}$. As another example, if $\omega \in \mathsf{A}$, Example~\ref{ExampleOfUsualRecbyOrdRec} shows that the function $\mathrm{sum}_0(n; a) = n + a$ is in $\mathcal{C}_{\mathsf{A}}$. Therefore, using safe composition as in Example~\ref{ex:add-mult-bc}, the function $\mathrm{sum}(n, m;) = n + m$ is in $\mathcal{C}_{\mathsf{A}}$ and hence in $\PredFuncClass{\mathsf{A}}$.
\end{example}

\subsection{Simulating $\mathcal{E}_2$}

The class $\PredFuncClass{\mathsf{A}}$ is a robust class of functions. We have seen that $\PredFuncClass{\mathsf{A}}$ contains some basic functions. It is also clear that $\PredFuncClass{\OrdClass}$ is closed under composition, which shows that it contains many more natural functions. However, one can go one step further and show (Corollary~\ref{cor:E2-subset-predr} below) that the class $\PredFuncClass{\OrdClass}$ contains all linear-space computable functions, provided that $o(r) + \omega \in \mathsf{A}$ for some $r \in \mathbb{N}$.

To obtain this result, recall that $\mathcal{E}_2 = \PredFuncClass{}$ from Theorem \ref{thm:cb} and the definition of $\PredFuncClass{}$ in terms of the class $B$. Therefore, it suffices to simulate functions in $B$ by functions in $\ClassName{\OrdClass}$, using the idea that $\ClassName{\OrdClass}$ generalizes $B$ by extending the predicative recursion of $B$ from finite numbers to ordinals in $\OrdClass$. 
For this purpose, we first establish the following lemma, which shows how to simulate finite ordinal inputs by \emph{normal} inputs (and vice versa) in functions of the class $\ClassName{\OrdClass}$. To simulate finite ordinals with numbers, we use a technique similar to that explained in Example \ref{ExampleOfUsualRecbyOrdRec}. For the converse direction, we use slow-growing functions that imitate numbers by ordinals. 
For the simulation to work, it is important to assume that $\mathsf{A}$ contains an ordinal of the form $o(r)+\omega$, for some $r \in \mathbb{N}$.

\begin{lemma}\label{lem:ord-to-nat-and-back} Let $\OrdClass \subseteq \Omega$ be a downset of ordinals such that $o(r)+\omega \in \OrdClass$, for some $r \in \NatSet$. Then:
    \begin{enumerate}[(i)]
        \item For any
        $f(\bar{\alpha},\beta,
        \bar{\gamma},\bar{\SafeOrdA};\bar{\SafeNumbA}) \in \ClassName{\OrdClass}$, there is
        $f'(\bar{\alpha}, \bar{\gamma},\bar{\SafeOrdA},\SafeOrdB;\bar{\SafeNumbA}) \in \ClassName{\OrdClass}$
        such that
        $f(\bar{\alpha},o(m),
        \bar{\gamma},\bar{\SafeOrdA};\bar{\SafeNumbA}) = 
        f'(\bar{\alpha}, \bar{\gamma},\bar{\SafeOrdA},\SafeOrdB;\bar{\SafeNumbA})$
        for any $m \in \NatSet$.
        \item 
        For any
        $f(\bar{\alpha},\bar{\SafeOrdB},
        \SafeOrdA,\bar{\SafeOrdC};\bar{\SafeNumbA}) \in \ClassName{\OrdClass}$, there is
        $f'(\bar{\alpha}, \beta,\bar{\SafeOrdB},\bar{\SafeOrdC};\bar{\SafeNumbA}) \in \ClassName{\OrdClass}$
        such that
        $f(\bar{\alpha},\bar{\SafeOrdB},
        \SafeOrdA,\bar{\SafeOrdC};\bar{\SafeNumbA}) = 
        f'(\bar{\alpha}, o(\SafeOrdA),\bar{\SafeOrdB},\bar{\SafeOrdC};\bar{\SafeNumbA})$
        for any $\SafeOrdA \in \NatSet$.
    \end{enumerate}
\end{lemma}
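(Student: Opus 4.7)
The plan is to dispatch (ii) first, since it is a single-step construction, and then to prove (i) by induction on the derivation of $f$ in $\ClassName{\OrdClass}$.

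For (ii), the slow-growing function $G \in \ClassName{\OrdClass}$ from Example~\ref{ex:grz-def} satisfies $G(o(\SafeOrdA), 0;) = \SafeOrdA$ for every $\SafeOrdA \in \NatSet$, so setting
\[
f'(\bar{\alpha}, \beta, \bar{\SafeOrdB}, \bar{\SafeOrdC}; \bar{\SafeNumbA}) \;:=\; f(\bar{\alpha}, \bar{\SafeOrdB}, G(\beta, 0;), \bar{\SafeOrdC}; \bar{\SafeNumbA})
\]
by safe composition (after using weakening to align the ordinal signatures of $f$ and $G$) immediately yields the required identity. This step does not even need the hypothesis $o(r)+\omega \in \OrdClass$.

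For (i), the routine cases of the induction are those where $\beta$ is present only via weakening (basic functions), or is simply threaded through to subfunctions (structural rules, constant substitution, safe composition, and predicative ordinal recursion in which $\beta$ is not the recursion variable): in each of them, applying IH to the subfunctions and reassembling with the same combinators produces $f'$. The only genuinely non-trivial case is predicative ordinal recursion in which $\beta$ coincides with the recursion variable $\mu$. Writing $\hat{f}(m, \bar{\NormalOrdB}, \bar{\SafeOrdA}; \bar{\SafeNumbA}) := f(o(m), \bar{\NormalOrdB}, \bar{\SafeOrdA}; \bar{\SafeNumbA})$, the original recursion on $\mu$ unfolds along the finite ordinals as $\hat{f}(0, \ldots) = g(\ldots)$ and $\hat{f}(m+1, \ldots) = h_{\mathsf{suc}}(o(m), \ldots; \hat{f}(m, \ldots), \ldots)$, which is almost a numeral predicative recursion except that $h_{\mathsf{suc}}$ is still called at the ordinal $o(m)$. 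I would first apply IH~(i) to $h_{\mathsf{suc}}$ to obtain $h'_{\mathsf{suc}}$ with the numeral $m$ replacing $o(m)$, so that $\hat{f}$ becomes a genuine numeral predicative recursion from $g$ and $h'_{\mathsf{suc}}$.

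To realise this numeral recursion inside $\ClassName{\OrdClass}$, I would follow Example~\ref{ExampleOfUsualRecbyOrdRec}. Introduce a fresh ordinal variable $\nu$ and define $F(\nu, \bar{\NormalOrdB}, \bar{\SafeOrdA}, m; \bar{\SafeNumbA})$ by predicative ordinal recursion on $\nu$ with base $g(\bar{\NormalOrdB}, \bar{\SafeOrdA}; \bar{\SafeNumbA})$, successor clause $h'_{\mathsf{suc}}(\bar{\NormalOrdB}, \bar{\SafeOrdA}, L(\nu, 0;); F(\nu, \ldots), \bar{\SafeNumbA})$ (the length function $L \in \ClassName{\OrdClass}$ from Example~\ref{ex:grz-def} extracts the step index $l$ from $\nu = o(l)$), limit clause the projection onto the safe recursive slot, and selector $q(\bar{\SafeOrdA}, m;)$ that applies $\Pred$ to $m$ exactly $r+1$ times. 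A short induction on $l \in \NatSet$ then gives $F(o(l), \ldots, m; \ldots) = \hat{f}(l, \ldots)$. Since $o(r) + \omega = \langle o(r+i+1) \rangle$, constant substitution $\nu := o(r) + \omega \in \OrdClass$ produces $F(o(r) + \omega, \ldots, m; \ldots) = \hat{f}(r + q(m;) + 1, \ldots)$, which equals $f(o(m), \ldots)$ precisely when $m \geq r + 1$. For each of the finitely many remaining values $m \in \{0, 1, \ldots, r\}$, I would build $f(o(m), \ldots)$ explicitly as an iterated safe composition of $h_{\mathsf{suc}}$ on $g$ using constant substitutions with $o(i) \in \OrdClass$ for $0 \leq i \leq r$ (available because $\OrdClass$ is a downset containing $o(r)+\omega$), and then glue these small-$m$ cases to $F(o(r)+\omega, \ldots, m; \ldots)$ by the generalised conditional $C_r$ of Remark~\ref{RemOnCr} to produce $f'$. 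The main obstacle throughout is that $\ClassName{\OrdClass}$ offers no primitive for producing the ordinal $o(m)$ as a function of the numeral $m$: ordinals can enter a derivation only via constant substitution or via the recursion tree of a fixed ordinal already in $\OrdClass$, so the tree of $o(r)+\omega$ must serve as a scaffold for all finite ordinals, with the iterated predecessors in $q$ absorbing the offset $r$ and the conditional $C_r$ handling the initial segment.
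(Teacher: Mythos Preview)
Your proposal is correct and follows essentially the same route as the paper: part (ii) is verbatim the paper's argument via $G(\beta,0;)$, and for part (i) the crucial recursion-variable case is resolved, exactly as in the paper, by running a fresh predicative ordinal recursion whose limit clause is the identity, whose selector is $\Pred^{r+1}$ applied to the new numeral parameter, and then substituting $o(r)+\omega$ and patching the values $m\le r$ with $C_r$.

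Two minor differences are worth noting. First, the paper proves a strengthened induction hypothesis in which an arbitrary subset of the ordinal variables is converted simultaneously; this is not needed for the single-variable statement (your direct induction goes through), but it lets the paper treat all subfunctions uniformly. Second, in the key case the paper keeps $h_{\mathsf{suc}}$ unchanged (merely adding the dummy numeral) and lets the recursion variable itself carry the step index, whereas you first invoke IH~(i) on $h_{\mathsf{suc}}$ to replace its ordinal argument by a numeral and then reconstruct that numeral via $L(\nu,0;)$. Your detour is harmless but avoidable: since $F(o(l{+}1),\ldots)$ already receives the ordinal $o(l)$ in the successor clause, one can feed it straight into the original $h_{\mathsf{suc}}$ without any conversion.
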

\begin{proof}
For $(i)$, we prove the following stronger claim: for any
$f(\bar{\alpha},\bar{\SafeOrdA};\bar{\SafeNumbA}) \in \ClassName{\OrdClass}$, any $k \geq 1$ and any possible splitting of $\bar{\alpha}$ in segments of the form $\bar{\alpha}=\bar{\beta}_1,\bar{\gamma}_1,\ldots,\bar{\beta}_k,\bar{\gamma}_k$, there is $f'(\bar{\beta}_1,\ldots,\bar{\beta}_k,\bar{\SafeOrdA},\bar{\SafeOrdB}_1,\ldots,\bar{\SafeOrdB}_k;\bar{\SafeNumbA}) \in \ClassName{\OrdClass}$ such that 
\[
f(\bar{\beta}_1,o(\bar{\SafeOrdB}_1),\ldots,\bar{\beta}_k,o(\bar{\SafeOrdB}_k),\bar{\SafeOrdA};\bar{\SafeNumbA}) =  f'(\bar{\beta}_1,\ldots,\bar{\beta}_k,\bar{\SafeOrdA},\bar{\SafeOrdB}_1,\ldots,\bar{\SafeOrdB}_k;\bar{\SafeNumbA})
\]
for any $\bar{\SafeOrdB}_1,\ldots,\bar{\SafeOrdB}_k \in \NatSet$. For simplicity, we denote $\bar{\beta}_1,\ldots,\bar{\beta}_k$ and $\bar{\SafeOrdB}_1,\ldots,\bar{\SafeOrdB}_k$, by $\bar{\beta}$ and $\bar{m}$, respectively. In words, we claim that any set of ordinal variables can be changed to parameters such that the former on finite ordinals is the same as the latter on the corresponding numbers.

We prove this claim by an induction on the construction of $f$ as an element of $\PredFuncClass{\mathsf{A}}$. For the basic functions, the claim follows by vacuity since these functions have no ordinal inputs. If $f$ is constructed by safe composition, i.e.,
    \[
f(\bar\alpha,\bar \SafeOrdA; \bar \SafeNumbA)
		= h(\bar\alpha,\bar s(\bar \alpha, \bar \SafeOrdA;);\bar t(\bar\alpha,\bar\SafeOrdA; \bar \SafeNumbA)),
    \]
define 
    \[
    {f'}(
\bar\beta,\bar \SafeOrdA, \bar{\SafeOrdB}; \bar \SafeNumbA)
		= {h'}(\bar\beta,\bar{s}'(\bar\beta, \bar \SafeOrdA,\bar{\SafeOrdB};), \bar{\SafeOrdB};\bar{t}'(\bar\beta,\bar\SafeOrdA,\bar{\SafeOrdB}; \bar \SafeNumbA))
    \]
    where 
    $h'$,$\bar{s}'$
    and $\bar{t}'$ are obtained by the induction hypothesis.
    Then, for any $\bar{m}_1, \ldots, \bar{m}_k \in \mathbb{N}$, we have:
    \begin{align*}
&f(\bar{\beta}_1,o(\bar{\SafeOrdB}_1),\ldots,\bar{\beta}_k,o(\bar{\SafeOrdB}_k),\bar{\SafeOrdA};\bar{\SafeNumbA})\\
		&= h(\bar{\beta}_1,o(\bar{\SafeOrdB}_1),\ldots,\bar{\beta}_k,o(\bar{\SafeOrdB}_k),\bar s(\bar{\beta}_1,o(\bar{\SafeOrdB}_1),\ldots,\bar{\beta}_k,o(\bar{\SafeOrdB}_k), \bar \SafeOrdA;);\\
        &\qquad \bar t(\bar{\beta}_1,o(\bar{\SafeOrdB}_1),\ldots,\bar{\beta}_k,o(\bar{\SafeOrdB}_k),\bar\SafeOrdA; \bar \SafeNumbA))\\
	&= h'(\bar{\beta}_1,\ldots,\bar{\beta}_k,\bar {s}'(\bar{\beta}_1,\ldots,\bar{\beta}_k, \bar \SafeOrdA,\bar{\SafeOrdB};),\bar{\SafeOrdB};\bar{t}'(\bar{\beta}_1,\ldots,\bar{\beta}_k,\bar\SafeOrdA,\bar{\SafeOrdB}; \bar\SafeNumbA))\\
    &=
    {f'}(\bar{\beta}_1,\ldots,\bar{\beta}_k,\bar\SafeOrdA,\bar{\SafeOrdB}; \bar \SafeNumbA).
    \end{align*}
If $f$ is constructed by predicative ordinal recursion, we have:
    \begin{align*}
			f(0, \bar \NormalOrdB, \bar \SafeOrdA; \bar \SafeNumbA)
			&= g(\bar \NormalOrdB,
			\bar \SafeOrdA; \bar \SafeNumbA)\\
			f(\NormalOrdA+1,\bar\NormalOrdB, \bar\SafeOrdA;\bar \SafeNumbA) &= h_{\mathsf{suc}}(\NormalOrdA, \bar\NormalOrdB,\bar\SafeOrdA; 
			f(\NormalOrdA,\bar\NormalOrdB, \bar\SafeOrdA;\bar \SafeNumbA),\bar \SafeNumbA)\\
			f(\langle \NormalOrdA_i \rangle_i, \bar\NormalOrdB,\bar\SafeOrdA;\bar \SafeNumbA)
			&= h_{\mathsf{lim}}(\langle \NormalOrdA_i \rangle_i, \bar\NormalOrdB, \bar\SafeOrdA;
			f(\NormalOrdA_{q(\bar \SafeOrdA;)}, \bar\NormalOrdB,\bar\SafeOrdA; \bar \SafeNumbA),
			\bar \SafeNumbA
			)
		\end{align*}
As $\bar{\beta}_1,\bar{\gamma}_1,\ldots,\bar{\beta}_k,\bar{\gamma}_k$ is a splitting of $\mu, \bar{\nu}$, there are two cases to consider: either $\mu \in \bar{\beta}_1$ or $\bar{\beta_1}$ is the empty list and hence $\mu \in \bar\gamma_1$. In the first case, assume $\bar{\beta}_1=\mu, \bar{\beta}_1'$ and consider the splitting $\bar{\beta}'_1,\bar{\gamma}_1,\ldots,\bar{\beta}_k,\bar{\gamma}_k$ of $\bar{\nu}$ and denote $\bar{\beta}'_1, \bar{\beta}_2 \ldots, \bar{\beta_k}$ by $\bar{\beta}'$. 
That is, $\bar{\beta}'$
is obtained from $\bar{\beta}$ by removing $\mu$ from the first segment $\bar{\beta}_1$, i.e., by replacing $\bar{\beta}_1$ with $\bar{\beta'_1}$. Note that in this case $\mu$ will not be converted into a numerical variable.
Define $f'$ by recursion in the following way:
    \begin{align*}
			f'(0, \bar{\beta}', \bar \SafeOrdA,\bar{\SafeOrdB}; \bar \SafeNumbA)
			&= 
            g'(\bar{\beta}', \bar\SafeOrdA,\bar{\SafeOrdB}; \bar \SafeNumbA) \\
			f'(\NormalOrdA+1,\bar{\beta}', \bar\SafeOrdA,\bar{\SafeOrdB};\bar \SafeNumbA) &=
            h'_{\mathsf{suc}}(\NormalOrdA, \bar{\beta}',\bar\SafeOrdA,\bar{\SafeOrdB}; 
			f'(\NormalOrdA,\bar{\beta}', \bar\SafeOrdA,\bar{\SafeOrdB};\bar \SafeNumbA),\bar \SafeNumbA)
            \\
			f'(\langle \NormalOrdA_i \rangle_i, \bar{\beta}',\bar\SafeOrdA,\bar{\SafeOrdB};\bar \SafeNumbA)
			&= 
			h'_{\mathsf{lim}}(\langle\NormalOrdA_i\rangle_i, \bar{\beta}',\bar\SafeOrdA,\bar{\SafeOrdB}; 
			f'(\NormalOrdA_{q(\bar n;)},\bar{\beta}', \bar\SafeOrdA,\bar{\SafeOrdB};\bar \SafeNumbA),\bar \SafeNumbA)
		\end{align*}
        where $g'$ is (resp. $h'_{\mathsf{suc}}$
    and $h'_{\mathsf{lim}}$ are) the corresponding functions provided by the induction hypothesis, using the induced (resp. original) splitting of $\bar{\nu}$ (resp. $\mu, \bar{\nu}$). By induction on $\mu$, it is not hard to see that 
  {\small\[
f(\mu,\bar{\beta}'_1,o(\bar{\SafeOrdB}_1),\bar{\beta}_2,o(\bar{\SafeOrdB}_2),\ldots,\bar{\beta}_k,o(\bar{\SafeOrdB}_k),\bar{\SafeOrdA};\bar{\SafeNumbA}) =  f'(\mu, \bar{\beta}'_1,\ldots,\bar{\beta}_k,\bar{\SafeOrdA},\bar{\SafeOrdB}_1,\ldots,\bar{\SafeOrdB}_k;\bar{\SafeNumbA})
\]}
for any $\bar{\SafeOrdB}_1,\ldots,\bar{\SafeOrdB}_k \in \NatSet$. 

    For the second case, $\bar{\beta}_1$ is an empty list and $\mu \in \bar{\gamma}_1$.
    Let $\bar{\gamma}_1=\mu, \bar{\gamma}_1'$ and set $\bar{m}_1=p,\bar{m}_1'$. Therefore, the variable $p$ is there to simulate $\mu$. To define the function $f'$, we proceed in two stages. In the first stage, we simulate $f$ on all variables in $\bar{\gamma}_1, \ldots, \bar{\gamma}_k$, except for $\mu$. For the variable $\mu$, we only ensure that the resulting function $f''$ correctly imitates $f$ on finite ordinals of the form $o(l)$, where $l \in \mathbb{N}$. This yields an intermediate function, denoted by $f''$. In the second stage, to obtain $f'$ from $f''$, it suffices to get the ordinal $o(p)$ from the input $p$ and use $\mu=o(p)$ in $f''$. We now begin by implementing the first stage.
    
    Consider the splitting $\mu, \bar{\gamma}'_1,\bar{\beta}_2,\bar{\gamma}_2,\ldots,\bar{\beta}_k,\bar{\gamma}_k$ of $\mu, \bar{\nu}$. Note that the only difference between this splitting and the original one is that we moved out $\mu$ from $\bar{\gamma}_1$ to be one of the ordinals that we keep rather than change to numbers. 
    Let $\bar{m}' = \bar{m}'_1,\bar{m}_2,\ldots,\bar{m}_k$.
    Now, consider $g'(\bar{\beta}, \bar{n},\bar{m}';\bar{a})$ (resp. $h'_{\mathsf{suc}}(\mu,\bar{\beta}, \bar{n},\bar{m}';\bar{a})$) as the corresponding functions provided by the induction hypothesis, using the new splitting of $\bar{\nu}$ (resp. $\mu, \bar{\nu}$). Then, define $g''(\bar{\beta}, \bar{n},p, \bar{m}_1', \ldots, \bar{m}_k;\bar{a})=g'(\bar{\beta}, \bar{n},\bar{m}';\bar{a})$ and $h''_{\mathsf{suc}}(\mu,\bar{\beta}, \bar{n}, p, \bar{m}_1', \ldots, \bar{m}_k;\bar{a})=h'_{\mathsf{suc}}(\mu,\bar{\beta}, \bar{n},\bar{m}';\bar{a})$ from $g'$ and $h'_{\mathsf{suc}}$ by adding the dummy normal numeral variable $p$ which is possible by safe composition. As $\bar{m}_1=p, \bar{m}_1'$, for simplicity, we can denote the new functions by $g''(\bar{\beta}, \bar{n},\bar{m};\bar{a})$ and $h''_{\mathsf{suc}}(\mu,\bar{\beta}, \bar{n},\bar{m};\bar{a})$. 
    Then define:
    \begin{align*}
			f''(0, \bar \beta, \bar \SafeOrdA,\bar{\SafeOrdB}; \bar \SafeNumbA)
			&= 
            g''(\bar \beta,
			\bar\SafeOrdA,\bar{\SafeOrdB}; \bar \SafeNumbA) \\
			f''(\NormalOrdA+1,\bar\beta, \bar\SafeOrdA,\bar{\SafeOrdB};\bar \SafeNumbA) &=
            h''_{\mathsf{suc}}(\NormalOrdA, \bar\beta,\bar\SafeOrdA,\bar{\SafeOrdB}; 
			f''(\NormalOrdA,\bar\beta, \bar\SafeOrdA,\bar{\SafeOrdB};\bar \SafeNumbA),\bar \SafeNumbA)
            \\
			f''(\langle \NormalOrdA_i \rangle_i, \bar\beta,\bar\SafeOrdA,\bar{\SafeOrdB};\bar \SafeNumbA)
			&= 
			f''(\NormalOrdA_{q(\bar n, p, \bar{m}'_1, \ldots,\bar{m}_k;)}, \bar\beta,\bar\SafeOrdA,\bar{\SafeOrdB}; \bar \SafeNumbA)
		\end{align*}
In limit case above, we used the function $q(\bar n,p, \bar{m}'_1,\ldots,\bar{m}_k;) = \Pred^{r+1}(;p)$, the $(r+1)$-fold composition of $\Pred{}$, clearly definable by safe composition using 
$\Pred{}$ and
a projection. Recall that $r \in \mathbb{N}$ is a fixed natural number with the property that $o(r)+\omega \in \mathsf{A}$. We will make use of the limit case clause in the definition of $f''$ later, during the second stage of the construction of $f'$.

We now show that, for any $l \in \NatSet$, we have:
$$f''(o(l),\bar{\beta}, \bar \SafeOrdA,\bar{\SafeOrdB}; \bar \SafeNumbA)=
f(o(l),o(\bar{\SafeOrdB}_1'),
\bar{\beta}_2,
o(\bar{m}_2),
\ldots,\bar{\beta}_k,o(\bar{m}_k),\bar{\SafeOrdA};\bar{\SafeNumbA}). \qquad (*)$$
We use an induction on $l$. For $l=0$, by the induction hypothesis, for any $\bar{\SafeOrdB}_1,\ldots,\bar{\SafeOrdB}_k \in \NatSet$, we have:
\begin{align*}
f''(o(0),\bar \beta, \bar \SafeOrdA,\bar{\SafeOrdB}; \bar \SafeNumbA)
&=
g''(\bar \beta,
			\bar\SafeOrdA,\bar{\SafeOrdB}; \bar \SafeNumbA)\\
&=
g'(\bar \beta, \bar\SafeOrdA,\bar{\SafeOrdB}'_1,\ldots,\bar{\SafeOrdB}_k; \bar \SafeNumbA)\\
&=
g(o(\bar{\SafeOrdB}'_1),\bar{\beta}_2, \ldots,\bar{\beta}_k,o(\bar{\SafeOrdB}_k),\bar{\SafeOrdA};\bar{\SafeNumbA})\\
&=f(o(0),o(\bar{\SafeOrdB}'_1),\bar{\beta}_2, \ldots,\bar{\beta}_k,o(\bar{\SafeOrdB}_k),\bar{\SafeOrdA};\bar{\SafeNumbA})
\end{align*}
For the inductive step, by the induction hypothesis, for any $\bar{\SafeOrdB}_1,\ldots,\bar{\SafeOrdB}_k \in \NatSet$, we obtain: 
\begin{align*}   
&f''(o(l+1),\bar \beta, \bar \SafeOrdA,\bar{\SafeOrdB}; \bar \SafeNumbA)\\
&=
h''_{\mathsf{suc}}(o(l), \bar\beta,\bar\SafeOrdA,\bar{\SafeOrdB}; 
			f''(o(l),\bar\beta, \bar\SafeOrdA,\bar{\SafeOrdB};\bar \SafeNumbA),\bar \SafeNumbA)\\
&=h_{\mathsf{suc}}(o(l),o(\bar{\SafeOrdB}'_1),\ldots,\bar{\beta}_k,o(\bar{\SafeOrdB}_k),\bar{\SafeOrdA}; 
f(o(l),o(\bar{\SafeOrdB}'_1),\ldots,\bar{\beta}_k,o(\bar{\SafeOrdB}_k),\bar{\SafeOrdA};\bar{\SafeNumbA}),\bar \SafeNumbA)\\
&=f(o(l+1),o(\bar{\SafeOrdB}'_1),\ldots,\bar{\beta}_k,o(\bar{\SafeOrdB}_k),\bar{\SafeOrdA};\bar{\SafeNumbA}).
\end{align*}
This completes the proof of $(*)$. 

Now, for the second stage, we first explain our strategy in a rough manner. To define $f'$ from $f''$, as previously discussed, we need to get the ordinal $o(p)$ from the input $p$ and feed $\mu=o(p)$ to $f''$. There are $r+2$ distinct cases to handle: either $p > r$ or $p \in \{0, \ldots, r\}$. 
For the case $p > r$, we evaluate $f''$ at $\mu = o(r) + \omega$. Then, by the definition of $f''$ in the limit case and by the choice of $q$—which has exactly $r+1$ applications of the predecessor function—we ultimately reach the computation of $f''$ at $\mu = o(p)$.
For the remaining $r+1$ cases, where $p \in \{0, \ldots, r\}$, we simply substitute the constant ordinal $o(p)$ into $f''$ using constant substitution.
Finally, to handle the case distinction across these $r+2$ branches, we employ safe composition with the conditional function $C_r$, as introduced in Remark~\ref{RemOnCr}. The details of this construction are explained below.

First, observe that $o(r) + \omega \in \mathsf{A}$. Since $\mathsf{A}$ is a downset and $o(j) \prec o(r) + \omega$ for any $j \in \mathbb{N}$, it follows that $o(j) \in \mathsf{A}$ for all $j \in \mathbb{N}$. Using this fact, and applying constant substitution for $\mu$ in $f''$, we can construct the function $f''(o(j), \bar{\beta}, \bar{\SafeOrdA}, \bar{\SafeOrdB}; \bar{\SafeNumbA})$ in $\mathcal{C}_{\mathsf{A}}$ for any $j \leq r$. By the same reasoning, we can also construct $f''(o(r) + \omega, \bar{\beta}, \bar{\SafeOrdA}, \bar{\SafeOrdB}; \bar{\SafeNumbA})$ in $\mathcal{C}_{\mathsf{A}}$. Now, using safe composition with $C_r$, define:
\[
f'(\bar \beta, \bar \SafeOrdA,\bar{\SafeOrdB}; \bar \SafeNumbA)=\begin{cases} 
    f''(o(0), \bar \beta,
			\bar\SafeOrdA,\bar{\SafeOrdB}; \bar \SafeNumbA) & \text{if } p = 0 \\ 
    f''(o(1), \bar \beta,
			\bar\SafeOrdA,\bar{\SafeOrdB}; \bar \SafeNumbA) & \text{if } p = 1 \\        
            \qquad\qquad\vdots&\;\;\quad\vdots\\
    f''(o(r), \bar \beta,
			\bar\SafeOrdA,\bar{\SafeOrdB}; \bar \SafeNumbA) & \text{if } p = r \\ 
    f''(o(r)+\omega,\bar \beta, \bar \SafeOrdA,\bar{\SafeOrdB}; \bar \SafeNumbA) & \text{if } p > r
\end{cases} 
\]
Note that the variable $p$ is present on both sides, as it appears as a component of the tuple $\bar{m}$. For $p \leq r$, using $(*)$ for $l \leq r$, we reach: 
\[
f'(\bar \beta, \bar \SafeOrdA,\bar{\SafeOrdB}; \bar \SafeNumbA)=f(o(p), o(\bar{\SafeOrdB}'_1), \bar{\beta}_2, \ldots,\bar{\beta}_k,o(\bar{\SafeOrdB}_k),\bar{\SafeOrdA};\bar{\SafeNumbA}).
\]
For $p > r$, using $(*)$ and the definition of $f''$ in the limit case, we have:
\begin{align*}
f'(\bar \beta, \bar \SafeOrdA, p, \bar{\SafeOrdB}'_1,\ldots,\bar{\SafeOrdB}_k; \bar \SafeNumbA)
&= f''(o(r)+\omega,\bar \beta, \bar \SafeOrdA,p, \bar{\SafeOrdB}'_1,\ldots,\bar{\SafeOrdB}_k; \bar \SafeNumbA)\\
&=f''(o(r + (p - (r+1))+1),\bar{\beta}, \bar \SafeOrdA,\bar{\SafeOrdB}; \bar \SafeNumbA)\\
&=f''(o(p),\bar{\beta}, \bar \SafeOrdA,\bar{\SafeOrdB}; \bar \SafeNumbA)\\
&=f(o(p),o(\bar{\SafeOrdB}'_1), \bar{\beta}_2, \ldots,\bar{\beta}_k,o(\bar{\SafeOrdB}_k),\bar{\SafeOrdA};\bar{\SafeNumbA})\\
&=f(o(\bar{\SafeOrdB}_1), \bar{\beta}_2, \ldots,\bar{\beta}_k,o(\bar{\SafeOrdB}_k),\bar{\SafeOrdA};\bar{\SafeNumbA}).
\end{align*}
If $f$ is obtained by a constant substitution, thus replacing the variable $\mu$ in the function $g$ by an ordinal $\alpha \in \mathsf{A}$, then we first consider the given splitting for the variables of $f$ and add $\mu$ to  $\beta$'s to get a splitting for the variables of $g$. Then, it is enough to use the induction hypothesis to get $g'$ and then substitute $\mu$ with $\alpha$. The resulting function clearly satisfies the claim.

Finally, to address the cases where $f$ is obtained by a structural rule, first recall from Remark \ref{rem:pred-rec-ord-funcs} that the similar rules are admissible for normal numeral variables. It is just enough to use composition with suitable projections. Now, consider the exchange rule. Let $\mu$ and $\nu$ be the variables being exchanged. If only one of them is transferred to a numeral variable, the claim is straightforward. If both are transferred, then by the above observation, the result follows directly from the induction hypothesis together with the exchange rule for numeral normal variables. The same reasoning applies to the other two structural rules.

For $(ii)$, using the more general safe composition discussed in {Remark~\ref{rem:pred-rec-ord-funcs}}, define:
\[
f'(\bar{\alpha}, \beta,\bar{\SafeOrdB},\bar{\SafeOrdC};\bar{\SafeNumbA})= f(\bar{\alpha}, \bar{\SafeOrdB}, G(\beta,0;),\bar{\SafeOrdC};\bar{\SafeNumbA}).
\]
Then, as $G(o(n), 0;)=G_{o(n)}(0)=n$, for any $n \in \mathbb{N}$, we reach:
\[
f'(\bar{\alpha}, o(n),\bar{\SafeOrdB},\bar{\SafeOrdC};\bar{\SafeNumbA})= f(\bar{\alpha}, \bar{\SafeOrdB}, G(o(n),0;),\bar{\SafeOrdC};\bar{\SafeNumbA})=f(\bar{\alpha}, \bar{\SafeOrdB}, n,\bar{\SafeOrdC};\bar{\SafeNumbA}).
\qedhere
\]
\end{proof}

\begin{remark}
Here are two remarks regarding the condition $o(r) + \omega \in \mathsf{A}$ in Lemma \ref{lem:ord-to-nat-and-back}. First, one might think that $o(r) + \omega = \omega$, and thus the assumption that there exists some $r \in \mathbb{N}$ with $o(r) + \omega \in \mathsf{A}$ is equivalent to assuming $\omega \in \mathsf{A}$. However, as discussed earlier, when $r>0$, in our constructive setting the ordinals 
$
\omega =\langle o(1), o(2), \ldots \rangle
$
and
$
o(r) + \omega = \langle o(r+1), o(r+2), \ldots \rangle
$
are distinct, and in fact incomparable with respect to $\preceq$. Therefore, having $o(r) + \omega \in \mathsf{A}$ for some $r > 0$ does not imply that $\omega \in \mathsf{A}$.
Second, since our assumption is weaker than $\omega \in \mathsf{A}$, one might think it is more natural to use the stronger condition $\omega \in \mathsf{A}$ when implementing the simulation. However, as we will see later, the weaker condition plays a crucial role in providing a characterization of $\PredFuncClass{\mathsf{A}}$ for a more natural family of downsets $\mathsf{A}$. This characterization is tight, thereby motivating the use of the weaker condition.
\end{remark}

We are now ready to show that predicative ordinal recursion simulates predicative recursion (recall Section~\ref{sec:pred-rec-func}).
    
\begin{lemma}
\label{lem:agree-finite-ordinals}
Let $ \OrdClass \subseteq \Omega$ be a downset of ordinals such that 
$o(r)+\omega \in \OrdClass$, for some $r \in \NatSet$. Then, for any $f(\bar n;\bar \SafeNumbA) \in B$, there is a function
    $\hat f(\bar n;\bar\SafeNumbA) \in \ClassName{\OrdClass}$
    satisfying
    $f(\bar n;\bar a) = \hat f (\bar{n};\bar{\SafeNumbA})$, for any
    $\bar{n}, \bar{\SafeNumbA} \in \mathbb{N}$.
\end{lemma}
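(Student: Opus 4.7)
The plan is to induct on the construction of $f$ in the class $B$ per Definition \ref{dfn:ClassB}. For the initial functions (constant $0$, projections, successor, predecessor, conditional), the corresponding operations are directly available as initial functions in $\ClassName{\OrdClass}$; in particular, $B$'s numeral projections correspond to $\ClassName{\OrdClass}$'s projections instantiated with no ordinal variables. For $B$'s safe composition, given $\hat h, \hat{\bar r}, \hat{\bar t} \in \ClassName{\OrdClass}$ by induction hypothesis, I would apply $\ClassName{\OrdClass}$'s safe composition with empty lists of ordinal variables; the only apparent mismatch---that $\ClassName{\OrdClass}$'s composition additionally threads the normal parameters $\bar x$ through to $\hat h$---is absorbed by first weakening $\hat h$ with dummy normal numeral variables via composition with projections.

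The substantive case is predicative recursion. Given $g, h \in B$ with induction hypotheses providing $\hat g(\bar x; \bar a), \hat h(y, \bar x; \bar a, c) \in \ClassName{\OrdClass}$, I would build $\hat f$ in three steps. First, apply Lemma \ref{lem:ord-to-nat-and-back}(ii) to $\hat h$ to replace its first normal numeral input $y$ with an ordinal variable $\mu$, obtaining $\hat h'(\mu, \bar x; \bar a, c) \in \ClassName{\OrdClass}$ with $\hat h'(o(y), \bar x; \bar a, c) = \hat h(y, \bar x; \bar a, c)$ for every $y \in \NatSet$; then permute the safe arguments via safe composition with projections to form $\hat h_{\mathsf{suc}}(\mu, \bar x; c, \bar a) := \hat h'(\mu, \bar x; \bar a, c)$, which has exactly the signature required by predicative ordinal recursion. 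Second, define $\tilde f(\mu, \bar x; \bar a) \in \ClassName{\OrdClass}$ by predicative ordinal recursion on $\mu$, taking $\hat g$ in the base case, $\hat h_{\mathsf{suc}}$ in the successor case, and any convenient $h_{\mathsf{lim}}$ together with a trivial selector $q(;) = 0$ for the limit case, since only finite ordinals are ever evaluated. A routine induction on $y$ then shows $\tilde f(o(y), \bar x; \bar a) = f(y, \bar x; \bar a)$ for all $y \in \NatSet$. Third, apply Lemma \ref{lem:ord-to-nat-and-back}(i) to $\tilde f$ to replace the ordinal variable $\mu$ by a normal numeral, yielding $\hat f(y, \bar x; \bar a) \in \ClassName{\OrdClass}$ with $\hat f(y, \bar x; \bar a) = \tilde f(o(y), \bar x; \bar a) = f(y, \bar x; \bar a)$, as required.

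The main obstacle is precisely this predicative recursion case: the recursion scheme in $B$ acts on natural numbers, while $\ClassName{\OrdClass}$'s predicative ordinal recursion acts on ordinals, so bridging them requires the round-trip between normal numerals and finite ordinals supplied by Lemma \ref{lem:ord-to-nat-and-back}. The hypothesis $o(r) + \omega \in \OrdClass$ is exactly what makes that lemma applicable, which is why it is imported verbatim into the statement of the current lemma. All remaining bookkeeping---reorderings of inputs, addition of dummy variables, and the verification that the limit clause is never reached on finite ordinals---is handled by the structural rules and safe composition already present in $\ClassName{\OrdClass}$.
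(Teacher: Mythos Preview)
Your proposal is correct and follows essentially the same approach as the paper: induct on the construction of $f$ in $B$, handle the initial functions and safe composition directly, and for predicative recursion use Lemma~\ref{lem:ord-to-nat-and-back}(ii) to turn the recursion variable into an ordinal, perform predicative ordinal recursion (with a dummy limit clause), and then use Lemma~\ref{lem:ord-to-nat-and-back}(i) to convert back. You are in fact slightly more explicit than the paper about the need to permute the safe arguments of $\hat h$ so that the recursive value sits in the first safe slot as required by the schema.
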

\begin{proof}
We prove the claim by an induction on the construction of $f(\bar n;\bar a)$ as an element of the class $B$. 
The base cases concerning the zero, successor, predecessor, conditional and projection functions are easy, as we can use the corresponding basic function in $\ClassName{\OrdClass}$. 

For the induction step, there are two cases to consider. If
$f$ is constructed by safe composition, the claim is a direct consequence of the induction hypothesis. If $f(m, \bar n; \bar a)$ is constructed by predicative recursion:
\begin{align*}
             f(0, \bar n; \bar a) &= g(\bar n; \bar a)\\
            f(m+1,\bar n; \bar a) &= h(m,\bar n;\bar a,f(m,\bar n;\bar a))
        \end{align*}
then by the induction hypothesis, there are $\hat g(\bar n;\bar a),\hat h(m, \bar n;\bar{a}, b) \allowbreak\in \ClassName{\OrdClass}$ such that $g(\bar n;\bar a) = \hat g (\bar{n};\bar{a})$ and $h(m, \bar n;\bar a, b) = \hat h (m, \bar{n};\bar{a}, b)$, for any $m, \bar{n}, \bar{a}, b \in \mathbb{N}$. Let $h'(\NormalOrdA, \bar n;\bar a, b)$ be the function obtained from $\hat h$ by making the numeral variable $m$ into the ordinal variable $\mu$ using Lemma~\ref{lem:ord-to-nat-and-back}, part $(ii)$. Therefore, $h'(o(m), \bar{n};\bar{a}, b)=\hat{h}(m, \bar{n};\bar{a}, b)$, for any $m, \bar{n}, \bar{a}, b \in \mathbb{N}$.
In $\ClassName{\OrdClass}$, define $f'$ by predicative ordinal recursion as:
\begin{align*}
			f'(0, \bar n; \bar \SafeNumbA)
			&= \hat{g}(\bar n; \bar \SafeNumbA)\\
			f'(\NormalOrdA+1, \bar n; \bar \SafeNumbA) &= h'(\NormalOrdA, \bar n; \bar a, 
			f'(\NormalOrdA, \bar n; \bar \SafeNumbA))\\
			f'(\langle \NormalOrdA_i \rangle_i,\bar n;\bar \SafeNumbA)
			&= 0
		\end{align*}
By induction on $l$, it is clear that $f'(o(l),\bar n; \bar a)=f(l,\bar n; \bar a)$, for any $l, \bar{n}, \bar{a} \in \mathbb{N}$.
Now let $\hat f$ be the function obtained from $f'$ by making the ordinal variable $\mu$ into the numeral variable $m$ using Lemma~\ref{lem:ord-to-nat-and-back}, part $(i)$. Thus, $\hat{f}(m, \bar{n};\bar{a})=f'(o(m), \bar{n};\bar{a})$, for any $m, \bar{n}, \bar{a} \in \mathbb{N}$. Therefore, we finally get $\hat{f}(m, \bar{n};\bar{a})=f(m, \bar{n};\bar{a})$, for any $m, \bar{n}, \bar{a} \in \mathbb{N}$. 
\end{proof}

\begin{corollary}\label{cor:E2-subset-predr}
Let $ \OrdClass \subseteq \Omega$ be a downset of ordinals such that $o(r)+\omega \in \OrdClass$,
for some $r \in \NatSet$. Then,  $\GrzClass{2} \subseteq \PredFuncClass{\OrdClass}$.
\end{corollary}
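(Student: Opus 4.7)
The plan is to chain together the two preceding results: Bellantoni--Cook's Theorem~\ref{thm:cb} and Lemma~\ref{lem:agree-finite-ordinals}. More concretely, I would start with an arbitrary $f \in \GrzClass{2}$ and seek to exhibit a function $\hat{f} \in \ClassName{\OrdClass}$ witnessing $f \in \PredFuncClass{\OrdClass}$ according to the definition.

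First, by Theorem~\ref{thm:cb}, $\GrzClass{2} = \mathrm{PredR}$, so there exists $f' \in B$ such that $f(\bar{n}) = f'(\bar{n};)$ for every $\bar{n} \in \mathbb{N}$. In particular, $f'$ is a function whose safe argument tuple is empty. Second, since the downset $\OrdClass$ satisfies the hypothesis $o(r) + \omega \in \OrdClass$ for some $r \in \NatSet$, Lemma~\ref{lem:agree-finite-ordinals} applies to $f'$ and yields a function $\hat{f} \in \ClassName{\OrdClass}$ (with the same normal and safe argument profile as $f'$, hence with no safe inputs) such that $f'(\bar{n};) = \hat{f}(\bar{n};)$ for all $\bar{n} \in \mathbb{N}$.

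Chaining the two equalities gives $f(\bar{n}) = \hat{f}(\bar{n};)$ for every $\bar{n}$, which is exactly the condition required by the definition of $\PredFuncClass{\OrdClass}$. Hence $f \in \PredFuncClass{\OrdClass}$, and since $f$ was arbitrary we conclude $\GrzClass{2} \subseteq \PredFuncClass{\OrdClass}$. There is no real obstacle here: the heavy lifting, namely the simulation of normal numeral inputs by ordinal inputs (and back) via $o(r) + \omega$ together with the conditional $C_r$, was already carried out in Lemma~\ref{lem:ord-to-nat-and-back} and packaged into Lemma~\ref{lem:agree-finite-ordinals}. The corollary is thus a straightforward two-line combination of these results.
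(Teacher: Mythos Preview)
Your proposal is correct and follows exactly the paper's own argument: the paper's proof simply states that it is enough to use Theorem~\ref{thm:cb} and Lemma~\ref{lem:agree-finite-ordinals}, and you have spelled out precisely how these two results combine.
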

\begin{proof}
It is enough to use Theorem~\ref{thm:cb} and Lemma~\ref{lem:agree-finite-ordinals}.
\end{proof}

\subsection{Monotonicity and continuity of $\PredFuncClass{\OrdClass}$}

In this subsection, we prove that $\PredFuncClass{\OrdClass}$ is both \emph{monotone} and \emph{continuous} with respect to $\OrdClass$. The precise meaning of these terms will become clear in the course of the discussion.

\begin{lemma}[Monotonicity]
\label{MonotonicityOfPredR}
Let $\mathsf{A}$ and $\mathsf{B}$ be two downsets of ordinals such that $\mathsf{B} \subseteq \mathsf{A}$. Then, for every
$f(\bar{\NormalOrdA},\bar{\SafeOrdA};\bar{\SafeNumbA}) \in \ClassName{\mathsf{B}}$,
there is
$f'(\bar{\NormalOrdA},\bar{\SafeOrdA};\bar{\SafeNumbA}) \in \ClassName{\mathsf{A}}$
such that
$f(\bar{\NormalOrdA},\bar{\SafeOrdA};\bar{\SafeNumbA}) = f'(\bar{\NormalOrdA},\bar{\SafeOrdA};\bar{\SafeNumbA})$
for any $\bar{\NormalOrdA} \in \mathsf{B}$ and
any $\bar\SafeOrdA,\bar\SafeNumbA \in \NatSet$. 
Consequently, $\PredFuncClass{\mathsf{B}} \subseteq \PredFuncClass{\mathsf{A}}$.
\end{lemma}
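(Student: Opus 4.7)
The plan is to prove the main statement by structural induction on the derivation of $f$ as an element of $\mathcal{C}_{\mathsf{B}}$, producing, at each stage, a parallel derivation in $\mathcal{C}_{\mathsf{A}}$ that computes the same function on inputs from $\mathsf{B}$. The consequence $\PredFuncClass{\mathsf{B}} \subseteq \PredFuncClass{\mathsf{A}}$ then drops out immediately from the definition, since functions in $\PredFuncClass{\mathsf{B}}$ take no ordinal inputs at all.

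For the base cases (the constant $0$, projections, successor, predecessor, conditional), the very same initial functions are available in $\mathcal{C}_{\mathsf{A}}$, and none of them involves ordinal inputs constrained by $\mathsf{B}$, so the equality $f = f'$ is literal. For the closure cases under safe composition, exchange, weakening, and contraction, we just apply the induction hypothesis componentwise and re-assemble $f'$ using the same rule in $\mathcal{C}_{\mathsf{A}}$; the required equality is then a one-line consequence of the inductive equalities and the rule's defining equation.

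The two rules that involve the parameter $\mathsf{A}$ explicitly are constant substitution and predicative ordinal recursion, and both are benign. For constant substitution, $f$ is obtained from some $g \in \mathcal{C}_{\mathsf{B}}$ by fixing an ordinal variable to some $\alpha \in \mathsf{B}$; by induction we get $g' \in \mathcal{C}_{\mathsf{A}}$ matching $g$ on $\mathsf{B}$, and since $\mathsf{B} \subseteq \mathsf{A}$ we have $\alpha \in \mathsf{A}$, so the same substitution is a legal rule in $\mathcal{C}_{\mathsf{A}}$, and $f'$ agrees with $f$ on the remaining inputs taken from $\mathsf{B}$. For predicative ordinal recursion, given inductive replacements $g', h'_{\mathsf{suc}}, h'_{\mathsf{lim}}, q' \in \mathcal{C}_{\mathsf{A}}$, form $f'$ by the same three recursion equations; the equality $f(\mu, \bar\NormalOrdB, \bar\SafeOrdA; \bar\SafeNumbA) = f'(\mu, \bar\NormalOrdB, \bar\SafeOrdA; \bar\SafeNumbA)$ for $\mu, \bar\NormalOrdB \in \mathsf{B}$ is then a routine transfinite induction on $\mu$, using that $\mathsf{B}$ is a downset (so every predecessor $\mu_{q(\bar\SafeOrdA;)}$ of a limit $\mu = \langle\mu_i\rangle \in \mathsf{B}$ still lies in $\mathsf{B}$, which is where the inductive hypothesis applies).

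There is really no obstacle: the point of the lemma is that enlarging the pool of admissible ordinal constants and ordinal-variable instantiations can only add, not remove, derivable functions, and the closure rules are phrased in a way that is oblivious to which downset is being used beyond the side-condition on constants. Once the inductive statement is established, for the consequence pick $f : \NatSet^k \to \NatSet$ in $\PredFuncClass{\mathsf{B}}$ with witness $\hat f \in \mathcal{C}_{\mathsf{B}}$ such that $f(\bar n) = \hat f(\bar n;)$; applying the main claim yields $\hat f' \in \mathcal{C}_{\mathsf{A}}$ with $\hat f'(\bar n;) = \hat f(\bar n;) = f(\bar n)$ (vacuously on the empty list of ordinal inputs), witnessing $f \in \PredFuncClass{\mathsf{A}}$.
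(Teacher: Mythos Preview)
Your proposal is correct and follows essentially the same approach as the paper: a structural induction on the derivation of $f$ in $\mathcal{C}_{\mathsf{B}}$, with the only genuinely non-trivial case being constant substitution, where $\alpha \in \mathsf{B} \subseteq \mathsf{A}$ licenses the same substitution in $\mathcal{C}_{\mathsf{A}}$. You in fact supply more detail than the paper (which dismisses everything else as trivial), correctly noting that the predicative ordinal recursion case requires a transfinite induction on $\mu$ and uses the downset property of $\mathsf{B}$ to keep predecessors inside $\mathsf{B}$.
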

\begin{proof}
The proof is straightforward by induction on the
construction of $f$ as an element of $\mathcal{C}_{\mathsf{B}}$. The only non-trivial case is when $f$ is constructed using constant substitution, where the constant lies in $\mathsf{B}$ and hence also in $\mathsf{A}$. Therefore, we can apply the same constant substitution within $\mathcal{C}_{\mathsf{A}}$.
\end{proof}

\begin{lemma}[Continuity]
\label{lem:PredInLimit}
Let $\{\OrdClass_i\}_{i \in \mathbb{N}} \subseteq \ConsOrd$ be a family of downsets of ordinals such that $\mathsf{A}_i \subseteq \mathsf{A}_{i+1}$, for any $i \in \mathbb{N}$. Then
$ \PredFuncClass{\bigcup_{i \in \mathbb{N}} \OrdClass_i} = \bigcup_{i \in \NatSet} \PredFuncClass{\OrdClass_i}$.
\end{lemma}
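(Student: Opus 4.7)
The plan is to prove the two inclusions separately. The inclusion $\bigcup_{i \in \mathbb{N}} \PredFuncClass{\mathsf{A}_i} \subseteq \PredFuncClass{\bigcup_{i \in \mathbb{N}} \mathsf{A}_i}$ is immediate from the preceding Monotonicity Lemma (Lemma~\ref{MonotonicityOfPredR}): for each $i$, we have $\mathsf{A}_i \subseteq \bigcup_{j \in \mathbb{N}} \mathsf{A}_j$, hence $\PredFuncClass{\mathsf{A}_i} \subseteq \PredFuncClass{\bigcup_{j \in \mathbb{N}} \mathsf{A}_j}$, and taking the union over $i$ gives the claim.

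For the nontrivial inclusion $\PredFuncClass{\bigcup_{i \in \mathbb{N}} \mathsf{A}_i} \subseteq \bigcup_{i \in \mathbb{N}} \PredFuncClass{\mathsf{A}_i}$, let $f \in \PredFuncClass{\bigcup_{i \in \mathbb{N}} \mathsf{A}_i}$, so there exists $\hat{f} \in \mathcal{C}_{\bigcup_{i \in \mathbb{N}} \mathsf{A}_i}$ with $f(\bar{n}) = \hat{f}(\bar{n};)$ for all $\bar{n}$. The key observation is that $\hat{f}$ is built by a \emph{finite} construction tree whose nodes are labelled by the schemes (1)--(9) in the definition of $\ClassName{\OrdClass}$. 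Inspecting these schemes, the only place where an element of $\bigcup_{i \in \mathbb{N}} \mathsf{A}_i$ appears explicitly as a parameter of the scheme itself is in constant substitution (scheme~(8)), which requires picking an ordinal $\alpha \in \bigcup_{i \in \mathbb{N}} \mathsf{A}_i$. All other schemes are purely structural and are available identically in every $\mathcal{C}_{\mathsf{A}_i}$.

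Since the construction of $\hat{f}$ is finite, only finitely many ordinals $\alpha_1, \ldots, \alpha_m \in \bigcup_{i \in \mathbb{N}} \mathsf{A}_i$ appear in constant substitution steps. For each $\alpha_k$, choose $i_k$ with $\alpha_k \in \mathsf{A}_{i_k}$, and let $N = \max_{k} i_k$. Because $\{\mathsf{A}_i\}_{i \in \mathbb{N}}$ is an increasing chain, we have $\alpha_k \in \mathsf{A}_N$ for every $k$. Hence the very same construction tree witnesses that $\hat{f} \in \mathcal{C}_{\mathsf{A}_N}$, giving $f \in \PredFuncClass{\mathsf{A}_N} \subseteq \bigcup_{i \in \mathbb{N}} \PredFuncClass{\mathsf{A}_i}$.

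There is no real obstacle here; the only thing to be careful about is checking, by a quick induction on the construction of $\hat{f}$, that no scheme other than constant substitution embeds an ordinal parameter into the definition. In particular, predicative ordinal recursion (scheme~(6)) operates on ordinal \emph{variables} via their successor/limit structure and uses a selector $q(\bar n;)$ that itself has no ordinal arguments, so it does not silently introduce a constant from $\bigcup_{i \in \mathbb{N}} \mathsf{A}_i$. Once this is noted, the finite-support argument above completes the proof.
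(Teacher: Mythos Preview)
Your argument is correct and follows the same idea as the paper's proof: only finitely many ordinal constants are introduced (all via constant substitution), so they lie in some $\mathsf{A}_N$, and replaying the same construction in $\mathcal{C}_{\mathsf{A}_N}$ yields a function agreeing with $\hat f$. The paper organizes this as an explicit induction on the construction tree---proving that every intermediate function has a counterpart in $\mathcal{C}_{\mathsf{A}_j}$ for all sufficiently large $j$, which makes the domain-agreement of intermediate functions with ordinal inputs explicit---whereas you collect the constants globally first and then replay; the content is the same.
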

\begin{proof}
As $\OrdClass_i \subseteq \bigcup_{i \in \mathbb{N}}\OrdClass_i$, by Lemma~\ref{MonotonicityOfPredR}, we obtain $\PredFuncClass{\mathsf{A}_i} \subseteq \PredFuncClass{\bigcup_{i \in \mathbb{N}}\OrdClass_i}$. Hence, $\bigcup_{i \in \NatSet} \PredFuncClass{\OrdClass_i} \allowbreak\subseteq \PredFuncClass{\bigcup_{i \in \mathbb{N}}\OrdClass_i}$. For the converse, i.e.,  $\PredFuncClass{\bigcup_{i \in \mathbb{N}}\OrdClass_i} \subseteq \bigcup_{i \in \NatSet} \PredFuncClass{\OrdClass_i}$, we first prove the following stronger claim: \\

\noindent \textbf{Claim.} For every $f(\bar{\NormalOrdA},\bar{\SafeOrdA};\bar{\SafeNumbA}) \in \ClassName{\bigcup_{i \in \mathbb{N}} \OrdClass_i}$,
there is $i \in \NatSet$ such that for any $j \geq i$, there is
$f'(\bar{\NormalOrdA},\bar{\SafeOrdA};\bar{\SafeNumbA}) \in \ClassName{\mathsf{A}_j}$ satisfying
$f(\bar{\NormalOrdA},\bar{\SafeOrdA};\bar{\SafeNumbA}) = f'(\bar{\NormalOrdA},\bar{\SafeOrdA};\bar{\SafeNumbA})$
for any $\bar{\NormalOrdA} \in \mathsf{A}_j$
and any $\bar\SafeOrdA,\bar\SafeNumbA \in \NatSet$.\\

\noindent 
The intuition behind the claim is that, although the function~$f$ is defined on the entire set $\bigcup_{i \in \mathbb{N}} \OrdClass_i$, its construction relies only on a finite number of ordinals, all introduced via constant substitution. Consequently, for sufficiently large~$j$, the downset~$\mathsf{A}_j$ will contain all these finitely many ordinals. Therefore, the function~$f'$ can be constructed in the same way as~$f$, and it will agree with~$f$ on~$\OrdClass_j$.

To prove the claim, we use an induction on the construction of $f$. For the base case,
if $f$ is an initial function, since it has no ordinal inputs, it is enough to take $i=0$ and $f'=f$. If $f$ is constructed by the safe composition:
    $$
f(\bar\NormalOrdA,\bar \SafeOrdA; \bar \SafeNumbA)
		= h(\bar\NormalOrdA,\bar s(\bar\NormalOrdA, \bar \SafeOrdA;);\bar t(\bar\NormalOrdA,\bar\SafeOrdA; \bar \SafeNumbA)),
    $$
apply the induction hypothesis to $h,\bar s$ and $\bar t$, to obtain one index for each of these functions. Then, set $i$ to be the largest among them. Let $j \geq i$ and take $h',\bar{s}',\bar{t}' \in \ClassName{\mathsf{A}_j}$ that agree with $h,\bar s$ and
$\bar t$ on the ordinals in $\mathsf{A}_j$.
It is then enough to set $f'(\bar\NormalOrdA,\bar \SafeOrdA; \bar \SafeNumbA)	= h'(\bar\NormalOrdA,\bar {s}'(\bar\NormalOrdA, \bar \SafeOrdA;);\bar {t}'(\bar\NormalOrdA,\bar\SafeOrdA; \bar \SafeNumbA))$.

If $f$ is constructed by predicative ordinal recursion:
    \begin{align*}
			f(0, \bar \NormalOrdB, \bar \SafeOrdA; \bar \SafeNumbA)
			&= g(\bar \NormalOrdB,
			\bar\SafeOrdA; \bar \SafeNumbA)\\
			f(\NormalOrdA+1,\bar\NormalOrdB, \bar\SafeOrdA;\bar \SafeNumbA) &= h_{\mathsf{suc}}(\NormalOrdA, \bar\NormalOrdB,\bar\SafeOrdA; 
			f(\NormalOrdA,\bar\NormalOrdB, \bar\SafeOrdA;\bar \SafeNumbA),\bar \SafeNumbA)\\
			f(\langle \NormalOrdA_i \rangle_i, \bar\NormalOrdB,\bar\SafeOrdA;\bar \SafeNumbA)
			&= h_{\mathsf{lim}}(\langle \NormalOrdA_i \rangle_i, \bar\NormalOrdB, \bar\SafeOrdA;
			f(\NormalOrdA_{q(\bar \SafeOrdA;)}, \bar\NormalOrdB,\bar\SafeOrdA; \bar \SafeNumbA),
			\bar \SafeNumbA
			)
		\end{align*}
apply the induction hypothesis to $g$, $h_{\mathsf{suc}}$, and $h_{\mathsf{lim}}$ to obtain indices for each of them, and set $i$ as the largest of those indices. Note that $q$ does not require the induction hypothesis, as it has no ordinal input. Let $j \geq i$ and take $g', h'_{\mathsf{suc}}, h'_{\mathsf{lim}} \in \ClassName{\mathsf{A}_j}$
that agree with $g$, $h_{\mathsf{suc}}$, and $h_{\mathsf{lim}}$ on the ordinals in $\mathsf{A}_j$. Then, it is enough to
define $f'$ as an element of $\mathcal{C}_{\mathsf{A}_j}$ by:
    \begin{align*}
			f'(0, \bar \NormalOrdB, \bar \SafeOrdA; \bar \SafeNumbA)
			&= g'(\bar \NormalOrdB,
			\bar\SafeOrdA; \bar \SafeNumbA)\\
			f'(\NormalOrdA+1,\bar\NormalOrdB, \bar\SafeOrdA;\bar \SafeNumbA) &= h'_{\mathsf{suc}}(\NormalOrdA, \bar\NormalOrdB,\bar\SafeOrdA; 
			f'(\NormalOrdA,\bar\NormalOrdB, \bar\SafeOrdA;\bar \SafeNumbA),\bar \SafeNumbA)\\
			f'(\langle \NormalOrdA_i \rangle_i, \bar\NormalOrdB,\bar\SafeOrdA;\bar \SafeNumbA)
			&= h'_{\mathsf{lim}}(\langle \NormalOrdA_i \rangle_i, \bar\NormalOrdB, \bar\SafeOrdA;
			f'(\NormalOrdA_{q(\bar \SafeOrdA;)}, \bar\NormalOrdB,\bar\SafeOrdA; \bar \SafeNumbA),
			\bar \SafeNumbA
			)
		\end{align*}
Note that the ordinal inputs of $f'$ are taken from $\mathsf{A}_j$, and it is clear that $f$ and $f'$ agree on these inputs.

If $f$ is obtained by constant substitution:
\[
f(\NormalOrdA_1,\ldots,\NormalOrdA_{i-1},\NormalOrdA_{i+1},\ldots,\NormalOrdA_{k}, \bar \SafeOrdA; \bar \SafeNumbA)
        =
        g(\NormalOrdA_1,\ldots,\NormalOrdA_{i-1},\alpha,\NormalOrdA_{i+1},\ldots,\NormalOrdA_{k},\bar\SafeOrdA;\bar\SafeNumbA)
\]
where $\alpha \in \bigcup_{i \in \mathbb{N}} \mathsf{A}_i$, pick $p \in \NatSet$ such that $\alpha \in \mathsf{A}_{p}$ and apply the induction hypothesis to $g$ to obtain the index $i_g$. Set $i$ as the maximum of $i_g$ and $p$. Then, given $j \geq i$,
let $g' \in \ClassName{\mathsf{A}_j}$ be the function
that agrees with $g$ on the ordinals in $\mathsf{A}_j$.
Now, it is enough to use constant substitution to define:
\[
f'(\NormalOrdA_1,\ldots,\NormalOrdA_{i-1},\NormalOrdA_{i+1},\ldots,\NormalOrdA_{k}, \bar \SafeOrdA; \bar \SafeNumbA)
        =
        g'(\NormalOrdA_1,\ldots,\NormalOrdA_{i-1},\alpha,\NormalOrdA_{i+1},\ldots,\NormalOrdA_{k},\bar\SafeOrdA;\bar\SafeNumbA)
\]
which is possible, because $\alpha \in \mathsf{A}_p \subseteq \mathsf{A}_i \subseteq \mathsf{A}_j$.

Finally, the case when $f$ is defined by the structural rules is clear, by following essentially the same strategy as above. This completes the proof of the claim.

To prove the lemma, if $f(\bar{\SafeOrdA}) \in \PredFuncClass{\bigcup_{i \in \mathbb{N}} \OrdClass_i}$, then there is $g(\bar{n};)$ such that $g \in \ClassName{\bigcup_{i \in \mathbb{N}} \OrdClass_i}$ and $f(\bar{n})=g(\bar{n};)$, for any $\bar{n} \in \mathbb{N}$. Use the claim for $g$ to find $i$ and pick $j=i$. Then, there is $g'(\bar{\SafeOrdA};) \in \ClassName{\OrdClass_j}$ such that $g(\bar{\SafeOrdA};)=g'(\bar{\SafeOrdA};)$, for any $\bar{\SafeOrdA},\bar{\SafeNumbA} \in \mathbb{N}$. As $g'\in \mathcal{C}_{\OrdClass_j}$, we get $f \in \PredFuncClass{\OrdClass_j}$.
\end{proof}

\section{Constructive Veblen Hierarchy}
\label{sec:const-veblen}
In this section, we introduce the constructive Veblen hierarchy of ordinal functions both for constructive ordinals and for set-theoretic ordinals, and we show how the former can represent the latter. We then proceed to define the set $\Phi_{\omega}$ of ordinals (Definition~\ref{Phi-sets}) as the closure of the set $\{0\}$ under addition and the constructive Veblen functions. We prove the uniqueness of such constructions (Theorem~\ref{UniquenessTheorem}) and show a dichotomy theorem stating that for any $\alpha \in \Phi_{\omega}$, there exists $r \in \mathbb{N}$ such that either $\alpha = o(r)$ or $o(r) + \omega \preceq \alpha$ (Theorem~\ref{FiniteOrInfiniteThm}). We also introduce some interesting subsets of $\Phi_{\omega}$ and establish some key properties of these classes.

\begin{definition}[Constructive Veblen hierarchy]\label{dfn:VeblenHierarchy}
	The \emph{Constructive Veblen hierarchy} $\{ \Veb{k}: \Omega \to \Omega \}_{k \in \mathbb{N}}$ is a sequence of ordinal functions recursively defined as follows:
\begin{align*}
&\Veb{0}(\gamma) = \omega^{\gamma}, &&
\Veb{k+1}(0) = \langle \Veb{k}^{(i)}(0) \rangle_i, \\
&\Veb{k+1}(\gamma + 1) = \langle \Veb{k}^{(i)}(\Veb{k+1}(\gamma) + 1) \rangle_i, &&
\Veb{k+1}(\langle \gamma_i \rangle_i) = \langle \Veb{k+1}(\gamma_i) \rangle_i.
\end{align*}
\end{definition}
\begin{example}\label{ex-Veblen}
It is clear that $\Veb{0}(0) = \omega^0 = 1$. For another example, consider:
\[
\Veb{1}(0) = \langle \Veb{0}^{(i)}(0) \rangle_i = \langle 0, \Veb{0}(0),\allowbreak \Veb{0}^2(0),\allowbreak \Veb{0}^3(0), \ldots \rangle = \langle 0, 1, \omega, \omega^\omega, \omega^{\omega^{\omega}}, \ldots \rangle.
\]
We denote the ordinal $\Veb{1}(0)$ by~$\epsilon_0$.
\end{example}

To motivate the constructive Veblen hierarchy, it is helpful to consider its counterpart over the class~$\mathbf{On}$ of set-theoretic ordinals. First, recall that a monotone class function $f: \mathbf{On} \to \mathbf{On}$ is called \emph{continuous} if, for any limit ordinal $\bm{\alpha}$, we have
$f(\bm{\alpha}) = \bigcup_{\bm{\beta} \subset \bm{\alpha}} f(\bm{\beta})$.
It is known (see \cite{veblen1908}) that any
strictly monotone and
continuous class function $f: \mathbf{On} \to \mathbf{On}$ has arbitrarily large fixed points.
We now define the hierarchy $\{\bm{\phi}_k: \mathbf{On} \to \mathbf{On}\}_{k \in \mathbb{N}}$ of class functions as follows:
\begin{align*}
\bm{\phi}_0(\bm{\alpha}) &= \bm{\omega}^{\bm{\alpha}}, \\[1ex]
\bm{\phi}_{k+1}(\bm{0}) &= \text{the least fixed point of } \bm{\phi}_k, \\[1ex]
\bm{\phi}_{k+1}(\bm{\alpha+1}) &= \text{the least fixed point of } \bm{\phi}_k \text{ strictly greater than } \bm{\phi}_{k+1}(\bm{\alpha}), \\[1ex]
\bm{\phi}_{k+1}(\bm{\alpha}) &= \displaystyle\bigcup_{\bm{\beta} \subset \bm{\alpha}} \bm{\phi}_{k+1}(\bm{\beta}) \quad \text{if } \bm{\alpha} \text{ is a limit}.
\end{align*}
Additionally, we define
\[
\bm{\phi}_{\bm{\omega}}(\bm{0}) = \bigcup_{k \in \mathbb{N}} \bm{\phi}_k(\bm{0}).
\]
To see that $\bm{\phi}_{k+1}$ is well-defined, it suffices to observe that $\bm{\phi}_k$ is strictly monotone (see Theorem~\ref{PropertiesOfRealVeblen} below) and continuous, and hence it has arbitrarily large fixed points.
Informally, one often describes $\bm{\phi}_{k+1}(\bm{\alpha})$ as the $\bm{\alpha}$-th fixed point of $\bm{\phi}_k$, where the indexing begins at zero. Thus, $\bm{\phi}_{k+1}(\bm{0})$ denotes the first fixed point of $\bm{\phi}_k$, and $\bm{\phi}_{k+1}(\bm{1})$ denotes the second.

The following is a list of well-known properties of Veblen functions on set-theoretic ordinals:

\begin{theorem}\cite{Schutte1977} \label{PropertiesOfRealVeblen}
For any set-theoretic ordinals $\bm{\alpha}, \bm{\beta}, \bm{\gamma} \in \mathbf{On}$ and any $k, l \in \mathbb{N}$, the following hold:
\begin{itemize}
    \item 
If $\bm{\alpha} \subset \bm{\beta}$ then $\bm{\phi}_k(\bm{\alpha}) \subset \bm{\phi}_{k}(\bm{\beta})$.
  \item 
If $k \leq l$ then $\bm{\phi}_k(\bm{\alpha}) \subseteq \bm{\phi}_{l}(\bm{\alpha})$.
    \item 
If $k< l$ then $\bm{\phi}_k(\bm{\phi}_l(\bm{\alpha}))=\bm{\phi}_l(\bm{\alpha})$.
\item 
$\bm{\alpha} \subseteq \bm{\phi}_k(\bm{\alpha})$.
    \item 
If $\bm{\beta}, \bm{\gamma} \subset \bm{\phi}_k(\bm{\alpha})$ then $\bm{\beta+\gamma} \subset \bm{\phi}_k(\bm{\alpha})$.
%\CHAT{RR}{Should we present the definition of $\bm{\beta+\gamma}$?}
\item 
If $\bm{0} \subset \bm{\alpha} \subset \bm{\phi}_{\bm{\omega}}(\bm{0})$, then $\bm{\alpha}$ has a unique representation in the form
\[
\bm{\alpha} = \bm{\phi}_{n_1}(\bm{\beta_1}) \bm{+ \cdots +} \bm{\phi}_{n_m}(\bm{\beta_m}),
\]
where $\bm{\phi}_{n_1}(\bm{\beta_1}) \supseteq \cdots \supseteq \bm{\phi}_{n_m}(\bm{\beta_m})$; for any $1 \leq i \leq m$ we have $\bm{\beta_i} \subset \bm{\phi}_{n_i}(\bm{\beta}_i)$; and $n_1 \geq \cdots \geq n_m$. In particular, for any $k \geq 1$, every set-theoretic ordinal $\bm{0} \subset \bm{\alpha} \subset \bm{\omega}^k$ has a unique representation of the form
$
\bm{\alpha} = \bm{\omega}^{p_1} + \cdots + \bm{\omega}^{p_m},
$
where $k > p_1 \geq p_2 \geq \cdots \geq p_m$.
\end{itemize}    
\end{theorem}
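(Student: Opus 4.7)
My plan is to prove the six items in sequence, each one relying on the previous, with essentially all arguments being transfinite inductions.

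First, I would prove strict monotonicity of each $\bm{\phi}_k$ by induction on $k$. For $k=0$, strict monotonicity of $\bm{\omega}^{\bm{\cdot}}$ is a standard fact about ordinal exponentiation. For $k+1$, the defining clauses say that $\bm{\phi}_{k+1}$ enumerates the fixed points of $\bm{\phi}_k$ in increasing order, so strict monotonicity is immediate from the definition together with continuity at limits. To justify the inductive step I also need to know that $\bm{\phi}_k$ has arbitrarily large fixed points; this follows from the fact that $\bm{\phi}_k$ is strictly monotone (from the IH) and continuous (by induction on $k$ using that limit clauses are explicit unions), which is exactly Veblen's fixed point theorem. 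Once this is in hand, expansivity $\bm{\alpha} \subseteq \bm{\phi}_k(\bm{\alpha})$ (item~4) follows by induction on $\bm{\alpha}$: the base $\bm{0} \subseteq \bm{\phi}_k(\bm{0})$ is trivial, the successor step uses the IH combined with strict monotonicity $\bm{\phi}_k(\bm{\alpha}) \subset \bm{\phi}_k(\bm{\alpha+1})$, and the limit step uses continuity.

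For items~2 and~3 I would proceed together, by induction on $l-k$ with $k$ fixed. The key point is that by the definition, $\bm{\phi}_{k+1}(\bm{\alpha})$ is a fixed point of $\bm{\phi}_k$, so $\bm{\phi}_k(\bm{\phi}_{k+1}(\bm{\alpha})) = \bm{\phi}_{k+1}(\bm{\alpha})$, which gives item~3 in the immediate-successor case. For $l > k+1$, the IH gives $\bm{\phi}_{k+1}(\bm{\phi}_l(\bm{\alpha})) = \bm{\phi}_l(\bm{\alpha})$, and then applying $\bm{\phi}_k$ to both sides and using the successor case yields $\bm{\phi}_k(\bm{\phi}_l(\bm{\alpha})) = \bm{\phi}_l(\bm{\alpha})$. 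Item~2 follows from items~3 and~4 since $\bm{\phi}_k(\bm{\alpha}) \subseteq \bm{\phi}_k(\bm{\phi}_l(\bm{\alpha})) = \bm{\phi}_l(\bm{\alpha})$, where the first inclusion uses monotonicity of $\bm{\phi}_k$ together with $\bm{\alpha} \subseteq \bm{\phi}_l(\bm{\alpha})$.

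For item~5, I first handle the $k=0$ case: every value $\bm{\phi}_0(\bm{\alpha}) = \bm{\omega}^{\bm{\alpha}}$ is an additive principal ordinal, so it is closed under addition of strictly smaller ordinals. For $k \geq 1$, item~3 gives $\bm{\phi}_k(\bm{\alpha}) = \bm{\phi}_0(\bm{\phi}_k(\bm{\alpha})) = \bm{\omega}^{\bm{\phi}_k(\bm{\alpha})}$, so $\bm{\phi}_k(\bm{\alpha})$ is again an additive principal, and the closure under sums of smaller ordinals follows.

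The most delicate item is~6, the unique representation theorem; this is the Cantor-style normal form for $\bm{\phi}_{\bm{\omega}}(\bm{0})$. My plan is transfinite induction on $\bm{\alpha}$. For existence, given $\bm{0} \subset \bm{\alpha} \subset \bm{\phi}_{\bm{\omega}}(\bm{0})$, pick the largest pair $(n, \bm{\beta})$ in the lexicographic sense for which $\bm{\phi}_n(\bm{\beta}) \subseteq \bm{\alpha}$ and $\bm{\beta} \subset \bm{\phi}_n(\bm{\beta})$ (this pair exists because $\bm{\alpha}$ lies below some $\bm{\phi}_k(\bm{0})$, bounding the possible indices, and the constraints cut down to a set). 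Write $\bm{\alpha} = \bm{\phi}_n(\bm{\beta}) + \bm{\gamma}$ with $\bm{\gamma}$ strictly smaller than $\bm{\alpha}$, and recurse; item~5 guarantees that the tail decomposition is consistent with the leading summand. For uniqueness, suppose two such representations agree on no leading term; use items~1--5 to derive a contradiction by comparing the largest term on each side, exploiting that $\bm{\phi}_{n_1}(\bm{\beta}_1)$ dominates any sum of strictly smaller additive principals. I expect the uniqueness bookkeeping, and in particular justifying that the inequalities $n_1 \geq \cdots \geq n_m$ and $\bm{\beta}_i \subset \bm{\phi}_{n_i}(\bm{\beta}_i)$ together pin down the representation, to be the main technical obstacle; the special case $\bm{\alpha} \subset \bm{\omega}^k$ then drops out by noting that all $n_i$ must equal $0$, reducing to ordinary Cantor normal form.
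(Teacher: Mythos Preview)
The paper does not give a proof of this theorem: it is stated with a citation to Sch\"utte and used as a black box throughout. There is therefore no argument in the paper to compare your proposal against.

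That said, your outline for items~1 through~5 is the standard one and is correct. For item~6, be aware that the extra clause $n_1 \geq \cdots \geq n_m$ in the paper's statement is not part of the usual Veblen normal form, and existence actually fails with it included. For example, take $\bm{\alpha} = \bm{\omega}^{\bm{\epsilon_0}+1} + \bm{\epsilon_0}$: the unique representation with non-increasing summands and $\bm{\beta_i} \subset \bm{\phi}_{n_i}(\bm{\beta_i})$ is $\bm{\phi}_0(\bm{\epsilon_0}+1) + \bm{\phi}_1(\bm{0})$, where $n_1 = 0 < 1 = n_2$. The paper never actually uses the $n_i$-ordering clause (see the proofs of Theorems~\ref{MeaningOfPhi} and~\ref{MeaningOfPsi}), so this is an overstatement in the theorem rather than a defect in your plan; your sketch of existence and uniqueness under the first two conditions alone is the right one.
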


Note that, from $\bm{\alpha} \subseteq \bm{\phi}_{k+1}(\bm{\alpha})$, the monotonicity of $\bm{\phi}_k$, and the identity $\bm{\phi}_k(\bm{\phi}_{k+1}(\bm{\alpha})) = \bm{\phi}_{k+1}(\bm{\alpha})$, one can conclude
$\bm{\phi}_k^m(\bm{\alpha}) \subseteq \bm{\phi}_{k+1}(\bm{\alpha})$,
for any $k, m \in \mathbb{N}$ and any $\bm{\alpha} \in \mathbf{On}$. 

\begin{corollary}\label{ClosureUnderAdditionAndPhi}
For any set-theoretic ordinal $\bm{\alpha} \in \mathbf{On}$ and any $k \in \mathbb{N}$ and $m \in \mathbb{N}^{\geq 1}$, the set $\{\bm{\beta} \in \mathbf{On} \mid \bm{\beta} \subset \bm{\phi}_k^m(\bm{\alpha})\}$ contains $\bm{0}$ and is closed under addition and $\bm{\phi}_i$, for any $i < k$.   
\end{corollary}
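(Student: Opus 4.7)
The plan is to reduce everything to a single application of $\bm{\phi}_k$ by setting $\bm{\delta} := \bm{\phi}_k^{m-1}(\bm{\alpha})$, which is well-defined since $m \geq 1$, and thus writing $\bm{\phi}_k^m(\bm{\alpha}) = \bm{\phi}_k(\bm{\delta})$. Then it suffices to verify that the set $S := \{\bm{\beta} \in \mathbf{On} \mid \bm{\beta} \subset \bm{\phi}_k(\bm{\delta})\}$ contains $\bm{0}$ and is closed under addition and under $\bm{\phi}_i$ for every $i < k$. Each of these three items will be handled by a direct appeal to one of the bullets of Theorem~\ref{PropertiesOfRealVeblen}, so the work is mainly a matter of assembling the right citations.

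For the first item, I would observe that $\bm{\phi}_0(\bm{0}) = \bm{\omega}^{\bm{0}} = \bm{1}$, and then use the monotonicity of $\bm{\phi}_k$ in its subscript (second bullet of Theorem~\ref{PropertiesOfRealVeblen}) together with the inclusion $\bm{0} \subseteq \bm{\delta}$ and strict monotonicity in the argument (first bullet) to conclude $\bm{\phi}_k(\bm{\delta}) \supseteq \bm{\phi}_0(\bm{0}) = \bm{1}$, so $\bm{0} \in S$. Closure under addition is even more immediate: if $\bm{\beta}, \bm{\gamma} \in S$, i.e., $\bm{\beta}, \bm{\gamma} \subset \bm{\phi}_k(\bm{\delta})$, then the fifth bullet of Theorem~\ref{PropertiesOfRealVeblen} yields $\bm{\beta + \gamma} \subset \bm{\phi}_k(\bm{\delta})$, so $\bm{\beta + \gamma} \in S$.

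The closure under $\bm{\phi}_i$ for $i < k$ is vacuous when $k = 0$, so I may assume $k \geq 1$. Here the key idea is that $\bm{\phi}_k(\bm{\delta})$ is itself a fixed point of $\bm{\phi}_i$ for every $i < k$, namely $\bm{\phi}_i(\bm{\phi}_k(\bm{\delta})) = \bm{\phi}_k(\bm{\delta})$ (third bullet of Theorem~\ref{PropertiesOfRealVeblen}). Thus if $\bm{\beta} \in S$, i.e., $\bm{\beta} \subset \bm{\phi}_k(\bm{\delta})$, then by strict monotonicity of $\bm{\phi}_i$ (first bullet) I obtain $\bm{\phi}_i(\bm{\beta}) \subset \bm{\phi}_i(\bm{\phi}_k(\bm{\delta})) = \bm{\phi}_k(\bm{\delta})$, i.e., $\bm{\phi}_i(\bm{\beta}) \in S$.

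There is no real obstacle here: the statement looks bulky because of the iterate $\bm{\phi}_k^m(\bm{\alpha})$, but the substitution $\bm{\delta} = \bm{\phi}_k^{m-1}(\bm{\alpha})$ eliminates this complication entirely, and the remaining three verifications are essentially one-line applications of Theorem~\ref{PropertiesOfRealVeblen}. The only minor point to remember is to handle the degenerate case $k = 0$ separately (or note that the claim about $\bm{\phi}_i$ is vacuous there), and to record that $m \geq 1$ is used to guarantee that the substitution is well-defined.
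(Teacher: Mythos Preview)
Your proposal is correct and follows essentially the same route as the paper: both set (explicitly or implicitly) $\bm{\delta} := \bm{\phi}_k^{m-1}(\bm{\alpha})$ so that $\bm{\phi}_k^m(\bm{\alpha}) = \bm{\phi}_k(\bm{\delta})$, and then verify the three closure properties by direct appeals to the corresponding bullets of Theorem~\ref{PropertiesOfRealVeblen} (the second and first bullets for $\bm{0} \in S$, the fifth for addition, and the first and third for closure under $\bm{\phi}_i$ with $i<k$). Your explicit handling of the vacuous case $k=0$ is a nice touch the paper leaves implicit.
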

\begin{proof}
As $m \geq 1$, by Theorem~\ref{PropertiesOfRealVeblen}, we have 
$\bm{0} \subset \bm{1} = \bm{\phi}_0(\bm{0}) \subseteq \bm{\phi}_k(\bm{\phi}_k^{m-1}(\bm{\alpha}))$.
Therefore, $\bm{0} \subset \bm{\phi}_k^m(\bm{\alpha})$. 
If $\bm{\beta},\bm{\gamma}\subset \bm{\phi}_k^m(\bm{\alpha})=\bm{\phi}_k(\bm{\phi}_k^{m-1}(\bm{\alpha}))$, then by Theorem~\ref{PropertiesOfRealVeblen} we have $\bm{\beta}+\bm{\gamma}\subset \bm{\phi}_k(\bm{\phi}_k^{m-1}(\bm{\alpha}))$. For the closure under $\bm{\phi}_i$ with $i < k$, note that if $\bm{\beta} \subset \bm{\phi}_k^m(\bm{\alpha})$, then since $m \geq 1$, by Theorem~\ref{PropertiesOfRealVeblen} we have
\[
\bm{\phi}_i(\bm{\beta}) \subset \bm{\phi}_i\bm{\phi}_k(\bm{\phi}_k^{m-1}(\bm{\alpha})) = \bm{\phi}_k(\bm{\phi}_k^{m-1}(\bm{\alpha}))=\bm{\phi}_k^{m}(\bm{\alpha}),
\]
which completes the proof.
\end{proof}

Recall the mapping~$[\![-]\!]\colon \Omega \to \mathbf{On}$ from constructive ordinals to set-theoretic ordinals, defined in Section~\ref{sec:const-ordinals}. Then:

\begin{theorem}\label{TheInterpretationOfPhi}
$[\![\phi_k(\alpha)]\!] = \bm{\phi}_k([\![\alpha]\!])$, for any $k \in \mathbb{N}$ and any $\alpha \in \Omega$.
\end{theorem}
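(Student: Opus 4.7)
The plan is a double induction: an outer induction on $k \in \mathbb{N}$, and an inner structural induction on $\alpha \in \Omega$ for the successor step $k \mapsto k+1$. Along the way I will also use the auxiliary fact that $[\![\phi_k^{(i)}(\alpha)]\!] = \bm{\phi}_k^{(i)}([\![\alpha]\!])$ for all $i \in \mathbb{N}$, which follows immediately by induction on $i$ from the outer hypothesis.

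The base case $k=0$ is a direct unfolding: $[\![\phi_0(\alpha)]\!] = [\![\omega^\alpha]\!] = \bm{\omega}^{[\![\alpha]\!]} = \bm{\phi}_0([\![\alpha]\!])$, using the identities $[\![\omega]\!] = \bm{\omega}$ and $[\![\alpha^\beta]\!] = [\![\alpha]\!]^{[\![\beta]\!]}$ recorded in Section~\ref{sec:functions-const-ordinals}. For the outer step, fix $k$ with $[\![\phi_k(\alpha)]\!] = \bm{\phi}_k([\![\alpha]\!])$ for all $\alpha$, and perform structural induction on $\alpha$. In the zero case, unfolding gives
\[
[\![\phi_{k+1}(0)]\!] = \bigcup_{i} [\![\phi_k^{(i)}(0)]\!] = \bigcup_i \bm{\phi}_k^{(i)}(\bm{0}),
\]
and by continuity of $\bm{\phi}_k$ this union is a fixed point of $\bm{\phi}_k$; it is the \emph{least} one because any fixed point $\bm{\beta}$ of $\bm{\phi}_k$ satisfies $\bm{0} \subseteq \bm{\beta}$, so iterating $\bm{\phi}_k$ and using monotonicity yields $\bm{\phi}_k^{(i)}(\bm{0}) \subseteq \bm{\beta}$ for every $i$. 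Hence the union equals $\bm{\phi}_{k+1}(\bm{0})$. The successor case $\alpha = \gamma+1$ is analogous: unfolding and using the inner hypothesis $[\![\phi_{k+1}(\gamma)]\!] = \bm{\phi}_{k+1}([\![\gamma]\!])$ together with the auxiliary fact gives
\[
[\![\phi_{k+1}(\gamma+1)]\!] = \bigcup_i \bm{\phi}_k^{(i)}\!\bigl(\bm{\phi}_{k+1}([\![\gamma]\!]) \bm{+ 1}\bigr),
\]
which by the same continuity/minimality argument is the least fixed point of $\bm{\phi}_k$ strictly above $\bm{\phi}_{k+1}([\![\gamma]\!])$, namely $\bm{\phi}_{k+1}([\![\gamma]\!] \bm{+ 1}) = \bm{\phi}_{k+1}([\![\gamma+1]\!])$.

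The step I expect to require the most care is the limit case $\alpha = \langle \alpha_i \rangle$. By the inner hypothesis,
\[
[\![\phi_{k+1}(\langle \alpha_i \rangle)]\!] = \bigcup_i [\![\phi_{k+1}(\alpha_i)]\!] = \bigcup_i \bm{\phi}_{k+1}([\![\alpha_i]\!]),
\]
and the target is $\bm{\phi}_{k+1}\!\bigl(\bigcup_i [\![\alpha_i]\!]\bigr)$. The subtlety is that, constructively, $\langle \alpha_i \rangle$ is always a limit, but its interpretation $\bigcup_i [\![\alpha_i]\!]$ in $\mathbf{On}$ need not be a (nonzero) limit: it can be $\bm{0}$ (if all $\alpha_i = 0$), a successor, or genuinely a limit. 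I will handle these possibilities uniformly by splitting on whether the supremum is attained: if some $[\![\alpha_j]\!]$ dominates all others, monotonicity of $\bm{\phi}_{k+1}$ collapses both sides to $\bm{\phi}_{k+1}([\![\alpha_j]\!])$; otherwise $\bigcup_i [\![\alpha_i]\!]$ is a nonzero limit and the equality follows from the continuity clause in the definition of $\bm{\phi}_{k+1}$ together with cofinality of $\{[\![\alpha_i]\!]\}_i$ in $\bigcup_i [\![\alpha_i]\!]$. This case analysis, which exploits that the set-theoretic Veblen function is well-defined on all of $\mathbf{On}$ while its constructive counterpart makes explicit choices of fundamental sequences, is the one place where the mismatch between $\Omega$ and $\mathbf{On}$ really bites, and it is where I will spend the bulk of the proof.
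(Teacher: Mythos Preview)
Your proposal is correct and follows the same double-induction structure as the paper's proof, with the same treatment of the base case and of the zero and successor subcases. The one point of contrast is emphasis: the paper handles the limit case $\alpha = \langle \alpha_i \rangle$ in a single line, writing $\bigcup_i \bm{\phi}_{k+1}([\![\alpha_i]\!]) = \bm{\phi}_{k+1}\bigl(\bigcup_i [\![\alpha_i]\!]\bigr)$ ``by continuity of $\bm{\phi}_{k+1}$,'' without pausing on the subtlety you raise that $\bigcup_i [\![\alpha_i]\!]$ need not be a set-theoretic limit. Your case split (supremum attained vs.\ not) is a perfectly valid way to make that step rigorous; alternatively, one can note once and for all that any strictly monotone, continuous ordinal function commutes with \emph{all} nonempty suprema, not just with limits, which is what the paper is tacitly invoking. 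Either way, the limit case turns out to be the routine one, not the bulk of the work.
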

\begin{proof}
We prove the claim by induction on $k$. For $k=0$, the claim is clear, as $[\![\omega^{\alpha}]\!]=[\![\omega]\!]^{[\![\alpha]\!]}=\bm{\omega}^{[\![\alpha]\!]}$, for any $\alpha \in \Omega$. For the induction step, we assume the claim for $k$ to prove it for $k+1$. For that purpose, we use an induction on $\alpha$. If $\alpha=0$, first,
by the induction hypothesis for $k$, we have:
\[
[\![\phi_{k+1}(0)]\!]=[\![\langle \phi^{(i)}_{k}(0) \rangle]\!]=\bigcup_{i \in \mathbb{N}} \bm{\phi}_k^{(i)}(\bm{0}).
\]
Second, as $\bm{\phi}_k^{(i)}(\bm{0})\subseteq \bm{\phi}_{k+1}(\bm{0})$, for any $i \in \mathbb{N}$, we have $\bigcup_{i \in \mathbb{N}} \bm{\phi}_k^{(i)}(\bm{0}) \subseteq \bm{\phi}_{k+1}(\bm{0})$.
Third, we claim that $\bigcup_{i \in \mathbb{N}} \bm{\phi}_k^{(i)}(\bm{0})$ is a fixed point of $\bm{\phi}_k$, i.e., $\bm{\phi}_k(\bigcup_{i \in \mathbb{N}} \bm{\phi}_k^{(i)}(\bm{0}))=\bigcup_{i \in \mathbb{N}} \bm{\phi}_k^{(i)}(\bm{0})$. To prove that, it is enough to use $\bm{\phi}_k^{(0)}(\bm{0})=\bm{0}$ and the continuity of $\bm{\phi}_k$ (see after Example~\ref{ex-Veblen}) to have:
\[
\bigcup_{i \in \mathbb{N}}\bm{\phi}^{(i)}_k(\bm{0})= \bigcup_{i \geq 1}\bm{\phi}^{(i)}_k(\bm{0})=\bm{\phi}_k(\bigcup_{i \in \mathbb{N}}\bm{\phi}^{(i)}_k(\bm{0})).
\]
Now, as $ \bm{\phi}_{k+1}(\bm{0})$ is the least fixed point of $ \bm{\phi}_{k}$, we have $\bm{\phi}_{k+1}(\bm{0}) \subseteq \bigcup_{i \in \mathbb{N}} \bm{\phi}_k^{(i)}(\bm{0})$. Therefore, $\bm{\phi}_{k+1}(\bm{0}) = \bigcup_{i \in \mathbb{N}} \bm{\phi}_k^{(i)}(\bm{0})$ which implies $[\![\phi_{k+1}(0)]\!]=\bm{\phi}_{k+1}([\![0]\!]) $.

If $\alpha=\beta+1$, first, by the induction hypothesis for $k$ and for $\beta$, we have:
\[
[\![\phi_{k+1}(\alpha)]\!]=[\![\phi_{k+1}(\beta+1)]\!]=[\![\langle \phi^{(i)}_{k}(\phi_{k+1}(\beta)+1) \rangle]\!]=\bigcup_{i \in \mathbb{N}} \bm{\phi}_k^{(i)}(\bm{\phi}_{k+1}([\![\beta]\!])\bm{+1}).
\]
Second, as $\bm{\phi}_{k+1}([\![\beta]\!]\bm{+1})$ is the least fixed point of $\bm{\phi}_k$ above $\bm{\phi}_{k+1}([\![\beta]\!])$, we have $\bm{\phi}_{k+1}([\![\beta]\!])\bm{+1} \subseteq \bm{\phi}_{k+1}([\![\beta]\!]\bm{+1})$. Then, as $\bm{\phi}_k$ is monotone, we reach:
\[
\bigcup_{i\in \mathbb{N}}\bm{\phi}^{(i)}_k(\bm{\phi}_{k+1}([\![\beta]\!])\bm{+1}) \subseteq \bigcup_{i\in \mathbb{N}}\bm{\phi}^{(i)}_k(\bm{\phi}_{k+1}([\![\beta]\!]\bm{+1}))=\bm{\phi}_{k+1}([\![\beta]\!]\bm{+1}).
\]
Third, we claim that 
\[
\bigcup_{i \in \mathbb{N}}\bm{\phi}^{(i)}_k(\bm{\phi}_{k+1}([\![\beta]\!])\bm{+1})
\]
is a fixed point of $\bm{\phi}_k$. To prove that,
recall that $\bm{\gamma} \subseteq \bm{\phi}_{k}(\bm{\gamma})$, for any $k \in \mathbb{N}$ and any $\bm{\gamma} \in \mathbf{On}$. Therefore,
$\bm{\phi}_{k+1}([\![\beta]\!])\bm{+1} \subseteq \bm{\phi}_k(\bm{\phi}_{k+1}([\![\beta]\!])\bm{+1})$. By
the continuity of $\bm{\phi}_k$, we reach
\[
\bigcup_{i \in \mathbb{N}}\bm{\phi}^{(i)}_k(\bm{\phi}_{k+1}([\![\beta]\!])\bm{+1})= \bigcup_{i \geq 1}\bm{\phi}^{(i)}_k(\bm{\phi}_{k+1}([\![\beta]\!])\bm{+1})=\bm{\phi}_k(\bigcup_{i \in \mathbb{N}}\bm{\phi}^{(i)}_k(\bm{\phi}_{k+1}([\![\beta]\!])\bm{+1})).
\]
Therefore, as $\bigcup_{i \in \mathbb{N}}\bm{\phi}^{(i)}_k(\bm{\phi}_{k+1}([\![\beta]\!])\bm{+1})\supseteq \bm{\phi}_{k+1}([\![\beta]\!])\bm{+1}$ and $\bm{\phi}_{k+1}([\![\beta]\!]\bm{+1})$ is the least fixed point of $\bm{\phi}_k$ after $\bm{\phi}_{k+1}([\![\beta]\!])$, we reach:
\[
\bm{\phi}_{k+1}([\![\beta]\!]\bm{+1}) \subseteq \bigcup_{i \in \mathbb{N}}\bm{\phi}^{(i)}_k(\bm{\phi}_{k+1}([\![\beta]\!])\bm{+1}).
\]
Thus, $\bigcup_{i \in \mathbb{N}} \bm{\phi}_k^{(i)}(\bm{\phi}_{k+1}([\![\beta]\!])\bm{+1})=\bm{\phi}_{k+1}([\![\beta]\!]\bm{+1})$. Therefore, we can conclude $[\![\phi_{k+1}(\beta+1)]\!]=\bm{\phi}_{k+1}([\![\beta +1]\!])$.

Finally, if $\alpha=\langle \alpha_i \rangle$ is a limit, using the induction hypothesis for $\alpha_i$'s and the fact that $\bm{\phi}_{k+1}$ is continuous, we have:
\[
[\![\phi_{k+1}(\langle \alpha_i \rangle)]\!]=[\![\langle \phi_{k+1}(\alpha_i) \rangle]\!]=\bigcup_{i \in \mathbb{N}} \bm{\phi}_{k+1}([\![\alpha_i]\!])=\bm{\phi}_{k+1}(\bigcup_{i \in \mathbb{N}}  [\![\alpha_i]\!])=\bm{\phi}_{k+1}([\![\langle \alpha_i \rangle]\!])
\]
which completes the proof.
\end{proof}

\begin{remark}
Theorem~\ref{TheInterpretationOfPhi}, together with the definition of $[\![-]\!]$, provides a \emph{constructive} way to define $\bm{\phi}_{k+1}(\bm{0})$, i.e., the first fixed point of $\bm{\phi}_k$, as the limit of all finite iterations of $\bm{\phi}_k$ on the ordinal zero. Similarly, $\bm{\phi}_{k+1}(\bm{\alpha + 1})$, the $(\bm{\alpha + 1})$-th fixed point of $\bm{\phi}_k$, can be defined constructively as the limit of all finite iterations of $\bm{\phi}_k$ applied to the first ordinal above the $\bm{\alpha}$-th fixed point, i.e., $\bm{\phi}_{k+1}(\bm{\alpha}) + \bm{1}$. From this perspective, $\phi_k$ serves as the constructive counterpart of the set-theoretic function $\bm{\phi}_k$. 
\end{remark}

\begin{remark}\label{WhyStructurednessFails}
When working with constructive ordinals, one often restricts attention to \emph{structured ordinals}, a subclass of constructive ordinals whose behavior more closely resembles that of set-theoretic ordinals \cite{fairtlough1998ptchapter}. We will not define structured ordinals formally here, but their key property is that for any structured ordinal $\alpha$, the set $\mathsf{D}_{\alpha}$ is linearly ordered by $\prec$.  
Unfortunately, Veblen functions generate \emph{unstructured} ordinals. Since our focus is on these functions, we must therefore work with unstructured ordinals and confront their potential anomalies. To illustrate why structuredness fails, let us show that the set $\mathsf{D}_{\phi_1(1)}$ is not linearly ordered.  
First, observe that both $\epsilon_0 = \phi_1(0)$ and $\omega^{\epsilon_0} = \phi_0(\phi_1(0))$ lie below $\phi_1(1)$, since  
\[
\epsilon_0 \prec \epsilon_0 + 1 = \phi^0_0(\epsilon_0 + 1) 
\prec \langle \phi^i_0(\epsilon_0 + 1) \rangle_i = \phi_1(1),
\]
and  
\[
\omega^{\epsilon_0} \prec 
\langle \omega^{\epsilon_0} \cdot o(i + 1) \rangle_i 
= \omega^{\epsilon_0 + 1} = \phi_0(\epsilon_0 + 1) 
\prec \langle \phi^i_0(\epsilon_0 + 1) \rangle_i = \phi_1(1).
\]
Second, note that $\omega^{\epsilon_0} \neq \epsilon_0$, since $\epsilon_0 = \langle \phi_0^{i}(0) \rangle_i$ and $\omega^{\epsilon_0} = \langle \phi_0^{i+1}(0) \rangle_i$ are sequences of ordinals whose first elements are $0$ and $\omega^0 = 1$, respectively, which are distinct.  
Finally, we show that $\epsilon_0$ and $\omega^{\epsilon_0}$ are incomparable. Suppose $\epsilon_0 \prec \omega^{\epsilon_0}$. Then there exists $i \in \mathbb{N}$ such that $\epsilon_0 \preceq \phi_0^{i+1}(0)$. Interpreting these ordinals as set-theoretic ones would then yield $\bm{\epsilon_0} \subseteq \bm{\phi}_0^{i+1}(\bm{0})$, which is impossible, as the latter represents a finite tower of $\bm{\omega}$’s. Similarly, $\omega^{\epsilon_0} \prec \epsilon_0$ would imply $\omega^{\epsilon_0} \preceq \phi_0^{i}(0)$, which is again impossible for the same reason.  
Hence, $\epsilon_0$ and $\omega^{\epsilon_0}$ are two incomparable elements of $\mathsf{D}_{\phi_1(1)}$, and therefore $\phi_1(1)$ is not structured.
\end{remark}

\subsection{$\Phi_{\omega}$ and its subclasses}

Theorem~\ref{TheInterpretationOfPhi} shows that the function $\phi_k$ serves as a constructive analogue of the set-theoretic Veblen function $\bm{\phi}_k$. By Theorem \ref{PropertiesOfRealVeblen}, every set-theoretic ordinal below $\bm{\phi}_{\bm{\omega}}(\bm{0})$ can be built from $\bm{0}$ using addition and the functions $\bm{\phi}_k$. Therefore, we can imitate this construction and use the constructive Veblen functions $\phi_k$ and addition on constructive ordinals to provide a representation for these ordinals. In this way, the constructive framework offers a canonical representation for the set-theoretic ordinals below $\bm{\phi}_{\bm{\omega}}(\bm{0})$. First, we need to define some classes of ordinals.

\begin{definition}\label{Phi-sets}
Let $k \geq 0$ be a natural number. Define the set $\Phi_{\omega} \subseteq \Omega$ (resp. $\Phi_k \subseteq \Omega$) of constructive ordinals as the smallest set containing $0$ and closed under addition and the functions $\phi_i$ for any $i \in \mathbb{N}$ (resp. for any $i < k$). Additionally, for any $m \geq 1$, define $\Phi_k^m$ recursively by setting $\Phi^1_k = \Phi_k$, and defining $\Phi^{m+1}_k$ as the smallest set containing $\phi_k[\Phi_k^m] \cup \{0\}$ and closed under addition and the functions $\phi_i$ for $i < k$.
\end{definition}

In words, the class $\Phi_\omega$ (resp.\ $\Phi_k$) consists of all ordinals constructible from $0$ by addition and the functions $\phi_i$ for $i \in \mathbb{N}$ (resp.\ $i < k$). Similarly, $\Phi^m_k$ is the set of all ordinals constructible from $0$ by addition and the functions $\phi_i$ for $i \leq k$, where the depth of nested applications of $\phi_k$ is at most $m-1$. For instance, $\Phi^1_0 = \Phi_0 = \{0\}$ and $\Phi^2_0 = \{ o(i) \mid i \in \mathbb{N} \}$ is the set of all finite ordinals.

\begin{lemma}\label{MonotonicityOfPhi}
Let $k \geq 0$ and $m \geq 1$ be natural numbers. Then:
\begin{itemize}
    \item[$(i)$] 
$\Phi_k^m \subseteq \Phi^{m+1}_k$.
    \item[$(ii)$] 
$\Phi_{k+1}=\bigcup_{m=1}^{\infty}\Phi^{m}_k$.
\item[$(iii)$]
$\Phi_{\omega}=\bigcup_{k=0}^{\infty} \Phi_k$.
\end{itemize}
\end{lemma}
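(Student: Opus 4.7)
The plan is to exploit the ``smallest set'' characterization underlying each of the classes $\Phi_k^m$, $\Phi_{k+1}$, and $\Phi_\omega$. All three parts then reduce to routine inductions or direct closure checks, so I do not expect any significant obstacle; the main work is bookkeeping the generators and closure operations correctly.

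For $(i)$, I would proceed by induction on $m \geq 1$. In the base case $m=1$, note that $\Phi_k^{2}$ contains $\{0\} \subseteq \phi_k[\Phi_k^1] \cup \{0\}$ and is closed under addition and the $\phi_i$ with $i<k$; since $\Phi_k = \Phi_k^1$ is the \emph{smallest} such set, $\Phi_k^1 \subseteq \Phi_k^2$. For the induction step, assume $\Phi_k^{m} \subseteq \Phi_k^{m+1}$. Then $\phi_k[\Phi_k^{m}] \subseteq \phi_k[\Phi_k^{m+1}] \subseteq \Phi_k^{m+2}$, so $\Phi_k^{m+2}$ contains $\phi_k[\Phi_k^{m}] \cup \{0\}$ and is closed under addition and the $\phi_i$ with $i<k$; by minimality of $\Phi_k^{m+1}$, we conclude $\Phi_k^{m+1} \subseteq \Phi_k^{m+2}$.

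For $(ii)$, I would show both inclusions. For $\bigcup_m \Phi_k^m \subseteq \Phi_{k+1}$, I would prove $\Phi_k^m \subseteq \Phi_{k+1}$ by induction on $m$, using that $\Phi_{k+1}$ contains $0$ and is closed under addition and all $\phi_i$ with $i\leq k$; the induction step uses closure under $\phi_k$ to push $\phi_k[\Phi_k^m]$ into $\Phi_{k+1}$. For the reverse inclusion, I would verify that $U := \bigcup_{m=1}^{\infty}\Phi_k^m$ contains $0$ and is closed under addition, under $\phi_i$ for $i<k$, and under $\phi_k$. Closure under addition and lower $\phi_i$ follows because any two elements of $U$ sit in a common $\Phi_k^m$ by $(i)$, which is itself closed under these operations. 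Closure under $\phi_k$ is immediate from the definition: if $\alpha \in \Phi_k^m$, then $\phi_k(\alpha) \in \phi_k[\Phi_k^m] \subseteq \Phi_k^{m+1} \subseteq U$. Minimality of $\Phi_{k+1}$ then yields $\Phi_{k+1} \subseteq U$.

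For $(iii)$, I would first record the auxiliary fact $\Phi_k \subseteq \Phi_{k+1}$, which is immediate since $\Phi_{k+1}$ contains $0$ and is closed under addition and every $\phi_i$ with $i<k$. Hence the union $V := \bigcup_{k=0}^{\infty}\Phi_k$ is directed. The inclusion $\Phi_k \subseteq \Phi_\omega$ is again by minimality, so $V \subseteq \Phi_\omega$. Conversely, $V$ contains $0$; for any $\alpha,\beta \in V$ we can pick $k$ large enough that both lie in $\Phi_k$, and $\Phi_k$ is closed under addition, so $\alpha+\beta \in V$; and for any $\alpha \in V$ and $j \in \mathbb{N}$, choose $k \geq j+1$ with $\alpha \in \Phi_k$, so that $\phi_j(\alpha) \in \Phi_k \subseteq V$. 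By the minimality defining $\Phi_\omega$, we get $\Phi_\omega \subseteq V$, completing the proof.
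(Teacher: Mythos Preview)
Your proposal is correct and follows essentially the same approach as the paper: induction on $m$ using the minimality clause for $(i)$, mutual inclusion via closure properties and minimality for $(ii)$, and the directed-union argument for $(iii)$. If anything, you supply more detail than the paper does (it dismisses $(iii)$ as ``straightforward'' and leaves the closure checks in $(ii)$ implicit), so there is nothing to add.
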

\begin{proof}
For $(i)$, we use an induction on $m$. For $m=1$, we have $\Phi_k^1 = \Phi_k$. Since $0 \in \Phi^{2}_k$ and $\Phi^{2}_k$ is closed under addition and all $\phi_i$'s for $i < k$, it follows that $\Phi_k \subseteq \Phi^{2}_k$. For the induction step, by definition, $\Phi^{m+2}_k$ contains $0$ and is closed under addition and $\phi_i$'s for all $i < k$. Therefore, to show that 
$\Phi_k^{m+1} \subseteq \Phi^{m+2}_k$, it suffices to prove that $\phi_k[\Phi_k^m] \subseteq \Phi^{m+2}_k$. For that, by the induction hypothesis, $\Phi_k^m \subseteq \Phi^{m+1}_k$, which implies $\phi_k[\Phi_k^m] \subseteq \phi_k[\Phi_k^{m+1}]$. By definition, $\phi_k[\Phi_k^{m+1}] \subseteq \Phi^{m+2}_k$. Hence, $\phi_k[\Phi_k^m] \subseteq \Phi^{m+2}_k$. 

For $(ii)$, an easy induction on $m$ shows that $\Phi^m_k \subseteq \Phi_{k+1}$. Therefore, $\bigcup_{m=1}^{\infty} \Phi^{m}_k \subseteq \Phi_{k+1}$. For the converse, i.e., $\Phi_{k+1} \subseteq \bigcup_{m=1}^{\infty} \Phi^{m}_k$, note that $0 \in \bigcup_{m=1}^{\infty} \Phi^{m}_k$ and $\bigcup_{m=1}^{\infty} \Phi^{m}_k$ is closed under addition and any $\phi_i$ for $i \leq k$.
The proof of $(iii)$ is straightforward.
\end{proof}

As expected, $\Phi^m_k$ provides a constructive representation for some set-theoretic ordinals. The following theorem shows that it is the set of all ordinals below $\bm{\phi}_k^m(\bm{0})$.

\begin{theorem}\label{MeaningOfPhi}
Let $k \in \mathbb{N}$ and $m \in \mathbb{N}^{\geq 1}$ be natural numbers. Then:
\begin{itemize}
    \item[$(i)$]
$[\![\alpha]\!] \subset  \bm{\phi}^m_k(\bm{0})$, for any $\alpha \in \Phi^m_k$.
     \item[$(ii)$]
For any $\bm{\alpha} \subset \bm{\phi}^m_k(\bm{0})$, there is $\alpha \in \Phi^m_k$ such that $[\![\alpha]\!]=\bm{\alpha}$.  
\end{itemize}
\end{theorem}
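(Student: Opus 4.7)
The plan is to prove both $(i)$ and $(ii)$ by induction on $m$: an inner structural induction on the formation of $\alpha \in \Phi_k^m$ for $(i)$, and an inner transfinite induction on $\bm{\alpha} \subset \bm{\phi}_k^m(\bm{0})$ for $(ii)$. The external ingredients are Theorem~\ref{TheInterpretationOfPhi} (in the form $[\![\phi_i(\alpha)]\!] = \bm{\phi}_i([\![\alpha]\!])$), Corollary~\ref{ClosureUnderAdditionAndPhi} (the ordinals below $\bm{\phi}_k^m(\bm{0})$ are closed under $\bm{0}$, addition, and $\bm{\phi}_i$ for $i<k$), the unique Veblen normal form in Theorem~\ref{PropertiesOfRealVeblen}, and the strict monotonicity of each $\bm{\phi}_k$ in its argument.

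For $(i)$ at $m=1$, a structural induction on $\alpha \in \Phi_k$ suffices: the base case $[\![0]\!]=\bm{0}\subset\bm{\phi}_k(\bm{0})$ and closure under addition and $\phi_i$ for $i<k$ follow directly from Corollary~\ref{ClosureUnderAdditionAndPhi} combined with Theorem~\ref{TheInterpretationOfPhi}. For the outer step $m \to m+1$, the only new generators of $\Phi_k^{m+1}$ relative to the closure rules are elements of $\phi_k[\Phi_k^m]$; for any $\phi_k(\beta)$ with $\beta \in \Phi_k^m$ the outer hypothesis gives $[\![\beta]\!] \subset \bm{\phi}_k^m(\bm{0})$, and strict monotonicity of $\bm{\phi}_k$ then yields $[\![\phi_k(\beta)]\!] = \bm{\phi}_k([\![\beta]\!]) \subset \bm{\phi}_k^{m+1}(\bm{0})$. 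The remaining closures are Corollary~\ref{ClosureUnderAdditionAndPhi} applied at index $m+1$.

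For $(ii)$, at a fixed $m \geq 1$ I induct on $\bm{\alpha} \subset \bm{\phi}_k^m(\bm{0})$. The case $\bm{\alpha}=\bm{0}$ gives $\alpha=0 \in \Phi_k^m$. Otherwise, Theorem~\ref{PropertiesOfRealVeblen} produces a unique decomposition $\bm{\alpha} = \bm{\phi}_{n_1}(\bm{\beta}_1) + \cdots + \bm{\phi}_{n_l}(\bm{\beta}_l)$ with each $\bm{\phi}_{n_i}(\bm{\beta}_i) \leq \bm{\alpha}$. A quick estimate excludes $n_i > k$: otherwise $\bm{\phi}_{n_i}(\bm{\beta}_i) \supseteq \bm{\phi}_{k+1}(\bm{0}) \supseteq \bm{\phi}_k^m(\bm{0}) > \bm{\alpha}$. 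When $m=1$, the same type of estimate also excludes $n_i=k$. For summands with $n_i < k$, the bound $\bm{\beta}_i \subset \bm{\phi}_{n_i}(\bm{\beta}_i) \leq \bm{\alpha}$ lets me invoke the inner IH to obtain $\beta_i \in \Phi_k^m$, and then $\phi_{n_i}(\beta_i) \in \Phi_k^m$ by closure. For summands with $n_i = k$ (which force $m \geq 2$), the inequality $\bm{\phi}_k(\bm{\beta}_i) \leq \bm{\alpha} < \bm{\phi}_k(\bm{\phi}_k^{m-1}(\bm{0}))$ and strict monotonicity give $\bm{\beta}_i \subset \bm{\phi}_k^{m-1}(\bm{0})$, so the \emph{outer} induction hypothesis on $m$ yields $\beta_i \in \Phi_k^{m-1}$, whence $\phi_k(\beta_i) \in \phi_k[\Phi_k^{m-1}] \subseteq \Phi_k^m$ by the definition of $\Phi_k^m$. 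Summing all $\phi_{n_i}(\beta_i)$ produces the required $\alpha \in \Phi_k^m$.

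The main technical subtlety is exactly this double-induction split in $(ii)$: representing a sub-ordinal that sits under a $\bm{\phi}_k$ requires decreasing $m$ (outer IH), while representing one that sits under $\bm{\phi}_{n_i}$ with $n_i < k$ only decreases $\bm{\alpha}$ (inner IH). The bookkeeping of which IH applies where, together with the elimination of the $n_i>k$ case via $\bm{\phi}_k^m(\bm{0}) \subseteq \bm{\phi}_{k+1}(\bm{0})$, are the only non-routine points; everything else reduces to closure under the Veblen operations.
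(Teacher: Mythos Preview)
Your proposal is correct and follows essentially the same route as the paper: part~$(i)$ via structural induction on $\Phi_k^m$ using Corollary~\ref{ClosureUnderAdditionAndPhi} and strict monotonicity of $\bm{\phi}_k$, and part~$(ii)$ via the Veblen normal form plus the case split $n_i<k$ / $n_i=k$. The only organizational difference is that the paper runs~$(ii)$ as a single transfinite induction on $\bm{\alpha}$ with the statement quantified over all $m$ (so that the $n_i=k$ case invokes the same induction hypothesis at the smaller parameter $m-1$), whereas you make the outer induction on $m$ explicit; the two formulations are equivalent.
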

\begin{proof}
First, recall that $[\![\alpha + \beta]\!]=[\![\alpha]\!]\bm{+}[\![\beta]\!]$ and $[\![\phi_i(\alpha)]\!]=\bm{\phi}_i([\![\alpha]\!])$, for any $\alpha, \beta \in \Omega$ and any $i \in \mathbb{N}$. The latter is proved in Theorem \ref{TheInterpretationOfPhi}. For $(i)$, we use induction on $m$. For $m=1$, by Theorem \ref{TheInterpretationOfPhi} and Corollary \ref{ClosureUnderAdditionAndPhi}, it is clear that the set 
$X=\{\alpha \in \Phi_k \mid [\![\alpha]\!] \subset \bm{\phi}_k(\bm{0})\}$ 
contains $0$ and is closed under addition and all $\phi_i$'s for $i < k$. Therefore, by the definition of $\Phi_k$, we have $X=\Phi_k$ which completes the proof. 
For the induction step, to prove the claim for $\Phi^{m+1}_k$, by Theorem \ref{TheInterpretationOfPhi} and Corollary \ref{ClosureUnderAdditionAndPhi} again, we know that the set 
$Y=\{\alpha \in \Phi^{m+1}_k \mid [\![\alpha]\!] \subset \bm{\phi}^{m+1}_k(\bm{0})\}$ 
contains $0$ and is closed under addition and all $\phi_i$'s for $i < k$. 
Moreover, if $\alpha = \phi_k(\beta)$ with $\beta \in \Phi^m_k$, by the induction hypothesis, we have 
$[\![\beta]\!] \subset \bm{\phi}^m_k(\bm{0})$, which implies 
$[\![\alpha]\!] = \bm{\phi}_k([\![\beta]\!]) \subset \bm{\phi}_k(\bm{\phi}^m_k(\bm{0})) = \bm{\phi}^{m+1}_k(\bm{0})$, using the strict monotonicity of $\bm{\phi}_k$ from Theorem \ref{PropertiesOfRealVeblen}. Therefore, using the definition of $\Phi^{m+1}_k$, we reach $Y=\Phi^{m+1}_k$ which completes the proof.

For $(ii)$, we use transfinite induction on $\bm{\alpha}$. If $\bm{\alpha}=\bm{0}$, as $[\![0]\!]=\bm{0}$ and $0 \in \Phi^m_k$, there is nothing to prove. If $\bm{\alpha} \neq \bm{0}$, by Theorem \ref{PropertiesOfRealVeblen}, $\bm{\alpha}$ has a representation in the form
$\bm{\alpha} = \bm{\phi}_{n_1}(\bm{\beta_1}) \bm{+ \cdots +} \bm{\phi}_{n_m}(\bm{\beta_m})$ where $\bm{\beta_i} \subset \bm{\phi}_{n_i}(\bm{\beta}_i)$, for any $1 \leq i \leq m$. We prove the following claim:\\

\noindent \textbf{Claim.} For any $1 \leq i \leq m$, either $n_i < k$ and $\bm{\beta_i} \subset \bm{\phi}^m_{k}(\bm{0})$ or $n_i=k$ and $\bm{\beta_i} \subset \bm{\phi}^{m-1}_{k}(\bm{0})$. The second case can take place if $m \geq 2$.\\

\noindent To prove the claim, if there is any $1 \leq i \leq m$ such that $n_i \geq k+1$, then by the monotonicity in Theorem \ref{PropertiesOfRealVeblen}, we have $\bm{\alpha} \supseteq \bm{\phi}_{n_i}(\bm{\beta}_i) \supseteq \bm{\phi}_{k+1}(\bm{0})$ which contradicts the assumption $\bm{\alpha} \subset \bm{\phi}^m_{k}(\bm{0})$.
Therefore, $n_i \leq k$, for any $1 \leq i \leq m$. Now, for any fixed $1 \leq i \leq m$, if $n_i < k$, then $\bm{\beta_i} \subset \bm{\phi}_{n_i}(\bm{\beta}_i) \subseteq \bm{\alpha} \subset \bm{\phi}^m_{k}(\bm{0})$.
If $n_i=k$ then $\bm{\beta_i} \subset \bm{\phi}^{m-1}_{k}(\bm{0})$ because otherwise, we have $\bm{\beta_i} \supseteq \bm{\phi}^{m-1}_{k}(\bm{0})$ which by the monotonicity of $\bm{\phi}_k$ implies $\bm{\alpha} \supseteq \bm{\phi}_k(\bm{\beta_i}) \supseteq \bm{\phi}^{m}_{k}(\bm{0})$ which is in contradiction with the assumption $\bm{\alpha} \subset \bm{\phi}^m_k(\bm{0})$. This completes the proof of the claim.

Now, note that $\bm{\beta_i} \subset \bm{\alpha}$ for every $1 \leq i \leq m$. Therefore, using the above claim and the induction hypothesis on the $\bm{\beta_i}$'s, we see that for each $1 \leq i \leq m$, either $n_i < k$ and there exists $\beta_i \in \Phi^{m}_k$ such that $[\![\beta_i]\!] = \bm{\beta_i}$, or $n_i = k$ and there exists $\beta_i \in \Phi^{m-1}_k$ such that $[\![\beta_i]\!] = \bm{\beta_i}$. In either case, $\phi_{n_i}(\beta_i) \in \Phi^m_k$. Define 
$\alpha = \phi_{n_1}(\beta_1) + \cdots + \phi_{n_m}(\beta_m)$.
Hence, $\alpha \in \Phi^m_k$. Since $[\![\alpha]\!] = \bm{\alpha}$, the proof is complete.
\end{proof}

Theorem \ref{MeaningOfPhi} shows that the class $\Phi^m_k$ provides a constructive counterpart to the set of all set-theoretic ordinals below $\bm{\phi}^m_k(\bm{0})$. In particular, for $m=1$, this means that $\Phi_k$ is the constructive counterpart of the set of all ordinals below $\bm{\phi}_k(\bm{0})$ and, hence, $\Phi_{\omega}$ corresponds to the ordinals below $\bm{\phi}_{\bm{\omega}}(\bm{0})$.

For $k=0$ and $m=3$, the class $\Phi^m_k$ provides a constructive counterpart of the set of all set-theoretic ordinals below
$
\bm{\phi}^3_0(\bm{0})=\bm{\omega}^{\bm{\omega}^{\bm{\omega}^{\bm{0}}}}=\bm{\omega}^{\bm{\omega}}.
$
To obtain a counterpart for ordinals below $\bm{\omega}^k$, we need to introduce another family of classes of constructive ordinals:

\begin{definition}\label{def:Psi-mk}
Let $k \geq 1$ be a natural number. Define the set $\Psi_{\omega} \subseteq \Omega$ (resp.\ $\Psi_k$) of constructive ordinals as the closure of
$\{0\} \cup \{\omega^p \mid p \in \mathbb{N}\}$ (resp.\ $\{0\} \cup \{\omega^p \mid p < k\}$) under addition. Moreover, for any $m \geq 1$, define $\Psi^m_k$ as the set
\[
\Psi^m_k = \{0\} \cup \left\{\sum_{i=1}^m \omega^{p_i} c_i + \sum_{j=0}^{k-1} \omega^{k-1-j} d_j \,\middle|\, \forall i \in \{1, \ldots, m\} \, (p_i < k)\right\}
\]
of ordinals, where $c_i$'s and $d_j$'s are finite ordinals.
\end{definition}
In words, an element of $\Psi_k$ is either $0$ or a sum of powers of $\omega$, each with an exponent less than $k$.  
An element of $\Psi^m_k$ is either $0$ or a sum of $m+k$ terms, where each term is a power of $\omega$ with a finite ordinal coefficient and an exponent below $k$.  
For the first $m$ terms, the exponents of $\omega$ may be any ordinals less than $k$, while the remaining $k$ terms form a strictly decreasing sequence of powers whose largest exponent is at most $k-1$.  
The motivation for this definition is to obtain a covering family of subclasses of $\Psi_k$, where each ordinal is expressed as a sum of a \emph{bounded} number of $\omega$-terms.

\begin{lemma}\label{MonotonicityOfPsi}
Let $k, m \geq 1$ be natural numbers. Then,
\begin{itemize}
    \item[$(i)$] 
$\Psi^m_k \subseteq \Psi^{m+1}_k$ and $\Psi_k=\bigcup_{m=1}^{\infty}\Psi^m_{k}$. 
    \item[$(ii)$] 
$\Psi_k \subseteq \Psi_{k+1}$ and
$\Psi_{\omega}=\Phi^3_0=\bigcup_{k=1}^{\infty}\Psi_{k}$. Hence, $\Psi_{\omega} \subseteq \Phi_1$.
\end{itemize}
\end{lemma}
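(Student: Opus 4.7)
The proof is essentially definition-chasing: each claim reduces to an explicit rewriting of sums of $\omega$-terms. For (i), the inclusion $\Psi^m_k \subseteq \Psi^{m+1}_k$ is obtained by padding: given a nonzero $\alpha = \sum_{i=1}^{m} \omega^{p_i} c_i + \sum_{j=0}^{k-1} \omega^{k-1-j} d_j \in \Psi^m_k$, I insert an extra leading summand with zero coefficient, rewriting $\alpha$ as $\omega^{p_1} \cdot 0 + \sum_{i=1}^{m} \omega^{p_i} c_i + \sum_{j=0}^{k-1} \omega^{k-1-j} d_j$. Since $\omega^{p_1} \cdot 0 = 0$ and $0 + \beta = \beta$ by Lemma \ref{AdditionProp}(i), this is the same constructive ordinal and witnesses $\alpha \in \Psi^{m+1}_k$; the zero case is immediate since $0$ lies in every $\Psi^m_k$ by definition.

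For the identity $\Psi_k = \bigcup_{m \geq 1} \Psi^m_k$, both directions are elementary. For $\supseteq$, I observe that each summand $\omega^{p} c$ with $c \in \mathbb{N}$ equals the $c$-fold sum $\omega^p + \cdots + \omega^p$ (by definition of ordinal multiplication), and hence lies in $\Psi_k$ whenever $p < k$; closure of $\Psi_k$ under addition finishes the inclusion, after verifying that the ``appendix'' $\sum_{j=0}^{k-1} \omega^{k-1-j} d_j$ is also a finite sum of $\omega^{q}$'s with $q<k$. For $\subseteq$, I argue by induction on the construction of $\Psi_k$ that every $\alpha \in \Psi_k$ is either $0$ or a finite sum $\omega^{q_1} + \cdots + \omega^{q_n}$ with each $q_i < k$; such an $\alpha$ fits into $\Psi^n_k$ by setting $c_i = 1$ and all $d_j = 0$, since the appendix then collapses to $0$ by Lemma \ref{AdditionProp}(i) and associativity.

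For (ii), $\Psi_k \subseteq \Psi_{k+1}$ is immediate because any exponent smaller than $k$ is also smaller than $k+1$, and the generating sets of the two classes are nested. The identity $\Psi_\omega = \bigcup_{k \geq 1} \Psi_k$ follows similarly: each element of $\Psi_\omega$ is a finite sum of $\omega^{p_i}$'s, so choosing $k$ larger than all $p_i$ places the sum in $\Psi_k$. The key step is $\Psi_\omega = \Phi^3_0$, which I obtain by unfolding the inductive definition of $\Phi^m_0$. Since there is no $\phi_i$ with $i < 0$, the set $\Phi^1_0 = \Phi_0$ is the closure of $\{0\}$ under addition, which is just $\{0\}$; next $\Phi^2_0$ is the closure of $\phi_0[\Phi^1_0] \cup \{0\} = \{0,1\}$ under addition, yielding all finite ordinals; finally $\Phi^3_0$ is the closure of $\phi_0[\Phi^2_0] \cup \{0\} = \{0\} \cup \{\omega^p \mid p \in \mathbb{N}\}$ under addition, which is by definition exactly $\Psi_\omega$. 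The concluding inclusion $\Psi_\omega \subseteq \Phi_1$ then follows from $\Phi^3_0 \subseteq \bigcup_{m \geq 1} \Phi^m_0 = \Phi_1$ by Lemma \ref{MonotonicityOfPhi}(ii).

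The proof contains no deep content, but the main subtlety is that constructive ordinal equality is strictly finer than set-theoretic equality, so every rewriting must be justified at the level of trees. This forces me to rely only on the identities available in Lemma \ref{AdditionProp} (the left identity $0 + \beta = \beta$, the absorption $\alpha \cdot 0 = 0$, associativity of addition, and the definition of multiplication by a finite ordinal as iterated addition), rather than on Cantor normal form arguments that are valid for set-theoretic ordinals but not automatically for their constructive counterparts.
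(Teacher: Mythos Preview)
Your proof is correct and is precisely the kind of detailed verification the paper has in mind when it writes ``The proof is straightforward.'' Your care with constructive-ordinal identities (using only $0+\beta=\beta$, $\alpha\cdot 0=0$, $\alpha\cdot 1=\alpha$, and associativity from Lemma~\ref{AdditionProp}) is well placed, and your unfolding of $\Phi^1_0=\{0\}$, $\Phi^2_0=\{o(n)\mid n\in\mathbb{N}\}$, $\Phi^3_0=\Psi_\omega$ is exactly how the identity $\Psi_\omega=\Phi^3_0$ is meant to be read off from Definition~\ref{Phi-sets}.
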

\begin{proof}
The proof is straightforward.
\end{proof}

The following theorem shows that the class $\Psi_k$ provides a constructive representation for the ordinals below $\bm{\omega}^k$.

\begin{theorem}\label{MeaningOfPsi}
Let $k \in \mathbb{N}^{\geq 1}$ be a natural number. Then:
\begin{itemize}
    \item[$(i)$]
$[\![\alpha]\!] \subset \bm{\omega}^{\bm{k}}$, for any $\alpha \in \Psi_k$.
     \item[$(ii)$]
For any $\bm{\alpha} \subset \bm{\omega}^{k}$, there is $\alpha \in \Psi_k$ such that $[\![\alpha]\!]=\bm{\alpha}$.  
\end{itemize}
\end{theorem}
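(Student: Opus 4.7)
The strategy is direct: part $(i)$ follows by structural induction along the inductive definition of $\Psi_k$, while part $(ii)$ follows from the Cantor normal form for ordinals below $\bm{\omega}^{\bm{k}}$, which is already available from the last bullet of Theorem~\ref{PropertiesOfRealVeblen}. Recall that $[\![\alpha+\beta]\!] = [\![\alpha]\!]\bm{+}[\![\beta]\!]$ and $[\![\omega^p]\!] = \bm{\omega}^{\bm{p}}$, which will be used throughout.

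For $(i)$, I would argue by induction on the construction of $\alpha \in \Psi_k$. If $\alpha = 0$, then $[\![\alpha]\!] = \bm{0} \subset \bm{\omega}^{\bm{k}}$ since $k\geq 1$. If $\alpha = \omega^p$ with $p<k$, then $[\![\alpha]\!] = \bm{\omega}^{\bm{p}} \subset \bm{\omega}^{\bm{k}}$ by strict monotonicity of set-theoretic exponentiation. For the inductive step, suppose $[\![\alpha]\!], [\![\beta]\!] \subset \bm{\omega}^{\bm{k}}$. The key fact is that $\bm{\omega}^{\bm{k}}$ is additively closed for $k \geq 1$: if both summands have Cantor normal forms with all exponents below $\bm{k}$, then so does their sum (the new exponents are at most the maximum of the old ones). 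Hence $[\![\alpha+\beta]\!] = [\![\alpha]\!]\bm{+}[\![\beta]\!] \subset \bm{\omega}^{\bm{k}}$.

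For $(ii)$, if $\bm{\alpha} = \bm{0}$ take $\alpha = 0 \in \Psi_k$. Otherwise, since $\bm{0} \subset \bm{\alpha} \subset \bm{\omega}^{\bm{k}}$, the last bullet of Theorem~\ref{PropertiesOfRealVeblen} yields a unique representation
\[
\bm{\alpha} = \bm{\omega}^{\bm{p_1}} \bm{+} \cdots \bm{+} \bm{\omega}^{\bm{p_m}}, \qquad k > p_1 \geq \cdots \geq p_m.
\]
Set $\alpha := \omega^{p_1} + \cdots + \omega^{p_m}$. Since each $p_i < k$, every $\omega^{p_i}$ is one of the generators of $\Psi_k$, and by closure under addition we obtain $\alpha \in \Psi_k$. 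Finally,
\[
[\![\alpha]\!] = [\![\omega^{p_1}]\!]\bm{+}\cdots\bm{+}[\![\omega^{p_m}]\!] = \bm{\omega}^{\bm{p_1}}\bm{+}\cdots\bm{+}\bm{\omega}^{\bm{p_m}} = \bm{\alpha}.
\]

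There is no real obstacle here: the whole argument reduces to invoking the Cantor normal form theorem cited in Theorem~\ref{PropertiesOfRealVeblen} and the elementary additive closure of $\bm{\omega}^{\bm{k}}$ (which in fact follows from the same Cantor normal form statement). The only place one needs to be slightly careful is the assumption $k \geq 1$, which is precisely what makes $\bm{\omega}^{\bm{k}}$ additively closed and which is assumed in the statement.
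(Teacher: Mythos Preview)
Your proposal is correct and follows essentially the same approach as the paper. The only cosmetic difference is how the additive closure of $\bm{\omega}^{\bm{k}}$ is justified: you sketch it via Cantor normal form, whereas the paper observes $\bm{\omega}^{\bm{k}} = \bm{\phi}_0(\bm{k})$ and invokes the fifth bullet of Theorem~\ref{PropertiesOfRealVeblen} directly; part~$(ii)$ is identical in both.
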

\begin{proof}
First, recall that $[\![\alpha + \beta]\!] = [\![\alpha]\!] \bm{+} [\![\beta]\!]$ and $[\![\omega^{\alpha}]\!] = \bm{\omega}^{[\![\alpha]\!]}$, for any $\alpha, \beta \in \Omega$.  
For $(i)$, consider the set
$
X = \{\alpha \in \Psi_k \mid [\![\alpha]\!] \subset \bm{\omega}^k\}.
$
Since $[\![0]\!] = \bm{0}$, it is clear that $0 \in X$. Moreover, as $\bm{\omega}^k = \bm{\phi}_0(\bm{k})$, by Theorem~\ref{PropertiesOfRealVeblen}, the set $X$ is closed under addition. Hence, by the definition of $\Psi_k$, we obtain $X = \Psi_k$, which completes the proof of $(i)$.  
For $(ii)$, if $\bm{\alpha} = \bm{0}$ it suffices to take $\alpha = 0$. Otherwise, by Theorem~\ref{PropertiesOfRealVeblen}, $\bm{\alpha}$ can be represented as
$
\bm{\alpha} = \bm{\omega}^{p_1} \bm{+ \cdots +} \bm{\omega}^{p_m},
$
where $p_i < k$ for all $1 \leq i \leq m$. Now, define $\alpha = \omega^{p_1} + \cdots + \omega^{p_m}$. Clearly, $\alpha \in \Psi_k$ and $[\![\alpha]\!] = \bm{\alpha}$.
\end{proof}

\subsection{Unique representation}

So far, we have provided a constructive representation for some set-theoretic ordinals. The next natural point to consider is the uniqueness of such representations. Unfortunately, as already observed, a set-theoretic ordinal can have many constructive representations. For instance, 
\[
[\![\phi_0(0)+\phi_0(\phi_0(0))]\!]=[\![1+\omega]\!]=\bm{\omega}=[\![\omega]\!]=[\![\phi_0(\phi_0(0))]\!],
\]
while $\phi_0(0)+\phi_0(\phi_0(0))=1+\omega \neq \omega=\phi_0(\phi_0(0))$. However, it is possible to prove another uniqueness result, which states that if an ordinal is constructible by addition and $\phi_k$'s from $0$, then this construction is unique. To prove this uniqueness theorem, we begin with an easy lemma.

\begin{lemma}\label{phiIsNonZero}
Let $\alpha \in \Omega$ be an ordinal and $k \in \mathbb{N}$ a natural number. Then, $\phi_k(\alpha)$ is a limit ordinal iff either $k \neq 0$ or $\alpha \neq 0$. Consequently, $\phi_k(\alpha) \neq 0$.
\end{lemma}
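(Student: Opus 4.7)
The plan is to reduce the statement to a straightforward case analysis on $k$, exploiting the shape of the defining clauses of the constructive Veblen functions (Definition~\ref{dfn:VeblenHierarchy}).

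First I would handle $k = 0$ separately, since $\phi_0(\alpha) = \omega^{\alpha}$ is defined by ordinal exponentiation rather than by one of the clauses that explicitly produce a limit. For this, I would split on the form of $\alpha$ using the recursive definitions of exponentiation and multiplication: if $\alpha = 0$, then $\omega^0 = 1$, which is a successor and in particular not a limit; if $\alpha = \beta + 1$, then $\omega^{\beta+1} = \omega^{\beta} \cdot \omega = \omega^{\beta} \cdot \langle o(i+1) \rangle_i = \langle \omega^{\beta} \cdot o(i+1) \rangle_i$, which is a limit ordinal by construction; and if $\alpha = \langle \alpha_i \rangle$, then $\omega^{\alpha} = \langle \omega^{\alpha_i} \rangle$, again a limit. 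This already establishes the claim for $k=0$, and moreover shows $\phi_0(0) = 1 \neq 0$.

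Next I would treat the case $k \geq 1$, writing $k = k'+1$, and show that $\phi_{k'+1}(\alpha)$ is a limit for every $\alpha \in \Omega$. This is immediate from Definition~\ref{dfn:VeblenHierarchy} by inspection of the three defining clauses: $\phi_{k'+1}(0) = \langle \phi_{k'}^{(i)}(0) \rangle_i$, $\phi_{k'+1}(\beta+1) = \langle \phi_{k'}^{(i)}(\phi_{k'+1}(\beta)+1) \rangle_i$, and $\phi_{k'+1}(\langle \alpha_i \rangle_i) = \langle \phi_{k'+1}(\alpha_i) \rangle_i$, each of which is syntactically a tuple $\langle - \rangle_i$, hence a limit ordinal.

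Combining the two cases, $\phi_k(\alpha)$ fails to be a limit exactly when $k = 0$ and $\alpha = 0$, which is the contrapositive of the stated iff. The ``consequently'' part is then immediate: if $\phi_k(\alpha)$ is a limit it is nonzero, and the one remaining case is $\phi_0(0) = 1 \neq 0$. I do not anticipate a real obstacle here; the only mild subtlety is to be careful that the successor-case computation for $\omega^{\beta+1}$ unfolds to a genuine $\langle - \rangle_i$ expression rather than just a term that happens to equal a limit, since we are working with constructive ordinals as trees and limit-ness is a syntactic property of the top constructor.
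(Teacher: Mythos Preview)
Your proposal is correct and follows essentially the same approach as the paper's proof: a case split on $k$ (with a sub-split on the form of $\alpha$ when $k=0$) followed by direct inspection of the defining clauses in Definition~\ref{dfn:VeblenHierarchy}. You are slightly more explicit in unfolding $\omega^{\beta+1}$ to a $\langle-\rangle_i$ expression, whereas the paper simply notes that $\phi_0(\alpha')\cdot\omega$ is a limit, but the argument is otherwise identical.
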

\begin{proof}
If $k=0$ and $\alpha=0$, we have $\phi_k(\alpha)=1$, which is not a limit.  
Otherwise, if $k=0$ and $\alpha = \alpha' + 1$ is a successor, $\phi_0(\alpha) = \phi_0(\alpha' + 1) = \phi_0(\alpha') \cdot \omega$ is a limit, and if $\alpha = \langle \alpha_i \rangle$ is a limit, then $\phi_0(\alpha) = \langle \phi_0(\alpha_i) \rangle$ is also a limit.  
For $k > 0$, there is $l \in \mathbb{N}$ such that $k = l + 1$. By looking into the definition, it is clear that $\phi_{l+1}(\alpha)$ is always a limit. This proves the first part. For the second part, by the first part, $\phi_k(\alpha)$ is either a limit, or $k = 0$ and $\alpha = 0$, which implies $\phi_k(\alpha) = 1$. In both cases, $\phi_k(\alpha) \neq 0$.
\end{proof}

The following lemma is the seed of the uniqueness of construction we claimed earlier. The proof involves extensive case-checking. However, since the claim only holds for constructive ordinals as opposed to set-theoretic ones, and hence may be counter-intuitive at first glance, we will provide a detailed and complete proof.

\begin{lemma}\label{UniquenessLemma}
For any $\alpha, \beta, \gamma, \delta \in \Omega$ and any $k, l \in \mathbb{N}$, if $\alpha+\phi_k(\beta)=\gamma+\phi_l(\delta)$, then $k=l$, $\alpha=\gamma$, and $\beta=\delta$. 
\end{lemma}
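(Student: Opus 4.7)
The plan is to argue by structural case analysis on the forms of $\phi_k(\beta)$ and $\phi_l(\delta)$, combined with an induction on the subtree complexity of $\beta$ and $\delta$.

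By Lemma~\ref{phiIsNonZero}, each of $\phi_k(\beta)$ and $\phi_l(\delta)$ is nonzero, and equals $1$ precisely when its subscript is $0$ and its argument is $0$; otherwise it is a limit. By the recursive definition of addition, $\alpha + 1$ is a successor tree whereas $\alpha + \langle\mu_i\rangle_i = \langle\alpha + \mu_i\rangle_i$ is a limit tree. Since a successor tree cannot equal a limit tree, the hypothesis $\alpha + \phi_k(\beta) = \gamma + \phi_l(\delta)$ forces $\phi_k(\beta)$ and $\phi_l(\delta)$ to share the same top-level shape. If both equal $1$, then $k=l=0$ and $\beta=\delta=0$, and $\alpha+1=\gamma+1$ as trees gives $\alpha=\gamma$ by uniqueness of the successor's predecessor. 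Otherwise both are limits; writing $\phi_k(\beta) = \langle\mu_i\rangle_i$ and $\phi_l(\delta) = \langle\nu_i\rangle_i$ as prescribed by Definition~\ref{dfn:VeblenHierarchy}, the tree equality $\langle\alpha+\mu_i\rangle = \langle\gamma+\nu_i\rangle$ yields the pointwise equalities $\alpha+\mu_i = \gamma+\nu_i$ for every $i\in\NatSet$.

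The heart of the argument is then a finite case analysis on the pair of forms. Each of $(k,\beta)$ and $(l,\delta)$ falls into one of five subcategories: (i) $k=0$ with $\beta$ a successor, giving $\mu_i = \omega^{\beta'}\cdot o(i+1)$; (ii) $k=0$ with $\beta$ a limit, giving $\mu_i = \omega^{\beta_i}$; (iii) $k\geq 1$ with $\beta=0$, giving $\mu_i = \phi_{k-1}^{(i)}(0)$; (iv) $k\geq 1$ with $\beta$ a successor, giving $\mu_i = \phi_{k-1}^{(i)}(\phi_k(\beta')+1)$; (v) $k\geq 1$ with $\beta$ a limit, giving $\mu_i = \phi_k(\beta_i)$. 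For matching subcategories on the two sides, the pointwise equalities at low indices reduce to instances of the claim on strictly smaller ordinals, to which the induction hypothesis applies. For example, in subcase (v) on both sides one obtains $\alpha + \phi_k(\beta_i) = \gamma + \phi_l(\delta_i)$ for every $i$, and the induction hypothesis yields $k=l$, $\alpha=\gamma$, and $\beta_i=\delta_i$, whence $\beta=\delta$. For mismatched subcategories, a careful inspection of the first few entries $\mu_0,\mu_1,\nu_0,\nu_1$ rules them out: $\mu_0 = 0$ occurs exclusively in (iii); $\mu_0$ is a successor only in (iv) or in the degenerate entries of (i) and (ii) where the relevant exponent vanishes; and otherwise $\mu_0$ is a limit.

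The main obstacle will be the delicate combinatorial matching required in the mixed cases, such as pairing form (i) against form (ii), where one must compare the sequence $\omega^{\beta'}\cdot o(i+1)$ with the raw power sequence $\omega^{\delta_i}$. There one typically inspects $\mu_0=\omega^{\beta'}$ against $\nu_0=\omega^{\delta_0}$ and $\mu_1=\omega^{\beta'}+\omega^{\beta'}$ against $\nu_1=\omega^{\delta_1}$, using Lemma~\ref{AdditionProp} (especially associativity and expansivity of addition) to manipulate the resulting identities and extract a contradiction or reduce to the induction hypothesis. The verification is accordingly lengthy, but each subcase is handled by a short calculation once the defining sequences have been unfolded.
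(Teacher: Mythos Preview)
Your overall strategy matches the paper's: unfold the limit definitions of $\phi_k(\beta)$ and $\phi_l(\delta)$, obtain the pointwise equalities $\alpha+\mu_i=\gamma+\nu_i$, and feed these back into an induction hypothesis. The five-fold case split is also essentially the paper's organization (the paper writes it as $4\times 9$ cases, but the content is the same).

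However, the induction parameter you propose---``subtree complexity of $\beta$ and $\delta$''---does not carry the argument. Consider the matched case (iii)$\times$(iii): $k,l\geq 1$ with $\beta=\delta=0$. Here $\mu_i=\phi_{k-1}^{(i)}(0)$ and $\nu_i=\phi_{l-1}^{(i)}(0)$, and at $i=1$ you must apply the lemma to $\alpha+\phi_{k-1}(0)=\gamma+\phi_{l-1}(0)$. The new arguments are again $0$ and $0$, so there is no descent in $\beta,\delta$. Worse, in the mixed case (iii)$\times$(iv) one must (as the paper does) apply the IH at $i=1$ to $\alpha+\phi_{k-1}(0)=\gamma+\phi_{l-1}(\phi_l(\delta')+1)$; the right-hand argument $\phi_l(\delta')+1$ is not a subtree of $\delta=\delta'+1$ at all. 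The paper resolves this by inducting on $\rho=\alpha+\phi_k(\beta)$ itself: since each $\mu_i$ is a component of the limit $\phi_k(\beta)$, we have $\alpha+\mu_i\prec\rho$ by Lemma~\ref{AdditionProp}, so every pointwise equality is a strictly smaller instance regardless of how the new $\beta$- and $\delta$-arguments compare to the originals.

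A secondary point: your heuristic for ruling out mismatched forms by inspecting the shape of $\mu_0$ alone does not work directly, because the pointwise equality reads $\alpha+\mu_0=\gamma+\nu_0$ with unknown $\alpha,\gamma$. For instance, $\mu_0=0$ (case (iii)) against $\nu_0\neq 0$ yields only $\alpha=\gamma+\nu_0$, which is no contradiction; one must pass to higher indices and invoke the IH (again on $\rho$) to derive an identity of the form $\alpha=\alpha+\phi_j(\cdot)$, contradicting Lemma~\ref{AdditionProp}(iii) and Lemma~\ref{phiIsNonZero}. Once you switch to induction on $\rho$, the remainder of your plan goes through exactly as in the paper.
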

\begin{proof}

By induction on $\rho \in \Omega$, we prove that for any $\alpha, \beta, \gamma, \delta \in \Omega$ and any $k, l \in \mathbb{N}$, if $\rho = \alpha + \phi_k(\beta) = \gamma + \phi_l(\delta)$, then $k = l$, $\alpha = \gamma$, and $\beta = \delta$.  
Assume the claim holds for all ordinals below $\rho$, and let $\rho = \alpha + \phi_k(\beta) = \gamma + \phi_l(\delta)$.  
To prove $k=l$, $\alpha=\gamma$, and $\beta=\delta$, there are four cases to consider, depending on whether $k$ or $l$ is zero or not:

\item[] I. 
If $k = l = 0$, we need to prove that $\alpha = \gamma$ and $\beta = \delta$.  
There are nine cases to consider, depending on whether $\beta$ and $\delta$ are zero, successors, or limits:

\item[] $\bullet$ If $\beta=\delta=0$, then $\alpha+1=\gamma+1$ which implies that $\alpha=\gamma$.

\item[] $\bullet$ If $\beta = 0$ and $\delta$ is non-zero, then since $\rho = \alpha + 1$, we know that $\rho$ is a successor.  
However, as $\rho = \gamma + \phi_0(\delta)$ and $\delta$ is non-zero, by Lemma \ref{phiIsNonZero}, $\phi_0(\delta)$ and hence $\rho$ is a limit, which is a contradiction.

\item[] $\bullet$ If $\beta = \beta' + 1$ is a successor and $\delta = 0$, the proof is similar to the previous case.

\item[] $\bullet$ If $\beta = \beta' + 1$ and $\delta = \delta' + 1$ are successors, we have $\rho = \alpha + \phi_0(\beta' + 1) = \alpha + \phi_0(\beta') \cdot \omega = \langle \alpha + \phi_0(\beta')(i + 1) \rangle_i$.  
Similarly, $\rho = \langle \gamma + \phi_0(\delta')(i + 1) \rangle_i$.  
This is an equality between two sequences. Therefore, for $i = 0$, we reach $\alpha + \phi_0(\beta') = \gamma + \phi_0(\delta')$.  
Since $\alpha + \phi_0(\beta') \prec \rho$, by the induction hypothesis, we have $\alpha = \gamma$ and $\beta' = \delta'$, which implies $\beta = \delta$.

\item[] $\bullet$ If $\beta = \beta' + 1$ is a successor and $\delta = \langle \delta_i \rangle$ is a limit, then $\rho = \alpha + \phi_0(\beta') \cdot \omega = \langle \alpha + \phi_0(\beta')(i + 1) \rangle_i$.  
On the other hand, $\rho = \langle \gamma + \phi_0(\delta_i) \rangle_i$.  
For $i = 0$, we reach $\alpha + \phi_0(\beta') = \gamma + \phi_0(\delta_0)$.  
Since $\alpha + \phi_0(\beta') \prec \rho$, by the induction hypothesis, we have $\alpha = \gamma$.  
Then, for $i = 1$, we have $\alpha + \phi_0(\beta') + \phi_0(\beta') = \gamma + \phi_0(\delta_1)$.  
Again, by the induction hypothesis, we get $\alpha + \phi_0(\beta') = \gamma$.  
Therefore, $\alpha = \alpha + \phi_0(\beta')$. However, by Lemma \ref{phiIsNonZero}, we have $\phi_0(\beta') \neq 0$. Therefore, by Lemma \ref{AdditionProp}, we reach $\alpha \prec \alpha + \phi_0(\beta')$, which is a contradiction.

\item[] $\bullet$ If $\beta = \langle \beta_i \rangle$ is a limit and $\delta$ is either zero or a successor, the case is similar to the previous ones.  
If $\delta = \langle \delta_i \rangle$ is a limit, then  
$\rho = \langle \alpha + \phi_0(\beta_i) \rangle_i = \langle \gamma + \phi_0(\delta_i) \rangle_i$.  
Hence, $\alpha + \phi_0(\beta_i) = \gamma + \phi_0(\delta_i)$, for any $i \in \mathbb{N}$.  
Therefore, as $\alpha + \phi_0(\beta_i) \prec \rho$, by the induction hypothesis, we have $\alpha = \gamma$ and $\beta_i = \delta_i$, for any $i \in \mathbb{N}$.  
Hence, $\langle \beta_i \rangle = \langle \delta_i \rangle$.\\

\noindent II. If $k=0$ and $l=n+1$, for some $n \in \mathbb{N}$, we must reach a contradiction.  
There are again nine cases to consider depending if $\beta$ and $\delta$ are zero, a successor or a limit:

\item[] $\bullet$ If $\beta=0$, then $\rho=\alpha+\phi_0(0)$ is a successor. However, by Lemma \ref{phiIsNonZero}, $\phi_{n+1}(\delta)$ and hence $\rho=\gamma+\phi_{n+1}(\delta)$ is a limit, which is a contradiction.

\item[] $\bullet$ If $\beta=\beta'+1$ is a successor and $\delta=0$, we have $\alpha+\phi_0(\beta'+1)=\gamma+\phi_{n+1}(0)$. Therefore,  
$\langle \alpha+\phi_0(\beta')(i+1) \rangle_i = \langle \gamma + \phi_n^i(0) \rangle_i$.  
Then, for $i=1$, we have  
$\alpha + \phi_0(\beta') + \phi_0(\beta') = \gamma + \phi_n(0)$,  
which by the induction hypothesis implies $n=0$ and $\beta' = 0$.  
For $i=2$, we have  
$\alpha + \phi_0(\beta') + \phi_0(\beta') + \phi_0(\beta') = \gamma + \phi_n(\phi_n(0))$,  
which by the induction hypothesis implies $\beta' = \phi_n(0)$.  
Therefore, $0 = \beta' = \phi_0(0) = 1$, which is impossible.

\item[] $\bullet$ If $\beta=\beta'+1$ and $\delta=\delta'+1$ are successors, we have  
$\alpha+\phi_0(\beta'+1) = \gamma + \phi_{n+1}(\delta'+1)$.   
Therefore,  
$\langle \alpha+\phi_0(\beta')(i+1) \rangle_i = \langle \gamma + \phi_n^i(\phi_{n+1}(\delta') + 1) \rangle_i$.  
For $i=0$, we have  
$\alpha + \phi_0(\beta') = \gamma + \phi_{n+1}(\delta') + 1$.  
Therefore, $\beta' = 0$ by the induction hypothesis.  
For $i=1$, we have  
$\alpha + \phi_0(\beta') + \phi_0(\beta') = \gamma + \phi_n(\phi_{n+1}(\delta') + 1)$,  
which by the induction hypothesis implies $\beta' = \phi_{n+1}(\delta') + 1$, which is a contradiction.

\item[] $\bullet$ If $\beta=\beta'+1$ is a successor and $\delta=\langle \delta_i \rangle$ is a limit, we have  
$\alpha+\phi_0(\beta'+1) = \gamma + \phi_{n+1}(\langle \delta_i \rangle)$.  
Therefore,  
$\langle \alpha+\phi_0(\beta')(i+1) \rangle_i = \langle \gamma + \phi_{n+1}(\delta_i) \rangle_i$.  
For $i=0$, we have  
$\alpha + \phi_0(\beta') = \gamma + \phi_{n+1}(\delta_0)$.  
By the induction hypothesis, this implies $n+1=0$ which is a contradiction.

\item[] $\bullet$ If $\beta=\langle \beta_i \rangle$ is a limit and $\delta=0$, we have $\alpha+\phi_0(\langle \beta_i \rangle)=\gamma+\phi_{n+1}(0)$. Therefore, $\langle \alpha+\phi_0(\beta_i) \rangle_i=\langle \gamma+\phi^i_{n}(0) \rangle_i$. For $i=0$, we have $\alpha+\phi_0(\beta_0)=\gamma$. For $i=1$, we have $\alpha+\phi_0(\beta_1)=\gamma+\phi_{n}(0)$ which by the induction hypothesis implies $\alpha=\gamma$. Therefore, $\alpha=\alpha+\phi_0(\beta_0)$. However, by Lemma \ref{phiIsNonZero}, we have $\phi_0(\beta_0) \neq 0$. Therefore, by Lemma \ref{AdditionProp}, we reach $\alpha \prec \alpha + \phi_0(\beta_0)$, which is a contradiction.

\item[] $\bullet$ If $\beta=\langle \beta_i \rangle$ is a limit and $\delta=\delta'+1$ is a successor, we have $\alpha+\phi_0(\langle \beta_i \rangle)=\gamma+\phi_{n+1}(\delta'+1)$. Therefore, $\langle \alpha+\phi_0(\beta_i) \rangle_i=\langle \gamma+\phi^i_{n}(\phi_{n+1}(\delta')+1) \rangle_i$. For $i=0$, we have $\alpha+\phi_0(\beta_0) = \gamma+\phi_{n+1}(\delta')+1$. Therefore, by the induction hypothesis, $\alpha = \gamma+\phi_{n+1}(\delta')$. For $i=1$, we get $\alpha+\phi_0(\beta_1) = \gamma+\phi_n(\phi_{n+1}(\delta')+1)$. Therefore, by the induction hypothesis, $\alpha = \gamma$. Hence, $\alpha = \alpha+\phi_{n+1}(\delta')$ which is impossible with a similar reasoning as in the previous case.

\item[] $\bullet$ If $\beta=\langle \beta_i \rangle$ and $\delta=\langle \delta_i \rangle$ are limits, we have $\alpha+\phi_0(\langle \beta_i \rangle)=\gamma+\phi_{n+1}(\langle \delta_i \rangle)$. Therefore, $\langle \alpha+\phi_0(\beta_i) \rangle_i=\langle \gamma+\phi_{n+1}(\delta_i) \rangle_i$. For $i=0$, we have $\alpha+\phi_0(\beta_0) = \gamma+\phi_{n+1}(\delta_0)$ which by the induction hypothesis implies $n+1=0$ which is impossible.\\

\noindent III. The case $k>0$ and $l=0$ is identical to the case II by symmetry.\\

\noindent IV. If $k=m+1$ and $l=n+1$, for some $m, n \in \mathbb{N}$, there are again nine cases to consider depending if $\beta$ and $\delta$ are zero, a successor or a limit:  

\item[] $\bullet$ If $\beta=\delta=0$, we have $\alpha+\phi_{m+1}(0)=\gamma+\phi_{n+1}(0)$ which implies $\langle \alpha+\phi_m^i(0) \rangle_i=\langle \gamma+\phi_n^i(0) \rangle_i$. For $i=1$, we have $\alpha+\phi_m(0)=\gamma+\phi_n(0)$. As $\alpha+\phi_m(0) \prec \rho$, by the induction hypothesis we have $\alpha=\gamma$ and $m=n$ which implies $k=l$.

\item[] $\bullet$ If $\beta=0$ and $\delta=\delta'+1$ is a successor, we have $\alpha+\phi_{m+1}(0)=\gamma+\phi_{n+1}(\delta'+1)$ which implies $\langle \alpha+\phi_m^i(0) \rangle_i=\langle \gamma+\phi_n^i(\phi_{n+1}(\delta')+1) \rangle_i$.
For $i=1$, we reach $\alpha+\phi_m(0)=\gamma+\phi_n(\phi_{n+1}(\delta')+1)$ which by the induction hypothesis implies $0=\phi_{n+1}(\delta')+1$ which is a contradiction.

\item[] $\bullet$ If $\beta = 0$ and $\delta = \langle \delta_i \rangle_i$ is a limit, then we have  
$\alpha + \phi_{m+1}(0) = \gamma + \phi_{n+1}(\langle \delta_i \rangle)$,  
which implies that  
$\langle \alpha + \phi_m^i(0) \rangle_i = \langle \gamma + \phi_{n+1}(\delta_i) \rangle_i$.  
For $i = 0$, this implies $\alpha = \gamma + \phi_{n+1}(\delta_0)$. 
For $i = 1$, we obtain  
$\alpha + \phi_m(0) = \gamma + \phi_{n+1}(\delta_1)$.  
By the induction hypothesis, it follows that $\alpha = \gamma$, which further implies  
$\alpha = \alpha + \phi_{n+1}(\delta_0)$.  
However, this is a contradiction, using a similar argument we employed in previous cases.

\item[] $\bullet$ If $\beta = \beta' + 1$ is a successor and $\delta = 0$, the case is similar to the previous ones, by symmetry between $\beta$ and $\delta$.

\item[] $\bullet$ If $\beta = \beta' + 1$ and $\delta = \delta' + 1$ are successors, we have  
$\alpha + \phi_{m+1}(\beta' + 1) = \gamma + \phi_{n+1}(\delta' + 1)$, which implies  
$\langle \alpha + \phi_m^i(\phi_{m+1}(\beta') + 1) \rangle_i = \langle \gamma + \phi_n^i(\phi_{n+1}(\delta') + 1) \rangle_i$.  
For $i = 0$, we have  
$\alpha + \phi_{m+1}(\beta') + 1 = \gamma + \phi_{n+1}(\delta') + 1$, which implies  
$\alpha + \phi_{m+1}(\beta') = \gamma + \phi_{n+1}(\delta')$.  
By the induction hypothesis, we reach $\alpha = \gamma$, $m + 1 = n + 1$, and $\beta' = \delta'$,  
which implies $\beta = \delta$.

\item[] $\bullet$ If $\beta = \beta' + 1$ is a successor and $\delta = \langle \delta_i \rangle$ is a limit, we have  
$\alpha + \phi_{m+1}(\beta' + 1) = \gamma + \phi_{n+1}(\langle \delta_i \rangle)$,  
which implies  
$\langle \alpha + \phi_m^i(\phi_{m+1}(\beta') + 1) \rangle_i = \langle \gamma + \phi_{n+1}(\delta_i) \rangle_i$.  
For $i = 0$, we have  
$\alpha + \phi_{m+1}(\beta') + 1 = \gamma + \phi_{n+1}(\delta_0)$.  
As the left-hand side is a successor and the right-hand side is a limit by Lemma \ref{phiIsNonZero}, this is a contradiction.

\item[] $\bullet$ If $\beta = \langle \beta_i \rangle$ is a limit and $\delta$ is either $0$ or a successor, the case is similar to the previous ones, using the symmetry between $\beta$ and $\delta$.

\item[] $\bullet$ If $\beta = \langle \beta_i \rangle$ and $\delta = \langle \delta_i \rangle$ are limits, we have  
$\alpha + \phi_{m+1}(\langle \beta_i \rangle) = \gamma + \phi_{n+1}(\langle \delta_i \rangle)$,  
which implies $\langle \alpha + \phi_{m+1}(\beta_i) \rangle_i = \langle \gamma + \phi_{n+1}(\delta_i) \rangle_i$.  
Therefore, for any $i \in \mathbb{N}$, we have  
$\alpha + \phi_{m+1}(\beta_i) = \gamma + \phi_{n+1}(\delta_i)$,  
which by the induction hypothesis implies $\alpha = \gamma$, $m + 1 = n + 1$, and $\beta_i = \delta_i$, for any $i \in \mathbb{N}$.  
Hence, $\beta = \delta$.
\end{proof}

Using the previous lemmas, we are now ready to prove the following uniqueness theorem. We will use this theorem later for the finitary representation of ordinals in $\Phi_{\omega}$.

\begin{theorem}[Unique Representation]\label{UniquenessTheorem}
Let $r, s \geq 1$ be natural numbers, $\{k_i\}_{i=1}^r$ and $\{l_i\}_{i=1}^s$ be sequences of natural numbers, and $\{\alpha_i\}_{i=1}^r$ and $\{\beta_i\}_{i=1}^s$ be sequences of ordinals. Then:
\begin{itemize}
    \item[$(i)$] 
$\sum_{i=1}^r \phi_{k_i}(\alpha_i) \neq 0$.
    \item[$(ii)$] 
If $\sum_{i=1}^r \phi_{k_i}(\alpha_i) = \sum_{i=1}^s \phi_{l_i}(\beta_i)$, then $r = s$ and for every $1 \leq i \leq r = s$, we have  
$k_i = l_i$ and $\alpha_i = \beta_i$.
\end{itemize}
\end{theorem}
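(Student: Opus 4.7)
The plan is to prove the two claims in the order stated, with part $(i)$ as a lemma for use in part $(ii)$. Both arguments are induction on $r$ (or $r+s$), and the entire weight of the theorem lies in the already-proved Lemma~\ref{UniquenessLemma}, which is the ``cancellation'' principle for the last Veblen summand.

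For part $(i)$, I will induct on $r \geq 1$. The base case $r=1$ is just Lemma~\ref{phiIsNonZero}, which gives $\phi_{k_1}(\alpha_1) \neq 0$. For $r \geq 2$, associativity of ordinal addition (Lemma~\ref{AdditionProp}$(ii)$) lets me write
\[
\sum_{i=1}^{r}\phi_{k_i}(\alpha_i) \;=\; \Bigl(\sum_{i=1}^{r-1}\phi_{k_i}(\alpha_i)\Bigr) + \phi_{k_r}(\alpha_r).
\]
If the left-hand side were $0$, then by Lemma~\ref{AdditionProp}$(v)$ both summands on the right would vanish, and in particular $\phi_{k_r}(\alpha_r)=0$, contradicting Lemma~\ref{phiIsNonZero}. (Alternatively one could directly invoke the inductive hypothesis on the left summand, but the direct appeal to Lemma~\ref{phiIsNonZero} is shorter.)

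For part $(ii)$, I will induct on $r+s$. In the base case $r=s=1$, the hypothesis reads $\phi_{k_1}(\alpha_1) = \phi_{l_1}(\beta_1)$, which using $0+\alpha=\alpha$ (Lemma~\ref{AdditionProp}$(i)$) we rewrite as $0+\phi_{k_1}(\alpha_1) = 0+\phi_{l_1}(\beta_1)$, and Lemma~\ref{UniquenessLemma} delivers $k_1=l_1$ and $\alpha_1=\beta_1$. For the inductive step, by the symmetry of the hypothesis I may assume $r\ge s$, so $r\ge 2$. Using associativity, write the given equality as
\[
\Bigl(\sum_{i=1}^{r-1}\phi_{k_i}(\alpha_i)\Bigr) + \phi_{k_r}(\alpha_r) \;=\; \sum_{i=1}^{s}\phi_{l_i}(\beta_i).
\]
If $s=1$, the right-hand side is $0+\phi_{l_1}(\beta_1)$, and Lemma~\ref{UniquenessLemma} forces $\sum_{i=1}^{r-1}\phi_{k_i}(\alpha_i)=0$; since $r-1\ge 1$, this contradicts part $(i)$. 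If instead $s\ge 2$, I again use associativity to isolate the last summand on the right as $\bigl(\sum_{i=1}^{s-1}\phi_{l_i}(\beta_i)\bigr)+\phi_{l_s}(\beta_s)$, and Lemma~\ref{UniquenessLemma} yields $k_r=l_s$, $\alpha_r=\beta_s$, and
\[
\sum_{i=1}^{r-1}\phi_{k_i}(\alpha_i) \;=\; \sum_{i=1}^{s-1}\phi_{l_i}(\beta_i).
\]
Since $(r-1)+(s-1)<r+s$, the induction hypothesis applies to this shorter equality and gives $r-1=s-1$ together with $k_i=l_i$ and $\alpha_i=\beta_i$ for $1\le i\le r-1$; combined with the last-index data, this completes the induction.

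The only subtle point — and the reason part $(i)$ is proved first and invoked inside $(ii)$ — is the ``unequal length'' situation where $s=1$ but $r\ge 2$: Lemma~\ref{UniquenessLemma} alone cannot see that the tail sum on one side is non-empty, so we need the non-vanishing statement from $(i)$ to reach a contradiction. Everything else is routine bookkeeping once associativity and the cancellation lemma are available.
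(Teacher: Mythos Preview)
Your proof is correct and follows essentially the same route as the paper: both rely on Lemma~\ref{UniquenessLemma} to cancel the last summand, invoke part~$(i)$ to rule out the unequal-length case, and recurse on the shortened sums. The one difference is the induction variable: the paper inducts on the common value $\rho \in \Omega$ via well-founded induction (using $\sum_{i=1}^{r-1}\phi_{k_i}(\alpha_i) \prec \rho$ to justify the recursive call), whereas you induct on $r+s$. Your choice is a modest simplification, since it replaces transfinite induction with ordinary numerical induction and avoids the need to verify the $\prec$-decrease.
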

\begin{proof}
For $(i)$, if $\sum_{i=1}^r \phi_{k_i}(\alpha_i) = 0$, then by Lemma~\ref{AdditionProp}, $\phi_{k_i}(\alpha_i) = 0$ for any $1 \leq i \leq r$. As $r \geq 1$, this implies the existence of an $i$ such that $\phi_{k_i}(\alpha_i) = 0$, which is impossible by Lemma~\ref{phiIsNonZero}.

For $(ii)$, by induction on $\rho \in \Omega$, we prove that for any $r, s \in \mathbb{N}$, any sequences $\{k_i\}_{i=1}^r$ and $\{l_i\}_{i=1}^s$ of numbers, and any sequences $\{\alpha_i\}_{i=1}^r$ and $\{\beta_i\}_{i=1}^s$ of ordinals, if  $\rho = \sum_{i=1}^r \phi_{k_i}(\alpha_i) = \sum_{i=1}^s \phi_{l_i}(\beta_i)$,  
then $r = s$ and for any $1 \leq i \leq r = s$, we have $k_i = l_i$ and $\alpha_i = \beta_i$. Assume the claim holds for every ordinal below $\rho$ and let  
$\rho = \sum_{i=1}^r \phi_{k_i}(\alpha_i) = \sum_{i=1}^s \phi_{l_i}(\beta_i)$.  
There are four cases to consider, depending on whether $r$ and $s$ are equal to one or greater than one:

If $r, s > 1$, then, using Lemma \ref{UniquenessLemma}, we have  $k_r = l_s$, $\alpha_r = \beta_s$, and $\sum_{i=1}^{r-1} \phi_{k_i}(\alpha_i) = \sum_{i=1}^{s-1} \phi_{l_i}(\beta_i)$.  
Since $\phi_{k_r}(\alpha_r)\neq 0$ (Lemma~\ref{phiIsNonZero}), from Lemma \ref{AdditionProp} it follows that
$\sum_{i=1}^{r-1} \phi_{k_i}(\alpha_i) \prec \rho$ so  
the induction hypothesis implies that $r-1 = s-1$ and for every $1 \leq i \leq r-1$, we have $k_i = l_i$ and $\alpha_i = \beta_i$. Using $r = s$, it follows that $\alpha_r = \beta_r$ and $k_r = l_r$, completing the proof.

If $r = 1$ and $s > 1$, then by Lemma \ref{AdditionProp},  
$\phi_{k_1}(\alpha_1) = 0 + \phi_{k_1}(\alpha_1) = \sum_{i=1}^s \phi_{l_i}(\beta_i)$.  
By Lemma \ref{UniquenessLemma}, we have  
$k_1 = l_s$,  
$\alpha_1 = \beta_s$,  
and  
$0 = \sum_{i=1}^{s-1} \phi_{l_i}(\beta_i)$ which is impossible by part $(i)$. 

The case $r > 1$ and $s = 1$ is similar to the previous one.
If $r = s = 1$, the claim follows from Lemma \ref{UniquenessLemma}.
\end{proof}

\begin{remark}
Theorem~\ref{UniquenessTheorem} may appear counter-intuitive, since for set-theoretic ordinals, uniqueness holds only for representations in \emph{normal form}, as presented in Theorem~\ref{PropertiesOfRealVeblen}. For instance, $\bm{n} + \bm{\omega} = \bm{\omega}$ for any $n \in \mathbb{N}$, which contradicts the kind of uniqueness we claim. In fact, the requirement of normal form is precisely intended to prevent such situations. Despite this counter-intuitiveness, the uniqueness claim does hold for constructive ordinals, which, as trees, are inherently more rigid. The key point is that a limit constructive ordinal $\alpha$ is defined as a fixed sequence of ordinals $\langle \alpha_i \rangle$, rather than as an ordinal that is neither zero nor a successor. Having such sequences explicitly present makes equality between constructive ordinals significantly stricter. As an example of this stricter notion of equality, recall that the identity $n + \omega = \omega$ fails if $n \neq 0$.
\end{remark}

\subsection{Consequences of unique representation}

In this subsection, we make use of the uniqueness of representation (Theorem~\ref{UniquenessTheorem}) to establish several properties of the classes $\Phi^m_k$ and $\Psi^m_k$.

\begin{corollary}\label{Summand}
Let $\alpha, \beta \in \Phi_{\omega}$ be ordinals and $k \in \mathbb{N}$ and $m \in \mathbb{N}^{\geq 1}$ be natural numbers. Then:
\begin{itemize}
    \item[$(i)$] 
If $\alpha+\beta \in \Phi^m_k$ then $\alpha, \beta \in \Phi^m_k$.
    \item[$(ii)$] 
If $\phi_l(\alpha) \in \Phi^m_k$ then either $l=k$, $m \geq 2$ and $\alpha \in \Phi^{m-1}_k$, or $l<k$ and $\alpha \in \Phi^{m}_k$.  
\end{itemize}
\end{corollary}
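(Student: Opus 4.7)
The strategy is to combine a simple normal-form lemma for $\Phi^m_k$ with the uniqueness of representation (Theorem~\ref{UniquenessTheorem}). Specifically, I would first establish the following claim by structural induction on how an element of $\Phi^m_k$ is generated: \emph{every non-zero $\rho \in \Phi^m_k$ admits a representation $\rho = \sum_{i=1}^{r} \phi_{k_i}(\gamma_i)$ with $r \geq 1$, where for each $1 \leq i \leq r$ either $(a)$ $k_i < k$ and $\gamma_i \in \Phi^m_k$, or $(b)$ $k_i = k$, $m \geq 2$, and $\gamma_i \in \Phi^{m-1}_k$}. The base cases ($\rho = 0$, and $\rho = \phi_k(\gamma)$ for $\gamma \in \Phi^{m-1}_k$ when $m \geq 2$) are immediate, while the inductive steps are handled by concatenation for the addition closure and by the single-term representation $\phi_i(\alpha)$ for the closure under $\phi_i$ with $i < k$ (note the inner $\gamma_i = \alpha$ may be an arbitrary element of $\Phi^m_k$, since the normal form only constrains top-level exponents and their immediate arguments).

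For part $(i)$, since $\alpha, \beta \in \Phi_\omega$, structural induction on $\Phi_\omega$ produces decompositions $\alpha = \sum_{i=1}^{r} \phi_{k_i}(\gamma_i)$ and $\beta = \sum_{j=1}^{s} \phi_{l_j}(\delta_j)$ with no constraints on the exponents beyond $k_i, l_j \in \mathbb{N}$. (If $\alpha = 0$ or $\beta = 0$ the claim is trivial, since $0 \in \Phi^m_k$.) Concatenation yields a decomposition of $\alpha + \beta$ in $\Phi_\omega$, while the normal-form lemma yields a second decomposition of $\alpha+\beta$ whose summands satisfy the $\Phi^m_k$-constraint. Theorem~\ref{UniquenessTheorem} forces these two decompositions to agree termwise, so every summand $\phi_{k_i}(\gamma_i)$ and $\phi_{l_j}(\delta_j)$ already satisfies that constraint. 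Each such summand then lies in $\Phi^m_k$---either by the defining inclusion $\phi_k[\Phi^{m-1}_k] \subseteq \Phi^m_k$ when the exponent equals $k$, or by closure under $\phi_i$ for $i < k$ otherwise---and closure of $\Phi^m_k$ under addition gives $\alpha, \beta \in \Phi^m_k$.

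For part $(ii)$, note that $\phi_l(\alpha) \neq 0$ by Lemma~\ref{phiIsNonZero}, so the normal-form lemma produces a decomposition $\phi_l(\alpha) = \sum_{i=1}^{p} \phi_{m_i}(\tau_i)$ whose summands satisfy the $\Phi^m_k$-constraint. On the other hand $\phi_l(\alpha)$ is itself a one-term sum of the shape in Theorem~\ref{UniquenessTheorem}, so uniqueness forces $p = 1$, $m_1 = l$, and $\tau_1 = \alpha$. The case distinction built into the normal form then yields precisely the two alternatives in the statement: either $l < k$ and $\alpha \in \Phi^m_k$, or $l = k$, $m \geq 2$, and $\alpha \in \Phi^{m-1}_k$.

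The only delicate part of this plan is verifying the normal-form lemma cleanly, and in particular being careful that in the closure step for $\phi_i$ with $i < k$ the only obligation is $\gamma_i \in \Phi^m_k$ (no deeper unpacking of $\gamma_i$ is required); once the lemma is in place, the argument is driven entirely by Theorem~\ref{UniquenessTheorem}.
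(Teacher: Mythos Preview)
Your approach is correct and is precisely the intended one: the paper records this corollary with the remark ``The proof is straightforward,'' leaving the details implicit, and what you have written is exactly the natural elaboration---a normal-form characterization of $\Phi^m_k$ (which falls out of its inductive definition) combined with Theorem~\ref{UniquenessTheorem} to force any competing decomposition to agree termwise. Your treatment of the edge cases ($\alpha=0$ or $\beta=0$ in part~$(i)$, and the observation $\phi_l(\alpha)\neq 0$ via Lemma~\ref{phiIsNonZero} in part~$(ii)$) is also in order.
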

\begin{proof}
The proof is straightforward.
\end{proof}

\begin{corollary}\label{Properness}
\begin{itemize}
    \item[$(i)$] 
Let $k, l, n \in \mathbb{N}$ and $m \in \mathbb{N}^{\geq 1}$ be natural numbers such that either $l > k$, or $l = k$ and $n \geq m$. Then, $\phi_l^n(0) \notin \Phi_k^m$.
    \item[$(ii)$] 
We have $\Phi_k^m \subsetneq \Phi_k^{m+1}$, $\Psi_{\omega} \subsetneq \Phi_1$, and $\Phi_k \subsetneq \Phi_{k+1}$ for any $k \in \mathbb{N}$ and any $m \in \mathbb{N}^{\geq 1}$.
\end{itemize} 
\end{corollary}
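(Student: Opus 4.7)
\textbf{Proof plan for Corollary \ref{Properness}.}

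For part $(i)$, the plan is to prove the claim by induction on $m \geq 1$, using Corollary~\ref{Summand}$(ii)$ as the sole analytic tool. Observe that in the case $l = k$ the hypothesis $n \geq m \geq 1$ automatically gives $n \geq 1$, while in the case $l > k$ we also need $n \geq 1$ (this is tacit in the statement, since $\phi_l^0(0) = 0$ belongs to every $\Phi_k^m$). Under this mild assumption we can always factor $\phi_l^n(0) = \phi_l(\phi_l^{n-1}(0))$ and feed this into Corollary~\ref{Summand}$(ii)$.

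For the base case $m = 1$, suppose towards a contradiction that $\phi_l^n(0) \in \Phi_k^1 = \Phi_k$. Applying Corollary~\ref{Summand}$(ii)$ to the factorization above, the first alternative ($l = k$ and $m \geq 2$) is ruled out since $m = 1$, so we must be in the second alternative, giving $l < k$. This contradicts both disjuncts of the hypothesis. For the inductive step, assume the claim for $m$ and consider $\phi_l^n(0) \in \Phi_k^{m+1}$. Corollary~\ref{Summand}$(ii)$ leaves two possibilities: either $l < k$, which directly contradicts both disjuncts; or $l = k$ and $\phi_l^{n-1}(0) \in \Phi_k^m$. In the latter case we still have $n - 1 \geq m$ (when $l = k$, from $n \geq m + 1$; when $l > k$, from $n - 1 \geq 0$ and $l > k$), so the induction hypothesis applies and yields $\phi_l^{n-1}(0) \notin \Phi_k^m$, again a contradiction.

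For part $(ii)$, each strict inclusion is witnessed by a concrete element obtained from part $(i)$, combined with the inclusions already established in Lemmas~\ref{MonotonicityOfPhi} and~\ref{MonotonicityOfPsi}. For $\Phi_k^m \subsetneq \Phi_k^{m+1}$, first verify by an easy induction on $j \geq 0$ that $\phi_k^j(0) \in \Phi_k^{j+1}$ (using that $\Phi_k^{j+1}$ contains $\phi_k[\Phi_k^j]$); this gives $\phi_k^m(0) \in \Phi_k^{m+1}$, while part $(i)$ with $l = k$ and $n = m$ yields $\phi_k^m(0) \notin \Phi_k^m$. For $\Phi_k \subsetneq \Phi_{k+1}$, the ordinal $\phi_k(0)$ lies in $\Phi_{k+1}$ by construction but fails to lie in $\Phi_k^1 = \Phi_k$ by part $(i)$ with $l = k$, $n = 1$, $m = 1$. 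Finally, for $\Psi_\omega \subsetneq \Phi_1$, use the identity $\Psi_\omega = \Phi_0^3$ recorded in Lemma~\ref{MonotonicityOfPsi}$(ii)$: then $\phi_1(0) \in \Phi_1$, while part $(i)$ with $l = 1 > 0 = k$, $n = 1$, $m = 3$ gives $\phi_1(0) \notin \Phi_0^3 = \Psi_\omega$.

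The main obstacle, if any, is purely bookkeeping: one must carefully keep track of the hypothesis $n \geq 1$ in the case $l > k$ and correctly invoke Corollary~\ref{Summand}$(ii)$ with the right value of $m$ at each step of the induction. No additional structural insight beyond unique representation (through Corollary~\ref{Summand}) is needed.
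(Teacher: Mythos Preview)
Your proof of part~$(i)$ is correct and follows essentially the same strategy as the paper: both arguments peel off layers of $\phi_l$ via Corollary~\ref{Summand}$(ii)$. You organize this as an explicit induction on $m$, whereas the paper phrases it as ``repeated application of Corollary~\ref{Summand}'' to reach $\Phi_k^1 = \Phi_k$ and then finishes with a case split on $k = 0$ versus $k > 0$. Your base case is actually a bit cleaner: applying Corollary~\ref{Summand}$(ii)$ directly at $m = 1$ immediately forces $l < k$ (since the alternative requires $m \geq 2$), avoiding the paper's detour through $\Phi_0 = \{0\}$ and $\Phi_k = \bigcup_r \Phi_{k-1}^r$. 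One cosmetic remark: in your inductive step, once you are in ``the latter case'' of Summand you have already established $l = k$, so the parenthetical ``when $l > k$'' is vacuous and should simply be dropped.

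There is, however, a genuine error in part~$(ii)$. Your witness for $\Psi_\omega \subsetneq \Phi_1$ does not work: you claim $\phi_1(0) \in \Phi_1$, but by Definition~\ref{Phi-sets} the class $\Phi_1$ is closed only under $\phi_i$ for $i < 1$, i.e.\ only under $\phi_0$. Hence $\phi_1(0) = \epsilon_0$ is \emph{not} in $\Phi_1$ (consistently, Theorem~\ref{MeaningOfPhi} shows that $\Phi_1$ represents exactly the set-theoretic ordinals strictly below $\bm{\epsilon_0}$). The paper instead takes $\phi_0^3(0) = \omega^{\omega^\omega}$ as the separating element: it lies in $\Phi_1$ since $\Phi_1$ is closed under $\phi_0$, and part~$(i)$ with $l = k = 0$ and $n = m = 3$ gives $\phi_0^3(0) \notin \Phi_0^3 = \Psi_\omega$. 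Your other two witnesses in part~$(ii)$ are correct and coincide with the paper's.
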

\begin{proof}
For $(i)$, assume $\phi^n_l(0) \in \Phi^m_k$ to reach a contradiction. By Corollary~\ref{Summand}, since $n \geq m \geq 1$ and $\phi^n_l(0)=\phi_l(\phi_l^{n-1}(0)) \in \Phi^m_k$, we must have $l \leq k$. However, by assumption, either $l > k$, or $l = k$ and $n \geq m$. Hence, we must be in the second case, i.e., $l = k$ and $n \geq m$. Thus, $\phi^n_k(0) \in \Phi^m_k$. 
Now, as $n \geq m$, by repeated application of Corollary~\ref{Summand}, we obtain $\phi^{n-m+1}_k(0) \in \Phi^1_k = \Phi_k$. 
If $k = 0$, since $n \geq m$, by Lemma~\ref{phiIsNonZero} we have $\phi^{n-m+1}_0(0) = \phi_0(\phi^{n-m}_0(0)) \notin \Phi_0 = \{0\}$, which yields a contradiction.
If $k > 0$, then by Lemma~\ref{MonotonicityOfPhi} we know that $\Phi_k = \bigcup_{r=1}^{\infty} \Phi_{k-1}^r$. Hence, there exists $r \geq 1$ such that $\phi^{n-m+1}_k(0) \in \Phi_{k-1}^r$. Since $n - m + 1 \geq 1$, we have $\phi^{n-m+1}_k(0)=\phi_k(\phi^{n-m}_k(0)) \in \Phi_{k-1}^r$. Therefore, by Corollary~\ref{Summand}, we obtain $k \leq k - 1$, which is a contradiction.

For $(ii)$, to show that $\Phi_k^m \subsetneq \Phi^{m+1}_k$, note that by part~$(i)$ we have $\phi_k^m(0) \notin \Phi^m_k$, while it is clear that $\phi_k^m(0) \in \Phi^{m+1}_k$. 
To show that $\Psi_{\omega} \subsetneq \Phi_1$, observe that by Lemma~\ref{MonotonicityOfPsi}, we have $\Psi_{\omega} = \Phi_0^3$. Therefore, by part~$(i)$, $\phi_0^3(0) \notin \Phi_0^3=\Psi_{\omega}$, while it is clear that $\phi_0^3(0) \in \Phi_1$.
Finally, to show that $\Phi_k \subsetneq \Phi_{k+1}$, observe that, by part~$(i)$, we have $\phi_k(0) \notin \Phi^1_k = \Phi_k$, while trivially $\phi_k(0) \in \Phi_{k+1}$.
\end{proof}

\begin{remark}\label{LengthBoundForPsi}
As another consequence of Theorem~\ref{UniquenessTheorem}, we can show that any non-zero $\alpha \in \Psi_{\omega}$ has a unique representation in the form $\alpha = \sum_{i=1}^l \omega^{p_i} c_i$, where $l \geq 1$, the $c_i$'s are non-zero finite ordinals, and $p_i \neq p_{i+1}$ for any $1 \leq i < l$. We call this representation the \emph{normal form} of $\alpha \in \Psi_{\omega}$. Moreover, using this uniqueness result, one can easily see that if $\alpha \in \Psi^m_k$ and its normal form is $\alpha = \sum_{i=1}^l \omega^{p_i} c_i$, then $l \leq m + k$. 
\end{remark}

Using Corollary \ref{Summand}, we are now ready to prove that $\Phi^m_k$ and $\Psi^m_k$ are downsets.

\begin{lemma}\label{PhiIsDownset}
The following hold:
\begin{itemize}
    \item[$(i)$] 
    $\Phi^m_k$ is a downset for any $k \geq 0$ and $m \geq 1$. Hence, $\Phi_k$ and $\Phi_{\omega}$ are downsets for all $k \geq 0$.
    \item[$(ii)$]
    $\Psi^m_k$ is a downset for any $k \geq 1$ and $m \geq 1$. Hence, $\Psi_k$ and $\Psi_{\omega}$ are downsets for all $k \geq 1$.
\end{itemize}
\end{lemma}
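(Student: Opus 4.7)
The plan is a closure argument. For each class such as $\Phi^m_k$, I would define the auxiliary set $X = \{\alpha \in \Omega : \{\alpha\} \cup \mathsf{D}_\alpha \subseteq \Phi^m_k\}$ and show that $X$ contains every generator of $\Phi^m_k$ and is closed under all of its defining operations; by the minimality built into the definition of $\Phi^m_k$, this forces $\Phi^m_k \subseteq X$, which is precisely the downset property. The substantive work reduces to two sub-lemmas, each proved by induction on the tree structure of one of the ordinals involved.

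The first sub-lemma (L1) will say: if $\alpha, \beta \in \Phi^m_k$ with $\mathsf{D}_\alpha, \mathsf{D}_\beta \subseteq \Phi^m_k$, then $\mathsf{D}_{\alpha+\beta} \subseteq \Phi^m_k$. I would prove this by structural induction on $\beta$. For $\beta = 0$ the conclusion is immediate; for $\beta = \beta'+1$, the identity $\mathsf{D}_{\alpha+\beta} = \{\alpha+\beta'\} \cup \mathsf{D}_{\alpha+\beta'}$, together with $\beta' \in \mathsf{D}_\beta \subseteq \Phi^m_k$ and closure of $\Phi^m_k$ under addition, gives $\alpha+\beta' \in \Phi^m_k$, while the inductive hypothesis on $\beta'$ handles $\mathsf{D}_{\alpha+\beta'}$; the limit case is analogous via $\alpha + \langle \beta_j \rangle = \langle \alpha+\beta_j \rangle$. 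The second sub-lemma (L2), for each $i \leq k$, will say: if $\gamma$ lies in the appropriate class (i.e.\ $\Phi^m_k$ when $i < k$, $\Phi^{m-1}_k$ when $i = k$) with $\mathsf{D}_\gamma$ contained in that same class, then $\mathsf{D}_{\phi_i(\gamma)} \subseteq \Phi^m_k$. I would prove (L2) by nested induction on $i$ and on the tree of $\gamma$, unfolding the recursive definition of $\phi_i$ at each of the three cases for $\gamma$, and invoking (L1) to absorb the interleaved additions appearing inside subtrees such as $\omega^{\gamma'}\cdot(j+1)$ inside $\omega^{\gamma'+1}$ and $\phi_l^{(j)}(\phi_{l+1}(\gamma')+1)$ inside $\phi_{l+1}(\gamma'+1)$.

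With (L1) and (L2), part (i) follows by induction on $m$. For $m=1$, the set $X^1_k$ contains $0$ (trivially) and is closed under addition and under $\phi_j$ for $j < k$ by the two sub-lemmas, so by minimality $X^1_k \supseteq \Phi_k$, giving the downset property for $\Phi^1_k = \Phi_k$. For $m \geq 2$ the same argument applies, and the new generators $\phi_k(\gamma)$ with $\gamma \in \Phi^{m-1}_k$ are handled by the outer inductive hypothesis (which gives $\mathsf{D}_\gamma \subseteq \Phi^{m-1}_k \subseteq \Phi^m_k$) together with (L2) at $i = k$. The statements about $\Phi_k$ and $\Phi_\omega$ then follow from Lemma \ref{MonotonicityOfPhi}, since a directed union of downsets is itself a downset. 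Part (ii) for the $\Psi$-classes runs along the same lines but is substantially simpler, because every element of $\Psi^m_k$ admits a normal form built from $\omega^p = \phi_0(p)$ and finite sums (Remark \ref{LengthBoundForPsi}), so only the case $i = 0$ of (L2) is actually needed.

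The main obstacle I anticipate is the detailed unfolding inside (L2) at $i = l+1$, where $\phi_{l+1}(\gamma)$ is defined as an $\mathbb{N}$-indexed sequence of iterates $\phi_l^{(j)}$ applied to either $0$ or $\phi_{l+1}(\gamma')+1$. Verifying that each such iterate and all of its sub-trees actually lie in $\Phi^m_k$ requires simultaneously tracking the outer recursion on $l$, the internal recursion inside $\phi_{l+1}$, and the structural recursion on $\gamma$, with (L1) invoked repeatedly to keep the interleaved additions within the target class.
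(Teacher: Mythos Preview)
For part~(i), your approach is correct and genuinely different from the paper's. The paper reduces the downset property to closure under the one-step predecessor $p(\alpha,n)$ and shows that the single set
\[
X \;=\; \bigl\{\alpha \in \Phi_\omega \;:\; \forall k,m\;(\alpha \in \Phi^m_k \Rightarrow \forall n\;p(\alpha,n)\in\Phi^m_k)\bigr\}
\]
equals $\Phi_\omega$. The crucial step there, when checking closure of $X$ under addition and under $\phi_l$, is Corollary~\ref{Summand} (hence the unique-representation Theorem~\ref{UniquenessTheorem}): from $\alpha+\beta \in \Phi^m_k$ the paper must recover $\alpha,\beta \in \Phi^m_k$ separately, and from $\phi_l(\alpha)\in\Phi^m_k$ it must recover the level $l\le k$ and the appropriate membership of $\alpha$. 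Your route sidesteps this entirely: by building $\alpha \in \Phi^m_k$ \emph{and} $\mathsf{D}_\alpha \subseteq \Phi^m_k$ into the definition of $X$ for each fixed $(k,m)$, you never have to decompose an element back into its constituents. So your argument is more elementary---it does not depend on uniqueness---at the cost of tracking full downsets and carrying out the nested inductions of (L2) explicitly.

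There is, however, a genuine gap in your sketch for part~(ii). The classes $\Psi^m_k$ are \emph{not} defined as closures; they are given explicitly (Definition~\ref{def:Psi-mk}) as sets of sums with a bounded number of $\omega$-terms, and in particular $\Psi^m_k$ is not closed under addition. Your sub-lemma (L1) relies on closure of the target class under addition (to place the intermediate sums $\alpha+\beta'$ inside it), so it does not transfer. The paper's proof of~(ii) accordingly abandons the closure template and argues directly: given $\alpha\in\Psi^m_k$, it computes $p(\alpha,n)$ explicitly from the form $\sum_i \omega^{p_i}c_i + \sum_j \omega^{k-1-j}d_j$, splitting into cases according to where the last nonzero coefficient sits (in the $m$-term block or in the $k$-term decreasing tail, with exponent~$0$ or not), and checks by hand that the result still fits the $\Psi^m_k$ format. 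You would need to do the same; ``only the $i=0$ case of (L2)'' is not enough.
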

\begin{proof}
First, note that since the union of downsets is itself a downset, by Lemma~\ref{MonotonicityOfPhi} and Lemma~\ref{MonotonicityOfPsi}, it suffices to prove the first parts of $(i)$ and $(ii)$.
Recall the predecessor function~$p$ defined in  Section~\ref{sec:functions-const-ordinals}.
For~$(i)$, it is enough to prove that for any $\alpha \in \Phi_{\omega}$, if $\alpha \in \Phi^m_k$, then $p(\alpha, n) \in \Phi^m_k$ for any $n \in \mathbb{N}$. Define $X$ as the subset of $\Phi_{\omega}$ consisting of those ordinals $\alpha$ such that for any $k \in \mathbb{N}$ and any $m \in \mathbb{N}^{\geq 1}$, if $\alpha \in \Phi^m_k$, then $p(\alpha, n) \in \Phi^m_k$ for all $n \in \mathbb{N}$. We show that $X = \Phi_{\omega}$. To that end, it is enough to prove that $X$ contains $0$ and is closed under addition and $\phi_l$ for any $l \in \mathbb{N}$.

First, it is clear that $0 \in X$, as $p(0, n) = 0 \in \Phi^m_k$, for any $k \in \mathbb{N}$ and any $m \in \mathbb{N}^{\geq 1}$. For the closure under addition, let $\alpha, \beta \in X$. If $\beta = 0$, then $\alpha + \beta = \alpha \in X$. Hence, we assume that $\beta \neq 0$. Now, let $k \in \mathbb{N}$ and $m \in \mathbb{N}^{\geq 1}$ be arbitrary natural numbers and assume that $\alpha + \beta \in \Phi^m_k$ to show that $p(\alpha+\beta, n) \in \Phi^m_k$, for any $n \in \mathbb{N}$. As $\alpha, \beta \in X \subseteq \Phi_{\omega}$ and $\alpha + \beta \in \Phi^m_k$, by Corollary~\ref{Summand}, we have $\alpha, \beta \in \Phi^m_k$. As $\beta \neq 0$, we have $p(\alpha + \beta, n) = \alpha + p(\beta, n)$. On the other hand, using $\beta \in X$ and $\beta \in \Phi^m_k$, we obtain $p(\beta, n) \in \Phi^m_k$. As $\Phi^m_k$ is closed under addition and $\alpha \in \Phi^m_k$, we conclude that $p(\alpha + \beta, n) = \alpha + p(\beta, n) \in \Phi^m_k$.

For the closure of $X$ under $\phi_l$, let $\alpha \in X$ and $k \in \mathbb{N}$ and $m \in \mathbb{N}^{\geq 1}$ be arbitrary natural numbers. We assume $\phi_l(\alpha) \in \Phi^m_k$ and aim to prove that $p(\phi_l(\alpha), n) \in \Phi^m_k$ for arbitrary $n \in \mathbb{N}$. By Corollary~\ref{Summand}, since $\alpha \in X \subseteq \Phi_{\omega}$ and $\phi_l(\alpha) \in \Phi^m_k$, either $l = k$, $m \geq 2$ and $\alpha \in \Phi^{m-1}_k$, or $l < k$ and $\alpha \in \Phi^m_k$. We now consider four cases:

\item[]  
I. Assume that $l=0$ and $k=0$. Hence, $\alpha \in \Phi^{m-1}_0$. If $\alpha=0$, then $\phi_0(0)=1$ and hence $p(\phi_0(0))=0 \in \Phi^m_k$. If $\alpha=\alpha'+1$ is a successor, $\phi_0(\alpha)=\langle \phi_0(\alpha') \cdot (i+1) \rangle$. As $\alpha \in X$ and $\alpha \in \Phi^{m-1}_0$, we have $\alpha'=p(\alpha, 0) \in \Phi^{m-1}_0$. Therefore, for any $n \in \mathbb{N}$, we have $p(\phi_0(\alpha), n)=\phi_0(\alpha') \cdot (n+1) \in \Phi^m_0$. If $\alpha=\langle \alpha_i \rangle$ is a limit, as $\alpha \in X$ and $\alpha \in \Phi_0^{m-1}$, we have $\alpha_n=p(\alpha, n) \in \Phi_0^{m-1}$, for any $n \in \mathbb{N}$. Therefore, $p(\phi_0(\alpha), n)=\phi_0(\alpha_n) \in \Phi^m_0$, for any $n \in \mathbb{N}$.

\item[] 
II. Assume that $l=0$ and $k>0$. Hence, $\alpha \in \Phi^{m}_k$. The case $\alpha=0$ is similar to the one above. If $\alpha=\alpha'+1$ is a successor, $\phi_0(\alpha)=\langle \phi_0(\alpha') \cdot (i+1) \rangle$. As $\alpha \in X$ and $\alpha \in \Phi^{m}_k$, we have $\alpha'=p(\alpha, 0) \in \Phi^{m}_k$. As $l=0<k$, we reach $p(\phi_0(\alpha), n)=\phi_0(\alpha') \cdot (n+1) \in \Phi^m_k$, for any $n \in \mathbb{N}$.
If $\alpha=\langle \alpha_i \rangle$ is a limit, as $\alpha \in X$ and $\alpha \in \Phi_k^{m}$, we have $\alpha_n=p(\alpha, n) \in \Phi_k^{m}$, for any $n \in \mathbb{N}$. Therefore, $p(\phi_0(\alpha), n)=\phi_0(\alpha_n) \in \Phi^m_k$, for any $n \in \mathbb{N}$.

\item[] 
III. Assume that $l \neq 0$ and $l=k$. Hence, $\alpha \in \Phi^{m-1}_k$. If $\alpha=0$, then $\phi_{k}(0)=\langle \phi^{(i)}_{k-1}(0) \rangle$. As $\phi_{k-1}^{(n)}(0) \in \Phi^m_k$, for any $n \in \mathbb{N}$, there is nothing to prove. If $\alpha=\alpha'+1$ is a successor, as $\alpha \in X$ and $\alpha \in \Phi^{m-1}_k$, then $\alpha'=p(\alpha, 0) \in \Phi^{m-1}_k$. Thus, $p(\phi_k(\alpha), n)=\phi_{k-1}^{(n)}(\phi_k(\alpha')+1) \in \Phi^m_k$, for any $n \in \mathbb{N}$. If $\alpha=\langle \alpha_i \rangle$ is a limit, as $\alpha \in X$ and $\alpha \in \Phi^{m-1}_k$, we have $\alpha_n \in \Phi^{m-1}_k$, for any $n \in \mathbb{N}$. Therefore, $p(\phi_k(\alpha), n)=\phi_k(\alpha_n) \in \Phi^{m}_k$, for any $n \in \mathbb{N}$.

\item[]
IV. Assume that $l \neq 0$ and $l<k$. Hence, $\alpha \in \Phi^{m}_k$. If $\alpha=0$, then $\phi_{l}(0)=\langle \phi^{(i)}_{l-1}(0) \rangle$. As $l<k$ and hence $\phi_{l-1}^{(n)}(0) \in \Phi^m_k$, for any $n \in \mathbb{N}$, there is nothing to prove. If $\alpha=\alpha'+1$ is a successor, as $\alpha \in X$ and $\alpha \in \Phi^{m}_k$, then $\alpha'=p(\alpha, 0) \in \Phi^{m}_k$. Thus, as $l < k$, we reach $p(\phi_l(\alpha), n)=\phi_{l-1}^{(n)}(\phi_l(\alpha')+1) \in \Phi^m_k$, for any $n \in \mathbb{N}$. If $\alpha=\langle \alpha_i \rangle$ is a limit, as $\alpha \in X$ and $\alpha \in \Phi^{m}_k$, we have $\alpha_n \in \Phi^{m}_k$, for any $n \in \mathbb{N}$. Therefore, as $l < k$, we reach $p(\phi_l(\alpha), n)=\phi_l(\alpha_n) \in \Phi^{m}_k$, for any $n \in \mathbb{N}$. This completes the proof of $(i)$.

For $(ii)$, it is enough to prove that if $\alpha \in \Psi^m_k$ then $p(\alpha, n) \in \Psi^m_k$, for any $n \in \mathbb{N}$. As $\alpha \in \Psi^m_k$, either $\alpha=0$ where there is nothing to prove, or it can be written in the form $\alpha=\sum_{i=1}^m \omega^{p_i} c_i + \sum_{j=0}^{k-1} \omega^{k-1-j}d_j$, for some $p_i < k$ and some finite ordinals $c_i$ and $d_j$. There are two cases to consider: either there is $0 \leq j \leq k-1$ such that $d_j \neq 0$ or $\alpha=\sum_{i=1}^m \omega^{p_i} c_i$. In the first case, let $0 \leq l \leq j$ be the greatest number such that $d_l \neq 0$. Therefore, $\alpha=\sum_{i=1}^m \omega^{p_i} c_i + \sum_{j=0}^{l} \omega^{k-1-j}d_j$. If $l=k-1$, we have $\alpha=\sum_{i=1}^m \omega^{p_i} c_i + \sum_{j=0}^{l-1} \omega^{k-1-j}d_j+d_l$ which implies that 
\[
p(\alpha, n)=\sum_{i=1}^m \omega^{p_i} c_i + \sum_{j=0}^{l-1} \omega^{k-1-j}d_j+d_l-1.
\]
which is clearly in $\Psi^m_k$. If $l<k-1$, then we have:
\[
p(\alpha, n)=\sum_{i=1}^m \omega^{p_i} c_i + \sum_{j=0}^{l-1} \omega^{k-1-j}d_j+\omega^{k-1-l}(d_l-1)+\omega^{k-1-l-1}(n+1)
\]
which is clearly in $\Psi^m_k$. In the second case, we have $\alpha=\sum_{i=1}^m \omega^{p_i} c_i$. Let $1 \leq l \leq m$ be the greatest number such that $c_l \neq 0$. Therefore, $\alpha=\sum_{i=1}^l \omega^{p_i} c_i$. If $p_l=0$ then $\alpha=\sum_{i=1}^{l-1} \omega^{p_i} c_i+c_l$ and hence 
\[
p(\alpha, n)=\sum_{i=1}^{l-1} \omega^{p_i} c_i + (c_l-1)
\]
which is clearly in $\Psi^m_k$, as $l \leq m$. If $p_l \geq 1$, then we have:
\[
p(\alpha, n)=\sum_{i=1}^{l-1} \omega^{p_i} c_i + \omega^{p_l}(c_l-1)+\omega^{p_l-1}(n+1)
\]
which is clearly in $\Psi^m_k$ as $p(\alpha, n)$ is the addition of $\sum_{i=1}^{l-1} \omega^{p_i} c_i + \omega^{p_l}(c_l-1)$ with $l \leq m$ summands and $\omega^{p_l-1}(n+1)$, where $p_l-1 < k$.
\end{proof}

\subsection{Dichotomy theorem}

In this subsection, we establish a dichotomy theorem stating that, for any $\alpha \in \Phi_\omega$, there exists $r \in \mathbb{N}$ such that either $\alpha = o(r)$ or $o(r) + \omega \preceq \alpha$. For that purpose, we first need to prove two lemmas.

\begin{lemma}\label{NonZeroPoint}
For any $\langle \alpha_i \rangle \in \Phi_{\omega}$, there is $i \in \mathbb{N}$ such that $\alpha_i \neq 0$.    
\end{lemma}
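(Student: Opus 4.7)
The plan is to prove the contrapositive: show that the ``constantly zero'' limit ordinal $\langle 0 \rangle_i$ cannot be generated by the closure operations defining $\Phi_\omega$. More precisely, I will perform structural induction on the construction of elements of $\Phi_\omega$ (Definition~\ref{Phi-sets}) to show that the set
\[
X = \{\beta \in \Phi_\omega \mid \text{if } \beta = \langle \gamma_i \rangle \text{ is a limit, then } \exists i \in \mathbb{N} \; (\gamma_i \neq 0)\}
\]
contains $0$ and is closed under addition and each $\phi_k$. By minimality of $\Phi_\omega$, this forces $X = \Phi_\omega$, which is exactly the statement.

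The base case $0 \in X$ is vacuous since $0$ is not a limit. For closure under addition, suppose $\beta, \gamma \in X$ and $\beta + \gamma$ is a limit. I case-split on $\gamma$: if $\gamma = 0$ then $\beta + \gamma = \beta$ and the conclusion comes from $\beta \in X$; if $\gamma = \gamma' + 1$ then $\beta + \gamma$ is a successor and there is nothing to check; if $\gamma = \langle \gamma_i \rangle$ then $\beta + \gamma = \langle \beta + \gamma_i \rangle$, and picking $i_0$ with $\gamma_{i_0} \neq 0$ (which exists since $\gamma \in X$), Lemma~\ref{AdditionProp}$(v)$ gives $\beta + \gamma_{i_0} \neq 0$.

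For closure under $\phi_k$, I inspect the defining clauses of $\phi_k$ (Definition~\ref{dfn:VeblenHierarchy}); here the induction hypothesis on $\beta$ is not even needed, since the non-zero entry can be read off directly from the defining sequence. When $k = 0$ and $\beta = \beta'+1$, the sequence $\phi_0(\beta) = \langle \omega^{\beta'} \cdot o(i+1) \rangle_i$ has $\omega^{\beta'} \neq 0$ as its first entry by Lemma~\ref{AdditionProp}$(vii)$ (since $\omega \neq 0$); when $k = 0$ and $\beta = \langle \beta_i \rangle$, every entry $\omega^{\beta_i}$ is non-zero for the same reason. When $k \geq 1$ and $\beta = 0$, the entry $\phi_{k-1}(0)$ at index $i=1$ is non-zero by Lemma~\ref{phiIsNonZero}; when $k \geq 1$ and $\beta = \beta'+1$, the entry $\phi_k(\beta')+1$ at index $i = 0$ is a successor and hence non-zero; when $k \geq 1$ and $\beta = \langle \beta_i \rangle$, every entry $\phi_k(\beta_i)$ is non-zero by Lemma~\ref{phiIsNonZero}.

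I do not expect any serious obstacle here: the argument is essentially a finite case analysis driven by the defining clauses of addition and of the constructive Veblen functions, with the two non-vanishing facts (Lemmas~\ref{AdditionProp}$(v),(vii)$ and~\ref{phiIsNonZero}) doing all the actual work. The only point requiring care is to remember that constructive ordinals are rigid trees, so that $\langle 0, 0, 0, \ldots \rangle$ really is a distinct object from $0$, and hence the statement has non-trivial content despite being trivially true for set-theoretic ordinals.
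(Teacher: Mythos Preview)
Your proposal is correct and follows essentially the same approach as the paper: defining the same set $X$, and showing it contains $0$ and is closed under addition and each $\phi_k$ via the same case analysis. The only cosmetic differences are that the paper uses Lemma~\ref{phiIsNonZero} (rather than Lemma~\ref{AdditionProp}$(vii)$) for the $k=0$ successor case, and explicitly records that $\phi_0(0)=1$ is a successor---a case you implicitly treat as vacuous.
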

\begin{proof}
Let $X$ be the set of all ordinals $\alpha \in \Phi_{\omega}$ such that either $\alpha$ is $0$, a successor, or a limit $\langle \alpha_i \rangle \in \Phi_{\omega}$ with $\alpha_i \neq 0$ for some $i \in \mathbb{N}$. To prove $X = \Phi_{\omega}$, it is enough to show that $X$ contains $0$ and is closed under $\phi_k$'s and addition. Clearly, $0 \in X$. 

To prove the closure under addition, assume $\alpha, \beta \in X$. There are three cases to consider, depending on whether $\beta$ is zero, a successor, or a limit.  
If $\beta = 0$, then $\alpha + \beta = \alpha \in X$, so there is nothing to prove.  
If $\beta$ is a successor, then $\alpha + \beta$ is also a successor, and hence $\alpha + \beta \in X$.  
If $\beta = \langle \beta_i \rangle$ is a limit, then $\alpha + \beta = \langle \alpha + \beta_i \rangle$. Since $\beta \in X$, there exists $i \in \mathbb{N}$ such that $\beta_i \neq 0$. Therefore, by Lemma \ref{AdditionProp}, we have $\alpha + \beta_i \neq 0$, which implies $\alpha + \beta \in X$.

To prove the closure under $\phi_k$, we establish the stronger claim that $\phi_k(\alpha) \in X$ for any $\alpha \in \Omega$. We consider two separate cases: either $k = 0$ or $k \neq 0$. 
If $k = 0$, we analyze three subcases depending on whether $\alpha$ is zero, a successor, or a limit.  
If $\alpha = 0$, then $\phi_0(\alpha) = 1$, which implies $\phi_0(\alpha) \in X$.  
If $\alpha = \alpha' + 1$ is a successor, then $\phi_0(\alpha' + 1) = \phi_0(\alpha') \cdot \omega = \langle \phi_0(\alpha') \cdot (i + 1) \rangle$. Taking $i = 0$, by Lemma \ref{phiIsNonZero}, we have $\phi_0(\alpha') \neq 0$, which shows that $\phi_0(\alpha) = \phi_0(\alpha' + 1) \in X$.  
If $\alpha = \langle \alpha_i \rangle$ is a limit, then $\phi_0(\alpha) = \langle \phi_0(\alpha_i) \rangle$. Again, by Lemma \ref{phiIsNonZero}, we have $\phi_0(\alpha_0) \neq 0$, which implies that $\phi_0(\alpha) \in X$.

For $k > 0$, there exists $l \in \mathbb{N}$ such that $k = l + 1$. We must prove that $\phi_{l+1}(\alpha) \in X$ for any $\alpha \in \Omega$. We consider three cases depending on whether $\alpha$ is zero, a successor, or a limit. 
If $\alpha = 0$, we have $\phi_{l+1}(0) = \langle \phi_l^{(i)}(0) \rangle$. For $i = 1$, by Lemma \ref{phiIsNonZero}, we have $\phi_l(0) \neq 0$, which implies $\phi_{l+1}(0) \in X$.  
If $\alpha = \alpha' + 1$ is a successor, then $\phi_{l+1}(\alpha) = \langle \phi_l^{(i)}(\phi_{l+1}(\alpha') + 1) \rangle$. Pick $i = 0$. Since $\phi_{l+1}(\alpha') + 1 \neq 0$, we have $\phi_{l+1}(\alpha) \in X$.  
If $\alpha = \langle \alpha_i \rangle$ is a limit, then $\phi_{l+1}(\alpha) = \langle \phi_{l+1}(\alpha_i) \rangle$. For $i = 0$, since $\phi_{l+1}(\alpha_0) \neq 0$, by Lemma \ref{phiIsNonZero}, we have $\phi_{l+1}(\alpha) \in X$.
\end{proof}

\begin{lemma}\label{FiniteOrInfiniteLemma}
Let $\alpha \in \Phi_{\omega}$ and $k \in \mathbb{N}$. 
If $k \neq 0$ or $\alpha \neq 0$ then $\omega \preceq \phi_{k}(\alpha)$.
\end{lemma}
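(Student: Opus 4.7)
The plan is to proceed by induction on $k$, with an inner structural induction on $\alpha$, relying on the fact that every limit ordinal in $\Phi_{\omega}$ has at least one nonzero component (Lemma~\ref{NonZeroPoint}) and that $\Phi_{\omega}$ is a downset (Lemma~\ref{PhiIsDownset}), so the inner induction stays inside $\Phi_{\omega}$. The key observation driving the proof is that, by Lemma~\ref{phiIsNonZero}, the hypothesis ``$k\neq 0$ or $\alpha\neq 0$'' is exactly the condition ensuring $\phi_k(\alpha)$ is a limit, so it always expands into an $\mathbb{N}$-indexed sequence from which one may extract a subtree witnessing $\omega\preceq\phi_k(\alpha)$.

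For the base case $k=0$ with $\alpha\neq 0$, I would do structural induction on $\alpha\in\Phi_{\omega}$. When $\alpha=\alpha'+1$, the ordinal $\omega^{\alpha'+1}=\omega^{\alpha'}\cdot\omega=\langle \omega^{\alpha'}\cdot o(i+1)\rangle_i$ contains $\omega^{\alpha'}\cdot 1=\omega^{\alpha'}$ as its $0$-th branch; if $\alpha'=0$ this branch equals $\omega$ directly using $1\cdot\omega=\omega$ (Lemma~\ref{AdditionProp}), and otherwise the sub-IH gives $\omega\preceq\omega^{\alpha'}$, which by transitivity of $\preceq$ yields $\omega\preceq\omega^{\alpha'+1}$. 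When $\alpha=\langle\alpha_i\rangle$, Lemma~\ref{NonZeroPoint} furnishes an index $j$ with $\alpha_j\neq 0$ (this is where I need $\alpha\in\Phi_{\omega}$, which is preserved by downward closure), and the sub-IH gives $\omega\preceq\omega^{\alpha_j}\prec\omega^{\alpha}$.

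For the inductive step $k=l+1$, I would assume the result for $l$ and run another structural induction on $\alpha$. When $\alpha=0$, I unfold $\phi_{l+1}(0)=\langle\phi_l^{(i)}(0)\rangle_i$ and split: if $l=0$, the branch at $i=2$ is $\phi_0(\phi_0(0))=\phi_0(1)=\omega$; if $l\geq 1$, the outer IH applied to $\phi_l(0)$ (with $l\neq 0$) gives $\omega\preceq\phi_l(0)\prec\phi_{l+1}(0)$. For $\alpha=\alpha'+1$, the branch $\phi_l(\phi_{l+1}(\alpha')+1)$ of $\phi_{l+1}(\alpha)$ has second argument nonzero, so the outer IH (valid for any $l$ since the input is nonzero) gives $\omega\preceq\phi_l(\phi_{l+1}(\alpha')+1)\prec\phi_{l+1}(\alpha)$. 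For $\alpha=\langle\alpha_i\rangle$, again Lemma~\ref{NonZeroPoint} yields some $\alpha_j\neq 0$, and the sub-IH gives $\omega\preceq\phi_{l+1}(\alpha_j)\prec\phi_{l+1}(\alpha)$.

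The only mildly subtle step is the limit case of both inductions: I must know that at least one component of a limit $\langle\alpha_i\rangle\in\Phi_{\omega}$ is nonzero, because sequences like $\langle 0,0,0,\ldots\rangle$ would otherwise break the argument. This is exactly the content of Lemma~\ref{NonZeroPoint}, and the downset property of $\Phi_{\omega}$ (Lemma~\ref{PhiIsDownset}) is what keeps $\alpha_j$ inside $\Phi_{\omega}$ so that the inductive hypothesis can be invoked on it. Every other case is an essentially one-line unfolding plus an appeal to Lemma~\ref{phiIsNonZero} or Lemma~\ref{AdditionProp}, so I expect no further difficulties.
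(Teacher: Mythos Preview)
Your proposal is correct and follows essentially the same double induction (outer on $k$, inner structural on $\alpha$, with Lemma~\ref{NonZeroPoint} handling the limit case when $k=0$) as the paper. One small simplification: in the limit case of the inductive step $k=l+1$, your appeal to Lemma~\ref{NonZeroPoint} is unnecessary, since the inner hypothesis $\omega\preceq\phi_{l+1}(\alpha_0)$ holds unconditionally (the premise ``$l+1\neq 0$ or $\alpha_0\neq 0$'' is already satisfied by $l+1\neq 0$), which is how the paper handles it.
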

\begin{proof}
First, we prove the weaker statement that if $\alpha \neq 0$ then $\omega \preceq \phi_{0}(\alpha)$. For that purpose, we use induction on $\alpha$. For $\alpha = 0$, there is nothing to prove.  
If $\alpha = \alpha' + 1$ is a successor, there are two cases to consider: either $\alpha' = 0$ or $\alpha' \neq 0$.  
If $\alpha' = 0$, then $\alpha = 1$ and hence $\phi_0(\alpha) = \phi_0(1) = \omega$. Thus, $\omega \preceq \phi_0(\alpha)$.  
If $\alpha' \neq 0$, then by the induction hypothesis, we have  
\[
\omega \preceq \phi_0(\alpha') \prec \langle \phi_0(\alpha')(i + 1) \rangle_i = \phi_0(\alpha') \cdot \omega = \phi_0(\alpha' + 1)=\phi_0(\alpha).
\]
If $\alpha = \langle \alpha_i \rangle$ is a limit, then since $\alpha \in \Phi_{\omega}$, by Lemma \ref{NonZeroPoint}, there exists $i \in \mathbb{N}$ such that $\alpha_i \neq 0$. Therefore, by the induction hypothesis, we have  
\[
\omega \preceq \phi_0(\alpha_i) \prec \langle \phi_0(\alpha_i) \rangle_i = \phi_0(\alpha).
\]
This completes the proof of the weaker statement. 

To prove the main statement, we use induction on $k$. For $k=0$, the claim follows from the weaker statement. For the induction step, we assume the claim holds for $\phi_k$ and, by induction on $\alpha$, show that $\omega \preceq \phi_{k+1}(\alpha)$.  
If $\alpha = 0$, by the induction hypothesis for $\phi_k$ and the fact that $\phi_k(0) \neq 0$ from Lemma \ref{phiIsNonZero}, we have  
$
\omega \preceq \phi_k(\phi_k(0)) = \phi_k^2(0),
$  
which implies  
$\omega \preceq \langle \phi_k^{i}(0) \rangle_i = \phi_{k+1}(0)$.  
If $\alpha = \alpha' + 1$ is a successor, as $\phi_{k+1}(\alpha' + 1)=\langle \phi_k^i(\phi_{k+1}(\alpha') + 1) \rangle$, we reach $\phi_{k+1}(\alpha') + 1 \prec \phi_{k+1}(\alpha' + 1) $.
Now, by the induction hypothesis for $\alpha$, we have
\[
\omega \preceq \phi_{k+1}(\alpha') \prec \phi_{k+1}(\alpha') + 1 \prec \phi_{k+1}(\alpha' + 1) = \phi_{k+1}(\alpha).
\]  
If $\alpha = \langle \alpha_i \rangle$ is a limit, by the induction hypothesis for $\alpha$, we have
\[
\omega \preceq \phi_{k+1}(\alpha_0) \prec \langle \phi_{k+1}(\alpha_i) \rangle=\phi_{k+1}(\alpha),
\]  
which completes the proof.
\end{proof}

\begin{theorem}[Dichotomy Theorem]\label{FiniteOrInfiniteThm}
For any $\alpha \in \Phi_{\omega}$, there is $r \in \mathbb{N}$ such that either $\alpha=o(r)$ or $o(r)+\omega \preceq \alpha$. 
\end{theorem}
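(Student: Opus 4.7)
The natural approach is structural induction on the construction of $\alpha$ as an element of $\Phi_{\omega}$. By definition, $\Phi_{\omega}$ is the smallest set containing $0$ and closed under addition and the $\phi_k$'s, so we must verify the claim for $\alpha = 0$, for $\alpha = \beta + \gamma$ with $\beta, \gamma \in \Phi_{\omega}$ satisfying the claim, and for $\alpha = \phi_k(\beta)$ with $\beta \in \Phi_{\omega}$ satisfying the claim. The base case $\alpha = 0 = o(0)$ is immediate.

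For the $\phi_k$ case, we split on whether $k = 0$ and $\beta = 0$. If so, then $\alpha = \phi_0(0) = \omega^0 = 1 = o(1)$, so we take $r = 1$. Otherwise (i.e., $k \neq 0$ or $\beta \neq 0$), Lemma~\ref{FiniteOrInfiniteLemma} gives $\omega \preceq \phi_k(\beta) = \alpha$; then, using $0 + \omega = \omega$ from Lemma~\ref{AdditionProp}$(i)$, we obtain $o(0) + \omega = \omega \preceq \alpha$, so $r = 0$ works.

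For the additive case $\alpha = \beta + \gamma$, we apply the induction hypothesis to both $\beta$ and $\gamma$ and split into four subcases. If $\beta = o(r_1)$ and $\gamma = o(r_2)$ are both finite, then $\alpha = o(r_1 + r_2)$ and $r = r_1 + r_2$ works. If $\beta = o(r_1)$ is finite while $o(r_2) + \omega \preceq \gamma$, then by monotonicity (Lemma~\ref{AdditionProp}$(iv)$) and associativity (Lemma~\ref{AdditionProp}$(ii)$), we compute $o(r_1 + r_2) + \omega = o(r_1) + (o(r_2) + \omega) \preceq o(r_1) + \gamma = \alpha$, so $r = r_1 + r_2$ works. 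In the two remaining subcases, $o(r_1) + \omega \preceq \beta$, and since $\beta \preceq \beta + \gamma = \alpha$ by Lemma~\ref{AdditionProp}$(iii)$, we conclude $o(r_1) + \omega \preceq \alpha$, so $r = r_1$ works.

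I do not foresee any serious obstacle: the proof is a clean induction, and all the algebraic facts required (associativity of addition, monotonicity in the right argument, and $0 + \alpha = \alpha$) are already recorded in Lemma~\ref{AdditionProp}, while the crucial input for the $\phi_k$ case is precisely Lemma~\ref{FiniteOrInfiniteLemma}. The only mild subtlety is remembering to use $0 + \omega = \omega$ (rather than the fact that, in general, $\alpha + \omega$ is \emph{not} equal to $\omega$ even when $\alpha$ is finite in the constructive setting), so that the case $\alpha \succeq \omega$ can be packaged uniformly under $r = 0$.
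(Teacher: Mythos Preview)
Your proposal is correct and follows essentially the same approach as the paper: structural induction on the construction of $\alpha\in\Phi_\omega$, with the $\phi_k$ case handled via Lemma~\ref{FiniteOrInfiniteLemma} (splitting off $\phi_0(0)=o(1)$), and the additive case handled by the same case analysis on which summand is finite. The paper phrases the induction as showing a set $X$ is closed under the constructors, and leaves the identification $o(0)+\omega=\omega$ implicit, but otherwise the arguments coincide.
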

\begin{proof}
Let $X$ be the set of all ordinals $\alpha \in \Phi_{\omega}$ such that either there is $r \in \mathbb{N}$ with $\alpha = o(r)$ or $o(r) + \omega \preceq \alpha$. To prove $X = \Phi_{\omega}$, it is enough to prove that $X$ contains $0$ and is closed under addition and $\phi_k$'s. Clearly, $0 = o(0) \in X$.  
To prove the closure under $\phi_k$, let $\alpha \in X$. If $\alpha = 0$ and $k = 0$, then $\phi_k(\alpha) = o(1)$, which implies $\phi_k(\alpha) \in X$. Otherwise, by Lemma \ref{FiniteOrInfiniteLemma}, we have $\omega \preceq \phi_k(\alpha)$. Therefore, $\phi_k(\alpha) \in X$. 

For the closure under addition, if $\alpha, \beta \in X$, then there exists $r \in \mathbb{N}$ such that either $\alpha = o(r)$ or $o(r) + \omega \preceq \alpha$. Similarly, there exists $s \in \mathbb{N}$ such that either $\beta = o(s)$ or $o(s) + \omega \preceq \beta$.  
If $o(r) + \omega \preceq \alpha$, then since $\alpha \preceq \alpha + \beta$, by Lemma \ref{AdditionProp}, we get $o(r) + \omega \preceq \alpha + \beta$, which implies $\alpha + \beta \in X$. Therefore, we assume $\alpha = o(r)$. Now, if $\beta = o(s)$, then $\alpha + \beta = o(r + s)$, which implies $\alpha + \beta \in X$. Otherwise, 
assume that $o(s) + \omega \preceq \beta$
and note that $\alpha + \beta = o(r) + \beta$. By Lemma \ref{AdditionProp}, we have  
$
o(r) + o(s) + \omega \preceq o(r) + \beta.
$  
Hence,  
$
o(r + s) + \omega \preceq \alpha + \beta,
$  
which implies $\alpha + \beta \in X$. 
\end{proof}

\section{Main Theorem}
\label{sec:main-theorem}
In this section, we present our main theorem together with its corollaries. We also outline the strategy of the proof, which will be carried out in detail in the subsequent sections of the paper.

For any $\alpha \in \Omega$ and $k \geq 0$, recall that the downset $\mathsf{D}_{\alpha}$ is defined as
$
\mathsf{D}_{\alpha} := \{ \beta \in \Omega \mid \beta \prec \alpha \}.
$
Also recall that the set $\Phi_{\omega}$ (resp. $\Phi_k$) is defined in Definition~\ref{Phi-sets} as the smallest set of constructive ordinals containing $0$ and closed under addition and the Veblen functions $\phi_i$ (Definition~\ref{dfn:VeblenHierarchy}), for all $i \in \mathbb{N}$ (resp. all $i < k$).
\begin{definition}\label{definitionOfl}
A downset $\mathsf{A} \subseteq \Phi_{\omega}$ of ordinals is called \emph{bounded} if there exists $k \geq 1$ such that $\mathsf{A} \subseteq \Phi_k$. For any bounded downset $\mathsf{A}$, define $l(\mathsf{A}) = 0$ if $\mathsf{A} \subseteq \Psi_{\omega}$. Otherwise, define $l(\mathsf{A})$ to be the least $k \geq 1$ such that $\mathsf{A} \subseteq \Phi_k$.
\end{definition}

\begin{example}\label{ExamplesOfLevels}
For the first example, observe that $\Psi_{\omega} \subseteq \Phi_1$ is clearly bounded. Moreover, it is trivial that $l(\Psi_{\omega}) = 0$. As a second example, $\Phi_k$ is trivially bounded for any $k \in \mathbb{N}$.  
To compute $l(\Phi_k)$, note that if $k = 0$, then $\Phi_0 = \{0\} \subseteq \Psi_{\omega}$. Hence, $l(\Phi_0) = 0$.  
For $k \geq 1$, by Corollary~\ref{Properness}, we have $\Phi_k \nsubseteq \Phi_{k-1}$ and $\Phi_1 \nsubseteq \Psi_{\omega}$. It follows that $\Phi_k \nsubseteq \Psi_{\omega}$, and therefore $l(\Phi_k) = k$. Moreover, note that $\Phi_{\omega}$ is unbounded; otherwise, there exists $k \in \mathbb{N}$ such that $\Phi_{k+1} \subseteq \Phi_{\omega} \subseteq \Phi_k$, which is impossible by Corollary~\ref{Properness}.
As a third example, we show that for any $\alpha \preceq \omega^{\omega}$, the downset $\mathsf{D}_{\alpha}$ is bounded and $l(\mathsf{D}_{\alpha}) = 0$. Note that by the definition of $\omega^{\omega} = \langle \omega^{i+1} \rangle_i$, if $\beta \prec \alpha \preceq \omega^{\omega}$, by the properties of $\prec$, then $\beta \prec \omega^k$ for some $k \geq 1$. Since $\omega^k \in \Psi_{\omega}$, Lemma~\ref{PhiIsDownset} yields $\beta \in \Psi_{\omega}$. Thus, $\mathsf{D}_{\alpha} \subseteq \Psi_{\omega}$, and so $\mathsf{D}_{\alpha}$ is bounded and $l(\mathsf{D}_{\alpha}) = 0$.
As concrete examples we will use later, we have $l(\mathsf{D}_{\omega^k}) = l(\mathsf{D}_{\omega^\omega}) = 0$ for any $k \geq 1$.    
\end{example}

The main theorem of this paper is the characterization of $\PredFuncClass{\mathsf{A}}$ for any downset $\mathsf{A} \subseteq \Phi_{\omega}$ such that $\mathsf{A} \nsubseteq \mathsf{D}_{\omega}$.

\begin{theorem}[Main Theorem]\label{mainth}
Let $\mathsf{A} \subseteq \Phi_{\omega}$ be a downset of ordinals with $\mathsf{A} \nsubseteq \mathsf{D}_{\omega}$. Then:
\begin{itemize}
    \item[$(i)$] 
If $\mathsf{A}$ is bounded, then $\PredFuncClass{\mathsf{A}} = \GrzClass{l(\mathsf{A})+2}$.
    \item[$(ii)$] 
If $\mathsf{A}$ is unbounded, then $\PredFuncClass{\mathsf{A}} = \PR$.  
\end{itemize}
\end{theorem}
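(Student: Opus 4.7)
The plan is to prove part (i) by establishing both inclusions separately, and then derive part (ii) as a consequence. The hypothesis $\mathsf{A} \nsubseteq \mathsf{D}_\omega$ plays a crucial role: by the Dichotomy Theorem~\ref{FiniteOrInfiniteThm} combined with the fact that $o(r) \prec \omega$ for all $r \in \mathbb{N}$, it guarantees that $o(r)+\omega \in \mathsf{A}$ for some $r \in \mathbb{N}$, which in turn activates Corollary~\ref{cor:E2-subset-predr} and yields the base inclusion $\GrzClass{2} \subseteq \PredFuncClass{\mathsf{A}}$.

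For the lower bound in (i) at level $k = l(\mathsf{A}) \geq 1$, the task is to simulate the initial functions of $\GrzClass{k+2}$---in particular the fast-growing functions $h_1, \ldots, h_{k+1}$---within $\PredFuncClass{\mathsf{A}}$, and then to absorb bounded primitive recursion. By the definition of $l(\mathsf{A}) = k$, the downset $\mathsf{A}$ is not contained in $\Phi_{k-1}$ (or in $\Psi_\omega$ when $k=1$), so by Corollary~\ref{Summand} and Lemma~\ref{PhiIsDownset} there is some ordinal of the form $\phi_{k-1}(\beta)$ whose entire subtree lies in $\mathsf{A}$. Predicative ordinal recursion down such a Veblen ordinal iterates lower fast-growing functions sufficiently many times to match $h_{k+1}$; Lemma~\ref{lem:agree-finite-ordinals} provides the bridge from $\mathbb{N}$-recursion to $\Omega$-recursion, and safe composition supplies the remaining closure properties. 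This construction is carried out in Section~\ref{sec:ReductionToG}.

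The upper bound $\PredFuncClass{\mathsf{A}} \subseteq \GrzClass{l(\mathsf{A})+2}$ is proved by structural induction on the construction of functions in $\ClassName{\mathsf{A}}$, the key observation being that predicative ordinal recursion on any $\alpha \in \mathsf{A}$ terminates in at most $L_\alpha(n)$ steps on input $n$, while the structural discipline of safe/normal positions prevents recursively defined values from re-entering recursion positions. Since $\alpha \in \Phi_{l(\mathsf{A})}$, the length function $L_\alpha$ is itself bounded inside $\GrzClass{l(\mathsf{A})+2}$ by the inequalities of Lemma~\ref{lem:g-properties}, and tracking this bound alongside the value of $f$ closes the induction; this is the content of Section~\ref{sec:UpperBoundLength}. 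Given (i), part (ii) follows in both directions: $\PredFuncClass{\mathsf{A}} \subseteq \PredFuncClass{\Phi_\omega} = \bigcup_k \PredFuncClass{\Phi_k} = \bigcup_k \GrzClass{k+2} = \PR$ via Lemma~\ref{lem:PredInLimit} and the standard identity $\PR = \bigcup_{k \geq 2} \GrzClass{k}$; conversely, for $f \in \GrzClass{k+2}$ with $k \geq 1$, the unboundedness of $\mathsf{A}$ gives some $\alpha \in \mathsf{A} \setminus \Phi_k$, the set $\mathsf{B} := \mathsf{D}_\alpha \cup \{\alpha\} \subseteq \mathsf{A}$ is a bounded downset with $l(\mathsf{B}) \geq k$ satisfying $\mathsf{B} \nsubseteq \mathsf{D}_\omega$ (since $\mathsf{D}_\omega \subseteq \Psi_\omega \subseteq \Phi_1 \subseteq \Phi_k$), so (i) and Lemma~\ref{MonotonicityOfPredR} yield $f \in \PredFuncClass{\mathsf{A}}$, while the case $f \in \GrzClass{2}$ is already covered by the base inclusion above.

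The principal obstacle lies entirely in part (i). The lower-bound simulation requires encoding fast-growing behavior inside the rigid normal/safe calculus using only ordinal recursion up to a single Veblen rank, which demands a careful representation of iteration schemes tied to the specific Veblen ordinals available in $\mathsf{A}$. The upper bound is even more delicate, because one must show that the seemingly modest syntactic restrictions of safe composition and predicative ordinal recursion are \emph{exactly} strong enough to cap computational growth at the Grzegorczyk level $l(\mathsf{A})+2$; this requires a simultaneous bookkeeping of value and runtime bounds through every recursion and composition step, and it is the reason only the length functions $L_\alpha$---rather than any slow- or fast-growing iterates---govern the final complexity bound.
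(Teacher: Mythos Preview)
Your overall plan is correct and aligns with the paper's strategy: part~(i) is proved by two inclusions in Sections~\ref{sec:ReductionToG} and~\ref{sec:UpperBoundLength}, and part~(ii) follows from~(i). Your derivation of~(ii) differs slightly from the paper's---you pick, for each $f\in\GrzClass{k+2}$, a single witness $\alpha\in\mathsf{A}\setminus\Phi_k$ and work with $\mathsf{B}=\mathsf{D}_\alpha\cup\{\alpha\}$, whereas the paper sets $\mathsf{A}_k=\mathsf{A}\cap\Phi_k$ and invokes continuity (Lemma~\ref{lem:PredInLimit}) once---but both arguments are valid.

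There is one genuine slip in your lower-bound sketch. You assert that ``by Corollary~\ref{Summand} and Lemma~\ref{PhiIsDownset} there is some ordinal of the form $\phi_{k-1}(\beta)$ whose entire subtree lies in $\mathsf{A}$.'' This is false in general: summands with respect to the syntactic $\Phi$-decomposition need not be subtrees with respect to $\prec$. For a concrete counterexample take $k=2$ and $\mathsf{A}=\{\gamma\mid\gamma\preceq 1+\epsilon_0\}$. Then $\mathsf{A}$ is a bounded downset with $l(\mathsf{A})=2$ and $\mathsf{A}\nsubseteq\mathsf{D}_\omega$, yet no $\phi_1(\beta)$ lies in $\mathsf{A}$: one checks $\epsilon_0\nprec 1+\epsilon_0$ because every element of the fundamental sequence of $1+\epsilon_0$ has the form $1+\phi_0^{(i)}(0)$, and $[\![\epsilon_0]\!]$ strictly dominates each $[\![1+\phi_0^{(i)}(0)]\!]$. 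The paper's actual mechanism (Lemma~\ref{ExistenceOfWitness} and the theorem following it) avoids this by taking any $\alpha\in\mathsf{A}\setminus\Phi_{k-1}$ and proving $G_\alpha(n)\ge G_{\phi_{k-1}(0)}(n)$ directly via the $Q_k$ calculus; the fast-growing bound is then obtained from $G_\alpha\in\PredFuncClass{\mathsf{A}}$ (Example~\ref{Exam:GInPred}) together with Corollary~\ref{lem:Q-bounds-h}, not from recursion ``down a Veblen ordinal'' that may not be present. Your citation of Lemma~\ref{lem:g-properties} for the $L_\alpha$ bound in the upper direction is also off; the relevant bounds come from Lemma~\ref{lem:ub-g-and-l-via-q-h}, Lemma~\ref{lem:veb-hie-H-func}, and Lemma~\ref{lem:bound-on-g}.
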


We will prove part~$(i)$ of Theorem~\ref{mainth} later. However, part~$(ii)$ follows as a consequence of part~$(i)$, and hence we provide its proof now.

\begin{proof}[Proof of part $(ii)$]
First, note that since $\mathsf{A} \nsubseteq \mathsf{D}_{\omega}$, there exists $\alpha \in \mathsf{A}$ such that $\alpha \nprec \omega$. By the dichotomy theorem (Theorem~\ref{FiniteOrInfiniteThm}), since $\alpha \in \mathsf{A} \subseteq \Phi_{\omega}$, there exists $r \in \mathbb{N}$ such that either $\alpha = o(r)$ or $o(r) + \omega \preceq \alpha$. The first case is impossible, as $\alpha \nprec \omega$. Therefore, $o(r) + \omega \preceq \alpha$. Since $\alpha \in \mathsf{A}$ and $\mathsf{A}$ is a downset, we conclude $o(r) + \omega \in \mathsf{A}$.

Second, define $\mathsf{A}_k = \mathsf{A} \cap \Phi_k$ for any $k \geq 1$. First, by Lemma~\ref{MonotonicityOfPhi}, $\Phi_{\omega} = \bigcup_{k \in \mathbb{N}} \Phi_k$. Since $\Phi_0 = \{0\} \subseteq \Phi_1$, we also have $\Phi_{\omega} = \bigcup_{k \geq 1} \Phi_k$. Therefore, as $\mathsf{A} \subseteq \Phi_{\omega}$, it follows that $\mathsf{A} = \bigcup_{k \geq 1} \mathsf{A}_k$. Second, by Lemma~\ref{PhiIsDownset}, each $\Phi_k$ is a downset, and hence each $\mathsf{A}_k$ is a bounded downset. Third, since $o(r) + \omega \in \Phi_k$ for any $k \geq 1$, we have $o(r) + \omega \in \mathsf{A}_k$ for all $k \geq 1$. Hence, as $o(r) + \omega \nprec \omega$, it follows that $\mathsf{A}_k \nsubseteq \mathsf{D}_{\omega}$. Therefore, we can apply part~$(i)$ of Theorem~\ref{mainth} to each $\mathsf{A}_k$.

Furthermore, note that the set $\{l(\mathsf{A}_k) \mid k \geq 1\}$ is unbounded; for otherwise, there exists $N \in \mathbb{N}$ such that $l(\mathsf{A}_k) \leq N$ for all $k \geq 1$. Then $\mathsf{A}_k \subseteq \Phi_N$ by the definition of $l$, which implies $\mathsf{A}= \bigcup_{k \geq 1} \mathsf{A}_k \subseteq \Phi_N$, contradicting the assumption that $\mathsf{A}$ is unbounded. 

Now, since $\mathsf{A}_k \subseteq \mathsf{A}_{k+1}$ and $\mathsf{A} = \bigcup_{k \geq 1} \mathsf{A}_k$, by Lemma~\ref{lem:PredInLimit} and part~$(i)$ of Theorem~\ref{mainth}, we have 
\[
\PredFuncClass{\mathsf{A}} = \bigcup_{k \geq 1} \PredFuncClass{\mathsf{A}_k} = \bigcup_{k \geq 1} \mathcal{E}_{l(\mathsf{A}_k)+2}.
\]
Since the set $\{l(\mathsf{A}_k) \mid k \geq 1\}$ is unbounded, we have $\bigcup_{k \geq 1} \mathcal{E}_{l(\mathsf{A}_k)+2} = \PR$, which implies $\PredFuncClass{\mathsf{A}} = \PR$.
\end{proof}

\begin{remark}
The characterization provided in Theorem~\ref{mainth} is tight for any downset $\mathsf{A} \subseteq \Phi_{\omega}$ of ordinals. The only aspect that requires further clarification is the condition $\mathsf{A} \nsubseteq \mathsf{D}_{\omega}$. This condition serves as a natural restriction, ensuring that $\mathsf{A}$ is sufficiently large to support a meaningful recursion. Indeed, if $\mathsf{A} \subseteq \mathsf{D}_{\omega}$, then predicative ordinal recursion operates only up to a fixed finite ordinal, effectively reducing predicative ordinal recursion to safe composition. In this case, the class loses its predicative ordinal recursion scheme entirely and becomes essentially the closure of basic functions under safe composition, and hence too weak to be of significant interest.
\end{remark}

Some concrete instances of Theorem~\ref{mainth} are particularly noteworthy. First, we can apply the characterization to the classes $\Psi_{\omega}$, $\Phi_k$ (for $k \geq 1$), and $\Phi_{\omega}$:

\begin{corollary}\label{MaincorI}
$\PredFuncClass{\Psi_{\omega}}=\GrzClass{2}$, 
    $\PredFuncClass{\Phi_{k}}=\GrzClass{k+2}$, for any $k \geq 1$, and 
    $\PredFuncClass{\Phi_{\omega}}=\PR$.
\end{corollary}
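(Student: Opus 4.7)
The plan is to derive each of the three equalities as a direct instance of Theorem~\ref{mainth}, so the task reduces to verifying, for each of $\Psi_{\omega}$, $\Phi_k$ (with $k \geq 1$), and $\Phi_{\omega}$, the two hypotheses of the main theorem: being a downset of $\Phi_{\omega}$ and not being contained in $\mathsf{D}_{\omega}$. All three inclusions in $\Phi_{\omega}$ are immediate: $\Psi_{\omega} \subseteq \Phi_1 \subseteq \Phi_{\omega}$ by Lemma~\ref{MonotonicityOfPsi}, and $\Phi_k \subseteq \Phi_{\omega}$ by Lemma~\ref{MonotonicityOfPhi}. Each of the three sets is a downset by Lemma~\ref{PhiIsDownset}.

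To check $\mathsf{A} \nsubseteq \mathsf{D}_{\omega}$, I would exhibit a single witness in $\mathsf{A}$ that is not strictly below $\omega$. The ordinal $\omega$ itself works in all three cases: using Lemma~\ref{AdditionProp}$(i)$ one has $\omega = \omega^1 = \phi_0(1)$, so $\omega \in \Psi_{\omega}$ by Definition~\ref{def:Psi-mk}, and hence $\omega$ belongs also to $\Phi_k$ (for every $k \geq 1$) and to $\Phi_{\omega}$. Since $\prec$ is irreflexive, $\omega \nprec \omega$, so $\omega \notin \mathsf{D}_{\omega}$, which rules out any containment of the candidate downsets inside $\mathsf{D}_{\omega}$.

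With these hypotheses in place, the remaining step is to read off the levels from Example~\ref{ExamplesOfLevels}, which already records that $\Psi_{\omega}$ is bounded with $l(\Psi_{\omega}) = 0$, that $\Phi_k$ is bounded with $l(\Phi_k) = k$ for each $k \geq 1$, and that $\Phi_{\omega}$ is unbounded. Applying part~$(i)$ of Theorem~\ref{mainth} to the first two cases then yields $\PredFuncClass{\Psi_{\omega}} = \GrzClass{l(\Psi_{\omega})+2} = \GrzClass{2}$ and $\PredFuncClass{\Phi_k} = \GrzClass{k+2}$, while part~$(ii)$ applied to $\Phi_{\omega}$ gives $\PredFuncClass{\Phi_{\omega}} = \PR$.

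There is no significant obstacle here; the corollary is essentially a bookkeeping exercise, assembling the main theorem with the worked-out levels of Example~\ref{ExamplesOfLevels}. The only minor point worth being explicit about is the witness $\omega$ for the non-triviality condition $\mathsf{A} \nsubseteq \mathsf{D}_{\omega}$, which requires unfolding the definitions of $\Psi_{\omega}$, $\Phi_k$, and $\Phi_{\omega}$ together with the identities $\phi_0(0) = 1$ and $\phi_0(1) = \omega$.
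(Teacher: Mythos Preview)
Your proof is correct and follows essentially the same approach as the paper: show that each of the three classes contains $\omega$ (hence is not contained in $\mathsf{D}_{\omega}$), invoke Example~\ref{ExamplesOfLevels} for the levels, and apply Theorem~\ref{mainth}. You are somewhat more explicit than the paper in verifying the downset and inclusion-in-$\Phi_{\omega}$ hypotheses, but the structure of the argument is identical.
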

\begin{proof}
First, note that none of the classes $\Psi_{\omega}$, $\Phi_k$ (for $k \geq 1$), and $\Phi_{\omega}$ is a subset of $\mathsf{D}_{\omega}$, since they all contain $\omega$. Then, it is enough to use Example~\ref{ExamplesOfLevels} and Theorem~\ref{mainth}.
\end{proof}

\begin{remark}
As consequences of Theorem~\ref{MeaningOfPhi} and Theorem~\ref{MeaningOfPsi}, we showed that $\Psi_{\omega}$, $\Phi_{k}$ (for $k \geq 1$), and $\Phi_{\omega}$ are the constructive counterparts of the classes of all set-theoretic ordinals below $\bm{\omega}^{\omega}$, $\bm{\phi}_k(\bm{0})$, and $\bm{\phi}_{\bm{\omega}}(\bm{0})$, respectively. Therefore, one can intuitively interpret Corollary~\ref{MaincorI} as stating that predicative ordinal recursion up to ordinals below $\bm{\omega}^{\omega}$, $\bm{\phi}_k(\bm{0})$, or $\bm{\phi}_{\bm{\omega}}(\bm{0})$ corresponds to the classes $\GrzClass{2}$, $\GrzClass{k}$, and $\PR$, respectively, if $k \geq 1$. For instance, to construct all and only elementary functions (i.e., the functions in $\mathcal{E}_3$), one must use predicative ordinal recursion up to the ordinals below $\bm{\phi}_1(\bm{0}) = \bm{\epsilon_0}$.

As a second remark, note that Corollary~\ref{MaincorI} can alternatively be interpreted as a characterization of the Grzegorczyk hierarchy via predicative ordinal recursion on the ordinals within the corresponding classes. This yields a structural and machine-independent characterization of the Grzegorczyk hierarchy, extending the Bellantoni–Cook characterization of $\mathcal{E}_2$ to all levels of the hierarchy.
\end{remark}

Another interesting special case of Theorem~\ref{mainth} arises when the downset $\mathsf{A}$ is restricted to either $\mathsf{D}_{\omega^{\omega}}$ or $\mathsf{D}_{\omega^k}$ for $k \geq 2$. In these cases, we have:

\begin{corollary}\label{MaincorII}
$\PredFuncClass{\omega^k}=\PredFuncClass{\omega^{\omega}}=\GrzClass{2}$, for any $k \geq 2$.
\end{corollary}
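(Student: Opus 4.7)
The plan is to derive Corollary~\ref{MaincorII} as an immediate consequence of Theorem~\ref{mainth}(i), instantiated at $\mathsf{A} = \mathsf{D}_{\omega^k}$ for $k \geq 2$ and at $\mathsf{A} = \mathsf{D}_{\omega^{\omega}}$. The two hypotheses of that theorem to verify are that $\mathsf{A} \subseteq \Phi_{\omega}$ is a bounded downset and that $\mathsf{A} \nsubseteq \mathsf{D}_{\omega}$; once established, $\PredFuncClass{\mathsf{A}} = \GrzClass{l(\mathsf{A})+2}$, and it remains to recall that $l(\mathsf{A}) = 0$ so that the right-hand side collapses to $\GrzClass{2}$.

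For the first hypothesis I would quote Example~\ref{ExamplesOfLevels}, which already observes that $\mathsf{D}_{\alpha} \subseteq \Psi_{\omega}$ whenever $\alpha \preceq \omega^{\omega}$, and that $l(\mathsf{D}_{\omega^k}) = l(\mathsf{D}_{\omega^{\omega}}) = 0$. Combined with Lemma~\ref{MonotonicityOfPsi}, which gives $\Psi_{\omega} \subseteq \Phi_1 \subseteq \Phi_{\omega}$, this immediately yields that both downsets lie in $\Phi_{\omega}$ and are bounded, with level zero. No additional work is needed for this part beyond invoking these earlier results.

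The only thing left to check is $\mathsf{A} \nsubseteq \mathsf{D}_{\omega}$. For this I would exhibit $\omega$ as a common witness: since $\prec$ is well-founded and hence irreflexive, $\omega \notin \mathsf{D}_{\omega}$, so it suffices to verify $\omega \in \mathsf{D}_{\omega^k}$ and $\omega \in \mathsf{D}_{\omega^{\omega}}$. Unfolding the recursive definition of ordinal exponentiation, $\omega^2 = \omega \cdot \omega = \omega \cdot \langle o(i+1) \rangle_i = \langle \omega \cdot o(i+1) \rangle_i$, whose $i=0$ entry is $\omega \cdot 1 = \omega$, giving $\omega \prec \omega^2$ as a subtree relation. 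Iterating the same unfolding, $\omega^{j} \prec \omega^{j+1}$ for every $j \geq 1$, and $\omega^j$ appears as an entry of $\omega^{\omega} = \langle \omega^{i+1} \rangle_i$. Transitivity of $\prec$ then delivers $\omega \prec \omega^k$ for any $k \geq 2$ and $\omega \prec \omega^{\omega}$, as desired.

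I do not anticipate any real obstacle in this argument: all substantive machinery is already packaged in Theorem~\ref{mainth}(i) and Example~\ref{ExamplesOfLevels}. The only small subtlety is checking that $\omega$ genuinely sits strictly below $\omega^k$ and $\omega^{\omega}$ in the \emph{constructive} subtree order $\prec$ rather than merely in the set-theoretic sense, but this is settled by the direct unfolding above. Putting the pieces together, Theorem~\ref{mainth}(i) yields $\PredFuncClass{\mathsf{D}_{\omega^k}} = \PredFuncClass{\mathsf{D}_{\omega^{\omega}}} = \GrzClass{0+2} = \GrzClass{2}$.
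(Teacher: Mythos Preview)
Your proposal is correct and follows essentially the same approach as the paper: verify $\mathsf{A} \nsubseteq \mathsf{D}_{\omega}$ by exhibiting $\omega$ as a member, invoke Example~\ref{ExamplesOfLevels} for boundedness and $l(\mathsf{A})=0$, and apply Theorem~\ref{mainth}(i). You simply spell out in more detail the subtree-order verification that $\omega \prec \omega^k$ and $\omega \prec \omega^{\omega}$, which the paper's proof asserts in a single clause.
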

\begin{proof}
First, note that for any $k \geq 2$, neither of the classes $\mathsf{D}_{\omega^k}$ nor $\mathsf{D}_{\omega^{\omega}}$ is a subset of $\mathsf{D}_{\omega}$, since both include $\omega$. Then, it is enough to use Example~\ref{ExamplesOfLevels} and Theorem~\ref{mainth}.
\end{proof}

Corollary~\ref{MaincorII} states that if we extend predicative recursion from the set of natural numbers to any fixed finite product thereof, or even to the union of all such finite products, the resulting functions still remain computable in linear space.

Now, let us explain our strategy to prove part~$(i)$ of Theorem~\ref{mainth}. To that end, given any bounded downset $\mathsf{A} \subseteq \Phi_{\omega}$ satisfying $\mathsf{A} \nsubseteq \mathsf{D}_{\omega}$, we need to establish two claims: $\mathcal{E}_{l(\mathsf{A})+2} \subseteq \PredFuncClass{\mathsf{A}}$ and $\PredFuncClass{\mathsf{A}} \subseteq \mathcal{E}_{l(\mathsf{A})+2}$. For each of these claims, we follow a strategy that will be outlined in the following in a rough manner.

For the first claim, i.e., $\mathcal{E}_{l(\mathsf{A})+2} \subseteq \PredFuncClass{\mathsf{A}}$, using the assumption $\mathsf{A} \nsubseteq \mathsf{D}_{\omega}$ and Theorem~\ref{FiniteOrInfiniteThm}, we can easily deduce that there exists $r \in \mathbb{N}$ such that $o(r)+\omega \in \mathsf{A}$. Therefore, by Corollary~\ref{cor:E2-subset-predr}, all linear-space computable functions (i.e., the class $\mathcal{E}_2$) are contained in $\PredFuncClass{\mathsf{A}}$.
Hence, if $l(\mathsf{A}) = 0$, the claim follows immediately. For the case $l(\mathsf{A}) \geq 1$, since $\mathcal{E}_2 \subseteq \PredFuncClass{\mathsf{A}}$, the strategy is to show that any function in $\mathcal{E}_{l(\mathsf{A})+2}$ is computable in space that is linear in a function from $\PredFuncClass{\mathsf{A}}$, and then to use the closure of $\PredFuncClass{\mathsf{A}}$ under composition to obtain the desired function in $\PredFuncClass{\mathsf{A}}$.

To this end, recall that each function in $\mathcal{E}_{l(\mathsf{A})+2}$ is computable in space bounded by a function in $\mathcal{E}_{l(\mathsf{A})+2}$. When $l(\mathsf{A}) \geq 2$ (resp. $l(\mathsf{A}) = 1$), since any function in $\mathcal{E}_{l(\mathsf{A})+2}$ is bounded by some number of iterations of $h_{l(\mathsf{A})+1}$ (resp. by the function $e(n) = 2^n$), it follows that these functions are computable in space bounded by some number of iterations of $h_{l(\mathsf{A})+1}$ (resp. $e$).
Moreover, recall from Example~\ref{ex:grz-def} that $G_{\alpha} \in \PredFuncClass{\mathsf{A}}$ for every $\alpha \in \mathsf{A}$. Therefore, it suffices to show that $h_{l(\mathsf{A})+1}$ is essentially bounded by $G_{\phi_{l(\mathsf{A})-1}(0)}$ if $l(\mathsf{A}) \geq 2$, and that $G_{\phi_{l(\mathsf{A})-1}(0)}$ (resp. $e$) is bounded by $G_{\alpha}$ for some $\alpha \in \mathsf{A}$, if $l(\mathsf{A}) \geq 2$ (resp. $l(\mathsf{A})=1$). This strategy will be carried out in Section~\ref{sec:ReductionToG}.

For the reverse direction $\PredFuncClass{\mathsf{A}} \subseteq \mathcal{E}_{l(\mathsf{A})+2}$, we consider two cases. If $l(\mathsf{A}) = 0$, then $\mathsf{A} \subseteq \Psi_{\omega} = \bigcup_{k \geq 1} \bigcup_{m \geq 1} \Psi^m_k$, and if $l(\mathsf{A}) \geq 1$, then $\mathsf{A} \subseteq \Phi_{l(\mathsf{A})} = \bigcup_{m \geq 1} \Phi^m_{l(\mathsf{A}) - 1}$, by Definition~\ref{definitionOfl}, Lemma~\ref{MonotonicityOfPsi}, and Lemma~\ref{MonotonicityOfPhi}. In both cases, since $\PredFuncClass{\mathsf{A}}$ is continuous in $\mathsf{A}$ by Lemma~\ref{lem:PredInLimit}, it suffices to show that $\PredFuncClass{\Psi^m_k} \subseteq \mathcal{E}_2$ and $\PredFuncClass{\Phi^m_l} \subseteq \mathcal{E}_{l+3}$ for all $k, m \geq 1$ and $l \geq 0$.

For this purpose, we first introduce a suitable notation system for ordinals, encoding them as strings over the alphabet 
$\Sigma = \{0,1,(,),;,\bot\}$, which enables the simulation of the functions in 
$\mathcal{C}_{\Psi^m_k}$ (resp. $\mathcal{C}_{\Phi^m_l}$) via the functions in 
$\GrzClassSig{2}$ (resp. $\GrzClassSig{l+3}$), i.e., the Grzegorczyk hierarchy over $\Sigma$.  
Then, for numeral functions in $\PredFuncClass{\Psi^m_k}$ (resp. $\PredFuncClass{\Phi^m_l} \subseteq \mathcal{E}_{l+3}$), we can return to the usual Grzegorczyk hierarchy by Remark~\ref{rem:equiv-E-sig-E}.

Up to this encoding, note that the basic functions in $\mathcal{C}_{\Psi^m_k}$ (resp. $\mathcal{C}_{\Phi^m_l}$) are already available at $\GrzClassSig{2}$, and each $\GrzClassSig{j}$ is closed under composition. Therefore, the main task is to simulate predicative ordinal recursion over the ordinals in $\Psi^m_k$ (resp. $\Phi^m_l$) using length-bounded primitive recursion available in $\GrzClassSig{2}$ (resp. $\GrzClassSig{l+3}$).
To explain, let $f$ be defined by predicative ordinal recursion:
\begin{align*}
    f(0, \bar \NormalOrdB, \bar \SafeOrdA; \bar \SafeNumbA)
        &= g(\bar \NormalOrdB, \bar \SafeOrdA; \bar \SafeNumbA), \\
    f(\NormalOrdA + 1, \bar \NormalOrdB, \bar \SafeOrdA; \bar \SafeNumbA) 
        &= h_{\mathsf{suc}}(\NormalOrdA, \bar \NormalOrdB, \bar \SafeOrdA; 
            f(\NormalOrdA, \bar \NormalOrdB, \bar \SafeOrdA; \bar \SafeNumbA), \bar \SafeNumbA), \\
    f(\langle \NormalOrdA_i \rangle_i, \bar \NormalOrdB, \bar \SafeOrdA; \bar \SafeNumbA)
        &= h_{\mathsf{lim}}(\langle \NormalOrdA_i \rangle_i, \bar \NormalOrdB, \bar \SafeOrdA;
            f(\NormalOrdA_{q(\bar \SafeOrdA;)}, \bar \NormalOrdB, \bar \SafeOrdA; \bar \SafeNumbA),
            \bar \SafeNumbA).
\end{align*}
In computing $f(\mu, \bar \NormalOrdB, \bar \SafeOrdA; \bar \SafeNumbA)$, observe that the ordinal recursion effectively proceeds along the sequence of $q(\bar n)$-predecessors of $\mu$, starting from $0$ and culminating at $\mu$. Therefore, it suffices to compute
$
f(R(i, \mu, q(\bar n)), \bar \NormalOrdB, \bar \SafeOrdA; \bar \SafeNumbA)
$ 
as a function of $i \in \mathbb{N}$, which can be carried out via primitive recursion on $i$, using $R(i, \mu, q(\bar n))$ to determine the ordinal at each step, and $g$, $h_{\mathsf{suc}}$, and $h_{\mathsf{lim}}$ for the base and recursive cases. Finally, setting $i = L_{\mu}(q(\bar n))$ yields $f(\mu, \bar \NormalOrdB, \bar \SafeOrdA; \bar \SafeNumbA)$, thus completing the simulation of ordinal recursion by primitive recursion.
Now, to show that $f$ is in $\GrzClassSig{2}$ (resp. $\GrzClassSig{l+3}$), it remains to verify that $R$ and $L$ belong to $\GrzClassSig{2}$ (resp. $\GrzClassSig{l+3}$). Furthermore, by bounding the lengths of the values of $f$ with a function in $\mathcal{E}_2$ (resp. $\mathcal{E}_{l+3}$), we can apply length-bounded primitive recursion (Theorem~\ref{the:Esigma-closure}) to conclude that $f$ is in $\GrzClassSig{2}$ (resp. $\GrzClassSig{l+3}$). We pursue this line of argument in Section~\ref{sec:UpperBoundLength}.

\section{Simulation of the Grzegorczyk Hierarchy}
%\section{$\mathcal{E}_{l(\mathsf{A})+2} \subseteq \PredFuncClass{\mathsf{A}}$}
\label{sec:ReductionToG}
In this section, we prove that $\mathcal{E}_{l(\mathsf{A})+2} \subseteq \PredFuncClass{\mathsf{A}}$ for any bounded downset $\mathsf{A} \subseteq \Phi_{\omega}$ satisfying $\mathsf{A} \nsubseteq \mathsf{D}_{\omega}$. 
As explained in Section~\ref{sec:main-theorem}, a key component of our strategy is to establish an upper bound for $h_i$ in terms of $G_{\phi_{i-2}(0)}$ for all $i \geq 3$. To this end, we must analyze the behavior of the slow-growing hierarchy on the ordinals in $\Phi_{\omega}$. Recall that the ordinals in $\Phi_{\omega}$ are generated from $0$ using ordinal addition and the $\phi_k$ functions. Moreover, the functions in the slow-growing hierarchy are additive with respect to the ordinal argument. Therefore, to understand the behavior of the slow-growing hierarchy on $\Phi_{\omega}$, it suffices to compute $G_{\phi_k(\alpha)}(n)$ in terms of $G_{\alpha}(n)$.

In this section, we will also obtain an \emph{upper bound} for the length hierarchy on ordinals in $\Phi_{\omega}$. 
Since the functions in the length hierarchy, like those in the slow-growing hierarchy, commute with addition on the ordinal argument, it suffices to bound $L_{\phi_k(\alpha)}(n)$ in terms of $L_{\alpha}(n)$. As these two tasks are structurally similar, we treat them together in this section, although the upper bound on the length hierarchy will only be needed later in Section~\ref{sec:UpperBoundLength}.

To accomplish these tasks, we introduce two families of functions $\{H_k\}_{k \in \mathbb{N}}$ and $\{Q_k\}_{k \in \mathbb{N}}$ to simulate the computation of $G_{\phi_k(\alpha)}(n)$ (resp.~$L_{\phi_k(\alpha)}(n)$) in terms of $G_{\alpha}(n)$ (resp. $L_{\alpha}(n)$).
To this end, consider the following numeral functions:
\begin{align*}
	&H_0(x,y) := (x+2)^y,\quad\quad
	H_{k+1}(x,0) := H_k^{(x)}(x,0)+1\\
	&H_{k+1}(x,y+1) := H_k^{(x)}(x, H_{k+1}(x,y)+1)+1\\[1em]
	&Q_0(x,y) := (x+1)^y,\quad\quad
	Q_{k+1}(x,0) := Q_k^{(x)}(x,0)\\
	&Q_{k+1}(x,y+1) := Q_k^{(x)}(x, Q_{k+1}(x,y)+1)
\end{align*}
Note that $Q_k^{(x)}$ and $H_k^{(x)}$ denotes $x$ many iterations on the \emph{second} argument of $Q_k$ and $H_k$, respectively. Later, in Lemma~\ref{lem:ub-g-and-l-via-q-h}, we will see that 
$G_{\Veb{k}(\beta)}(x) = Q_k(x, G_{\beta}(x))$ and 
$L_{\Veb{k}(\beta)}(x) \leq H_k(x, L_{\beta}(x))$ 
for any $\beta \in \Omega$ and $k \in \mathbb{N}$.

We begin by establishing some basic properties of these functions.

\begin{lemma}\label{ExpansiveLemma}
For any $k \in \mathbb{N}$, we have:
\begin{itemize}
 \item[$(i)$] 
$Q_{0}(0, y)=1$ and $Q_{k+1}(0, y)=y$, for any $x, y \in \mathbb{N}$.
    \item[$(ii)$] 
$Q_{k}(x, y) \geq y$, for any $x, y \in \mathbb{N}$ such that $x \neq 0$.
\item[$(iii)$] 
$H_k(x, y) \geq y$, for any $x, y \in \mathbb{N}$.
\end{itemize}
As a consequence of $(i)$ and $(ii)$, we have $Q_k(x, y) \geq y$, for any $x, y \in \mathbb{N}$ and any $k \geq 1$.
\end{lemma}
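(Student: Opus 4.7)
The plan is to prove each of the three items by straightforward (possibly nested) induction, exploiting the recursive definitions of $H_k$ and $Q_k$. Since the iteration $f^{(x)}$ means applying $f$ exactly $x$ times to its second argument, a monotonicity property of the form ``one step does not decrease the second argument'' will lift to ``$x$ steps do not decrease the second argument'' for free. This observation is what makes all three claims go through.

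For $(i)$, the first equation is immediate: $Q_0(0,y) = (0+1)^y = 1^y = 1$. For $Q_{k+1}(0,y) = y$, I would induct on $y$. The base case uses the convention that $Q_k^{(0)}(0,a) = a$ (zero iterations), giving $Q_{k+1}(0,0) = Q_k^{(0)}(0,0) = 0$. For the successor step, again $Q_k^{(0)}(0, a) = a$, so
\[
Q_{k+1}(0, y+1) \;=\; Q_k^{(0)}(0,\, Q_{k+1}(0,y)+1) \;=\; Q_{k+1}(0,y)+1 \;=\; y+1
\]
by the inductive hypothesis.

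For $(ii)$, I would do an outer induction on $k$. The base case $k=0$ with $x \geq 1$ reduces to $(x+1)^y \geq 2^y$, and a trivial induction on $y$ gives $2^y \geq y$. For the inductive step, assuming $Q_k(x,y) \geq y$ for all $x \geq 1$ and all $y$, I first note that this lifts (by a short induction on the number of iterations) to $Q_k^{(i)}(x, a) \geq a$ for every $i \geq 0$ and every $a$. Then I run an inner induction on $y$: the base case $Q_{k+1}(x,0) \geq 0$ is trivial, and for the step,
\[
Q_{k+1}(x, y+1) \;=\; Q_k^{(x)}(x,\, Q_{k+1}(x,y)+1) \;\geq\; Q_{k+1}(x,y)+1 \;\geq\; y+1,
\]
using the lifted monotonicity for the first inequality and the inner IH for the second.

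Part $(iii)$ follows the same template. The base case $k=0$ uses $(x+2)^y \geq 2^y \geq y$. For the step, I first upgrade the IH on $k$ to $H_k^{(i)}(x,a) \geq a$ for every $i$, $x$, $a$ (again a short induction on $i$, with the $i=0$ case being $H_k^{(0)}(x,a) = a$ by convention). Then I induct on $y$: the base $H_{k+1}(x,0) = H_k^{(x)}(x,0) + 1 \geq 1 \geq 0$ is immediate, and for the step,
\[
H_{k+1}(x,y+1) \;=\; H_k^{(x)}(x,\, H_{k+1}(x,y)+1) + 1 \;\geq\; H_{k+1}(x,y) + 2 \;\geq\; y+2.
\]
There is no real obstacle here; the only mild subtlety is remembering that the iteration is on the \emph{second} argument and that $x=0$ iterations is the identity, so the $x=0$ case of $(iii)$ must be handled by the same formula rather than appealing to the monotonicity-under-iteration argument (which is vacuous when $x=0$ but still yields equality, which is enough). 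The final consequence $Q_k(x,y) \geq y$ for all $x,y$ when $k \geq 1$ is an immediate combination of $(i)$ (for $x=0$) and $(ii)$ (for $x \geq 1$).
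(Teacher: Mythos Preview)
Your proposal is correct and follows essentially the same approach as the paper: part $(i)$ by induction on $y$ using that zero-fold iteration is the identity, and parts $(ii)$ and $(iii)$ by an outer induction on $k$ with an inner induction on $y$, using that each application of $Q_k(x,-)$ (resp.\ $H_k(x,-)$) does not decrease its argument. If anything, you are slightly more explicit than the paper, which simply writes ``the proof for $(iii)$ is similar to $(ii)$'' without commenting on the $x=0$ case you flag; your observation that zero iterations still gives equality is exactly the right way to close that gap.
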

\begin{proof}
For $(i)$, the first part is clear as $Q_0(0, y)=1^y=1$. For the second part, we use induction on $y$. For $y=0$, we have $Q_{k+1}(0, 0)=Q_k^{(0)}(0, 0)=0$ and for the induction step:
\[
Q_{k+1}(0, y+1)=Q_{k+1}^{(0)}(0, Q_{k+1}(0, y)+1)= Q_{k+1}(0, y)+1 = y+1
\]

For $(ii)$, we prove the claim by induction on $k$. For $k=0$, since $x>0$, then $Q_0(x, y)=(x+1)^y\geq 2^y\geq y$, for any $y \in \mathbb{N}$. For the inductive step, we assume the claim for $k$ and use an induction on $y$ to show $Q_{k+1}(x, y) \geq y$. For $y=0$, as $0 \leq Q_{k+1}(x, 0)$, there is nothing to prove. For the inductive step, by the definition of $Q_{k+1}(x, y+1)$ and the induction hypothesis for $k$ and $y$, we have:  
\[
Q_{k+1}(x, y+1)=Q_{k}^{(x)}(x, Q_{k+1}(x, y)+1) \geq Q_{k+1}(x, y)+1 \geq y+1
\]
This completes the proof of $(ii)$. The proof for $(iii)$ is similar to $(ii)$. 
\end{proof}

\begin{lemma}\label{MonotinicityOfQ}
$Q_k$ and $H_k$ are monotone in both of their arguments, for any $k \in \mathbb{N}$.
\end{lemma}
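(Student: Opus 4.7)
The plan is to induct on $k$, proving monotonicity of $Q_k$ and $H_k$ in parallel, since their recursive definitions share the same structural form. For the base case $k=0$, we have $Q_0(x,y)=(x+1)^y$ and $H_0(x,y)=(x+2)^y$; monotonicity in $x$ is immediate since the base increases, and monotonicity in $y$ follows because both bases are at least $1$.

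For the inductive step, assuming $Q_k$ and $H_k$ are monotone in both arguments, I would first establish an auxiliary \emph{iteration lemma}: under the inductive hypothesis, $Q_k^{(n)}(x,y)$ is jointly monotone in $n$, $x$, and $y$, and likewise for $H_k^{(n)}$. Monotonicity in $x$ and $y$ follows by a straightforward induction on $n$ together with the inductive hypothesis on $Q_k$ (resp.\ $H_k$). Monotonicity in $n$ reduces to the claim that one extra iteration cannot decrease the value, which is exactly expansiveness: Lemma~\ref{ExpansiveLemma}$(iii)$ gives it unconditionally for $H_k$, while Lemma~\ref{ExpansiveLemma}$(ii)$ gives it for $Q_k$ provided the first argument is $\geq 1$.

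Next I would prove monotonicity of $Q_{k+1}$ and $H_{k+1}$ in $y$ by induction on $y$. From $Q_{k+1}(x,y{+}1)=Q_k^{(x)}(x,Q_{k+1}(x,y){+}1)$ and expansiveness, we get $Q_{k+1}(x,y{+}1)\geq Q_{k+1}(x,y)+1$ whenever $x\geq 1$; the case $x=0$ collapses to $Q_{k+1}(0,y)=y$ by Lemma~\ref{ExpansiveLemma}$(i)$. The same argument yields monotonicity of $H_{k+1}$ in $y$ with no boundary case to check. Finally, monotonicity of $Q_{k+1}$ in $x$ is shown by induction on $y$. The base case $Q_{k+1}(x,0)=Q_k^{(x)}(x,0)\leq Q_k^{(x+1)}(x+1,0)=Q_{k+1}(x+1,0)$ is a direct application of the iteration lemma (with all three of $n$, $x$, $y$ moving up by at most one). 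The inductive step combines the just-established monotonicity in $y$ with the iteration lemma: from $Q_{k+1}(x,y)\leq Q_{k+1}(x+1,y)$ one obtains $Q_k^{(x)}(x,Q_{k+1}(x,y){+}1)\leq Q_k^{(x+1)}(x+1,Q_{k+1}(x+1,y){+}1)$. The argument for $H_{k+1}$ is parallel.

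The proof is not conceptually deep; the main obstacle is the bookkeeping required to sequence the iteration lemma and the two monotonicity statements so that each step uses only facts already established at the current induction level. The mildly awkward corner is the case $x=0$ in $Q_{k+1}$, where the recursion ``degenerates'' via Lemma~\ref{ExpansiveLemma}$(i)$ rather than unfolding in the usual way; however, this case is handled in isolation without affecting the overall structure of the argument.
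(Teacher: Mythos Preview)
Your proposal is correct and follows essentially the same route as the paper: induction on $k$, with the inductive step reducing to (a) expansiveness of $Q_k^{(x)}(x,-)$ (handling $x=0$ separately) to get monotonicity in $y$, and (b) an inner induction on $y$ using monotonicity of the iterate $Q_k^{(n)}$ to get monotonicity in $x$. The only cosmetic difference is that you package the needed facts about $Q_k^{(n)}$ into a standalone ``iteration lemma,'' whereas the paper simply inlines the two relevant inequalities $Q_k^{(x+1)}(x+1,0)\geq Q_k^{(x)}(x+1,0)\geq Q_k^{(x)}(x,0)$ where they are used.
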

\begin{proof}
We only prove the claim for $Q_k$. The other is similar. To prove the monotonicity of $Q_k$, we use an induction on $k$. For $k=0$ the claim is clear, as $Q_0(x, y)=(x+1)^y$ is monotone. For the inductive step, we assume the monotonicity of $Q_k$ to prove the monotonicity of $Q_{k+1}$. For that purpose,
it is enough to prove $Q_{k+1}(x, y) \leq Q_{k+1}(x, y+1)$ and
$Q_{k+1}(x, y) \leq Q_{k+1}(x+1, y)$, for any $x, y \in \mathbb{N}$. First, observe that $Q_{k}^{(x)}(x, z) \geq z$, for any $z \in \mathbb{N}$. The reason is that if $x=0$, then 
$Q_{k}^{(x)}(x, z) =z$, by definition and if $x>0$, by Lemma \ref{ExpansiveLemma}, we have $Q_{k}^{(x)}(x, z) \geq z$.
Now, using $z=Q_{k+1}(x, y)+1$, we get:
\[
Q_{k+1}(x, y+1)=Q_{k}^{(x)}(x, Q_{k+1}(x, y)+1) \geq Q_{k+1}(x, y)+1 \geq Q_{k+1}(x, y). 
\]
To prove $Q_{k+1}(x, y) \leq Q_{k+1}(x+1, y)$, we use an induction on $y$. For $y=0$, by Lemma \ref{ExpansiveLemma} and the monotonicity of $Q_{k}$, we have:
\[
Q_{k+1}(x+1, 0)=Q_{k}^{(x+1)}(x+1, 0) \geq Q_{k}^{(x)}(x+1, 0) \geq Q_{k}^{(x)}(x, 0)=Q_{k+1}(x, 0).
\]
For the induction step, using Lemma \ref{ExpansiveLemma}, the monotonicity of $Q_{k}$ and the induction hypothesis, we have:
\[
Q_{k+1}(x+1, y+1)=Q_{k}^{(x+1)}(x+1, Q_{k+1}(x+1, y)+1)
\]
\[
\geq Q_{k}^{(x)}(x+1, Q_{k+1}(x+1, y)+1) \geq Q_{k}^{(x)}(x, Q_{k+1}(x, y)+1)=Q_{k+1}(x, y+1). \qedhere
\]
\end{proof}

\begin{lemma}\label{kMonotonicityOfQ}
Let $k\in \mathbb{N}$ be a natural number. Then:
\begin{itemize}
    \item[$(i)$] 
For any $x, y\in \mathbb{N}$, if $x>0$ then $Q_k(x, y) \leq Q_{k+1}(x, y)$.
    \item[$(ii)$] 
For any $x, y\in \mathbb{N}$, we have $H_k(x, y) \leq H_{k+1}(x, y)$.   
\end{itemize}
\end{lemma}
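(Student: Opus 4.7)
The plan is to prove both inequalities by directly unfolding the recursive definitions of $Q_{k+1}$ and $H_{k+1}$, relying on the monotonicity established in Lemma~\ref{MonotinicityOfQ} together with the expansiveness statements of Lemma~\ref{ExpansiveLemma}. No induction on $k$ is needed: the outer iteration $Q_k^{(x)}$ (resp.\ $H_k^{(x)}$) appearing in the definition of $Q_{k+1}$ (resp.\ $H_{k+1}$) already contains one application of $Q_k$ (resp.\ $H_k$), and the remaining iterations can only enlarge the value provided the base is large enough. Thus the whole argument reduces to a two-case split on whether $y=0$ or $y = y'+1$.

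For part $(i)$, fix $x \geq 1$. If $y = 0$, then
\[
Q_{k+1}(x, 0) \;=\; Q_k^{(x)}(x, 0) \;\geq\; Q_k(x, 0),
\]
where the inequality uses the monotonicity of $Q_k(x,\cdot)$ from Lemma~\ref{MonotinicityOfQ} together with the trivial fact that $Q_k^{(x-1)}(x, 0) \geq 0$. If $y = y' + 1$, unfolding the definition and applying the monotonicity of $Q_k(x,\cdot)$ together with the expansiveness $Q_k(x, z) \geq z$ for $x \geq 1$ (Lemma~\ref{ExpansiveLemma}$(ii)$) yields
\[
Q_{k+1}(x, y) \;=\; Q_k^{(x)}(x, Q_{k+1}(x, y') + 1) \;\geq\; Q_k(x, Q_{k+1}(x, y') + 1) \;\geq\; Q_k(x, y),
\]
the last step following because $Q_{k+1}(x, y') \geq y'$ by Lemma~\ref{ExpansiveLemma}$(ii)$ and the fact that $x \geq 1$.

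Part $(ii)$ proceeds by the same two-case strategy, using Lemma~\ref{ExpansiveLemma}$(iii)$ (the expansiveness of $H_k$, which holds without restriction on $x$) in place of Lemma~\ref{ExpansiveLemma}$(ii)$. In the base case $y=0$ we have $H_{k+1}(x, 0) = H_k^{(x)}(x, 0) + 1 \geq H_k(x, 0) + 1 > H_k(x, 0)$, and in the inductive step
\[
H_{k+1}(x, y'+1) \;=\; H_k^{(x)}(x, H_{k+1}(x, y') + 1) + 1 \;\geq\; H_k(x, H_{k+1}(x, y') + 1) + 1 \;\geq\; H_k(x, y'+1),
\]
where the final inequality uses $H_{k+1}(x, y') \geq y'$ together with the monotonicity of $H_k$.

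The only place I expect to need additional care is the degenerate boundary $x = 0$: there the outer iteration $H_k^{(0)}(0, \cdot)$ collapses to the identity, so for $k \ge 1$ both sides of $(ii)$ satisfy the same linear recurrence $F(0, y+1) = F(0, y) + 2$ with base value $1$, and equality holds throughout; this must be checked by direct inspection of the recursive definition rather than by the generic unfolding argument above. Modulo this explicit verification at the boundary, the two parts are essentially identical in structure.
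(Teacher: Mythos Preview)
Your argument for part $(i)$ is correct and in fact cleaner than the paper's: the paper proceeds by induction on $k$, comparing $Q_{k+2}$ with $Q_{k+1}$ via the induction hypothesis applied inside the iterates $Q_{k+1}^{(x)}$ versus $Q_k^{(x)}$, whereas you bypass this entirely by unfolding $Q_{k+1}$ once, peeling off a single $Q_k$ from the $x$-fold iterate using expansiveness, and then invoking $Q_{k+1}(x,y') \geq y'$ from Lemma~\ref{ExpansiveLemma}. This is a genuine simplification.

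For part $(ii)$ there is a real gap, and it cannot be filled as the statement stands. Your unfolding argument is fine for $x \geq 1$, and you correctly observe that for $x=0$ and $k \geq 1$ both $H_k(0,\cdot)$ and $H_{k+1}(0,\cdot)$ satisfy the recurrence $F(y+1)=F(y)+2$ with $F(0)=1$, hence coincide. But you leave the case $k=0$, $x=0$ to ``direct inspection,'' and that inspection fails: $H_0(0,y)=2^y$ while $H_1(0,y)=2y+1$, so for instance $H_0(0,3)=8 > 7 = H_1(0,3)$. Thus claim $(ii)$ is false at $x=0$, $k=0$. The paper's proof only says $(ii)$ is ``similar'' to $(i)$, which in effect imports the hypothesis $x>0$ from part $(i)$; under that additional hypothesis your argument (and the paper's) goes through without difficulty.
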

\begin{proof}
We only prove $(i)$. Part $(ii)$ is similar. For $(i)$, we use induction on $k$. For $k=0$, we use induction on $y$. For $y=0$, by $x \geq 1$ and using Lemma \ref{ExpansiveLemma}, we have
\[
Q_1(x, 0)=Q_0^{(x)}(x, 0)=Q_0^{(x-1)}(x, Q_0(x, 0)) \geq Q_0(x, 0). 
\]
For the induction step, first, notice that by definition, $Q_0(x, y)(x+1)=Q_0(x, y+1)$. Now, by $x \geq 1$, Lemmas \ref{ExpansiveLemma} and \ref{MonotinicityOfQ} and the induction hypothesis, we have
\[
Q_1(x, y+1)=Q_0^{(x)}(x, Q_1(x, y)+1) \geq Q_0(x, Q_1(x, y)+1) \geq Q_0(x, Q_0(x, y)+1)
\]
\[
= (x+1)^{Q_0(x, y)+1} \geq  Q_0(x, y)(x+1)=Q_0(x, y+1).
\]
This completes the case $k=0$. For the induction step, we assume the claim for $k$ and prove it for $k+1$. We prove by induction on $y$. For $y=0$, by Lemma \ref{MonotinicityOfQ} and the induction hypothesis, we have:
\[
Q_{k+2}(x, 0)=Q_{k+1}^{(x)}(x, 0) \geq Q_{k}^{(x)}(x, 0)=Q_{k+1}(x, 0).
\]
For the induction step for $y$, by Lemma \ref{MonotinicityOfQ} and the induction hypothesis, we have:
\[
Q_{k+2}(x, y+1)=Q_{k+1}^{(x)}(x, Q_{k+2}(x, y)+1) 
\geq 
Q_{k+1}^{(x)}(x, Q_{k+1}(x, y)+1) 
\]
\[
\geq Q_{k}^{(x)}(x, Q_{k+1}(x, y)+1)=Q_{k+1}(x, y+1).
\qedhere
\]
\end{proof}

The following lemma provides the connection between $\phi_k(\beta)$ and $\beta$ that we have been seeking:

\begin{lemma}\label{lem:ub-g-and-l-via-q-h}
$G_{\Veb{k}(\beta)}(x) = Q_k(x,G_{\beta}(x))$ and 
$L_{\Veb{k}(\beta)}(x) \leq H_k(x,L_{\beta}(x))$, for any $\beta \in \Omega$ and $k \in \mathbb{N}$. Consequently, $G_{\alpha}$ is monotone, for any $\alpha \in \Phi_{\omega}$.
\end{lemma}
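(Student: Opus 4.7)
The plan is to prove both assertions simultaneously by induction on $k$, with an inner structural induction on $\beta \in \Omega$ in the inductive step. The base case $k=0$ reduces to the exponential identities: since $\phi_0(\beta) = \omega^{\beta}$ and $G_{\omega}(x) = G_{o(x+1)}(x) = x+1 \geq 1$, Lemma~\ref{lem:g-properties}(i)--(ii) gives $G_{\omega^{\beta}}(x) = G_{\omega}(x)^{G_{\beta}(x)} = (x+1)^{G_{\beta}(x)} = Q_0(x, G_{\beta}(x))$. Similarly, $L_{\omega}(x) = x+2 \geq 2$, so Lemma~\ref{lem:g-properties}(vi) yields $L_{\omega^{\beta}}(x) \leq (x+2)^{L_{\beta}(x)} = H_0(x, L_{\beta}(x))$.

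For the inductive step, assume the equality and inequality hold for $k$, and prove them for $k+1$ by structural induction on $\beta$. An auxiliary fact, established by a short induction on $i$ using the $k$-inductive hypothesis together with the monotonicity of $H_k$ from Lemma~\ref{MonotinicityOfQ}, is that $G_{\phi_k^{(i)}(\gamma)}(x) = Q_k^{(i)}(x, G_{\gamma}(x))$ and $L_{\phi_k^{(i)}(\gamma)}(x) \leq H_k^{(i)}(x, L_{\gamma}(x))$ for any $\gamma \in \Omega$ and $i \in \mathbb{N}$. In the base case $\beta=0$, unfolding $\phi_{k+1}(0) = \langle \phi_k^{(i)}(0)\rangle_i$ gives $G_{\phi_{k+1}(0)}(x) = G_{\phi_k^{(x)}(0)}(x) = Q_k^{(x)}(x,0) = Q_{k+1}(x,0)$, matching the defining clause of $Q_{k+1}$, and analogously for $L$ and $H_{k+1}$, where the ``$+1$'' arising from the limit clause of $L$ aligns with the ``$+1$'' in the definition of $H_{k+1}(x,0)$. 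The successor case $\beta=\beta'+1$ follows by unfolding $\phi_{k+1}(\beta'+1) = \langle \phi_k^{(i)}(\phi_{k+1}(\beta')+1)\rangle_i$, applying the iterated auxiliary fact to push the computation down to $\phi_{k+1}(\beta')+1$, and then invoking the inner hypothesis on $\beta'$; the resulting expressions exactly match the recursive clauses of $Q_{k+1}$ and $H_{k+1}$. The limit case $\beta = \langle \beta_i\rangle$ reduces by the definitions of $G$ and $L$ to the single index $x$, after which the inner hypothesis on $\beta_x$ and the easy fact that $H_{k+1}(x, y+1) \geq H_{k+1}(x, y)+1$ close the argument for $L$.

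For the consequence that $G_{\alpha}$ is monotone for all $\alpha \in \Phi_{\omega}$, I will proceed by structural induction on the generation of $\alpha$ from $0$ via addition and the $\phi_k$'s. The case $\alpha=0$ is trivial, the additive case uses the identity $G_{\beta+\gamma}(x) = G_{\beta}(x) + G_{\gamma}(x)$ from Lemma~\ref{lem:g-properties}(i), and the $\phi_k$ case invokes the just-proved identity $G_{\phi_k(\beta)}(x) = Q_k(x, G_{\beta}(x))$ together with the monotonicity of $Q_k$ in both arguments (Lemma~\ref{MonotinicityOfQ}) and the inductive monotonicity of $G_{\beta}$.

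The main obstacle is bookkeeping rather than deep structure: one must align the ``$+1$'' offsets correctly in the successor and limit clauses of the length hierarchy with the corresponding offsets in $H_{k+1}$, and keep the two inductions (on $k$ and on $\beta$) as well as the auxiliary iteration lemma on $\phi_k^{(i)}$ properly stratified. The assumption $\beta \in \Omega$ (rather than $\beta \in \Phi_{\omega}$) is essential: the identities are purely arithmetic and do not require the $\Phi_{\omega}$ constraint; only the final monotonicity corollary relies on $\alpha \in \Phi_{\omega}$ to license the structural induction.
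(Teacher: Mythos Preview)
Your proposal is correct and follows essentially the same approach as the paper's proof: induction on $k$ with the base case $k=0$ handled via the exponential identities from Lemma~\ref{lem:g-properties}, and the inductive step handled by an inner structural induction on $\beta$ unfolding the defining clauses of $\phi_{k+1}$; the monotonicity corollary is likewise proved by structural induction on the generation of $\alpha \in \Phi_{\omega}$. The only cosmetic difference is that you make the auxiliary iteration fact $G_{\phi_k^{(i)}(\gamma)}(x) = Q_k^{(i)}(x,G_{\gamma}(x))$ explicit, whereas the paper absorbs it directly into the computation.
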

\begin{proof}
We only prove $L_{\Veb{k}(\beta)}(x) \leq H_k(x,L_{\beta}(x))$. The equality for $G$ is similar and can also be found in the proof of~\cite[Prop.~VIII.8.27]{odifreddi1999crtv2}. 
We prove the claim by induction on $k$. For $k=0$, as $L_{\omega}(n)=n+2 \geq 2$, by Lemma \ref{lem:g-properties}, we have:
\[
L_{\phi_0(\beta)}(n)=L_{\omega^\beta}(n) \leq L_{\omega}(n)^{L_{\beta}(n)}=(n+2)^{L_{\beta}(n)}=H_0(n, L_{\beta}(n)).
\]
For the induction step, let $k=l+1$ and assume the claim for $l$. Now, by induction on $\beta$, we show that $L_{\Veb{k}(\beta)}(x) \leq H_k(x,L_{\beta}(x))$. For $\beta=0$, by the induction hypothesis for $l$ and the monotonicity of $H_l$ proved in Lemma \ref{MonotinicityOfQ}, we have
\[
L_{\phi_{l+1}(0)}(n)=L_{\langle \phi_l^i(0) \rangle_i}(n)= L_{\phi_l^n(0)}(n)+1 \leq H_l^{(n)}(n, L_{0}(n))+1= H_{l+1}(n, 0).
\]
For the successor $\beta=\gamma+1$, by the induction hypothesis for $l$ and $\gamma$ and the monotonicity of $H_l$ proved in Lemma \ref{MonotinicityOfQ}, we have
\[
L_{\phi_{l+1}(\gamma+1)}(n)=L_{\langle \phi_l^i(\phi_{l+1}(\gamma)+1) \rangle_i}(n)= L_{\phi_l^n(\phi_{l+1}(\gamma)+1)}(n)+1
\]
\[
\leq H_l^{(n)}(n, L_{\phi_{l+1}(\gamma)+1}(n))+1 \leq H_l^{(n)}(n, H_{l+1}(n, L_{\gamma}(n))+1)+1 
\]
\[
{=} H_{l+1}(n, L_{\gamma}(n)+1)=H_{l+1}(n, L_{\gamma+1}(n)).
\]
For the limit $\beta=\langle \gamma_i \rangle$, 
by the induction hypothesis for $\gamma_n$ and the expansiveness of $H_l$, we have
\[
L_{\phi_{l+1}(\langle \gamma_i \rangle)}(n)=L_{\langle \phi_{l+1}(\gamma_i)\rangle}(n)= L_{\phi_{l+1}(\gamma_n)}(n)+1 \leq H_{l+1}(n, L_{\gamma_n}(n))+1 
\]
\[
\leq
H^{(n)}_l(n, H_{l+1}(n, L_{\gamma_n}(n))+1)+1 
= H_{l+1}(n, L_{\gamma_n}(n)+1)
= H_{l+1}(n, L_{\langle \gamma_i \rangle}(n)).
\]
This completes the proof of the first part. For the second part, i.e., the monotonicity of $G_{\alpha}$ for any $\alpha \in \Phi_{\omega}$, let $X$ be the set of ordinals $\alpha \in \Phi_{\omega}$ for which $G_{\alpha}$ is monotone. Since the constant zero function and each $Q_i$ (for $i \in \mathbb{N}$) are monotone (Lemma \ref{MonotinicityOfQ}), and the sum of monotone functions is monotone, it follows from Lemma~\ref{lem:g-properties} and the first part that $X$ contains $0$ and is closed under the operations $\phi_i$ and addition. Therefore, $X = \Phi_{\omega}$, which completes the proof of the second part. 
\end{proof}

\begin{remark}
Note that $G_{\alpha}$ is not necessarily monotone for arbitrary $\alpha \in \Omega$. For instance, consider $\alpha = \langle 1, 0, 0, \ldots \rangle$, which is constantly zero except for the first element, which is one. In this case, we have $G_{\alpha}(0) = G_{1}(0) = 1$, while $G_{\alpha}(n+1) = G_{0}(n+1) = 0$ for any $n \in \mathbb{N}$.
\end{remark}

In the next lemma, we show the connection between iterating $Q_k$ with respect to its second argument and $Q_{k+1}$. Moreover, we demonstrate the relationship between the two arguments of $Q_k$ in a specific situation that we will need later.

\begin{lemma}\label{LemmaIteratedQ}
Let $k$ and $i$ be natural numbers. Then:
\begin{itemize}
    \item[$(i)$]
$Q_k^{2i}(2, 2) \leq Q_{k+1}(2, i)$. 
\item[$(ii)$]
$Q_{k+1}(2, i) \leq Q_{k+1}(3i+2, 0)$.    
\end{itemize}
\end{lemma}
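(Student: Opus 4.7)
The plan is to prove both inequalities by induction on $i$, leaning on the monotonicity (Lemma~\ref{MonotinicityOfQ}) and $k$-monotonicity (Lemma~\ref{kMonotonicityOfQ}) of the $Q_k$. The engine of both proofs is a small auxiliary inequality $(*)$: $Q_k(2,y) \geq y+1$ for all $k,y \geq 0$. For $k=0$ this reduces to $3^y \geq y+1$, an easy induction on $y$; for $k \geq 1$, Lemma~\ref{kMonotonicityOfQ}(i) gives $Q_k(2,y) \geq Q_0(2,y) \geq y+1$. Note that straight expansiveness (Lemma~\ref{ExpansiveLemma}) only yields $Q_k(2,y) \geq y$, so $(*)$ is a genuinely sharper tool.

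For part $(i)$, induct on $i$. The base case $i=0$ requires $2 \leq Q_{k+1}(2,0) = Q_k(2,Q_k(2,0))$; applying $(*)$ at $y=0$ gives $Q_k(2,0) \geq 1$, and then $(*)$ together with monotonicity yields $Q_k(2,Q_k(2,0)) \geq Q_k(2,1) \geq 2$. For the inductive step, simply unfold once:
\[
Q_k^{2(i+1)}(2,2) \;=\; Q_k^{(2)}(2,Q_k^{2i}(2,2)) \;\leq\; Q_k^{(2)}(2,Q_{k+1}(2,i)+1) \;=\; Q_{k+1}(2,i+1),
\]
where the middle inequality combines the IH with monotonicity of $Q_k^{(2)}(2,\cdot)$ in its second argument.

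For part $(ii)$, again by induction on $i$, with a trivial base case $i=0$. For the inductive step, write $z_j := Q_k^{(j)}(3i+5,0)$, so $Q_{k+1}(3i+5,0)=z_{3i+5}$. By monotonicity of $Q_k$ in its first argument (iterated $3i+2$ times from $0$), one has $z_{3i+2} \geq Q_k^{(3i+2)}(3i+2,0) = Q_{k+1}(3i+2,0) \geq Q_{k+1}(2,i)$, with the last step invoking the IH. Three further iterations of $Q_k(3i+5,\cdot) \geq Q_k(2,\cdot)$ then successively give $z_{3i+3} \geq Q_k(2,Q_{k+1}(2,i)) \geq Q_{k+1}(2,i)+1$ by $(*)$, then $z_{3i+4} \geq Q_k(2,Q_{k+1}(2,i)+1)$, and finally $z_{3i+5} \geq Q_k^{(2)}(2,Q_{k+1}(2,i)+1) = Q_{k+1}(2,i+1)$, as desired.

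The only delicate design choice is the coefficient $3$ in the bound $3i+2$ of $(ii)$: one extra iteration is consumed by $(*)$ to bridge from $Q_{k+1}(2,i)$ to $Q_{k+1}(2,i)+1$, and the remaining two match the two outer $Q_k$'s inside $Q_{k+1}(2,i+1) = Q_k^{(2)}(2,Q_{k+1}(2,i)+1)$. No step beyond this bookkeeping presents any real obstacle.
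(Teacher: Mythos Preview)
Your proof is correct and follows essentially the same approach as the paper: both parts proceed by induction on $i$, and both hinge on the auxiliary inequality $(*)$ $Q_k(2,y)\geq y+1$. The only organizational difference is in part~$(ii)$: the paper first proves the intermediate bound $Q_{k+1}(2,i)\leq Q_k^{3i+2}(2,0)$ by induction (keeping the first argument fixed at~$2$) and then applies a single monotonicity step $Q_k^{3i+2}(2,0)\leq Q_k^{3i+2}(3i+2,0)=Q_{k+1}(3i+2,0)$, whereas you induct directly on the target inequality and track the iterates $z_j=Q_k^{(j)}(3i+5,0)$; both arrive at the same place with the same three ``extra'' iterations per step.
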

\begin{proof}
For $(i)$, we use induction on $i$. For $i=0$, using the monotonicity of $Q_k(x, y)$ in $k$ (Lemma \ref{kMonotonicityOfQ}) and in $x$ and $y$ (Lemma \ref{MonotinicityOfQ}), we have 
\[
Q_k^{0}(2, 2)=2 \leq 3=Q_0^2(2, 0) \leq Q_k^2(2, 0)=Q_{k+1}(2, 0).
\]
For the inductive step, by the induction hypothesis and the monotonicity of $Q_k$ (Lemma \ref{MonotinicityOfQ}), we have
\[
Q_k^{2i+2}(2, 2)=Q_k^2(2, Q_k^{2i}(2, 2)) \leq Q_k^2(2, Q_{k+1}(2, i))
\]
\[
\leq Q_k^2(2, Q_{k+1}(2, i)+1)=Q_{k+1}(2, i+1),
\]
which completes the proof.

For $(ii)$, since $Q_{k}^{j}(2, 0) \leq Q^j_k(j, 0) = Q_{k+1}(j, 0)$ for any $j \geq 2$ by Lemma~\ref{MonotinicityOfQ}, it is enough to prove that $Q_{k+1}(2, i) \leq Q_{k}^{3i+2}(2, 0)$. To prove this claim, we use induction on $i$. 
For $i=0$, by definition, we have
$
Q_{k+1}(2, 0) = Q_k^{2}(2, 0).
$
For the inductive step, first note that by the monotonicity of $Q_k(x, y)$ in $k$ (Lemma~\ref{kMonotonicityOfQ}), we have
\[
y + 1 \leq 3^y = Q_0(2, y) \leq Q_k(2, y),
\]
for any $y \in \mathbb{N}$. Then, using the monotonicity of $Q_k$ (Lemma~\ref{MonotinicityOfQ}) and the induction hypothesis, we obtain
\[
Q_{k+1}(2, i+1) = Q_k^{2}(2, Q_{k+1}(2, i) + 1) \leq Q_k^{2}(2, Q_k(2, Q_{k+1}(2, i))) 
\]
\[
\leq Q_k^{3}(2, Q_k^{3i+2}(2, 0)) = Q_k^{3(i+1)+2}(2, 0),
\]
which completes the proof.
\end{proof}

Using the machinery developed thus far, we are now ready to provide the upper bound for $h_k$ in terms of $G_{\phi_{k-2}(0)}$ that we have been seeking. First, we use $Q_{k-2}(2, n)$ instead.

\begin{lemma}\label{BoundForh}
$h_{k}(n) \leq Q_{k-2}(2, n)$, for any $k \geq 3$ and $n \in \mathbb{N}$.
\end{lemma}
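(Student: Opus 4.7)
The plan is to proceed by induction on $k \geq 3$, treating the base case and the inductive step differently: the step will be handled cleanly via Lemma~\ref{LemmaIteratedQ}(i), while the base case $k = 3$ requires a more delicate analysis, since the naive analogue $h_1(x) \leq Q_0(2, x)$ does not hold (e.g.\ $h_1(0) = 2 > 1 = Q_0(2,0)$).

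For the inductive step, I would assume $h_k(x) \leq Q_{k-2}(2, x)$ and fix $n \in \mathbb{N}$. Unwinding the definition $h_{k+1}(n) = h_k^{(n)}(2)$, combining the inductive hypothesis with the monotonicity of $Q_{k-2}$ (Lemma~\ref{MonotinicityOfQ}) yields, by a routine induction on $n$, that
\[
h_k^{(n)}(2) \leq Q_{k-2}^{(n)}(2, 2),
\]
where the superscript iteration is on the second argument. Since $k - 2 \geq 1$, the expansiveness $Q_{k-2}(2, y) \geq y$ from Lemma~\ref{ExpansiveLemma} allows replacing $n$ by $2n$ in the iteration count without decreasing the right-hand side, and Lemma~\ref{LemmaIteratedQ}(i) then bounds $Q_{k-2}^{(2n)}(2, 2)$ by $Q_{k-1}(2, n)$. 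The degenerate case $n = 0$ reduces to $h_{k+1}(0) = 2 \leq Q_{k-1}(2, 0)$, which follows from $Q_{k-1}(2, 0) \geq Q_1(2, 0) = 3$ via Lemma~\ref{kMonotonicityOfQ}.

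For the base case $k = 3$, I would first establish the auxiliary estimate $h_2(m) \leq Q_0^{(2)}(2, m) = 3^{3^m}$ by induction on $m$, using the recurrence $h_2(m+1) = h_1(h_2(m)) = h_2(m)^2 + 2$ together with the elementary inequality $(3^{3^m})^2 + 2 \leq 3^{3^{m+1}}$. Then I would prove $h_3(n) \leq Q_1(2, n)$ by induction on $n$: the base is $h_3(0) = 2 \leq 3 = Q_1(2, 0)$, and the step is the chain
\[
h_3(n+1) = h_2(h_3(n)) \leq Q_0^{(2)}(2, h_3(n)) \leq Q_0^{(2)}(2, Q_1(2, n) + 1) = Q_1(2, n+1),
\]
where the last equality is the defining recurrence of $Q_1$, and the intermediate steps use the auxiliary bound, the inductive hypothesis, and the monotonicity of $Q_0$.

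The main obstacle is the base case $k=3$. Naively mimicking the inductive step would require a bound of the form $h_2(m) \leq Q_0^{(m)}(2, 2)$ (which does hold by iterating $h_1(x) \leq 3^x$), but composing it with itself produces iteration depths of order $h_3(n)$, which massively overshoots $Q_1(2, n)$ once $n \geq 2$. The looser pointwise estimate $h_2(m) \leq Q_0^{(2)}(2, m)$ is weaker at a single evaluation but matches exactly the two-fold $Q_0$-iteration in the definition of $Q_1$, which is precisely why the induction on $n$ closes.
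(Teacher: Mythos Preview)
Your proposal is correct and follows essentially the same route as the paper: induction on $k$, with the inductive step handled via Lemma~\ref{LemmaIteratedQ}(i) exactly as you describe, and the base case $k=3$ resting on the same auxiliary bound $h_2(m) \leq Q_0^{(2)}(2,m) = 3^{3^m}$. The only difference is that, once this auxiliary bound is in hand, the paper closes the base case uniformly with the inductive step---iterating to $h_2^{(n)}(2) \leq Q_0^{(2n)}(2,2)$ and applying Lemma~\ref{LemmaIteratedQ}(i)---whereas you re-derive $h_3(n) \leq Q_1(2,n)$ by a direct induction on $n$ using the defining recurrence of $Q_1$; both arguments are straightforward once the auxiliary bound is established.
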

\begin{proof}
We prove the claim by induction on $k$. For $k=3$, recall that $h_1(n)=n^2+2$. Thus, $h_1(n) \geq 2$ for any $n \in \mathbb{N}$, and $h_1(n) \leq n^3$ for any $n \geq 2$. Therefore, $h_2(n)=h_1^{n}(2) \leq 3^{3^{n}}=Q_0^2(2, n)$, for any $n \in \mathbb{N}$. Now, using the monotonicity of $h_2$ and Lemma~\ref{LemmaIteratedQ}, we reach
\[
h_{3}(n)=h_2^{n}(2) \leq Q^{2n}_{0}(2, 2) \leq Q_{1}(2, n).
\]
For the inductive step, assume the claim for $k$. Then, by the induction hypothesis, the monotonicity of $h_k$, the expansiveness of $Q_k(2, -)$ (Lemma~\ref{ExpansiveLemma}), and Lemma~\ref{LemmaIteratedQ}, we have
\[
h_{k+1}(n)=h_k^{n}(2) \leq Q^{n}_{k-2}(2, 2) \leq Q^{2n}_{k-2}(2, 2) \leq Q_{k-1}(2, n),
\]
which completes the proof.
\end{proof}

\begin{corollary}\label{lem:Q-bounds-h}
$h_{k}(n) \leq G_{\phi_{k-2}(0)}(3n+2)$, for any $k \geq 3$ and $n \in \mathbb{N}$.   
\end{corollary}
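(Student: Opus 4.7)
The plan is to chain together three facts already established in the paper. First, by Lemma~\ref{BoundForh} we have the bound $h_k(n) \leq Q_{k-2}(2,n)$ for $k \geq 3$, which reduces the problem to controlling $Q_{k-2}(2,n)$ in terms of $G_{\phi_{k-2}(0)}$.

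Second, I would apply part~$(ii)$ of Lemma~\ref{LemmaIteratedQ}, with the index shifted by one (so that $k+1$ there plays the role of $k-2$ here; note $k-2 \geq 1$ since $k \geq 3$), to obtain
\[
Q_{k-2}(2,n) \;\leq\; Q_{k-2}(3n+2,\,0).
\]

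Finally, I would unfold $G_{\phi_{k-2}(0)}(3n+2)$ using Lemma~\ref{lem:ub-g-and-l-via-q-h} applied to $\beta = 0$: since $G_0(x) = 0$, this gives
\[
G_{\phi_{k-2}(0)}(3n+2) \;=\; Q_{k-2}(3n+2,\,G_0(3n+2)) \;=\; Q_{k-2}(3n+2,\,0).
\]
Concatenating the three inequalities yields $h_k(n) \leq G_{\phi_{k-2}(0)}(3n+2)$, as desired. The reasoning is completely routine given the supporting lemmas, so there is no substantial obstacle; the only point that requires a brief sanity check is that the index shift in Lemma~\ref{LemmaIteratedQ}$(ii)$ applies (which it does, because $k-2 \geq 1$).
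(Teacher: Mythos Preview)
Your proposal is correct and follows exactly the same three-step chain as the paper's own proof: Lemma~\ref{BoundForh}, then Lemma~\ref{LemmaIteratedQ}$(ii)$, then Lemma~\ref{lem:ub-g-and-l-via-q-h} with $\beta=0$. Your explicit remark about the index shift (that $k-2\geq 1$ since $k\geq 3$) is a nice clarification that the paper leaves implicit.
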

\begin{proof}
By Lemma~\ref{BoundForh}, we have $h_k(n) \leq Q_{k-2}(2, n)$. Then, by Lemma~\ref{LemmaIteratedQ}, we obtain $h_k(n) \leq Q_{k-2}(3n+2, 0)$. Finally, since $G_{\phi_{k-2}(0)}(3n+2) = Q_{k-2}(3n+2, 0)$ by Lemma~\ref{lem:ub-g-and-l-via-q-h}, we obtain the desired bound.
\end{proof}

As the second part of the strategy for proving $\mathcal{E}_{l(\mathsf{A})+2} \subseteq \PredFuncClass{\mathsf{A}}$, as explained in Section~\ref{sec:main-theorem}, we need to bound $G_{\phi_{l(\mathsf{A})-1}(0)}$ (resp.\ $e(n)=2^n$) by $G_{\alpha}$ for some $\alpha \in \mathsf{A}$, if $l(\mathsf{A}) \geq 2$ (resp.\ $l(\mathsf{A})=1$). For that purpose, we first prove the following lemma.

\begin{lemma}\label{ExistenceOfWitness}
Let $k \geq 1$ be a natural number. Then:
\begin{itemize}
 \item[$(i)$] 
If $\alpha \in \Phi_{1}-\mathsf{D}_{\omega}$, then $G_{\alpha}(n) \geq n+1$, for any $n \in \mathbb{N}$.
    \item[$(ii)$] 
If $\alpha \in \Phi_{1}-\Psi_{\omega}$, then $G_{\alpha}(n) \geq 2^n$, for any $n \in \mathbb{N}$.
    \item[$(iii)$] 
If $\alpha \in \Phi_{k+1}-\Phi_k$, then $G_{\alpha}(n) \geq G_{\phi_k(0)}(n)$, for any $n \geq 1$. 
\end{itemize}
\end{lemma}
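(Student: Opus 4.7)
The plan is to exploit the normal-form decompositions of ordinals in $\Phi_1$ and in $\Phi_{k+1}$, together with the two key identities from Lemma~\ref{lem:ub-g-and-l-via-q-h}, namely $G_{\omega^\beta}(n) = (n+1)^{G_\beta(n)}$ and $G_{\phi_k(\beta)}(n) = Q_k(n, G_\beta(n))$, combined with the additivity $G_{\alpha+\beta}(n)=G_\alpha(n)+G_\beta(n)$ from Lemma~\ref{lem:g-properties}. The uniqueness theorem (Theorem~\ref{UniquenessTheorem}) together with Corollary~\ref{Summand} guarantees that every nonzero $\alpha \in \Phi_1$ admits a unique normal form $\alpha = \omega^{\alpha_1} + \cdots + \omega^{\alpha_m}$ with $\alpha_i \in \Phi_1$, and that the summands and nested Veblen arguments of an element of $\Phi^m_k$ themselves lie in a predictable $\Phi^{m'}_k$.

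For $(i)$, I first establish by a one-line recursion on the normal form that every nonzero $\beta \in \Phi_1$ satisfies $G_\beta(n) \geq 1$ for all $n$: if $\beta = \omega^{\beta_1} + \cdots + \omega^{\beta_k}$ with $k \geq 1$, then $G_\beta(n) = \sum_i (n+1)^{G_{\beta_i}(n)} \geq k \geq 1$. Now if $\alpha \in \Phi_1 - \mathsf{D}_\omega$, its normal form cannot have all exponents equal to $0$ (otherwise $\alpha$ would reduce to the finite ordinal $o(m)$), so some $\alpha_j \neq 0$. Then $G_\alpha(n) \geq G_{\omega^{\alpha_j}}(n) = (n+1)^{G_{\alpha_j}(n)} \geq (n+1)^1 = n+1$.

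For $(ii)$, I apply the same decomposition. Since $\Psi_\omega = \Phi_0^3$ consists precisely of $0$ together with sums $\omega^{p_1}+\cdots+\omega^{p_k}$ with each $p_i$ finite, an $\alpha \in \Phi_1 - \Psi_\omega$ must have at least one exponent $\alpha_j \in \Phi_1 - \mathsf{D}_\omega$. Part $(i)$ then yields $G_{\alpha_j}(n) \geq n+1$, whence $G_\alpha(n) \geq (n+1)^{n+1}$; for $n = 0$ this equals $1 = 2^0$, and for $n \geq 1$ it dominates $2^{n+1} > 2^n$.

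For $(iii)$, by Lemma~\ref{MonotonicityOfPhi}, $\Phi_{k+1} = \bigcup_{m \geq 1} \Phi_k^m$, so $\alpha \in \Phi_k^m$ for some minimal $m \geq 2$. I then induct on the construction of $\alpha$ as an element of $\Phi_k^m$. The case $\alpha = 0$ is impossible. If $\alpha = \phi_k(\beta)$ with $\beta \in \Phi_k^{m-1}$, monotonicity of $Q_k$ (Lemma~\ref{MonotinicityOfQ}) gives $G_\alpha(n) = Q_k(n, G_\beta(n)) \geq Q_k(n,0) = G_{\phi_k(0)}(n)$. If $\alpha = \beta + \gamma$ with $\beta, \gamma \in \Phi_k^m$, Corollary~\ref{Summand} rules out both summands lying in $\Phi_k^1 = \Phi_k$ (else $\alpha \in \Phi_k$), so WLOG $\beta \notin \Phi_k$, and the induction hypothesis plus additivity suffices. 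If $\alpha = \phi_i(\beta)$ with $i < k$ and $\beta \in \Phi_k^m$, then $\beta \notin \Phi_k$ (else $\alpha = \phi_i(\beta) \in \Phi_k$ by closure), so by induction $G_\beta(n) \geq G_{\phi_k(0)}(n)$, and the expansiveness $Q_i(n,y) \geq y$ for $n \geq 1$ from Lemma~\ref{ExpansiveLemma} yields $G_\alpha(n) = Q_i(n, G_\beta(n)) \geq G_\beta(n) \geq G_{\phi_k(0)}(n)$. The main delicate point is ensuring the ``not in $\Phi_k$'' condition propagates correctly through the sum and Veblen-application cases, which is where both parts of Corollary~\ref{Summand} are needed; once the bookkeeping is set up, the numerical inequalities follow immediately from the monotonicity and expansiveness of $Q_k$.
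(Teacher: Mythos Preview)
Your proof is correct and follows essentially the same approach as the paper. Parts $(i)$ and $(ii)$ are virtually identical to the paper's argument: decompose $\alpha$ into a sum of $\omega$-powers, locate an exponent that is nonzero (for $(i)$) or outside $\mathsf{D}_\omega$ (for $(ii)$), and push the bound through using $G_{\omega^\beta}(n)=(n+1)^{G_\beta(n)}$ together with additivity. For $(iii)$, the paper packages the same induction as a closure argument: it defines $X \subseteq \Phi_{k+1}$ to be the set of $\alpha$ with either $\alpha \in \Phi_k$ or $G_\alpha(n) \geq G_{\phi_k(0)}(n)$ for all $n \geq 1$, and shows $X = \Phi_{k+1}$ by checking that $X$ contains $0$ and is closed under addition and each $\phi_i$ with $i \leq k$. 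Your case analysis on $\phi_k(\beta)$, $\phi_i(\beta)$ with $i<k$, and $\beta+\gamma$ is exactly the verification of these closure properties, using the same three facts: monotonicity of $Q_k$ for the $\phi_k$ case, expansiveness of $Q_i(n,-)$ when $n \geq 1$ for the $\phi_i$ case, and additivity of $G$ for the sum case. The one stylistic difference is that your detour through a minimal $m$ with $\alpha \in \Phi_k^m$ is unnecessary---the induction can be run directly on the inductive definition of $\Phi_{k+1}$, which is what the paper's $X$-argument does and which avoids having to track membership in the intermediate $\Phi_k^m$ classes.
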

\begin{proof}
For $(i)$, as $\alpha \in \Phi_{1} - \mathsf{D}_{\omega}$, we have $\alpha \neq 0$. Therefore, there exist $\{\beta_i\}_{i=1}^m$ in $\Phi_{1}$ such that $\alpha = \phi_{0}(\beta_1) + \cdots + \phi_{0}(\beta_m)$. Note that there exists $1 \leq i \leq m$ such that $\beta_i \neq 0$, because otherwise, $\alpha \prec \omega$. Pick $1 \leq i \leq m$ such that $\beta_i \neq 0$. As $\beta_i \in \Phi_1$, there exist $\{\gamma_j\}_{j=1}^r$ in $\Phi_1$ such that $\beta_i = \phi_{0}(\gamma_1) + \cdots + \phi_{0}(\gamma_r)$. Therefore, by Lemma~\ref{lem:g-properties} and Lemma~\ref{lem:ub-g-and-l-via-q-h}, we have:
\[
G_{\beta_i}(n) = \sum_{j=1}^r G_{\phi_{0}(\gamma_j)}(n) = \sum_{j=1}^r G_{\omega^{\gamma_j}}(n) = \sum_{j=1}^r (n+1)^{G_{\gamma_j}(n)} \geq 1.
\]
Therefore, by Lemma~\ref{lem:g-properties} and Lemma~\ref{lem:ub-g-and-l-via-q-h}, we reach
\[
G_{\alpha}(n) \geq G_{\phi_{0}(\beta_i)}(n) = G_{\omega^{\beta_i}}(n) = (n+1)^{G_{\beta_i}(n)} \geq (n+1).
\]

For $(ii)$, as $\alpha \in \Phi_{1} - \Psi_{\omega}$, we have $\alpha \neq 0$. Therefore, there exist $\{\beta_i\}_{i=1}^m$ in $\Phi_{1}$ such that $\alpha = \phi_{0}(\beta_1) + \cdots + \phi_{0}(\beta_m)$. Again, note that there exists $1 \leq i \leq m$ such that $\beta_i \notin \mathsf{D}_{\omega}$, because otherwise, $\alpha \in \Psi_{\omega}$. Pick $1 \leq i \leq m$ such that $\beta_i \notin \mathsf{D}_{\omega}$. Therefore, using $(i)$, Lemma \ref{lem:g-properties} and Lemma \ref{lem:ub-g-and-l-via-q-h}, we reach
\[
G_{\alpha}(n) \geq G_{\phi_{0}(\beta_i)}(n) = G_{\omega^{\beta_i}}(n) = (n+1)^{G_{\beta_i}(n)} \geq (n+1)^{(n+1)} \geq 2^n.
\]

For $(iii)$, let $X \subseteq \Phi_{k+1}$ be the set of all ordinals $\alpha$ such that either $\alpha \in \Phi_k$ or $G_{\alpha}(n) \geq G_{\phi_k(0)}(n)$ for all $n \geq 1$. To prove the claim, it suffices to show that $X = \Phi_{k+1}$. For this, we show that $X$ contains $0$ and is closed under addition and under $\phi_i$ for any $i \leq k$.
Clearly, $0 \in \Phi_k$ and hence $0 \in X$. For addition, let $\alpha, \beta \in X$. If both $\alpha, \beta \in \Phi_k$, then $\alpha + \beta \in \Phi_k$. Otherwise, assume that at least one of $\alpha$ or $\beta$ is outside $\Phi_k$. Since $\alpha, \beta \in X$, either $G_{\alpha}(n) \geq G_{\phi_k(0)}(n)$ for all $n \geq 1$, or $G_{\beta}(n) \geq G_{\phi_k(0)}(n)$ for all $n \geq 1$. Using the fact that
$
G_{\alpha + \beta}(n) = G_{\alpha}(n) + G_{\beta}(n),
$ from Lemma \ref{lem:g-properties}, it follows that in either case
$G_{\alpha + \beta}(n) \geq G_{\phi_k(0)}(n)$ for all $n \geq 1$,
and thus $\alpha + \beta \in X$.
For closure under $\phi_i$ for any $i \leq k$, let $\alpha \in X$. There are two cases: either $i=k$ or $i < k$. If $i = k$, then we have the following by Lemma \ref{lem:ub-g-and-l-via-q-h} and the monotonicity of $Q_k$ (Lemma \ref{MonotinicityOfQ}), from which we conclude $\phi_k(\alpha) \in X$.
\[
G_{\phi_k(\alpha)}(n) = Q_k(n, G_{\alpha}(n)) \geq Q_k(n, 0) = Q_k(n, G_0(n)) = G_{\phi_k(0)}(n).
\]
If $i < k$, and $\alpha \in \Phi_k$, then clearly $\phi_i(\alpha) \in \Phi_k$
and thus
$\phi_i(\alpha) \in X$. Otherwise, if $G_{\alpha}(n) \geq G_{\phi_k(0)}(n)$ for all $n \geq 1$, then by the expansiveness of $Q_i(n, -)$ for $n \geq 1$ (Lemma \ref{ExpansiveLemma}), we have
\[
G_{\phi_i(\alpha)}(n) = Q_i(n, G_{\alpha}(n)) \geq G_{\alpha}(n) \geq G_{\phi_k(0)}(n),
\]
which establishes $\phi_i(\alpha) \in X$.
\end{proof}

Finally, we are in a position to prove the direction of part~$(i)$ of Theorem~\ref{mainth} that we have been seeking in this section.

\begin{theorem}
Let $\mathsf{A} \subseteq \Phi_{\omega}$ be a bounded downset of ordinals such that $\mathsf{A} \nsubseteq \mathsf{D}_{\omega}$. Then, $\GrzClass{l(\mathsf{A})+2} \subseteq \PredFuncClass{\mathsf{A}}$.
\end{theorem}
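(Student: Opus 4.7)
Since $\mathsf{A}\subseteq\Phi_\omega$ and $\mathsf{A}\nsubseteq\mathsf{D}_\omega$, the Dichotomy Theorem~\ref{FiniteOrInfiniteThm} produces some $r\in\mathbb{N}$ with $o(r)+\omega\in\mathsf{A}$, and Corollary~\ref{cor:E2-subset-predr} then yields $\GrzClass{2}\subseteq\PredFuncClass{\mathsf{A}}$, which settles the case $l(\mathsf{A})=0$. From here on I assume $l:=l(\mathsf{A})\geq 1$; the plan is, given $f\in\GrzClass{l+2}$, to realise $f$ inside $\PredFuncClass{\mathsf{A}}$ as a safe composition of a \emph{padded} linear-space simulation of $f$ (an element of $\GrzClass{2}\subseteq\PredFuncClass{\mathsf{A}}$) together with a bound $B\in\PredFuncClass{\mathsf{A}}$ produced by finite safe compositions of $G_\alpha$ for a well-chosen $\alpha\in\mathsf{A}$.

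\textbf{Choice of witness and majorisation of $h_{l+1}$.} The definition of $l(\mathsf{A})$ forces the existence of a witnessing $\alpha\in\mathsf{A}$ outside $\Phi_{l-1}$ when $l\geq 2$, and outside $\Psi_\omega$ when $l=1$; in either case $\alpha\in\Phi_l$, and Example~\ref{Exam:GInPred} places $G_\alpha\in\PredFuncClass{\mathsf{A}}$. Combining Lemma~\ref{ExistenceOfWitness}(iii) with Corollary~\ref{lem:Q-bounds-h} yields $h_{l+1}(n)\leq G_\alpha(3n+2)$ for every $n\geq 1$ when $l\geq 2$, while Lemma~\ref{ExistenceOfWitness}(ii) yields $G_\alpha(n)\geq 2^n$ when $l=1$. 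Because $\PredFuncClass{\mathsf{A}}$ is closed under safe composition and already contains $\GrzClass{2}$ (hence all linear polynomials), any fixed finite iteration of $G_\alpha$ precomposed with linear functions will remain in $\PredFuncClass{\mathsf{A}}$.

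\textbf{Bounding and padded simulation.} Let $f\in\GrzClass{l+2}$. By Theorem~\ref{the:elem-characterizatons}, $f$ is computable by a multitape Turing machine $M_f$ in space $S\in\GrzClass{l+2}$. Lemma~\ref{lem:grz-properties}(iii) bounds $S(\bar x)\leq h_{l+1}^{(M)}(\max_i x_i)$ for some $M\in\mathbb{N}$ when $l\geq 2$, and Lemma~\ref{lem:grz-properties}(ii) bounds $S(\bar x)$ by a fixed-height tower of twos over $\max_i x_i$ when $l=1$. In both cases the previous paragraph supplies a function $B\in\PredFuncClass{\mathsf{A}}$ with $B(\bar x)\geq 2^{S(\bar x)}$, so $\RepLen{B(\bar x)}\geq S(\bar x)$. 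Now define
\[
\widetilde f(\bar x,y):=\text{the output of simulating } M_f \text{ on } \bar x \text{ using space at most } \RepLen{\bar x}+\RepLen{y},
\]
returning $0$ if that space cap is exceeded. This machine is linear-space in $(\bar x,y)$, so $\widetilde f\in\GrzClass{2}\subseteq\PredFuncClass{\mathsf{A}}$, and whenever $y\geq 2^{S(\bar x)}$ the bound $\RepLen{y}\geq S(\bar x)$ makes $M_f$ complete within the cap, so $\widetilde f(\bar x,y)=f(\bar x)$. Therefore $f(\bar x)=\widetilde f(\bar x,B(\bar x))$, a safe composition inside $\mathcal{C}_\mathsf{A}$ of two members of $\PredFuncClass{\mathsf{A}}$, and hence $f\in\PredFuncClass{\mathsf{A}}$.

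\textbf{Main obstacle.} The delicate step is turning the space bound into an explicit witness $B\in\PredFuncClass{\mathsf{A}}$: one must iterate the inequality $h_{l+1}(n)\leq G_\alpha(3n+2)$ (respectively $2^n\leq G_\alpha(n)$) a fixed number of times, handle the additional exponential $2^{(\cdot)}$ that converts space into a value bound by once more using that $G_\alpha$ eventually dominates $2^n$, and absorb the resulting additive constants through safe composition with the conditional $C$ and linear functions already present in $\mathcal{C}_\mathsf{A}$. Once $B$ has been produced, the padded-simulation construction of $\widetilde f$ is a standard complexity-theoretic argument and causes no further difficulty.
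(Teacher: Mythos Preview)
Your argument is correct and follows essentially the same route as the paper: invoke the Dichotomy Theorem to obtain $o(r)+\omega\in\mathsf{A}$ and hence $\GrzClass{2}\subseteq\PredFuncClass{\mathsf{A}}$, pick a witness $\alpha\in\mathsf{A}$ outside $\Phi_{l-1}$ (resp.\ $\Psi_\omega$) to majorise $h_{l+1}$ (resp.\ $n\mapsto 2^n$) via Lemma~\ref{ExistenceOfWitness} and Corollary~\ref{lem:Q-bounds-h}, and then recover $f$ by a padded simulation composed with an iterated-$G_\alpha$ bound. The only noteworthy differences are cosmetic: the paper pads with a \emph{time} bound (simulate for $\RepLen{i}$ steps) rather than your \emph{space} cap, and it is slightly more explicit about monotonicity---defining $F(n)=G_\alpha(n)+G_{\phi_{l-1}(0)}(0)$ so that the majorisation holds for all $n$ and iterates cleanly---whereas you defer this to your ``main obstacle'' paragraph; both choices work.
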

\begin{proof}
First, observe that, as $\mathsf{A} \subseteq \Phi_{\omega}$, by Theorem~\ref{FiniteOrInfiniteThm}, either every element in $\mathsf{A}$ is of the form $o(r)$ for some $r \in \mathbb{N}$, or there exists an element $\alpha \in \mathsf{A}$ such that $o(r) + \omega \preceq \alpha$. The first case implies that $\mathsf{A} \subseteq \mathsf{D}_{\omega}$, which contradicts the assumption. Thus, as $\mathsf{A}$ is a downset, there exists $r \in \mathbb{N}$ such that $o(r) + \omega \in \mathsf{A}$. Therefore, $\mathcal{E}_2 \subseteq \PredFuncClass{\mathsf{A}}$, by Corollary~\ref{cor:E2-subset-predr}.

To prove $\GrzClass{l(\mathsf{A})+2} \subseteq \PredFuncClass{\mathsf{A}}$, we consider three cases. The case $l(\mathsf{A}) = 0$ has already been established. We postpone the proof for the case $l(\mathsf{A}) = 1$ to the end, as it follows with only minor adjustments from the last case $l(\mathsf{A}) \geq 2$.

For $l(\mathsf{A}) \geq 2$, let $f(n_1, \ldots, n_l) \in \GrzClass{l(\mathsf{A})+2}$ to show that $f \in \PredFuncClass{\mathsf{A}}$. First, by Theorem~\ref{the:elem-characterizatons}, there exists a function $T_f \in \mathcal{E}_{l(\mathsf{A})+2}$ such that $f(\bar{n})$ is computable in time $T_f(\RepLen{\bar{n}})$ for any $\bar{n} \in \mathbb{N}$. 
By Lemma~\ref{lem:grz-properties}, there exists $M \in \mathbb{N}$ such that
\[
T_f(\bar{m}) \leq h_{l(\mathsf{A})+1}^M\left( \max_j m_j \right)
\quad \text{for all } \bar{m} \in \mathbb{N}.
\]
Since $h_{l(\mathsf{A})+1}$ is monotone and $\RepLen{n_j} \leq n_j + 1$ for each $1 \leq j \leq l$, we can conclude that $f(\bar{n})$ is computable in time at most $h_{l(\mathsf{A})+1}^M(l + \sum_j n_j)$.

Let $U$ be a deterministic Turing machine that computes $f(\bar{n})$ within at most $h_{l(\mathsf{A})+1}^M(l + \sum_j n_j)$ steps. Define a function $g(i, \bar{n})$ that returns the output of $U(\bar{n})$ if the machine halts within $\RepLen{i}$ steps, and $0$ otherwise. Since we use binary representations for numbers (including for the input $i$), it follows that
\[
f(\bar{n}) = g(i, \bar{n}) \quad \text{for all } i \geq 2^{h_{l(\mathsf{A})+1}^M(l + \sum_j n_j)}.
\]
It is clear that $g$ is computable in linear space: it suffices to simulate $U$ for at most $\RepLen{i}$ steps, which requires only $O(\RepLen{i})$ cells. Thus, $g \in \mathcal{E}_2 \subseteq \PredFuncClass{\mathsf{A}}$.
    
To conclude that $f \in \PredFuncClass{\mathsf{A}}$, it now suffices to find a function $I(\bar n) \in \PredFuncClass{\mathsf{A}}$ such that $I(\bar n) \geq 2^{h_{l(\mathsf{A})+1}^M(l+\sum_j n_j)}$. Then, since $f(\bar n) = g(I(\bar n), \bar n)$ and both $g, I \in \PredFuncClass{\mathsf{A}}$, and $\PredFuncClass{\mathsf{A}}$ is closed under composition, we obtain $f \in \PredFuncClass{\mathsf{A}}$. To find such an $I$, we show that:\\
    
\noindent \textbf{Claim.} There are monotone functions $F, E \in \PredFuncClass{\mathsf{A}}$ such that $F(n) \geq G_{\phi_{l(\mathsf{A})-1}(0)}(n)$ and $E(n) \geq 2^n$, for any $n \in \mathbb{N}$.\\
    
\noindent Using this claim and Corollary~\ref{lem:Q-bounds-h}, since addition and the constant functions are in $\mathcal{E}_2 \subseteq \PredFuncClass{\mathsf{A}}$, and using the monotonicity of $F$ and $E$, as well as the closure of $\PredFuncClass{\mathsf{A}}$ under composition, we can find the desired function $I \in \PredFuncClass{\mathsf{A}}$.

To prove the claim, first observe that, as $l(\mathsf{A}) \geq 2$, by definition, $\mathsf{A} \subseteq \Phi_{l(\mathsf{A})}$ but $\mathsf{A} \nsubseteq \Phi_{l(\mathsf{A})-1}$. Thus, there exists $\alpha \in \mathsf{A}$ such that $\alpha \in \Phi_{l(\mathsf{A})} - \Phi_{l(\mathsf{A})-1}$. Therefore, Lemma~\ref{ExistenceOfWitness} yields $G_{\phi_{l(\mathsf{A})-1}(0)}(n) \leq G_{\alpha}(n)$ for all $n \geq 1$. This implies that $G_{\phi_{l(\mathsf{A})-1}(0)}(n) \leq G_{\alpha}(n) + G_{\phi_{l(\mathsf{A})-1}(0)}(0)$ for any $n \in \mathbb{N}$. Define $F(n) = G_{\alpha}(n) + G_{\phi_{l(\mathsf{A})-1}(0)}(0)$. By Lemma~\ref{lem:ub-g-and-l-via-q-h}, the function $F$ is monotone. Moreover, since $\alpha \in \mathsf{A}$, we have $G_{\alpha} \in \PredFuncClass{\mathsf{A}}$, by Example \ref{Exam:GInPred}. Therefore, as the addition operation and any constant function belong to $\mathcal{E}_2 \subseteq \PredFuncClass{\mathsf{A}}$, and $\PredFuncClass{\mathsf{A}}$ is closed under composition, we conclude that $F \in \PredFuncClass{\mathsf{A}}$.
    
To find $E$, as $l(\mathsf{A}) - 1 \geq 1$, by Lemma~\ref{lem:ub-g-and-l-via-q-h} and Lemma~\ref{kMonotonicityOfQ}, if $n \geq 3$, we have:
\[
F(n) \geq G_{\phi_{l(\mathsf{A})-1}(0)}(n) 
= Q_{l(\mathsf{A})-1}(n, 0) \geq Q_1(n, 0) =
\]
\[
Q_0^{(n)}(n, 0) \geq (n+1)^{(n+1)} \geq 2^n.
\]
Therefore, if we define $E(n) = F(n) + 4$, it follows that $E(n) \geq 2^n$ for any $n \in \mathbb{N}$. Since $F$ is monotone, $E$ is also monotone. Moreover, using a similar argument as in the case of $F$, we conclude that $E \in \PredFuncClass{\mathsf{A}}$. This completes the proof of the claim, and hence the case $l(\mathsf{A}) \geq 2$.

Finally, if $l(\mathsf{A})=1$, let $f \in \mathcal{E}_{l(\mathsf{A})+2}=\mathcal{E}_3$ to show that $f \in \PredFuncClass{\mathsf{A}}$. By Theorem~\ref{the:elem-characterizatons}, there exists a function $T_f \in \mathcal{E}_{3}$ such that $f(\bar{n})$ is computable in time $T_f(\RepLen{\bar{n}})$ for any $\bar{n} \in \mathbb{N}$. 
By Lemma~\ref{lem:grz-properties}, there exists $M \in \mathbb{N}$ such that
\[
T_f(\bar{m}) \leq 2^{2^{\iddots^{\max_j m_j}}}
\quad \text{for all } \bar{m} \in \mathbb{N}.
\]
where the number of $2$'s is $M$. Then, with the same type of argument as above, it is enough to find a function $E(n) \geq 2^n$ such that $E \in \PredFuncClass{\mathsf{A}}$. For that purpose, as $l(\mathsf{A})=1$, by definition, $\mathsf{A} \subseteq \Phi_{1}$ but $\mathsf{A} \nsubseteq \Psi_{\omega}$. Therefore, there is $\alpha \in \mathsf{A}$ such that $\alpha \in \Phi_{1}-\Psi_{\omega}$. Hence, by Lemma \ref{ExistenceOfWitness}, we have $G_{\alpha}(n) \geq 2^n$, for any $n \in \mathbb{N}$. Therefore, it is enough to set $E=G_{\alpha}$. As $\alpha \in \mathsf{A}$, we have $G_{\alpha} \in \PredFuncClass{\mathsf{A}}$, by Example \ref{Exam:GInPred}. Moreover, as $\alpha \in \Phi_{\omega}$, by Lemma \ref{lem:ub-g-and-l-via-q-h}, $G_{\alpha}$ is monotone which completes the proof.
\end{proof}

%\section{An Ordinal Notation System}
%\label{sec:compat-ordinals}
%\input{tex/compatibility.tex}

\section{Simulation of $\PredFuncClass{\mathsf{A}}$}
%\section{$\PredFuncClass{\mathsf{A}} \subseteq \mathcal{E}_{l(\mathsf{A})+2}$}
\label{sec:UpperBoundLength}
In this section, we follow the strategy outlined in Section~\ref{sec:main-theorem} to prove that $\PredFuncClass{\mathsf{A}} \subseteq \mathcal{E}_{l(\mathsf{A})+2}$, where $\mathsf{A} \subseteq \Phi_{\omega}$ is a bounded downset of ordinals. As discussed in Section~\ref{sec:main-theorem}, this requires three main ingredients. First, in Section~\ref{subsection-notation-system}, we present a notation system for the ordinals in $\Phi_{\omega}$. Second, in Section~\ref{subsec:l-r-comp}, up to the encoding of ordinals, we locate the functions $R(i, \mu, n)$ and $L_{\mu}(n)$ within the Grzegorczyk hierarchy. Third, in Section~\ref{subsec:bounding-lemma}, we establish an upper bound for predicative ordinal functions in terms of functions in the Grzegorczyk hierarchy. Finally, in Subsection~\ref{subsecSimulation}, we combine these ingredients to prove the claim that $\PredFuncClass{\mathsf{A}} \subseteq \mathcal{E}_{l(\mathsf{A})+2}$. 

\subsection{A Notation System for $\Phi_{\omega}$}
\label{subsection-notation-system}

Let $\Sigma := \{0, 1, (, ), ;, \bot \}$ be the alphabet. We provide a notation system for the ordinals in $\Phi_{\omega}$ in terms of strings over $\Sigma$. 
First, for $n \in \mathbb{N}$, let $\IntermCode{n}$ be the binary representation of $n$ as a string over $\Sigma$. Then, recursively define the following notation (or code) for the ordinals in $\Phi_{\omega}$ as strings in $\Sigma^\ast$, setting $\IntermCode{0} \Def ()$, and
\[
\IntermCode{\sum_{i=1}^m \Veb{k_i}(\beta_i)}
\Def
(\IntermCode{k_1};\IntermCode{\beta_1})\cdots(\IntermCode{k_m};\IntermCode{\beta_m}),
\]
where $k_i \in \mathbb{N}$ and $\beta_i \in \Phi_\omega$, for any $1 \leq i \leq m$. 
Note that, by Definition~\ref{Phi-sets} and Theorem~\ref{UniquenessTheorem}, the zero ordinal cannot be written in the form $\sum_{i=1}^m \Veb{k_i}(\beta_i)$, and for any non-zero ordinal in $\Phi_{\omega}$, such a representation exists and is unique. Hence, the coding function $\IntermCode{-} \colon \Phi_{\omega} \to \Sigma^*$ is well-defined. 

We define $\RepLen{\alpha}$ as the length of the string $\IntermCode{\alpha} \in \Sigma^*$, for any $\alpha \in \Phi_{\omega}$. Moreover, recall that $\RepLen{n}$ denotes the length of the binary representation of $n$, for any $n \in \mathbb{N}$.

\begin{definition}\label{Dfn: NumeralAndOrdinalFunc}
Let $X \subseteq \Omega$ be a set of ordinals. A function is called a \emph{numeral function with ordinals in $X$} if it takes as input only natural numbers and ordinals from $X$, and returns a natural number. Similarly, a function is called an \emph{ordinal function with ordinals in $X$} if it takes as input natural numbers and ordinals from $X$, and returns an output in $X$.
\end{definition}

\begin{remark}
Observe that any downset $\mathsf{A} \subseteq \Omega$ is closed under the predecessor function $p$. Therefore, the restriction of $p$ to $\mathsf{A}$, denoted by $p|_{\mathsf{A}}$ is an ordinal function with ordinals in $\mathsf{A}$.
\end{remark}

As both natural numbers and ordinals in $\Phi_{\omega}$ are encoded as strings over $\Sigma$, any numeral or ordinal function with ordinals in $\Phi_{\omega}$ naturally has a coded version over $\Sigma^*$. More precisely:

\begin{definition}\label{def:coded-func}
Given a numeral or ordinal function $f(\bar \alpha, \bar n)$ with ordinals in $X \subseteq \Phi_{\omega}$ (Definition \ref{Dfn: NumeralAndOrdinalFunc}), define its coded version $\CodedFunc{f}$ over $\Sigma^*$ by:
\[
\CodedFunc{f}(\bar{u}, \bar{v}) :=
\begin{cases}
\IntermCode{f(\bar{\alpha}, \bar{n})} & \exists \bar{\alpha} \in X\, \exists \bar{n} \in \mathbb{N}\, (\bar{u} = \IntermCode{\bar{\alpha}} \wedge \bar{v} = \IntermCode{\bar{n}}), \\
\bot & \text{otherwise.}
\end{cases}
\]
\end{definition} 

\begin{lemma}\label{lem:representable-n-geq-3}
Let $\mathsf{A}$ be either $\Phi_{\omega}$, $\Phi_k$, or $\Phi^m_k$ for some $k \geq 0$ and $m \geq 1$. Then:
\begin{itemize}
\item[$(i)$]
Deciding whether a given string is the code of an ordinal in $\mathsf{A}$, and if it is, testing whether the ordinal is zero, a successor or a limit is possible in polynomial time.
\item[$(ii)$] 
The function $\CodedFunc{p|_{\mathsf{A}}}: \Sigma^* \times \Sigma^* \to \Sigma^*$ is in $\GrzClassSig{3}$, where $p|_{\mathsf{A}}$ is the restriction of the ordinal predecessor function to $\mathsf{A}$.
\item[$(iii)$]
There is an expansive polynomial function $P$ with natural coefficients 
such that
$\RepLen{p(\alpha, n)} \leq P(\RepLen{\alpha}, {n})$, for any $\alpha \in \Phi_\omega$ and any $n \in \mathbb{N}$.
\end{itemize}
\end{lemma}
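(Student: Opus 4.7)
The proof naturally splits into three relatively independent tasks, one per item. The simplest is (i): parsing the code $\IntermCode{\alpha}$ is a straightforward recursive descent. Given $s \in \Sigma^*$, a parser either recognises $s = ()$ or, by tracking parenthesis depth, splits $s$ at its maximal top-level parentheses into blocks $(d_1; e_1)\cdots(d_m; e_m)$; for each block it checks that $d_j$ is a binary numeral and recurses on $e_j$. For $\mathsf{A} = \Phi_k$, one adds at every layer the constraint that each $d_j$ encodes a number $< k$; for $\mathsf{A} = \Phi_k^m$, one further tracks the nesting depth of occurrences of $d_j = k$, rejecting if it exceeds $m - 1$. Since the work at each level is linear in the block's size and the sizes summed across all levels total $\RepLen{s}$, the parser runs in polynomial time. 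For the zero/successor/limit classification, Lemma~\ref{phiIsNonZero} shows that $\phi_{k_m}(\beta_m)$ is a successor exactly when $k_m = 0$ and $\beta_m = 0$. Hence, once parsing succeeds, $\alpha$ is zero iff $\IntermCode{\alpha} = ()$; a successor iff the trailing top-level block is $(\IntermCode{0};())$; and a limit otherwise.

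For (iii), I proceed by structural induction on $\alpha = \sum_{i=1}^m \phi_{k_i}(\beta_i)$, splitting on the shape of the trailing summand $\phi_{k_m}(\beta_m)$. The zero case and the successor case (where $\phi_{k_m}(\beta_m) = \phi_0(0)$) yield $\RepLen{p(\alpha,n)} \leq \RepLen{\alpha}$ immediately, since predecessor either returns zero or drops the trailing block. When $\phi_{k_m}(\beta_m)$ is a limit, one has $p(\alpha, n) = \sum_{i<m} \phi_{k_i}(\beta_i) + p(\phi_{k_m}(\beta_m), n)$, so the bound reduces to one on $\RepLen{p(\phi_k(\beta), n)}$ for $\phi_k(\beta)$ a limit. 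The five sub-cases are: (a) $k=0$, $\beta = \gamma+1$, giving $\phi_0(\gamma) \cdot (n+1)$, of code length $(n+1)(\RepLen{\gamma} + O(1))$; (b) $k=0$, $\beta$ a limit, giving $\phi_0(p(\beta,n))$, recursive with $O(1)$ wrapping; (c) $k \geq 1$, $\beta = 0$, giving $\phi_{k-1}^{(n)}(0)$, of code length $O(n\cdot\RepLen{k})$; (d) $k \geq 1$, $\beta = \gamma+1$, giving $\phi_{k-1}^{(n)}(\phi_k(\gamma)+1)$, of code length $O(n\cdot\RepLen{k} + \RepLen{\phi_k(\gamma)+1})$; (e) $k \geq 1$, $\beta$ a limit, giving $\phi_k(p(\beta,n))$, recursive with $O(1)$ wrapping. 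Cases (a), (c), (d) are terminal and contribute a single $O(n)$ multiplicative blow-up, while (b) and (e) merely wrap a recursive call in a constant-size envelope. Collating, one obtains $\RepLen{p(\alpha,n)} \leq c\,(n+1)(\RepLen{\alpha}+1)$ for an absolute constant $c$, and the expansive polynomial $P(x, y) := c(x+1)(y+1) + x + y$ works.

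Finally, (ii) follows from (i), (iii), and Theorem~\ref{the:elem-characterizatons}. On input $(u, v) \in (\Sigma^\ast)^2$, the algorithm uses (i) to check in polynomial time whether $u = \IntermCode{\alpha}$ for some $\alpha \in \mathsf{A}$ and $v = \IntermCode{n}$ for some $n \in \mathbb{N}$, outputting $\bot$ if not. Otherwise, it implements the case analysis of (iii), writing the code of $p(\alpha, n)$ directly. Each step performs only bookkeeping proportional to its locally produced output segment; since by (iii) the total output has length polynomial in $\RepLen{\alpha} + n$, the entire computation runs in time polynomial in $\RepLen{u} + n$, which is elementary in the input length $\RepLen{u} + \RepLen{v}$. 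Theorem~\ref{the:elem-characterizatons} then gives $\CodedFunc{p|_{\mathsf{A}}} \in \GrzClassSig{3}$. The main technical obstacle is (iii): one must verify carefully that the ``wrapping'' limit sub-cases (b) and (e) genuinely do not compound the $n$-factor expansion through arbitrarily deep recursion on the $\beta$-component before reaching a terminal case, so that a single polynomial bound suffices uniformly in the shape of $\alpha$.
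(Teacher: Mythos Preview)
Your approach matches the paper's: recursive-descent parsing for (i), the same five-way case split on the trailing summand $\phi_{k_m}(\beta_m)$ for the limit predecessor in (ii)--(iii), and deriving (ii) from (i) and (iii) via an elementary-time bound.

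One imprecision worth flagging, since you single it out as the main obstacle: in case~(e), the wrapper $\phi_k(\cdot)$ contributes $\RepLen{k}+O(1)$ symbols, not $O(1)$. Your claimed bound $c(n+1)(\RepLen{\alpha}+1)$ still holds, but the justification should be that each such $k$ along the recursion path comes from a disjoint substring of $\IntermCode{\alpha}$, so the accumulated wrapper overhead is at most $\RepLen{\alpha}$, not that each wrap is constant-size. The paper sidesteps this accounting entirely by taking the coarser estimate $\RepLen{p(\alpha,n)} = O(\RepLen{\alpha}^2 n)$ (each of the $O(\RepLen{\alpha})$ recursion steps adds at most $O(\RepLen{\alpha}\,n)$ symbols), arriving at $P(x,n)=Cx^2n+x+n+D$; either bound suffices for the lemma.
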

\begin{proof}
For~$(i)$, checking whether a given string $w \in \Sigma^*$ is the code of an ordinal in $\Phi_{\omega}$ can be done by a recursive algorithm: first, we check whether $w = ()$; if not, we verify whether $w = (u_1; v_1; \cdots ; u_l; v_l)$, where each $u_i$ is the binary code of a natural number $k_i$ and each $v_i$ is the code of an ordinal. It is clear that this algorithm runs in polynomial time. For $\Phi_k$, one also needs to add another check to each recursive call to ensure that $k_i < k$ for any $1 \leq i \leq l$. For $\Phi_k^m$, if $m=1$, since $\Phi^{1}_k = \Phi_k$, we already covered this case. If $m > 1$, one also needs to add another check to each recursive call to ensure that for any $1 \leq i \leq l$, either $k_i = k$ and $v_i$ is the code of an ordinal in $\Phi^{m-1}_k$, or $k_i < k$ and $v_i$ is the code of an ordinal in $\Phi^{m}_k$. It is clear that these modified algorithms also run in polynomial time.

For the second task, if the string is indeed the code of an ordinal $\alpha \in \mathsf{A}$, determining whether $\alpha$ is zero, a successor, or a limit ordinal is straightforward. An ordinal is zero if and only if its code is $()$, and by Theorem~\ref{UniquenessTheorem}, it is a successor if and only if $\IntermCode{\alpha}$ ends with $(\lceil 0 \rceil;())$. Otherwise, it is a limit ordinal. Using~$(i)$, all of these conditions can be checked in polynomial time.

For~$(ii)$, using~$(i)$, we present an algorithm to compute predecessors that runs in $\mathcal{E}_3$ time. First, we check whether the given string is the code of an ordinal in $\mathsf{A}$. If it is not, we output $\bot$. Otherwise, we determine whether the input ordinal is zero, a successor, or a limit. In the first case, the output is $()$. In the successor case, computing $\IntermCode{\alpha}$ from $\IntermCode{\alpha+1}$ can be done by removing the rightmost $(\lceil 0 \rceil;())$ from the code $\IntermCode{\alpha+1}$. Therefore, the computation in these cases can be performed in time $\RepLenCode{\alpha}^{O(1)}$.
The challenging part concerns the predecessor of limit ordinals. To address this case, we can easily use the definition of ordinal addition and Definition \ref{dfn:VeblenHierarchy} to see that, for any limit ordinal $\alpha = \langle\alpha_i\rangle \in \Phi_\omega$, we have:
	\[
		\alpha_n := 
		\begin{cases}
			%\omega^\gamma 
			\Veb{0}(\gamma)\cdot (n+1) & \text{if } \alpha = \omega^{\gamma+1} = \Veb{0}(\gamma+1)\\
			%\omega^{\gamma_i} 
			\Veb{0}(\gamma_n)
			& \text{if } \alpha = \omega^\gamma = \Veb{0}(\gamma), \text{ for } \gamma=\langle \gamma_i \rangle \text{ a limit}\\
			\Veb{k}^{(n)}(0)  & \text{if } \alpha = \Veb{k+1}(0)\\
			\Veb{k}^{(n)}(\Veb{k+1}(\gamma)+1) & \text{if } \alpha=\Veb{k+1}(\gamma+1)\\
			\Veb{k+1}(\gamma_n) & \text{if } \alpha=\Veb{k+1}(\gamma), \text{ for } \gamma=\langle \gamma_i \rangle \text{ a limit}\\
			\beta + \gamma_n & 	\text{if } \alpha = \beta + \gamma, \text{ for } \gamma=\langle \gamma_i \rangle \text{ a limit}\\
		\end{cases}
	\]
This provides a recursive algorithm for computing $\CodedFunc{p|_{\mathsf{A}}}(\lceil \alpha \rceil, \IntermCode{n})$. To explain, let $\alpha = \sum_{j=1}^r \phi_{k_j}(\gamma_j)$. The main task is to compute the expression $\CodedFunc{p|_{\mathsf{A}}}(\lceil \phi_{k_r}(\gamma_r) \rceil, \IntermCode{n})$. Observe that when $r = 1$, this is exactly the required value, while for $r > 1$, the desired code is obtained by appending $\lceil \sum_{j=1}^{r-1} \phi_{k_j}(\gamma_j) \rceil$ to the left side of $\CodedFunc{p|_{\mathsf{A}}}(\lceil \phi_{k_r}(\gamma_r) \rceil, \IntermCode{n})$.
Now, to compute $\CodedFunc{p|_{\mathsf{A}}}(\lceil \phi_k(\gamma) \rceil, \IntermCode{n})$, let $k = k_r$ and $\gamma = \gamma_r$. Observe that $\lceil \phi_k(\gamma) \rceil$ can be directly accessed from $\lceil \alpha \rceil$ by extracting its last segment. There are two cases to consider.
If $\gamma$ is zero or a successor, then computing $\CodedFunc{p|_{\mathsf{A}}}(\lceil \phi_k(\gamma) \rceil, \IntermCode{n})$ is straightforward and can be done in time $O(\RepLen{\alpha} n)$. This is because, in these cases, inspection of the explicit expression for $\alpha_n$ listed above reveals that its length is bounded by $O(\RepLen{\alpha} n)$.  
If instead $\gamma$ is a limit ordinal, the computation recurses into $\lceil \gamma \rceil$. Each step of the recursion just described takes $O(\RepLen{\alpha} n)$ time, and the recursion depth is $O(\RepLen{\alpha})$. Thus, the algorithm in this case runs in time $O(\RepLen{\alpha}^2 n)$.  
Therefore, in terms of the representation length $\RepLenCode{n}$, the computation takes $\RepLenCode{\alpha}^{O(1)} 2^{O(\RepLenCode{n})}$ steps. Hence, $\CodedFunc{p|_{\mathsf{A}}}$ is computable in $\mathcal{E}_3$ time and thus belongs to the class $\GrzClassSig{3}$.

For $(iii)$, to define the polynomial $P$, observe from the analysis above that in each recursive step, we add at most $O(\RepLen{\alpha}n)$ symbols. Therefore, we have $\RepLen{p(\alpha, n)} \leq O(\RepLen{\alpha}^2 n)$ for any $\alpha \in \Phi_\omega$ and any $n \in \mathbb{N}$. It is thus sufficient to define $P(x, n) := Cx^2 n + x + n+D$, for sufficiently large constants $C, D \in \mathbb{N}$.
\end{proof}

\begin{remark}
As observed in the proof of Lemma~\ref{lem:representable-n-geq-3}, some cases that arise in the computation of $p(\phi_{k+1}(\gamma), n)$
invoke $n$ iterations of $\phi_k$. As a result, the length of $\CodedFunc{p}(\IntermCode{\alpha}, \IntermCode{n})$ becomes polynomial in $n$, and hence exponential in $\RepLenCode{n}$. Therefore, the function $\CodedFunc{p}$ cannot be computed in less than exponential time, and consequently, $\GrzClassSig{3}$ is the lowest level of the Grzegorczyk hierarchy to include $\CodedFunc{p}$.
\end{remark}

\subsubsection{A linear-space notation system for $\Psi_{\omega}$}
\label{subsec:rep-omega-omega}

A special case of the main objective of this section is to prove that $\PredFuncClass{\Psi_{\omega}} \subseteq \mathcal{E}_2$. 
This requires simulating the corresponding predicative ordinal recursion within the class $\mathcal{E}_2$. Since $\mathcal{E}_2$, the class of linear-space computable functions, is more resource-sensitive than the broader class of elementary functions on which our original ordinal notation system was based, we now require a more refined (i.e., linear-space efficient) notation system for the ordinals in $\Psi_\omega$.

Recall that, by Remark~\ref{LengthBoundForPsi}, any non-zero $\alpha \in \Psi_\omega$
has a unique normal form
$\sum_{i=1}^l \omega^{p_i} c_i$,
where $l \geq 1$,
the
$c_i$'s are
non-zero finite ordinals,
and
$p_i \neq p_{i+1}$ for all 
$1 \leq i < l$.
Therefore, for such ordinals, this presentation allows for a simpler and more compact notation system over the alphabet~$\Sigma$. 
We define this new notation as follows. If $\alpha = 0$, we set $\IntermCode{\alpha}_E$ as the string $()$. If $\alpha \neq 0$ and has the normal form
$\alpha = \sum_{i=1}^l \omega^{p_i} c_i$, define
\[
\IntermCode{\alpha}_E \Def (
\IntermCode{p_{1}}; \IntermCode{c_{1}};\cdots;\IntermCode{p_{l}};\IntermCode{c_l}).
\]
To ensure a uniform notation, we also extend the new notation $\IntermCode{-}_E$ to numbers; that is, if $n \in \NatSet$, we sometimes denote $\IntermCode{n}$ by $\IntermCode{n}_E$. Moreover, define $\RepLen{\alpha}_E$ and $\RepLen{n}_E$ as the lengths of the strings $\IntermCode{\alpha}_E$ and $\IntermCode{n} = \IntermCode{n}_E$, respectively.

\begin{definition}\label{def:coded-func-omega-omega}
Given a numeral or ordinal function $f(\bar \alpha, \bar n)$ with ordinals in $X \subseteq \Psi_{\omega}$, define its coded version $\CodedFunc{f}_E$ over $\Sigma^*$ by:
\[
\CodedFunc{f}_E(\bar{u}, \bar{v}) :=
\begin{cases}
\IntermCode{f(\bar{\alpha}, \bar{n})}_E & \exists \bar{\alpha} \in X\, \exists \bar{n} \in \mathbb{N}\, (\bar{u} = \IntermCode{\bar{\alpha}}_E \wedge \bar{v} = \IntermCode{\bar{n}}_E), \\
\bot & \text{otherwise},
\end{cases}
\]
\end{definition} 

\begin{lemma}\label{lem:representable-n-geq-3III}
Let $\mathsf{A}$ be either $\Psi_{\omega}$, $\Psi_k$, or $\Psi^m_k$ for some $k, m \geq 1$. Then:
\begin{itemize}
\item[$(i)$]
Deciding whether a given string $w \in \Sigma^*$ is of the form $\IntermCode{\alpha}_E$, for some $\alpha \in \mathsf{A}$, and, if it is, testing whether $\alpha$ is zero, a successor, or a limit is possible in linear space.
\item[$(ii)$] 
The function $\CodedFunc{p|_{\mathsf{A}}}_E: \Sigma^* \times \Sigma^* \to \Sigma^*$ is computable in linear space, where $p|_{\mathsf{A}}$ is the restriction of the ordinal predecessor function to $\mathsf{A}$.
\end{itemize}
\end{lemma}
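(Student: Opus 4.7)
Well-formedness of a string $w \in \Sigma^*$ as $\IntermCode{\alpha}_E$ can be tested in linear space by a single left-to-right scan enforcing the regular pattern $(\IntermCode{p_1};\IntermCode{c_1};\cdots;\IntermCode{p_l};\IntermCode{c_l})$, together with local checks that each $\IntermCode{c_i}$ codes a nonzero number and that adjacent $\IntermCode{p_i},\IntermCode{p_{i+1}}$ code different numbers (a binary comparison whose length is bounded by $\RepLen{w}$). For $\mathsf{A}=\Psi_k$, add the constant-size check $p_i < k$; for $\mathsf{A}=\Psi^m_k$, additionally count the terms with a bounded counter (up to $m+k+1$) and verify that for some split $r \leq m$ with $l - r \leq k$, the exponents $p_{r+1} > p_{r+2} > \cdots > p_l$ are strictly decreasing, as required by Definition \ref{def:Psi-mk}. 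Since $m,k$ are constants, all these checks use only $O(\RepLen{w})$ workspace. The classification of $\alpha$ is read off $p_l$: the empty string $()$ codes zero; if $p_l = 0$ then $\omega^{p_l} c_l = c_l$ is finite and $\alpha$ is a successor; if $p_l \geq 1$, then $\omega^{p_l}$ is a limit by Lemma \ref{phiIsNonZero} and so is $\alpha = \sum_{i<l}\omega^{p_i}c_i + \omega^{p_l}c_l$.

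\textbf{Plan for (ii).} After the membership test of (i), branch on the classification. If $\alpha=0$, output $()$. If $\alpha$ is a successor ($p_l=0$), compute $c_l - 1$ in binary; drop the last pair if this yields $0$, and otherwise overwrite the trailing $\IntermCode{c_l}$ with $\IntermCode{c_l - 1}$. If $\alpha$ is a limit ($p_l \geq 1$), use the identity
\[
p(\alpha, n) \;=\; \sum_{i<l}\omega^{p_i}c_i \;+\; \omega^{p_l}(c_l-1) \;+\; \omega^{p_l-1}(n+1),
\]
which follows from the limit-case rules recalled in the proof of Lemma \ref{lem:representable-n-geq-3}: since $p_l$ is a positive natural number, it is a successor, so $p(\omega^{p_l},n) = \omega^{p_l-1}(n+1)$. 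When $c_l > 1$ this is already in normal form, so write it out directly. When $c_l = 1$ the middle term vanishes and one must check whether $p_{l-1} = p_l - 1$; in that case merge the adjacent summands via
$\omega^{p_{l-1}}c_{l-1} + \omega^{p_{l-1}}(n+1) = \omega^{p_{l-1}}(c_{l-1}+n+1)$,
an identity provable by induction from the definition of ordinal multiplication. All the arithmetic steps---decrementing $c_l$ or $p_l$, incrementing $n$, adding $c_{l-1}+n+1$, and comparing $p_{l-1}$ with $p_l - 1$---are standard binary operations on numerals of length at most $\RepLen{w}+O(\RepLen{n})$, each of which runs in linear space.

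\textbf{Space analysis and main obstacle.} The output length is bounded by $\RepLen{\alpha}_E + O(\RepLen{n})$, since the algorithm either strictly shrinks the input or appends a single new pair $(\IntermCode{p_l-1};\IntermCode{n+1})$, and in the merging subcase replaces two numerals by their sum without net growth beyond $O(\RepLen{n})$. All intermediate computations reside on work tapes of size $O(\RepLen{w}+\RepLen{n})$, so the entire algorithm runs in linear space. Membership of the output in $\mathsf{A}$ is automatic: by Lemma \ref{PhiIsDownset} each of $\Psi_\omega$, $\Psi_k$, $\Psi^m_k$ is a downset, and $p(\alpha,n) \preceq \alpha$, so no re-verification is needed. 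The only delicate point I anticipate is the limit subcase with $c_l = 1$ and $p_{l-1} = p_l - 1$, which requires the merger above to restore the invariant $p_i \neq p_{i+1}$; what makes this clean is that the merger cannot cascade, since $p_{l-2} \neq p_{l-1}$ was already guaranteed in the input's normal form.
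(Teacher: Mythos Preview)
Your proposal is correct and follows essentially the same approach as the paper's proof: a left-to-right scan enforcing the pattern $(\IntermCode{p_1};\IntermCode{c_1};\cdots;\IntermCode{p_l};\IntermCode{c_l})$ with the obvious local checks (and the extra constant-bounded conditions for $\Psi_k$ and $\Psi^m_k$), followed by exactly the same case analysis on $p_l$ and $c_l$ for the predecessor, including the subcase split on whether $p_{l-1}=p_l-1$. Your explicit remark that the merge in the $c_l=1$, $p_{l-1}=p_l-1$ subcase cannot cascade (since $p_{l-2}\neq p_{l-1}$ in the input normal form) is a useful observation that the paper leaves implicit.
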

\begin{proof}
For~$(i)$, checking whether a given string $w \in \Sigma^*$ is of the form $\IntermCode{\alpha}_E$, for some $\alpha \in \Psi_{\omega}$, can be done using the following algorithm: first, we check whether $w = ()$; if not, we verify whether $w$ is in the form $(\tau_1; \tau_2; \ldots; \tau_{2l})$ for some $l \geq 1$, such that $\tau_{2i-1}$ is the binary code of a number $p_i$, for any $1 \leq i \leq l$, and $\tau_{2i}$ is the binary code of a non-zero number $c_i$, for any $1 \leq i \leq l$, and, second, whether 
$\tau_{2j-1} \neq \tau_{2j+1}$,
for any  $1 \leq j < l$. It is clear that the algorithm only checks the basic form, whether some strings are over $\{0, 1\}$ and start with $1$, along with verifying some equalities, and hence runs in linear space. For $\Psi_k$, we should also add another check to ensure that $p_i < k$ for any $1 \leq i \leq l$. For $\Psi^m_k$, we also need to check the existence of $1 \leq r \leq l$ such that $r \leq m$ and $p_{j}<p_{j+1}$ for any $r+1 \leq j < l$. It is clear that these checks also run in linear space.

For the second task, if the string is indeed the code of an ordinal $\alpha \in \mathsf{A}$, determining whether $\alpha$ is zero, a successor, or a limit ordinal is straightforward. An ordinal is zero if and only if its code is $()$, and by Theorem~\ref{UniquenessTheorem}, it is a successor if $\tau_{2l-1} = 0$; and it is a limit otherwise. Using the previous part, all of these conditions can be checked in linear space.

For $(ii)$, first, we check in linear space whether $\alpha$ is a code of an ordinal in $\mathsf{A}$. If it is not, we output $\bot$. Otherwise, we determine whether the ordinal is zero, a successor, or a limit. If it is zero, the predecessor is $()$. If it is a successor, then $\IntermCode{\alpha}_E$ must be in the form 
$(\IntermCode{p_{1}}; \IntermCode{c_{1}};\cdots;\IntermCode{p_{l}};\IntermCode{c_l})$, where $p_l = 0$. Then, if $c_l = 1$, we have $\IntermCode{p(\alpha, n)}_E = (\IntermCode{p_{1}}; \IntermCode{c_{1}};\cdots;\IntermCode{p_{l-1}};\IntermCode{c_{l-1}})$ and if 
$c_l \geq 2$, then $\IntermCode{p(\alpha, n)}_E = (\IntermCode{p_{1}}; \IntermCode{c_{1}};\cdots;\IntermCode{p_{l}};\IntermCode{c_l - 1})$.
It is clear that in these cases, the computation of the predecessor is possible in linear space.

If $\alpha$ is a limit ordinal with the normal form $\alpha = \sum_{i=1}^l\omega^{p_i} \cdot c_i$, we have $p_l \geq 1$. Hence
\[
p(\alpha, n) = \sum_{i=1}^{l-1} \omega^{p_i} \cdot c_i + \omega^{p_l} \cdot (c_l - 1) + \omega^{p_l - 1} \cdot (n + 1). \qquad (*)
\]
To compute $\IntermCode{p(\alpha, n)}_E$, it remains to transform $(*)$ into normal form (gathering coefficients of adjacent terms with the same power, and removing a term whose coefficient now is $0$), and output its representation. If $c_l > 1$, then $(*)$ is already the normal form of $p(\alpha, n)$. Hence, we have
\[
\IntermCode{p(\alpha, n)}_E = (
\IntermCode{p_1}; \IntermCode{c_1}; \cdots; \IntermCode{p_l}; \IntermCode{c_l - 1}; \IntermCode{p_l - 1}; \IntermCode{n + 1}).
\]
If $c_l = 1$, we can simplify $p(\alpha, n)$ into
\[
p(\alpha, n) = \sum_{i=1}^{l-1} \omega^{p_i} \cdot c_i + \omega^{p_l - 1} \cdot (n + 1). \qquad (**)
\]
There are two cases to consider, either $p_{l-1} = p_l - 1$ or not. If $p_{l-1} \neq p_l - 1$, $(**)$ is already the normal form of $p(\alpha, n)$. Hence, we have
\[
\IntermCode{p(\alpha, n)}_E = (
\IntermCode{p_1}; \IntermCode{c_1}; \cdots; \IntermCode{p_{l-1}}; \IntermCode{c_{l-1}}; \IntermCode{p_l - 1}; \IntermCode{n + 1}).
\]
Otherwise, the normal form of $p(\alpha, n)$ is
\[
p(\alpha, n) = \sum_{i=1}^{l-2} \omega^{p_i} \cdot c_i + \omega^{p_{l-1}} \cdot (c_{l-1}+ n + 1). 
\]
Therefore, we have
$\IntermCode{p(\alpha, n)}_E = (
\IntermCode{p_1}; \IntermCode{c_1}; \cdots; \IntermCode{p_{l-1}}; \IntermCode{c_{l-1} + n + 1})$. 
It is clear that in computing $\IntermCode{p(\alpha, n)}_E$ in this case, we essentially check some equalities and perform some additions. As these operations run in linear space, $\CodedFunc{p|_{\mathsf{A}}}_E$ is computable in linear space.
\end{proof}

To control the computations that run on the ordinals in 
$\Psi_\omega$, along with their codes, a notion of norm is also required.

\begin{definition}\label{def:sum-norm-tuples}
For any $\alpha \in \Psi_\omega$, if $\alpha = 0$, define $|\alpha| = 0$; and if $\alpha \neq 0$, define $\SumNorm{\alpha} \Def \sum_{i=1}^{l} c_i$, where $\alpha = \sum_{i=1}^l \omega^{p_i} \cdot c_i$ is the normal form of $\alpha$.
\end{definition}

For any natural number $n \in \mathbb{N}$, the length of the binary representation of $n$, i.e., $\RepLen{n}$, is linearly related to $\log(n+1)$. More precisely,
\[
\log(n+1) \leq \RepLen{n} \leq \log(n+1) + 1
\]
for any $n \in \mathbb{N}$.
The same claim holds for ordinals in $\Psi^m_k$, where $k, m \geq 1$:

\begin{lemma}
\label{lem:relate-norms-linear}
Let  $k,m \in \mathbb{N}^{\geq 1}$ be fixed numbers. Then, 
there are $a, b\in \NatSet^{\geq 1}$ such that
$\log(\SumNorm{\alpha}+1)\leq  \RepLen{\alpha}_E$ and $\RepLen{\alpha}_E \leq a \cdot \log(\SumNorm{\alpha}+1)+b$,
for any $\alpha \in \Psi_k^m$.
\end{lemma}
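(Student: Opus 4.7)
The plan is to unpack both sides of the desired inequalities using the normal form guaranteed by Remark~\ref{LengthBoundForPsi}, and then translate between the tree-like norm $\SumNorm{\alpha}$ and the textual code length $\RepLen{\alpha}_E$ via elementary estimates on the binary representations of the coefficients. The argument splits naturally into the trivial case $\alpha=0$ (where $\SumNorm{\alpha}=0$ and $\RepLen{\alpha}_E=2$, so both inequalities hold provided $b\geq 2$), and the generic case $\alpha\neq 0$.

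For the generic case, write $\alpha=\sum_{i=1}^{l}\omega^{p_i}c_i$ in normal form, with $c_i\geq 1$ and $l\geq 1$. By Remark~\ref{LengthBoundForPsi}, $l\leq m+k$, and since $\alpha\in\Psi^m_k$ every exponent satisfies $p_i<k$, so $\RepLen{p_i}\leq \log k+2$. Counting opening/closing parentheses and the $2l-1$ semicolons in the definition of $\IntermCode{\alpha}_E$, we get the exact formula
\[
\RepLen{\alpha}_E = 2l+1+\sum_{i=1}^{l}\bigl(\RepLen{p_i}+\RepLen{c_i}\bigr).
\]
This is the structural identity on which everything hinges.

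First I would establish the lower bound $\log(\SumNorm{\alpha}+1)\leq \RepLen{\alpha}_E$. Dropping all contributions except those of the coefficients, and using the standard estimate $\RepLen{c_i}\geq \log(c_i+1)$, one has
\[
\RepLen{\alpha}_E \;\geq\; \sum_{i=1}^{l}\RepLen{c_i} \;\geq\; \sum_{i=1}^{l}\log(c_i+1) \;=\; \log\!\Bigl(\prod_{i=1}^{l}(c_i+1)\Bigr).
\]
The elementary inequality $\prod_{i=1}^{l}(c_i+1)\geq 1+\sum_{i=1}^{l}c_i$, which holds whenever each $c_i\geq 1$ (trivial induction on $l$, expanding one factor at a time), then gives $\RepLen{\alpha}_E\geq \log(\SumNorm{\alpha}+1)$ as desired.

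Next I would establish the upper bound $\RepLen{\alpha}_E\leq a\log(\SumNorm{\alpha}+1)+b$. Starting from the structural identity and using $\RepLen{c_i}\leq \log(c_i+1)+1\leq \log(\SumNorm{\alpha}+1)+1$, together with $\RepLen{p_i}\leq \log k+2$ and $l\leq m+k$, we bound
\[
\RepLen{\alpha}_E \;\leq\; 2(m+k)+1+(m+k)(\log k+2)+(m+k)\bigl(\log(\SumNorm{\alpha}+1)+1\bigr),
\]
which is of the required form $a\log(\SumNorm{\alpha}+1)+b$ for constants $a,b$ depending only on $k$ and $m$; e.g.\ $a=m+k$ and any $b$ large enough to absorb the remaining terms (and the $\alpha=0$ case). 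There is no genuine obstacle here: the only subtlety is making sure that the innocuous-looking inequality $\prod(c_i+1)\geq \sum c_i+1$ is invoked correctly, which is why the restriction $c_i\geq 1$ built into the normal form matters.
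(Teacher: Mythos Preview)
Your proof is correct and follows essentially the same approach as the paper: both handle $\alpha=0$ separately, unfold the exact identity $\RepLen{\alpha}_E=2l+1+\sum_i(\RepLen{p_i}+\RepLen{c_i})$ from the normal form, and then bound each side using $l\leq m+k$, $p_i<k$, $c_i\leq\SumNorm{\alpha}$, and the standard estimates $\log(n+1)\leq\RepLen{n}\leq\log(n+1)+1$, arriving at $a=m+k$. The only difference is cosmetic: for the lower bound the paper routes through $c_i+2\geq 2$ and the inequality $x+y\leq xy$ on $\mathbb{R}^{\geq 2}$, whereas you use the cleaner direct estimate $\prod_i(c_i+1)\geq 1+\sum_i c_i$.
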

\begin{proof}
To make the proof easier to read, we first examine the conditions under which $a$ and $b$ satisfy the claim, and at the end, we show that the conditions have a solution.
First, if $\alpha = 0$, then $|\alpha| = 0$. Hence, $\log(\SumNorm{\alpha} + 1) = 0 \leq \RepLen{\alpha}_E$, and there is nothing to check. For $\RepLen{\alpha}_E \leq a \cdot \log(\SumNorm{\alpha} + 1) + b$, since $\RepLenCode{0} = 2$, it is enough to set $b \geq 2$. If $\alpha \neq 0$, let 
$\alpha = \sum_{i=1}^l \omega^{p_i} c_i$
be the normal form of $\alpha$. Hence,
$\IntermCode{\alpha}_E = (
\IntermCode{p_{1}}; \IntermCode{c_{1}};\cdots;\allowbreak\IntermCode{p_{l}};\IntermCode{c_l})$, which implies $\RepLen{\alpha}_E = 2l + 1 + 
 \sum_{i} \RepLen{p_i}+
 \sum_{i} \RepLen{c_i}$.
For the first claim, first observe that for any $x, y \in \mathbb{R}^{\geq 2}$, we have $x+y \leq xy$. 
Therefore, as $l \geq 1$, we obtain:
        \[
        \log(\SumNorm{\alpha}+1)
        \leq  
        \log(1+\sum_i c_i)
         \leq 
        \log(\sum_i (c_i+2))
        \leq 
        \log(\prod_i (c_i+2))=
        \]
        \[
        \sum_i \log(c_i + 2)
        \leq \sum_i \log(2(c_i+1))
        = l + \sum_i \log(c_i+1)
        \leq 
        l + \sum_i \RepLen{c_i}
        \leq \RepLen{\alpha}_E.
        \]
For the second claim, as $c_i \leq |\alpha|$ and $p_i < k$ for any $1 \leq i \leq l$, and $l \leq k+m$ (Remark~\ref{LengthBoundForPsi}), we have:
 \[
 \RepLen{\alpha}_E = 2l + 1 + 
 \sum_{i} \RepLen{p_i}+
 \sum_{i} \RepLen{c_i}
 \leq 2l+1+ 
 \sum_{i} (\log(p_i+1)+1)
 +
 \sum_{i} (\log(c_i+1)+1)
 \]
 \[
        \leq 4l+1+ 
        \sum_{i} \log(p_i+1) +
        \sum_{i} \log(c_i+1) 
        \leq C(m,k) + \sum_{i} \log(c_i+1)
\]
\[
\leq C(m,k)+ l\log(\SumNorm{\alpha}+1)
        \leq 
        C(m,k)+ (m+k)\log(\SumNorm{\alpha}+1)
 \]
where $C(m,k) \geq 4l+1 \geq 2$ is a constant in terms of $m$ and $k$. 
Therefore, it is enough to set $a \Def k+m$ and $b \Def C(m,k)$. Notice that $b \geq 2$ as required for the case $\alpha=0$.
\end{proof}

\begin{lemma}\label{lem:omega-k-case}
$\SumNorm{{p(\alpha, n)}} \leq 
\SumNorm{{\alpha}} + n$, for 
any $\alpha \in \Psi_\omega$ and $n \in \NatSet$.
\end{lemma}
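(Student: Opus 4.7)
The plan is a straightforward case analysis on $\alpha \in \Psi_\omega$, reusing the explicit formulas for $p(\alpha,n)$ that were worked out in the proof of Lemma~\ref{lem:representable-n-geq-3III}, and then computing the norm of the resulting normal form in each case. The key observation is that the norm-increasing step is confined to the ``limit'' branch, where the cofinality tail $\omega^{p_l - 1}(n+1)$ contributes exactly $n+1$ to the sum of coefficients while simultaneously decrementing the last coefficient by one, yielding a net increase of at most $n$.

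First, I would dispose of the trivial case $\alpha = 0$, where $p(\alpha,n) = 0$ and both sides are $0$. Next, for $\alpha$ a successor with normal form $\sum_{i=1}^{l-1}\omega^{p_i}c_i + c_l$ (so $p_l = 0$), the normal form of $p(\alpha,n)$ is obtained either by dropping the last summand (if $c_l = 1$) or by decreasing $c_l$ to $c_l-1$ (if $c_l \geq 2$); in both cases $\SumNorm{p(\alpha,n)} = \SumNorm{\alpha} - 1 \leq \SumNorm{\alpha} + n$.

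The main work is the limit case, where $p_l \geq 1$ and
\[
p(\alpha, n) = \sum_{i=1}^{l-1} \omega^{p_i} c_i + \omega^{p_l}(c_l - 1) + \omega^{p_l - 1}(n + 1).
\]
Here I would split into three subcases in parallel to the proof of Lemma~\ref{lem:representable-n-geq-3III}. If $c_l \geq 2$, the displayed expression is already in normal form and
\[
\SumNorm{p(\alpha,n)} = \sum_{i=1}^{l-1} c_i + (c_l-1) + (n+1) = \SumNorm{\alpha} + n.
\]
If $c_l = 1$ and $p_{l-1} \neq p_l - 1$ (including the case $l=1$, where the condition is vacuous), the expression collapses to $\sum_{i=1}^{l-1}\omega^{p_i}c_i + \omega^{p_l-1}(n+1)$, already in normal form, and again $\SumNorm{p(\alpha,n)} = \sum_{i=1}^{l-1} c_i + (n+1) = \SumNorm{\alpha} + n$. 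Finally, if $c_l = 1$ and $p_{l-1} = p_l - 1$, the two adjacent $\omega^{p_l-1}$-terms merge into $\omega^{p_{l-1}}(c_{l-1} + n + 1)$, giving $\SumNorm{p(\alpha,n)} = \sum_{i=1}^{l-2} c_i + c_{l-1} + n + 1 = \SumNorm{\alpha} + n$ (using $c_l = 1$).

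I do not anticipate any real obstacle: the only thing to be careful about is ensuring that one correctly identifies the normal form in each subcase (in particular handling the boundary $l=1$, and the merge when $p_{l-1} = p_l - 1$), so that $\SumNorm{\cdot}$ is applied to a genuine normal-form representation and the coefficient sums are computed correctly. Everything else is bookkeeping, and the bound $\SumNorm{\alpha} + n$ is in fact attained with equality in the limit case.
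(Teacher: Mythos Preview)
Your proposal is correct and follows essentially the same approach as the paper: a case split on $\alpha$ being zero, a successor, or a limit, using the explicit form of $p(\alpha,n)$ in each case. The only difference is that in the limit case the paper does not bother to put $p(\alpha,n)$ into normal form explicitly; it simply observes that the displayed expression $\sum_{i=1}^{l-1}\omega^{p_i}c_i + \omega^{p_l}(c_l-1) + \omega^{p_l-1}(n+1)$ has coefficient sum $\SumNorm{\alpha}+n$, and since passing to normal form (dropping a zero coefficient, merging adjacent equal-exponent terms) never increases the coefficient sum, one immediately gets $\SumNorm{p(\alpha,n)} \leq \SumNorm{\alpha}+n$. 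Your three subcases unpack this and show the bound is actually attained with equality, which is fine but more than is needed.
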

\begin{proof}
If $\alpha = 0$, we have $p(\alpha, n) = 0$, and hence there is nothing to prove. If $\alpha \neq 0$, let $\sum_{i=1}^{l} \omega^{p_i} \cdot c_i$ be the normal form of $\alpha$. Note that $c_i \geq 1$ for all $1 \leq i \leq l$.
If $\alpha$ is a successor, then $p_l = 0$, and hence
\[
p(\alpha, n) = \left( \sum_{i=1}^{l-1} \omega^{p_i} \cdot c_i \right) + (c_l - 1).
\]
Therefore,
$\SumNorm{p(\alpha, n)} = \SumNorm{\alpha} - 1 < \SumNorm{\alpha} \leq \SumNorm{\alpha} + n$.
If $\alpha$ is a limit, then $p_l > 0$, and hence
\[
p(\alpha, n) = \left( \sum_{i=1}^{l-1} \omega^{p_i} \cdot c_i \right) + \omega^{p_l} \cdot (c_l - 1) + \omega^{p_l - 1} \cdot (n + 1).
\]
Therefore,
$\SumNorm{p(\alpha, n)} \leq \sum_{i=1}^{l} c_i + n = \SumNorm{\alpha} + n$.
\end{proof}

\subsection{Computing $L_{\mu}(n)$ and $R(i, \mu, n)$}\label{subsec:l-r-comp}

Recall (see Definition~\ref{def:slow-growing-hierarchy}) that $R^b(i, \mu, n)$ is obtained by applying the $n$-predecessor function $i$ times to $\mu$, and that $L_{\mu}(n)$ is the least $l \in \mathbb{N}$ such that $R^b(l, \mu, n)=0$. We called the decreasing sequence $\{R^b(i, \mu, n)\}_{i=0}^{L_{\mu}(n)}$ the sequence of $n$-predecessors of $\mu$, and defined $R(i, \mu, n)=R^b(L_{\mu}(n) \dotminus i, \mu, n)$ as the function that enumerates the sequence of $n$-predecessors of $\mu$ in a forward manner (i.e., from $0$ to $\mu$).

In this subsection, we complete the second step of the strategy outlined in Section~\ref{sec:main-theorem}.  
Up to the encoding of ordinals and numbers as strings over $\Sigma$, we show that the functions $R(i,\mu,n)$ and $L_{\mu}(n)$ lie in the appropriate level of the Grzegorczyk hierarchy. For this purpose, since $R$ is defined in terms of $R^b$ and $L$, it suffices to locate $R^b$ and $L$ within the hierarchy. Observe that $R^b$ is defined by primitive recursion, and $L$ by minimization.
Hence, to place these functions at the suitable level of the hierarchy, we first upper bound them by functions at this level, and then appeal to the closure of the Grzegorczyk classes under bounded recursion and bounded minimization (see Section~\ref{sec:preliminaries}). We derive these required bounds separately for the restrictions of $R^b$ and $L$ to $\Phi^{m}_{k-3}$ (with $k \geq 3$, $m \geq 1$) and to $\Psi_k^m$ (with $k,m \geq 1$) in the following subsubsections.

\subsubsection{The case $\Phi^{m}_{k-3}$, for $k \geq 3$ and $m \geq 1$}

In this subsubsection, we aim to show that $\CodedFunc{R|_{\Phi^m_{k-3}}}$ and $\CodedFunc{L|_{\Phi^m_{k-3}}}$
belong to $\GrzClassSig{k}$. As discussed above, this requires establishing upper bounds for $\RepLenCode{R^b(i,\mu,n)}$ and $L_{\mu}(n)$.  
We begin with the simpler case of $\RepLenCode{R^b(i, \mu, n)}$ and then turn to the more involved case of $L_{\mu}(n)$. 

\begin{lemma}\label{lem:bounds-rec-sequence}
There is a monotone function $M_R \in \mathcal{E}_3$ such that $\RepLenCode{R^b(i, \mu, n)} \allowbreak\leq M_R(\RepLenCode{i}, \RepLenCode{\mu}, \RepLenCode{n})$, for any $\mu \in \Phi_{\omega}$ and any $n, i \in \mathbb{N}$.
\end{lemma}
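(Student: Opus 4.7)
The plan is to iterate the one-step polynomial length bound on the predecessor function and then verify that the resulting iterate, viewed as a function of representation lengths, is elementary.

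First, by Lemma \ref{lem:representable-n-geq-3}(iii) there is an expansive polynomial $P(x,n)$ with natural coefficients such that $\RepLen{p(\alpha,n)} \leq P(\RepLen{\alpha},n)$ for every $\alpha \in \Phi_\omega$ and $n \in \NatSet$. An easy induction on $i$, using the recursive definition $R^b(0,\mu,n)=\mu$ and $R^b(i+1,\mu,n)=p(R^b(i,\mu,n),n)$ together with the monotonicity of $P$ in its first argument, yields
\[
\RepLenCode{R^b(i,\mu,n)} \;\leq\; P^{(i)}(\RepLenCode{\mu},n),
\]
where $P^{(i)}$ denotes the $i$-fold iteration of $P$ in its first argument. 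The task thus reduces to bounding $P^{(i)}(\RepLenCode{\mu},n)$ by a monotone $\mathcal{E}_3$ function of $\RepLenCode{i}$, $\RepLenCode{\mu}$, and $\RepLenCode{n}$.

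Next, I would choose constants $C, D \geq 2$ depending only on $P$ such that $P(x,n) \leq C(x+n+1)^D$ for all $x,n \in \NatSet$, and then prove by induction on $i$ an explicit bound of the form $P^{(i)}(x,n) \leq C^{(D^i-1)/(D-1)}(x+n+1)^{D^i}$. Converting to representation lengths via $i \leq 2^{\RepLenCode{i}}$ and $n \leq 2^{\RepLenCode{n}}$, the exponent $D^i$ is at most $2^{2^{\RepLenCode{i}}\log D}$, the base is at most $\RepLenCode{\mu}+2^{\RepLenCode{n}+1}+1$, and the whole expression is therefore dominated by a fixed tower of exponentials of height three in $\RepLenCode{i}$, $\RepLenCode{\mu}$, $\RepLenCode{n}$. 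Such a function is elementary and thus lies in $\mathcal{E}_3$ by Lemma \ref{lem:grz-properties}; using Remark \ref{RemarkOnEk}, it can then be replaced by a monotone (and expansive) dominator in $\mathcal{E}_3$, which is the required $M_R$.

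There is no serious obstacle beyond careful bookkeeping; the decisive observation is that even though $P$ is iterated $i \leq 2^{\RepLenCode{i}}$ times, each iteration only raises the degree polynomially, so the degree $D^i$ grows only doubly exponentially in $\RepLenCode{i}$, and passing from value to representation length costs at most one additional logarithm, keeping the final bound within a tower of height three and hence inside $\mathcal{E}_3$.
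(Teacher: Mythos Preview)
Your proposal is correct and follows the same overall strategy as the paper: iterate the one-step polynomial bound on $\RepLen{p(\alpha,n)}$ from Lemma~\ref{lem:representable-n-geq-3}(iii) to bound $\RepLen{R^b(i,\mu,n)}$ by $P^{(i)}(\RepLenCode{\mu},n)$, and then convert to a function of the representation lengths.

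The only difference is in how you verify that the iterate lies in $\mathcal{E}_3$. You compute an explicit exponential-tower bound for $P^{(i)}(x,n)$ and argue it has height at most three. The paper instead invokes Lemma~\ref{lem:grz-properties} directly: since $P \in \GrzClass{2}$, the iterate $M'(i,x,y) := P^{(i)}(x,y)$ is automatically in $\GrzClass{3}$ by closure under iteration, and then one simply sets $M_R(j,x,y) := M'(2^j, x, 2^y)$, which is again in $\GrzClass{3}$ and monotone. This shortcut avoids the explicit bookkeeping you propose and makes the monotonicity of $M_R$ immediate (from monotonicity and expansiveness of $P$), whereas in your version you have to pass through Remark~\ref{RemarkOnEk} to obtain a monotone dominator. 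Both routes are valid; the paper's is just shorter.
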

\begin{proof}
By Lemma~\ref{lem:representable-n-geq-3}, there is a monotone and expansive polynomial $P$ such that $\RepLenCode{p(\mu, n)} \leq P(\RepLenCode{\mu}, n)$, for any $\mu \in \Phi_{\omega}$ and $n \in \mathbb{N}$. 
Define the function $M'$ by $M'(i, x, y)=P^i(x, y)$, where $P^i$ is the result of the $i$-iteration of $P$ on the first argument. Note that $P^0(x, y)=x$ and as $P$ is monotone and expansive, $M'$ is also monotone in all arguments. Moreover, as $P \in \mathcal{E}_2$, we have $M' \in \mathcal{E}_3$, by Lemma \ref{lem:grz-properties}.
Now, by induction on $i \in \mathbb{N}$, we prove 
$\RepLenCode{R^b(i, \mu, n)} \leq M'(i, \RepLenCode{\mu}, n)$. 
For $i=0$, we have
    $\RepLenCode{R^b(0, \mu, n)} = \RepLenCode{\mu}= P^0(\RepLenCode{\NormalOrdA}, n)=M'(0, \RepLenCode{\NormalOrdA}, n)$.
    For the inductive step, assume the claim for $i$. We know that
    $\RepLenCode{R^b(i+1, \mu, n)}
    =
    \RepLenCode{p(R^b(i, \mu, n),n)}
    \leq P(\RepLenCode{R^b(i, \mu, n)}, n)
    \leq P(P^{i}(\RepLenCode{\NormalOrdA}, n), n)
=M'(i+1, \RepLenCode{\NormalOrdA}, n)$.  This establishes $\RepLenCode{R^b(i, \mu, n)} \leq M'(i, \RepLenCode{\mu}, n)$. It remains to obtain the bound in terms of $\RepLenCode{i}$ and $\RepLenCode{n}$ instead of $i$ and $n$.
Define 
$M_R(j, x, y) \Def M'(2^j,\allowbreak x, 2^{y})$ and notice that $M_R$ is monotone and $M_R \in \mathcal{E}_3$. Finally, as $M'$ is monotone, we reach
$\RepLenCode{R^b(i, \mu, n)} \leq M_R(\RepLenCode{i}, \RepLenCode{\mu}, \RepLenCode{n})$, for any $\mu \in \Phi_{\omega}$ and any $n, i \in \mathbb{N}$.
\end{proof}

To provide an upper bound for $L_{\mu}(n)$, we first need to prove some basic properties. First, as $L_{\phi_k(\mu)}(n)$ is controlled by the function $H_k$ and $L_{\mu}(n)$ (Lemma~\ref{lem:ub-g-and-l-via-q-h}), our first task is clearly to locate the functions $\{H_k\}_{k=0}^{\infty}$ in the Grzegorczyk hierarchy.

\begin{lemma}\label{lem:veb-hie-H-func}
$H_i \in \mathcal{E}_k$, for any $k \geq 3$ and $i \leq k - 3$.
\end{lemma}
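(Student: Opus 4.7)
The plan is to prove the sharper statement that $H_i \in \mathcal{E}_{i+3}$ for every $i \geq 0$, from which the lemma follows at once because $\mathcal{E}_{i+3} \subseteq \mathcal{E}_k$ whenever $i \leq k-3$. The argument proceeds by induction on $i$.

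For the base case $i = 0$, note that $H_0(x,y) = (x+2)^y$ is defined from multiplication by a primitive recursion on $y$ with a simple exponential majorant $2^{(x+2)(y+1)}$, and hence lies in $\mathcal{E}_3$ since $\mathcal{E}_3$ already contains exponentiation.

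For the inductive step, assume $H_k \in \mathcal{E}_{k+3}$. By the iteration clause of Lemma~\ref{lem:grz-properties} and composition with projections, the auxiliary function $F(x,z) := H_k^{(x)}(x,z) = \ItFunc{H_k}{2}(x,x,z)$ lies in $\mathcal{E}_{k+4}$. Then $H_{k+1}$ is defined from $F$ by primitive recursion: $H_{k+1}(x,0) = F(x,0)+1$ and $H_{k+1}(x,y+1) = F(x, H_{k+1}(x,y)+1)+1$. To place $H_{k+1}$ in $\mathcal{E}_{k+4}$ via bounded primitive recursion, we need a majorant that itself lies in $\mathcal{E}_{k+4}$.

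Producing this majorant is the crux of the proof, since the naive bound $F^{(y+1)}(x,0)$ would only locate $H_{k+1}$ in $\mathcal{E}_{k+5}$. The hard part is thus showing, via a subsidiary induction on $k$, that there are fixed constants $M_k, N_k$ for which $H_k(x,y) \leq h_{k+2}^{(M_k)}(x+y+N_k)$, starting from the base estimate $H_0(x,y) = (x+2)^y \leq h_2(x+y+c)$ that follows from the super-exponential growth of $h_2$. Using the identity $h_{k+3}(n) = h_{k+2}^{(n)}(2)$ from Definition~\ref{def:fast-growing-hie}, together with monotonicity and expansiveness of the fast-growing hierarchy, one unwinds the definition of $H_{k+1}$ to obtain $H_{k+1}(x,y) \leq h_{k+3}^{(M_{k+1})}(x+y+N_{k+1})$ for suitable fixed constants $M_{k+1}, N_{k+1}$; the key mechanism is that a linear-in-$x$ number of iterations of $h_{k+2}$ can be absorbed into a single application of $h_{k+3}$ with a linearly enlarged argument. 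Since $h_{k+3} \in \mathcal{E}_{k+4}$ by Definition~\ref{def:Grz-Hie} and $\mathcal{E}_{k+4}$ is closed under composition, this majorant lies in $\mathcal{E}_{k+4}$, and bounded primitive recursion then delivers $H_{k+1} \in \mathcal{E}_{k+4}$, closing the induction.
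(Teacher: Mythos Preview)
Your proposal is correct and follows essentially the same route as the paper: induction on the index, iteration of $H_{k}$ lands in the next Grzegorczyk level via Lemma~\ref{lem:grz-properties}, and the crux is producing a majorant for $H_{k+1}$ already in $\mathcal{E}_{k+4}$ rather than in $\mathcal{E}_{k+5}$.

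The one organizational difference worth pointing out is how the bound on $H_k$ is obtained. You propose a separate subsidiary induction establishing $H_k(x,y)\le h_{k+2}^{(M_k)}(x+y+N_k)$. The paper avoids this extra induction entirely: once the main inductive hypothesis gives $H_{i'}\in\mathcal{E}_k$, the third item of Lemma~\ref{lem:grz-properties} immediately yields $H_{i'}(x,y)\le h_{k-1}^{M}(\max\{x,y\})$, and from there the paper writes down an explicit majorant $F(x,y)=E^{x(y+1)}((y+1)(x+y+1))$ with $E=h_{k-1}^{M}$, verifying $H_i(x,y)\le F(x,y)$ by a short induction on $y$. So your subsidiary induction is really re-proving a special case of Lemma~\ref{lem:grz-properties}; citing that lemma would streamline your argument. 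Conversely, your formulation has the minor advantage of giving explicit constants $M_k,N_k$ uniformly in $k$, whereas the paper's $M$ depends non-constructively on the hypothesis $H_{i'}\in\mathcal{E}_k$.
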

\begin{proof}
We prove the claim by an induction on $k$. For $k=3$, as $H_0$ is clearly elementary, it is in $\GrzClass{3}$ and hence there is nothing to prove. For the induction step, assume the claim for $k$ to prove it for $k+1$. Let $i \leq k-2$. If $i=0$, similar to the base case, i.e., $k=0$, there is nothing to prove. If $i \geq 1$, let $i' = i-1$. Note that $i' \leq k-3$. Thus, by the induction hypothesis, $H_{i'} \in \GrzClass{k}$. By Lemma \ref{lem:grz-properties}, the function $H_{i'}^{(x)}$ is in $\GrzClass{k+1}$.
    Since $H_i=H_{i'+1}$ is defined by a primitive recursion from $H_{i'}^{(x)} \in \GrzClass{k+1}$, if we show that it is bounded by a function in $\GrzClass{k+1}$, we obtain $H_{i'+1} \in \mathcal{E}_{k+1}$. 
    
    Now, since $H_{i'} \in \mathcal{E}_k$, by Lemma~\ref{lem:grz-properties}, there is a constant $M \in \NatSet$ such that $H_{i'}(x,y) \leq E(\max\{x,y\})$, for all $x,y \in \NatSet$, where $E = h_{k-1}^M$. As $h_{k-1}$ is strictly monotone and expansive, so is $E$. Therefore, by induction on $r \in \mathbb{N}$, it follows that $E^{(r)}(x)$ is expansive and strictly monotone in $x$. Since $E^{(r)}(x)$ is always a natural number, we obtain 
$E^{(r)}(x) + y \leq E^{(r)}(x+y)$,
for all $r, x, y \in \mathbb{N}$.
 Moreover, by an induction on $r \in \mathbb{N}$ and the expansiveness of $E^{(j)}$'s, we can show that for any $r \in \mathbb{N}$, we have $H_{i'}^{(r)}(x, y) \leq E^{(r)}(\max\{x, y\})$, for any $x, y \in \mathbb{N}$.

	Let $F(x,y) := E^{x(y+1)}((y+1)(x+y+1))$. As $E \in \GrzClass{k}$, we have $F \in \GrzClass{k+1}$, by Lemma~\ref{lem:grz-properties}. Therefore, it is enough to show that $H_{i}(x, y) \leq F(x, y)$, for any $x, y \in \mathbb{N}$. To prove this claim, we proceed by induction on $y$.
	For the base case, since $H_{i'}(x,0) \leq E(x)$, we have 
	\[
    H_{i}(x,0) = H_{i'}^{(x)}(x,0)+1
    \leq E^{(x)}(x)+1
    \leq 
    E^{(x)}(x+1)
    = F(x,0).
    \]
	For the induction step, by the induction hypothesis, $H_{i}(x,y) \leq F(x,y)$. Thus, 
\[
H_{i}(x, y) + 1 \leq F(x, y) + 1 = E^{x(y+1)}((y+1)(x + y \textcolor{black}{+ 1})) + 1 
\]
\[
\leq E^{x(y+1)}((y+1)(x + y \textcolor{black}{+ 1})) + (y+1) 
\]
\[
\leq E^{x(y+1)}((y+1)(x + y\textcolor{black}{+ 1}) + (y+1)) 
= E^{x(y+1)}((y+1)(x + y \textcolor{black}{+ 2})).
\]   
    Thus, $ H_{i}(x,y)+1 \leq E^{x(y+1)}((y+1)(x+y\textcolor{black}{+ 2}))$.
	Hence, using the monotonicity of $E$, for any $r \in \mathbb{N}$ we have:
\[
H_{i'}^{(r)}(x, H_{i}(x, y) + 1) \leq E^{(r)}\big(\max\{x, H_{i}(x, y) + 1\}\big)
\leq E^{(r)}(x + H_{i}(x, y) + 1) 
\]
\[
\leq E^{(r)}\big(E^{x(y+1)}((y+1)(x + y \textcolor{black}{+ 2})) + x\big) 
\leq E^{(r)}\big(E^{x(y+1)}((y+1)(x + y \textcolor{black}{+ 2})+x)\big).
\]
Therefore, putting $r=x$, we have:
   \[
H_i(x, y+1) = H_{i'}^{(x)}(x, H_i(x, y) + 1) \textcolor{black}{+1}
\leq E^{(x)}\big(E^{x(y+1)}((y+1)(x + y \textcolor{black}{+ 2}) \textcolor{black}{+ x})\big) 
\textcolor{black}{+ 1}
\]
\[
\leq E^{(x)}\big(E^{x(y+1)}((y+1)(x + y \textcolor{black}{+ 2}) \textcolor{black}{+ x + 1})\big)
\]
\[
\leq E^{x(y+2)}((y+2)(x + y \textcolor{black}{+ 2}))  = F(x, y+1).
\]
Therefore, $H_{i}(x,y) \leq F(x,y)$, for any $x, y \in \mathbb{N}$. This implies $H_i \in \mathcal{E}_{k+1}$.
\end{proof}

Now, to present the promised upper bound functions for $L_{\mu}(n)$, for each $k \geq 0$, define the family $\{ J_m^k : \NatSet^3 \to \NatSet \}_{m \in \NatSet}$ of numeral functions by:
\begin{align*}
&\,J_0^0(x, n, y) \Def y, &\\
&\,J_{m+1}^0(x, n, y) \Def x \cdot (n+2)^{J_m^0(x, n, y)}, &\\
&\,J_0^{k+1}(0, n, y) \Def y, &\\
&J_0^{k+1}(x + 1, n, y) \Def (x + 1) \cdot H_k\big(n, (x + 1) \cdot J_0^{k+1}(x, n, y)\big),& \\
&\,J_{m+1}^{k+1}(x, n, y) \Def x \cdot J_0^{k+1}\big(x, n, H_{k+1}\big(n, J_m^{k+1}(x, n, y)\big)\big).
\end{align*}

In the following, we establish some properties of this family.
\begin{lemma}\label{lem:mon-exp-js}
\begin{itemize}
    \item[$(i)$] 
$J_m^k$ is monotone in all arguments, for any $k, m \in \NatSet$.
    \item[$(ii)$] 
For any $m, k, x, n, y \in \NatSet$, we have
   \[
    x \cdot H_{k+1}(n,J_{m}^{k+1}(x,n,y))
    +
    x \cdot H_{k}(n,J_{m+1}^{k+1}(x,n,y)) \leq  J_{m+1}^{k+1}(x+1,n,y).
\]
\end{itemize}
\end{lemma}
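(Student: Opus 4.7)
For part $(i)$, the monotonicity of $J_m^k$ follows by a straightforward double induction, first on $k$ and then on $m$. The base case $J_0^0(x,n,y)=y$ and the simple recurrence $J_{m+1}^0(x,n,y)=x(n+2)^{J_m^0(x,n,y)}$ make the $k=0$ layer immediate. For $k \geq 1$, the recurrence for $J_0^{k+1}$ in its first argument is a composition of $H_k$ (monotone in all arguments by Lemma~\ref{MonotinicityOfQ}), multiplication, and the previous value, which preserves monotonicity in $n$ and in the initial value $y$; monotonicity in $x$ then follows from the expansiveness of $H_k$ (Lemma~\ref{ExpansiveLemma}). Finally, the recurrence for $J_{m+1}^{k+1}$ is built by composition from $J_0^{k+1}$, $H_{k+1}$, and $J_m^{k+1}$, all monotone by induction, so the claim propagates.

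For part $(ii)$, the strategy is to expand both sides of the inequality according to the defining recurrences and then use $(i)$ together with expansiveness of $H_k$ to reduce each of the two LHS summands into a single common expression that is clearly dominated by the RHS. Writing $z = H_{k+1}(n,J_m^{k+1}(x+1,n,y))$, the RHS unfolds as
\[
J_{m+1}^{k+1}(x+1,n,y) = (x+1)^2 \cdot H_k\bigl(n,\, (x+1) \cdot J_0^{k+1}(x,n,z)\bigr).
\]
An auxiliary step I intend to establish by induction on $x$ (using Lemma~\ref{ExpansiveLemma}) is that $J_0^{k+1}(x,n,z) \geq z$, i.e., $J_0^{k+1}$ is expansive in its third argument; this is the small but crucial lemma that makes the arithmetic go through.

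With these tools in hand, monotonicity (part $(i)$) gives $H_{k+1}(n,J_m^{k+1}(x,n,y)) \leq z$, so the first LHS summand is bounded by $x \cdot z \leq (x+1) \cdot J_0^{k+1}(x,n,z)$. For the second summand, the same monotonicity gives $J_{m+1}^{k+1}(x,n,y) \leq x \cdot J_0^{k+1}(x,n,z) \leq (x+1) \cdot J_0^{k+1}(x,n,z)$, and hence
\[
x \cdot H_k\bigl(n, J_{m+1}^{k+1}(x,n,y)\bigr) \leq x \cdot H_k\bigl(n, (x+1) \cdot J_0^{k+1}(x,n,z)\bigr).
\]
Applying the expansiveness $H_k(n,w) \geq w$ to the first summand and adding yields that the LHS is at most $(x+1) \cdot H_k(n,(x+1)\cdot J_0^{k+1}(x,n,z))$, which is exactly the RHS divided by $(x+1)$, and the conclusion follows.

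The main obstacle I anticipate is keeping the chain of substitutions legible: both sides carry nested occurrences of $H_k$, $H_{k+1}$, $J_0^{k+1}$, and $J_m^{k+1}$, and the key insight is that the factor $(x+1)^2$ produced in unfolding $J_{m+1}^{k+1}(x+1,n,y)$ provides precisely the ``slack'' needed to absorb both LHS summands; the auxiliary expansiveness lemma for $J_0^{k+1}$ is what couples the two sides.
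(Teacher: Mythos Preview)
Your proposal is correct and follows essentially the same approach as the paper: both parts rely on the monotonicity of $H_k$ (Lemma~\ref{MonotinicityOfQ}) and its expansiveness (Lemma~\ref{ExpansiveLemma}), then for $(ii)$ unfold the recurrences for $J_{m+1}^{k+1}$ and $J_0^{k+1}$, push $x$ to $x+1$ via monotonicity, and split the RHS into two pieces that dominate the two LHS summands. The only cosmetic difference is the split: the paper bounds the first summand by $x\cdot J_0^{k+1}(x{+}1,n,z)$ (using $H_{k+1}(n,w)=J_0^{k+1}(0,n,H_{k+1}(n,w))$ and monotonicity in the first argument directly) and the second by $J_0^{k+1}(x{+}1,n,z)$, while you unfold one more layer of $J_0^{k+1}$ and use expansiveness of $H_k$ to obtain a slightly tighter combined bound of $J_0^{k+1}(x{+}1,n,z)$ rather than $(x{+}1)\cdot J_0^{k+1}(x{+}1,n,z)$; your auxiliary expansiveness lemma $J_0^{k+1}(x,n,z)\geq z$ is exactly the paper's monotonicity-in-the-first-argument specialized to the base case $J_0^{k+1}(0,n,z)=z$.
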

\begin{proof}
For $(i)$, first, recall that for any $k \in \mathbb{N}$, by Lemma~\ref{MonotinicityOfQ}, the function $H_k$ is monotone in both arguments. Now, if $k=0$, by induction on $m$, one can easily prove the monotonicity of $J^0_m$, for any $m \in \mathbb{N}$. For $k > 0$, we prove the claim by induction on $m$. If $m=0$, the monotonicity of $J_0^k$ in $n$ and $y$ is trivial by the monotonicity of $H_{k-1}$. For its monotonicity in $x$, we have to show that $J^k_0(x+1, n, y) \geq J^k_0(x, n, y)$ for any $x, n, y \in \mathbb{N}$. For that purpose, it is enough to use $H_{k-1}(a, b) \geq b$ from Lemma \ref{ExpansiveLemma} to obtain:
\[
J^{k}_0(x+1, n, y)=(x + 1) \cdot H_{k-1}\big(n, (x + 1) \cdot J_0^{k}(x, n, y)\big)
\]
\[
\geq H_{k-1}\big(n, (x + 1) \cdot J_0^{k}(x, n, y)\big) \geq (x+1) \cdot J_0^{k}(x, n, y) \geq J_0^{k}(x, n, y).
\]
This completes the case $m=0$. The induction step is trivial by the monotonicity of $H_k$ and $J_0^k$.

For $(ii)$, by the definition of $J_0^{k+1}$ and the monotonicity of $J_0^{k+1}$, $J_{m}^{k+1}$, and $H_{k+1}$, we have:  
\begin{align*}
    x \cdot H_{k+1}(n,J^{k+1}_{m}(x,n,y))&=
     x \cdot J^{k+1}_0(0, n, H_{k+1}(n,J^{k+1}_{m}(x,n,y)))\\
    &\leq
    x \cdot J^{k+1}_0(x+1,n,H_{k+1}(n,J^{k+1}_{m}(x+1,n,y))) \quad (*)
    \end{align*}
    \noindent and using the definition of $J^{k+1}_{m+1}$, we also have:
    \begin{align*}
    &x \cdot H_{k}(n,J^{k+1}_{m+1}(x,n,y))\\
    &\leq 
    (x+1) \cdot H_{k}(n,(x+1)\cdot J^{k+1}_0(x,n, H_{k+1}(n, J^{k+1}_{m}(x,n,y))))\\
    &= J^{k+1}_0(x+1,n, H_{k+1}(n, J^{k+1}_{m}(x,n,y)))\\ 
    &\leq J^{k+1}_0(x+1,n, H_{k+1}(n, J^{k+1}_{m}(x+1,n,y))). \quad (**)
    \end{align*}
Therefore, using the definition of $J^{k+1}_{m+1}$,   
    the sum of the right sides of $(*)$ and $(**)$ is upper bounded by $J^{k+1}_{m+1}(x+1,n,y)$.
\end{proof}

As $J^k_m$'s are intended to play the role of upper bounds for $L$, we clearly need to locate them in the Grzegorczyk hierarchy.

\begin{lemma}\label{JIsInE}
    $J^{k-3}_{m} \in \GrzClass{k}$, for any $k \geq 3$ and $m \geq 0$.
\end{lemma}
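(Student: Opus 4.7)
The plan is to argue by induction on $m$, splitting cases on whether $k = 3$ or $k \geq 4$. The case $k = 3$ is elementary: $J^0_0(x,n,y) = y$ is a projection, and the defining equation $J^0_{m+1}(x,n,y) = x \cdot (n+2)^{J^0_m(x,n,y)}$ exhibits $J^0_{m+1}$ as a composition of $J^0_m$ (in $\mathcal{E}_3$ by induction), multiplication, and exponentiation, all of which lie in $\mathcal{E}_3$. Closure of $\mathcal{E}_3$ under composition finishes this case.

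For $k \geq 4$, Lemma~\ref{lem:veb-hie-H-func} immediately yields $H_{k-4} \in \mathcal{E}_{k-1}$ and $H_{k-3} \in \mathcal{E}_k$. The substantial step is the base case $m = 0$: showing $J^{k-3}_0 \in \mathcal{E}_k$. Since $J^{k-3}_0$ is defined by primitive recursion on $x$ from a step function built from $H_{k-4} \in \mathcal{E}_{k-1} \subseteq \mathcal{E}_k$, the closure of $\mathcal{E}_k$ under bounded primitive recursion reduces the task to producing an $\mathcal{E}_k$-bound on $J^{k-3}_0$. My plan is to introduce the auxiliary function $\tilde H(u,v) := u \cdot H_{k-4}(u, u \cdot v)$, note that $\tilde H \in \mathcal{E}_{k-1}$ (by composition, since multiplication lies in $\mathcal{E}_2 \subseteq \mathcal{E}_{k-1}$) and is monotone in both arguments by Lemma~\ref{MonotinicityOfQ}, and then prove by induction on $x$ the bound
\[
J^{k-3}_0(x,n,y) \leq \tilde H^{(x)}(\max(n,x)+1,\, y),
\]
where the iteration is taken on the second argument. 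The inductive step uses $x+1,\, n \leq \max(n, x+1)+1$ together with monotonicity of $\tilde H$ and $H_{k-4}$ to collapse the recursive application into one more iterate of $\tilde H$. Since iterating a function in $\mathcal{E}_{k-1}$ yields a function in $\mathcal{E}_k$ by Lemma~\ref{lem:grz-properties}, the right-hand side is in $\mathcal{E}_k$, providing the needed bound.

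Once $J^{k-3}_0 \in \mathcal{E}_k$ is in hand, the induction step on $m$ is routine: the defining equation
\[
J^{k-3}_{m+1}(x,n,y) = x \cdot J^{k-3}_0\bigl(x,\, n,\, H_{k-3}(n, J^{k-3}_m(x,n,y))\bigr)
\]
presents $J^{k-3}_{m+1}$ as a composition of $J^{k-3}_0 \in \mathcal{E}_k$, $H_{k-3} \in \mathcal{E}_k$, the inductive hypothesis $J^{k-3}_m \in \mathcal{E}_k$, and multiplication in $\mathcal{E}_2 \subseteq \mathcal{E}_k$. Closure of $\mathcal{E}_k$ under composition concludes the argument.

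The principal obstacle is the base case $m=0$ for $k \geq 4$, specifically the choice of the auxiliary $\tilde H$: it has to be monotone enough for the induction on $x$ to go through cleanly, simple enough to stay in $\mathcal{E}_{k-1}$ so that its iteration lands in $\mathcal{E}_k$, and yet large enough to absorb both the factors of $(x+1)$ sitting inside and outside $H_{k-4}$ and the enlargement from $n$ to $\max(n,x+1)+1$ required by monotonicity. All remaining work is pure closure-property bookkeeping about composition and bounded primitive recursion in the Grzegorczyk hierarchy.
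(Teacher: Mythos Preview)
Your proof is correct and follows essentially the same approach as the paper: the same split into $k=3$ and $k\geq 4$, the same reduction of the base case $m=0$ to producing an $\mathcal{E}_k$-bound via iterating an auxiliary $\mathcal{E}_{k-1}$ function built from $H_{k-4}$, and the same closure-under-composition argument for the induction step on $m$. The only cosmetic difference is in the auxiliary function: the paper keeps $x$ and $n$ separate via $E(x,n,z) := (x+1)\cdot H_{k-4}(n,(x+1)\cdot z)$ and proves $J^{k-3}_0(x,n,y)\leq E^{(x)}(x,n,y)$, whereas you merge them into a single argument $u=\max(n,x)+1$ in $\tilde H(u,v)=u\cdot H_{k-4}(u,u\cdot v)$; both choices work for the same reasons.
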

\begin{proof}
If $k=3$, by induction on $m$, it is clear that $J_m^0 \in \mathcal{E}_3$. If $k \geq 4$, we prove the claim by induction on $m$. For $m=0$, we have to prove $J^{k-3}_0 \in \mathcal{E}_k$. As $J^{k-3}_0$ is defined by primitive recursion, using a composition of $H_{k-4}$ and polynomials, and since $H_{k-4} \in \mathcal{E}_{k-1} \subseteq \mathcal{E}_k$ by Lemma~\ref{lem:veb-hie-H-func}, we only need to provide an upper bound for $J_0^{k-3}$ that is also in $\GrzClass{k}$. For that purpose, define
\[
E(x,n,z) \Def (x+1) \cdot H_{k-4}(n,(x+1) \cdot z).
\]
Denote $u$ iterations of $E$ on its third argument by $E^{(u)}$, i.e., $E^{(0)}(x,n,z)=z$ and $E^{(u+1)}(x,n,z)=E(x,n,E^{(u)}(x,n,z))$. As $H_{k-4} \in \GrzClass{k-1}$, we have $E \in \GrzClass{k-1}$, and hence its iteration, i.e., $E^{(u)}(x,n,z)$, is in $\GrzClass{k}$ by Lemma~\ref{lem:grz-properties}. Therefore, it is enough to prove $J^{k-3}_0(x,n,y) \leq E^{(x)}(x,n,y)$, for any $x, n, y \in \mathbb{N}$. 

We prove this claim by induction on $x$. For $x=0$, we clearly have $J^{k-3}_0(0,n,y) = y = E^{(0)}(0,n,y)$. For the induction step, by the induction hypothesis and the monotonicity of $H_{k-4}$ proved in Lemma~\ref{MonotinicityOfQ}, we have
\[
J^{k-3}_0(x+1,n,y) \leq 
(x+1) \cdot H_{k-4}(n,(x+1) \cdot E^{(x)}(x,n,y))
= E^{(x+1)}(x,n,y).
\]
Therefore, for any $x,n,y \in \mathbb{N}$, we have $J^{k-3}_0(x,n,y) \leq E^{(x)}(x,n,y)$. This completes the proof of the case $m=0$ in showing that $J_m^{k-3} \in \mathcal{E}_k$.

For the inductive step, note that $J^{k-3}_{m+1}$ is defined by composition from $J^{k-3}_{0}$, $H_{k-3}$, $J^{k-3}_{m}$, and a polynomial. By the induction hypothesis, we have $J^{k-3}_m \in \mathcal{E}_k$, and we have just proved that $J^{k-3}_0 \in \mathcal{E}_k$. Moreover, by Lemma~\ref{lem:veb-hie-H-func}, $H_{k-3} \in \GrzClass{k}$. Therefore, since $\mathcal{E}_k$ is closed under composition and contains all polynomials, the claim follows for $m+1$.
\end{proof}

Now, we are ready to prove the promised upper bound:

\begin{lemma}\label{lem:bound-on-g}
    Let $k \geq 3$ and $m \geq 1$. Then, 
    $
    L_\NormalOrdA(n) 
    \leq
    J_{m-1}^{k-3}(\RepLenCode{\NormalOrdA}, n, 0)
    $, for any $\NormalOrdA \in \Phi^m_{k-3}$ and any $n \in \mathbb{N}$.
\end{lemma}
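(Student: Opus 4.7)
Set $j := k-3$. The plan is to prove by an outer induction on $m$ the strengthened statement that, for every $\mu \in \Phi^m_j$ and every $y \in \NatSet$,
\[
L_\mu(n) + y \;\leq\; J^j_{m-1}(\RepLenCode{\mu}, n, y),
\]
which yields the target inequality upon setting $y=0$. The extra parameter $y$ is a \emph{carry} that will accommodate contributions from already-processed summands during the inner structural induction on $\mu$.

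In the base case $m = 1$, $\mu \in \Phi_j$ is built from $0$ by addition and $\phi_i$ for $i < j$, and I induct on this construction. The case $\mu=0$ is trivial since $J^j_0(\cdot,\cdot,y) \geq y$. For $\mu = \alpha + \beta$ with both summands nonzero, I use $L_{\alpha+\beta}(n) = L_\alpha(n) + L_\beta(n)$ from Lemma~\ref{lem:g-properties}(iv) and $\RepLenCode{\alpha+\beta} = \RepLenCode{\alpha} + \RepLenCode{\beta}$ (from the definition of the encoding on nonzero ordinals), and apply the inner IH first to $\beta$ (with carry $y$) and then to $\alpha$ (with updated carry $J^j_0(\RepLenCode{\beta},n,y)$). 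A short separate induction on $a$ establishes the composition inequality $J^j_0(a+b,n,y) \geq J^j_0(a,n,J^j_0(b,n,y))$, which collapses the nested applications back into $J^j_0(\RepLenCode{\mu},n,y)$. For $\mu = \phi_i(\alpha)$ with $i < j$, I bound $L_\mu(n) \leq H_i(n, L_\alpha(n)) \leq H_{j-1}(n, L_\alpha(n))$ using Lemma~\ref{lem:ub-g-and-l-via-q-h} and Lemma~\ref{kMonotonicityOfQ}(ii), invoke the inner IH on $\alpha$, and use the defining clause $J^j_0(x+1,n,y) = (x+1)H_{j-1}(n,(x+1)J^j_0(x,n,y))$ together with $\RepLenCode{\mu} \geq \RepLenCode{\alpha}+1$. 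Strict monotonicity of $H_{j-1}(n,\cdot)$ (giving $H_{j-1}(n,a+y) \geq H_{j-1}(n,a)+y$) transports the carry correctly through the $H_{j-1}$-layer.

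For the outer inductive step from $m$ to $m+1$, the only new generator is $\mu = \phi_j(\alpha)$ with $\alpha \in \Phi^m_j$. Here I combine $L_\mu(n) \leq H_j(n, L_\alpha(n))$ with the outer IH on $\alpha$ and the defining clause $J^j_{m+1}(x,n,y) = x \cdot J^j_0(x, n, H_j(n, J^j_m(x,n,y)))$ (for $j \geq 1$). The additive and $\phi_i$-with-$i<j$ cases proceed as in the base case but now within $J^j_{m+1}$, where the key quantitative ingredient is Lemma~\ref{lem:mon-exp-js}(ii): it is precisely tailored so that incrementing the first argument of $J^j_{m+1}$ by one absorbs both a fresh $H_j$-contribution (from a $\phi_j$-summand) and a fresh $H_{j-1}$-contribution (from a $\phi_i$-summand with $i<j$). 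The special case $j=0$ is handled analogously using $L_{\phi_0(\alpha)}(n) \leq (n+2)^{L_\alpha(n)}$ and the clause $J^0_{m+1}(x,n,y) = x(n+2)^{J^0_m(x,n,y)}$.

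The main obstacle will be the bookkeeping: each summand of $\mu$ must be absorbed by an appropriate increment of the first argument of $J$, and the carry $y$ must propagate consistently through both inductions. The two crucial ingredients that make this go through are (i) the additivity $\RepLenCode{\alpha+\beta} = \RepLenCode{\alpha} + \RepLenCode{\beta}$ on nonzero ordinals, which supplies enough room in the first argument of $J$ to accommodate each additive summand, and (ii) Lemma~\ref{lem:mon-exp-js}(ii), which packages the $H_j$- and $H_{j-1}$-contributions into a single step of the $x$-recursion of $J^j_m$.
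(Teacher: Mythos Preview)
Your overall structure---strengthening with a carry $y$ and running an outer induction on $m$ with an inner structural induction on $\mu$---does not go through in the inductive step $m \to m+1$. The additive case there relies on a composition inequality of the form $J^j_M(a+b,n,y) \ge J^j_M(a,n,J^j_M(b,n,y))$, which holds for $M=0$ (since $J^j_0$ is defined by iteration on its first argument) but \emph{fails} for $M \ge 1$. Concretely, for $j=0$ one has $J^0_1(x,n,y)=x(n+2)^y$, and already at $a=b=1$, $n=0$, $y=2$ we get $J^0_1(2,0,2)=8 < 16 = J^0_1(1,0,J^0_1(1,0,2))$. So the clause ``the additive case proceeds as in the base case but now within $J^j_{m+1}$'' cannot be executed. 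You correctly flag Lemma~\ref{lem:mon-exp-js}(ii) as the relevant tool, but that lemma gives a one-shot inequality comparing $J^j_{M}(x+1,n,y)$ with a combination of $H_j$- and $H_{j-1}$-terms at the \emph{same} $x$; it does not supply the nested composition your carry scheme needs, nor does it apply to $j=0$.

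The paper's proof avoids the carry altogether. For $k=3$ it inducts on $m$, writes a nonzero $\mu\in\Phi^m_0$ in its canonical form $\sum_i \phi_0(\gamma_i)$ with each $\gamma_i \in \Phi^{m-1}_0$, and bounds the whole sum at once by $\RepLenCode{\mu}\cdot(n+2)^{J^0_{m-2}(\RepLenCode{\mu},n,0)} = J^0_{m-1}(\RepLenCode{\mu},n,0)$. For $k\ge 4$ it runs a single induction on $\RepLenCode{\mu}$ (for all $m$ simultaneously): decompose $\mu = \sum_{i\in I}\phi_{k_i}(\gamma_i)$, split $I$ into $\{k_i=k-3\}$ and $\{k_i<k-3\}$, bound each $L_{\gamma_i}(n)$ by the inductive hypothesis at the appropriate $m'$ (using $\RepLenCode{\gamma_i}\le\RepLenCode{\mu}-1$), and then apply Lemma~\ref{lem:mon-exp-js}(ii) \emph{once} with $x=\RepLenCode{\mu}-1$ to wrap up. The point is that the multiplicative factor $x$ in both the definition of $J^j_m$ and in Lemma~\ref{lem:mon-exp-js}(ii) is designed to absorb the \emph{total number of summands} in a single step, not to be peeled off one summand at a time; a summand-by-summand carry scheme is not what these functions are tailored for.
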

\begin{proof}
If $k=3$, we prove the claim by induction on $m$. If $m=1$, then $\NormalOrdA = 0$ and the claim holds as $L_0(n)=0$. For the inductive step, let $m \geq 2$ and assume the claim for $m-1$ to prove it for $m$. If $\mu=0$, again there is nothing to prove. Otherwise, as $\NormalOrdA \in \Phi^{m}_{k-3}=\Phi_0^{m}$, we can write $\mu$ as $\sum_{i=1}^{l} \Veb{0}(\gamma_{i})$, where $\gamma_i \in \Phi^{m-1}_{k-3}=\Phi_0^{m-1}$, for any $1 \leq i \leq l$. As $m \geq 2$, by the induction hypothesis, $L_{\gamma_i}(n) \leq J_{m-2}^{0}(\RepLenCode{\gamma_i}, n, 0)$, for any $1 \leq i \leq l$. 
Therefore, using the monotonicity of $J^0_{m-2}$ from Lemma \ref{lem:mon-exp-js}, we reach $L_{\gamma_i}(n) \leq J_{m-2}^{0}(\RepLenCode{\mu}, n, 0)$, for any $1 \leq i \leq l$.
By Lemmas~\ref{lem:g-properties}
and~\ref{lem:ub-g-and-l-via-q-h}, $L_{\NormalOrdA}(n)$ is upper bounded by
\[
\sum_{i=1}^{l} H_0(n,L_{\gamma_i}(n))
= \sum_{i=1}^{l} (n+2)^{L_{\gamma_i}(n)}
\leq
\RepLenCode{\NormalOrdA} \cdot 
(n+2)^{J^0_{m-2}(\RepLenCode{\NormalOrdA},n,0)}
\leq J^0_{m-1}(\RepLenCode{\NormalOrdA},n,0).
\]
This completes the proof of the case $k=3$. For $k \geq 4$, we prove the claim by induction on $\RepLenCode{\mu}$. If $\mu=0$, again, there is nothing to prove. Otherwise, as $\NormalOrdA \in \Phi^{m}_{k-3}$, we can write $\mu$ as $ \sum_{i \in I} \Veb{k_i}(\gamma_{i})$, where $I$ is non-empty, $k_i \leq k-3$, for any $i \in I$ and if $k_i=k-3$, then $\gamma_i \in \Phi_{k-3}^{m-1}$ and if $k_i < k-3$, we have $\gamma_i \in \Phi^{m}_{k-3}$. Note that as $I$ is non-empty, $\RepLenCode{\gamma_{i}} \leq \RepLenCode{\mu}-1$, for any $i \in I$. Define $J=\{i \in I \mid k_i=k-3\}$ and $K=\{i \in I \mid k_i < k-3\}$.
Therefore, by Lemmas~\ref{lem:g-properties}, \ref{lem:ub-g-and-l-via-q-h}, and \ref{kMonotonicityOfQ}, we have: 
\[
L_{\mu}(n) \leq \sum_{i \in I}H_{k_i}(n, L_{\gamma_i}(n))  \leq \sum_{i \in J} H_{k-3}(n, L_{\gamma_i}(n))+\sum_{i \in K} H_{k-4}(n, L_{\gamma_i}(n)).
\]
As $\RepLenCode{\gamma_{i}} \leq \RepLenCode{\mu}-1$, by the induction hypothesis and the monotonicity of $J^{k-3}_{m-1}$ and $J^{k-3}_{m-2}$ (Lemma \ref{lem:mon-exp-js}), we have $L_{\gamma_i}(n) \leq J_{m-2}^{k-3}(\RepLenCode{\NormalOrdA}-1, n, 0)$ for any $i \in J$ and $L_{\gamma_i}(n) \leq J_{m-1}^{k-3}(\RepLenCode{\NormalOrdA}-1, n, 0)$ for any $i \in K$. Therefore,
\[
L_{\mu}(n) \leq \sum_{i \in J} H_{k-3}(n, J_{m-2}^{k-3}(\RepLenCode{\NormalOrdA}-1, n, 0))+\sum_{i \in K} H_{k-4}(n, J_{m-1}^{k-3}(\RepLenCode{\NormalOrdA}-1, n, 0)).
\]
As the cardinal of $I$ is bounded by $\RepLenCode{\mu}-1$, we reach 
\[
L_{\mu}(n) \leq  (\RepLenCode{\mu}-1) (H_{k-3}(n, J_{m-2}^{k-3}(\RepLenCode{\NormalOrdA}-1, n, 0))+ H_{k-4}(n, J_{m-1}^{k-3}(\RepLenCode{\NormalOrdA}-1, n, 0))).
\]
Finally, by Lemma~\ref{lem:mon-exp-js}, part $(ii)$, we get $L_{\mu}(n) \leq J_{m-1}^{k-3}(\RepLenCode{\NormalOrdA}, n, 0)$.
\end{proof}

Finally, we have gathered all the necessary ingredients to conclude that both 
$\CodedFunc{R|_{\Phi^{m}_{k-3}}}$ and $\CodedFunc{L|_{\Phi^{m}_{k-3}}}$ belong to $\GrzClassSig{k}$.  
For completeness, we also included the function $\CodedFunc{G|_{\Phi^{m}_{k-3}}}$, as computing $G_\mu(n)$ uniformly in $\mu$ may be of independent interest.

\begin{theorem}\label{lem:space-to-comp-seq}
Let $k \geq 3$ and $m \geq 1$. Then the functions $\CodedFunc{R|_{\Phi^m_k}}$, $\CodedFunc{L|_{\Phi^m_k}}$, and $\CodedFunc{G|_{\Phi^m_k}}$ all belong to $\GrzClassSig{k}$.
\end{theorem}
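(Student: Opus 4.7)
The plan is to build the three coded functions in the order $R^b$, $L$, $R$, $G$, each step reusing the membership in $\GrzClassSig{k}$ established at the previous step. Throughout, the main ingredients are the closure properties of $\GrzClassSig{k}$ from Theorem~\ref{the:Esigma-closure}, the length bound for $R^b$ from Lemma~\ref{lem:bounds-rec-sequence}, and the numerical bound for $L$ from Lemma~\ref{lem:bound-on-g}, applied to the ordinals in $\Phi^m_{k-3}$ (which match the hypotheses of those lemmas and the class $\mathcal{E}_k$).

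First I would construct $\CodedFunc{R^b|_{\Phi^m_{k-3}}}$ by length-bounded primitive recursion on the numerical argument $i$, with step function $\CodedFunc{p|_{\Phi^m_{k-3}}}$, which lies in $\GrzClassSig{3}\subseteq\GrzClassSig{k}$ by Lemma~\ref{lem:representable-n-geq-3}. The length bound $\RepLenCode{R^b(i,\mu,n)}\leq M_R(\RepLenCode{i},\RepLenCode{\mu},\RepLenCode{n})$ with $M_R\in\mathcal{E}_3\subseteq\mathcal{E}_k$ is supplied by Lemma~\ref{lem:bounds-rec-sequence}, so Theorem~\ref{the:Esigma-closure}(i) delivers $\CodedFunc{R^b|_{\Phi^m_{k-3}}}\in\GrzClassSig{k}$. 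Next, for $L$, I would observe that $L_\mu(n)$ is the least $l$ with $\CodedFunc{R^b|_{\Phi^m_{k-3}}}(\IntermCode{l},\IntermCode{\mu},\IntermCode{n}) = ()$, and Lemma~\ref{lem:bound-on-g} bounds this witness by $J^{k-3}_{m-1}(\RepLenCode{\mu},n,0)\in\mathcal{E}_k$ (Lemma~\ref{JIsInE}). To fit the $2^{v}$-shape required by Theorem~\ref{the:Esigma-closure}(ii), I would set $v(x,y)\Def J^{k-3}_{m-1}(x,2^{y},0)$, which lies in $\mathcal{E}_k$ because $\mathcal{E}_k$ for $k\geq 3$ contains $2^{(\cdot)}$ and is closed under composition; bounded $\Sigma$-minimization of $\CodedFunc{R^b|_{\Phi^m_{k-3}}}$ against the target $()$ with bound $2^{v}$ then places $\CodedFunc{L|_{\Phi^m_{k-3}}}$ in $\GrzClassSig{k}$.

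The coded $R$ is now immediate from $R(i,\mu,n)=R^b(L_\mu(n)\dotminus i,\mu,n)$: since $\CodedFunc{\dotminus}\in\GrzClassSig{2}\subseteq\GrzClassSig{k}$ by Remark~\ref{rem:equiv-E-sig-E}, composing it with $\CodedFunc{L|_{\Phi^m_{k-3}}}$ and $\CodedFunc{R^b|_{\Phi^m_{k-3}}}$ yields $\CodedFunc{R|_{\Phi^m_{k-3}}}\in\GrzClassSig{k}$. For $G$, I would define an auxiliary $g(j,\mu,n)$ by length-bounded primitive recursion on $j$ that increments a counter whenever $R^b(j,\mu,n)$ is a successor ordinal (a test in $\GrzClassSig{3}$ by Lemma~\ref{lem:representable-n-geq-3}) and leaves it unchanged otherwise, and then set $G_\mu(n)=g(L_\mu(n),\mu,n)$. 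The output length bound $G_\mu(n)\leq L_\mu(n)\leq J^{k-3}_{m-1}(\RepLenCode{\mu},n,0)$ (noted right after Definition~\ref{def:slow-growing-hierarchy}) provides the length bound needed to invoke Theorem~\ref{the:Esigma-closure}(i).

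The one genuinely delicate point I foresee is matching the numerical bound of Lemma~\ref{lem:bound-on-g} to the exponential-shaped bound of Theorem~\ref{the:Esigma-closure}(ii): $J^{k-3}_{m-1}$ depends polynomially, not logarithmically, on $n$, whereas bounded $\Sigma$-minimization expects its bound to be of the form $2^{v(\RepLenCode{\mu},\RepLenCode{n})}$ with $v$ already at the target level. The substitution $n \mapsto 2^{\RepLenCode{n}}$ (justified by $n<2^{\RepLenCode{n}}$ and the monotonicity of $J^{k-3}_{m-1}$ from Lemma~\ref{lem:mon-exp-js}) absorbs this discrepancy into $v$; everything else is an assembly of the existing bounding and coding lemmas with the closure results already proved.
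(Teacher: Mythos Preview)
Your proposal is correct and follows essentially the same route as the paper: first $\CodedFunc{R^b|_{\Phi^m_{k-3}}}$ by length-bounded primitive recursion using $\CodedFunc{p|_{\Phi^m_{k-3}}}$ and the bound $M_R$ from Lemma~\ref{lem:bounds-rec-sequence}; then $\CodedFunc{L|_{\Phi^m_{k-3}}}$ by bounded $\Sigma$-minimization with bound built from $J^{k-3}_{m-1}$ via the substitution $n\mapsto 2^{\RepLenCode{n}}$; then $\CodedFunc{R|_{\Phi^m_{k-3}}}$ by composition with $\dotminus$; and finally $\CodedFunc{G|_{\Phi^m_{k-3}}}$ by a length-bounded counter of successor ordinals along the predecessor sequence. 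The only cosmetic difference is that the paper runs the successor-counting loop over $R$ rather than $R^b$, which is immaterial since both enumerate the same multiset of ordinals, and you are in fact more explicit than the paper about matching the $2^{v}$ shape of the bound required in Theorem~\ref{the:Esigma-closure}(ii).
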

\begin{proof}
Before we begin, recall from Lemma~\ref{lem:representable-n-geq-3} that one can decide in polynomial time whether a string in $\Sigma^\ast$ encodes an ordinal in $\Phi^m_{k-3}$, and, if so, whether the encoded ordinal is zero, a successor, or a limit. Moreover, recall from the same Lemma that $\CodedFunc{p|_{\Phi^m_{k-3}}} \in \GrzClassSig{3}$, and since $k \geq 3$, we have $\CodedFunc{p|_{\Phi^m_{k-3}}} \in \GrzClassSig{k}$.

First, we show that $\CodedFunc{R^b|_{\Phi^m_{k-3}}} \in \GrzClassSig{k}$.  
Define a function $I(w, z)$ that outputs $w$ if there exist $\mu \in \Phi^m_{k-3}$ and $n \in \mathbb{N}$ such that $w=\IntermCode{\mu}$ and $z=\IntermCode{n}$, and outputs $\bot$ otherwise. By the above discussion, $I$ is computable in polynomial time and hence $I \in \GrzClassSig{3} \subseteq \GrzClassSig{k}$.  
Now, by Theorem~\ref{the:Esigma-closure}, it suffices to observe that $\CodedFunc{R^b|_{\Phi^m_{k-3}}}$ is defined by length-bounded recursion from functions in $\GrzClassSig{k}$ using a bounding function in $\GrzClass{k}$. For this purpose, notice the equality:
\[
\CodedFunc{R^b|_{\Phi^m_{k-3}}}(v,w,z)
=
\begin{cases}
   I(w, z) & \text{if } v = \IntermCode{0},\\
   \CodedFunc{p|_{\Phi^m_{k-3}}}(\CodedFunc{R^b|_{\Phi^m_{k-3}}}(\IntermCode{i}, w,z),z) & \text{if } v = \IntermCode{i+1},\\
   \bot & \text{otherwise,}
\end{cases}
\]
which is the encoded version of the recursive definition of $R^b$ and is therefore straightforward to prove. Since $I$ and $\CodedFunc{p|_{\Phi^m_{k-3}}}$ are both in $\GrzClassSig{k}$, the only remaining task is to check the bound. For this, by Lemma~\ref{lem:bounds-rec-sequence}, there exists a monotone function $M_R \in \GrzClass{3} \subseteq \GrzClass{k}$ such that
\[
\RepLenCode{R^b(i, \mu, n)}
\leq M_R(\RepLenCode{i}, \RepLenCode{\mu}, \RepLenCode{n}),
\]
for any $i, n \in \mathbb{N}$ and $\mu \in \Phi_{k-3}^m$. Since the output of $\CodedFunc{R^b|_{\Phi^m_{k-3}}}$ is either $\bot$ or $\IntermCode{R^b(i, \mu, n)}$, we obtain 
\[
\RepLenCode{\CodedFunc{R^b|_{\Phi^m_{k-3}}}(v, w, z)} \leq 1+M_R(\RepLenCode{v}, \RepLenCode{w}, \RepLenCode{z}),
\]
for any $v, w, z \in \Sigma^*$. Therefore,
since $1+M_R \in \GrzClass{k}$, we get the intended upper bound. 
Hence, by closure under length-bounded primitive recursion (Theorem~\ref{the:Esigma-closure}), we conclude that $\CodedFunc{R^b|_{\Phi^m_{k-3}}} \in \GrzClassSig{k}$.

Now we show that $\CodedFunc{L|_{\Phi^m_{k-3}}} \in \GrzClassSig{k}$. For this purpose, by Theorem~\ref{the:Esigma-closure}, it suffices to observe that $\CodedFunc{L|_{\Phi^m_{k-3}}}$ is essentially defined by bounded $\Sigma$-minimization from a function in $\GrzClassSig{k}$ using a bounding function in $\GrzClass{k}$.  
For the bound, define
\[
M_L(x, y) = J_{m-1}^{k-3}(x, 2^{y}, 0),
\]
and note that $M_L \in \GrzClass{k}$ by Lemma~\ref{JIsInE}. Moreover, by Lemma~\ref{lem:bound-on-g} and the monotonicity of $J_{m-1}^{k-3}$ from Lemma~\ref{lem:mon-exp-js}, we have
\[
L_\mu(n) \leq J_{m-1}^{k-3}(\RepLenCode{\mu}, n, 0) \leq J_{m-1}^{k-3}(\RepLenCode{\mu}, 2^{\RepLenCode{n}}, 0) = M_L(\RepLenCode{\mu}, \RepLenCode{n}),
\]
for any $\mu \in \Phi^m_{k-3}$ and $n \in \mathbb{N}$.  
Next, define
\[
L_R(v, w) \Def \Bmin \, i \leq M_L(\RepLenCode{v}, \RepLenCode{w}) \,[\CodedFunc{R^b|_{\Phi^m_{k-3}}}(\IntermCode{i},v,w) = \IntermCode{0}].
\]
We have already shown that $\CodedFunc{R^b|_{\Phi^m_{k-3}}} \in \GrzClassSig{k}$ and $M_L \in \GrzClass{k}$. Therefore, closure under $\Sigma$-bounded minimization (Theorem~\ref{the:Esigma-closure}) implies that $L_R \in \GrzClassSig{k}$.  
Finally, observe the equality
\[
\CodedFunc{L|_{\Phi^m_{k-3}}}(v, w) =
\begin{cases}
    L_R(\IntermCode{\mu},\IntermCode{n}) & \exists \mu \in \Phi^m_{k-3} \,\exists n \in \mathbb{N} \, (v = \IntermCode{\mu} \wedge w = \IntermCode{n}), \\
    \bot & \text{otherwise}
\end{cases}
\]
which is straightforward to verify. By the discussion at the beginning of the proof, deciding between the cases in the previous equality can be done in polynomial time. Therefore, we can conclude that $\CodedFunc{L|_{\Phi^m_{k-3}}} \in \GrzClassSig{k}$.

For $\CodedFunc{R|_{\Phi^m_{k-3}}} \in \GrzClassSig{k}$, we can easily prove the claim using the definition $R(i, \mu, n) = R^b(L_\mu(n) \dotminus i, \mu, n)$, the established results $\CodedFunc{R^b|_{\Phi^m_{k-3}}} \in \GrzClassSig{k}$ and $\CodedFunc{L|_{\Phi^m_{k-3}}} \in \GrzClassSig{k}$, and the facts that deciding whether a string is a code of a number is possible in polynomial time and that $\dotminus$ belongs to $\GrzClass{3} \subseteq \GrzClass{k}$.

Finally, we leverage the above functions to show that $\CodedFunc{G|_{\Phi^m_{k-3}}} \in \GrzClassSig{k}$. The idea is to count the number of successor ordinals in the sequence of $n$-predecessors of the input ordinal as enumerated by $\CodedFunc{R|_{\Phi^m_{k-3}}}$. For this purpose, define a function $F(x,u,v,w)$ as follows: it returns $\bot$ if it is not the case that 
$x = \IntermCode{y}$, $u = \IntermCode{i}$, $v = \IntermCode{\mu}$, and $w = \IntermCode{n}$ for some $y, i, n \in \mathbb{N}$ and $\mu \in \Phi_{k-3}^m$. Otherwise, set
\[
F(\IntermCode{y}, \IntermCode{i}, \IntermCode{\mu}, \IntermCode{n}) =
\begin{cases}
\IntermCode{y+1} & \text{if } \CodedFunc{R|_{\Phi^m_{k-3}}}(\IntermCode{i}, \IntermCode{\mu}, \IntermCode{n}) \text{ is a successor},\\
\IntermCode{y} & \text{otherwise}.
\end{cases}
\]  
Since $\CodedFunc{R|_{\Phi^m_{k-3}}} \in \GrzClassSig{k}$, and since both the test for whether an ordinal is a successor and the computation of the numerical successor function are polynomial-time operations, it follows that $F \in \GrzClassSig{k}$.
Next, define
\[
g(u,v,w) \Def 
\begin{cases}
\IntermCode{0} & \text{if } u = \IntermCode{0},\\
F(g(\IntermCode{i},v,w),u,v,w) & \text{if } u = \IntermCode{i+1},\\
\bot & \text{otherwise}.
\end{cases}
\]  
It is easy to see that $g(\IntermCode{i}, \IntermCode{\mu}, \IntermCode{n})$ counts the number of successor ordinals in the sequence $\{R(j, \mu, n)\}_{j=0}^i$. Therefore, 
$
g(\IntermCode{i}, \IntermCode{\mu}, \IntermCode{n}) \leq L_\mu(n),
$ 
for any $\mu \in \Phi_{k-3}^m$ and any $n, i \in \mathbb{N}$. Since the output of $g$ is either $\bot$ or a number bounded by $L_{\mu}(n) \leq M_L(\RepLenCode{\mu}, \RepLenCode{n})$, we obtain 
\[
\RepLenCode{g(u,v,w)} \leq 1 + \RepLenCode{M_L(\RepLenCode{v}, \RepLenCode{w})} \leq 2+M_L(\RepLenCode{v}, \RepLenCode{w}).
\]  
Since $M_L \in \GrzClass{k}$, we have $2 + M_L \in \GrzClass{k}$. Therefore, $g$ is defined by length-bounded primitive recursion and thus $g \in \GrzClassSig{k}$ by Theorem~\ref{the:Esigma-closure}.  
Finally, observe that
\[
\CodedFunc{G|_{\Phi^m_{k-3}}}(v,w) = g(\CodedFunc{L|_{\Phi^m_{k-3}}}(v,w),v,w).
\]  
Therefore, since $g$ and $\CodedFunc{L|_{\Phi^m_{k-3}}}$ are in $\GrzClassSig{k}$, we conclude that $\CodedFunc{G|_{\Phi^m_{k-3}}} \in \GrzClassSig{k}$.
\end{proof}

\subsubsection{The case $\Psi_k^m$, for $k,m \geq 1$}
 
In this subsubsection, we aim to show that $\CodedFunc{R|_{\Psi^m_k}}_E$ and $\CodedFunc{L|_{\Psi^m_k}}_E$
are computable in linear space and hence belong to $\GrzClassSig{2}$. As before, the main strategy is to establish upper bounds for $\RepLenCode{R(i, \mu, n)}_E$ and $L_{\mu}(n)$. 
Recall from Definition~\ref{def:sum-norm-tuples}
that $\SumNorm{0}=0$ and $\SumNorm{\alpha} = \sum_{i=1}^{l} c_i$, where $\alpha=\sum_{i=1}^l\omega^{p_i}\cdot c_{i}$
is the normal form of the non-zero ordinal $\alpha \in \Psi_{\omega}$.
To obtain the desired upper bounds, we first express them in terms of $|\mu|$, and then, by applying Lemma~\ref{lem:relate-norms-linear}, we reformulate them in terms of $\RepLenCode{\mu}_E$.

\begin{lemma}\label{the:computationOfGForOmegak}
Let $k,m \in {\mathbb{N}^{ \geq 1}}$ be natural numbers. Then:
\begin{itemize}
    \item[$(i)$] 
There is a polynomial $P_L$ with natural coefficients such that $L_\mu(n) \leq P_L(\SumNorm{\mu}, n)$, for any $\mu \in \Psi_k^m$ and any $n \in \mathbb{N}$. 
    \item[$(ii)$] 
There is a polynomial $P_R$ with natural coefficients such that $\SumNorm{R^b(i, \mu, n)}
        \allowbreak\leq P_R(i, \SumNorm{\NormalOrdA}, n)$,
    for any $\mu \in \Psi_k^m$ and any $n, i \in \mathbb{N}$. 
\end{itemize}
\end{lemma}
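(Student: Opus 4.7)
Both bounds are essentially easy consequences of structural facts already established, so the plan is simply to identify the right decomposition and then invoke these facts.

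For part~$(ii)$, the plan is a direct induction on~$i$ using Lemma~\ref{lem:omega-k-case}. Since $\Psi_\omega$ is a downset (Lemma~\ref{PhiIsDownset}), every term $R^b(j,\mu,n)$ lies in $\Psi_\omega$, so Lemma~\ref{lem:omega-k-case} applies to each predecessor step. The base case $|R^b(0,\mu,n)| = |\mu|$ is immediate, and the inductive step gives $|R^b(i{+}1,\mu,n)| = |p(R^b(i,\mu,n),n)| \leq |R^b(i,\mu,n)| + n$. This yields $|R^b(i,\mu,n)| \leq |\mu| + i\cdot n$, and one can take $P_R(i,x,y) \Def x + i\cdot y$.

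For part~$(i)$, the plan is to exploit the normal form of $\mu \in \Psi_k^m$ and the additivity of the length hierarchy. If $\mu = 0$ there is nothing to prove. Otherwise write $\mu = \sum_{i=1}^l \omega^{p_i} c_i$ with $p_i < k$ and $c_i \geq 1$. Since $\omega^{p_i}\cdot c_i = \omega^{p_i} + \cdots + \omega^{p_i}$ ($c_i$ summands), Lemma~\ref{lem:g-properties}(iv) yields the additive decomposition
\[
L_\mu(n) \;=\; \sum_{i=1}^l c_i \cdot L_{\omega^{p_i}}(n).
\]
Since $p_i$ is a finite ordinal, $L_{o(p_i)}(n) = p_i$ by an easy induction, so Lemma~\ref{lem:ub-g-and-l-via-q-h} gives $L_{\omega^{p_i}}(n) = L_{\phi_0(o(p_i))}(n) \leq H_0(n, p_i) = (n+2)^{p_i} \leq (n+2)^{k-1}$. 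Substituting back,
\[
L_\mu(n) \;\leq\; (n+2)^{k-1} \sum_{i=1}^l c_i \;=\; (n+2)^{k-1}\cdot |\mu|,
\]
so one can take $P_L(x,y) \Def (y+2)^{k-1}\cdot x$, which is a polynomial with natural coefficients (with $k$ a fixed parameter of the statement).

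There is no real obstacle here; the only point to keep in mind is that $k$ and $m$ are fixed parameters of the lemma, so the polynomials are allowed to depend on them. Part~$(ii)$ in fact does not depend on $k$ or $m$ at all, and in part~$(i)$ the dependence enters only through the degree $k-1$ and through the (implicit) bound $l \leq k+m$ on the number of summands in the normal form, which we do not actually need since we bound via $\sum c_i = |\mu|$.
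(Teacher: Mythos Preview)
Your proposal is correct and follows essentially the same approach as the paper. Part~$(ii)$ is identical to the paper's proof (induction on~$i$ via Lemma~\ref{lem:omega-k-case}, with $P_R(i,x,n)=x+in$). For part~$(i)$ both arguments decompose $\mu$ into its normal form, use the additivity of $L$ (Lemma~\ref{lem:g-properties}(iv)), and bound $L_{\omega^{p_i}}(n)\le (n+2)^{p_i}$; you then exploit $p_i<k$ directly to obtain the polynomial $(y+2)^{k-1}x$, whereas the paper additionally invokes the bound $l\le m+k$ from Remark~\ref{LengthBoundForPsi} and arrives at the looser $P_L(x,n)=x\sum_{i=1}^{m+k}(n+2)^i$. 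Your bound is in fact tighter and independent of~$m$.
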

\begin{proof}
For $(i)$, define 
$P_L(x, n)=x \sum_{i=1}^{m+k} (n+2)^i$. We need to show that $L_\mu(n) \leq P_L(\SumNorm{\mu}, n)$. If $\mu=0$, as $L_{\mu}(n)=0$, there is nothing to prove. For non-zero $\mu \in \Psi_{k}^m$, let $\mu=\sum_{i=1}^{l}\omega^{p_i} \cdot c_i$ be its normal form. Note that for each $1 \leq i \leq l$, by definition  $c_i > 0$ and as $\SumNorm{\mu}=\sum_{i=1}^{l}c_i$, we have $c_i \leq \SumNorm{\mu}$.
By Remark \ref{LengthBoundForPsi}, $l \leq m+k$. Therefore, using Lemma \ref{lem:g-properties} and the fact that $\omega$ and $c_i$'s are non-zero, for any $n \in \mathbb{N}$, we reach: 
\[
L_{\mu}(n) \leq 
\sum_{i=1}^{l} c_i (n+2)^{i} 
\leq \SumNorm{\mu} \sum_{i=1}^{l} (n+2)^i
\leq \SumNorm{\mu} \sum_{i=1}^{m+k} (n+2)^i
=P_L(\SumNorm{\mu}, n).
\]
For $(ii)$, by Lemma~\ref{lem:omega-k-case}, we have $\SumNorm{p(\alpha, n)} \leq \SumNorm{\alpha}+n$, for any $\alpha \in \Psi^m_k$ and any $n \in \mathbb{N}$.
Define $P_R(i,x,n) = x+in$.
By induction on $i$, we prove $\SumNorm{R^b(i, \mu, n)} \leq P_R(i, \SumNorm{\NormalOrdA}, n)$. If $i=0$, we have $R^b(0, \mu, n)= \NormalOrdA$,
and the claim is clear. For the inductive case, assume the claim for $i$. Then, as $R^b(i+1, \mu, n)=p(R^b(i, \mu, n), n)$, we have
\[
\SumNorm{R^b(i+1, \mu, n)}=\SumNorm{p(R^b(i, \mu, n), n)} 
\leq \SumNorm{\mu}+in+n=\SumNorm{\mu}+(i+1)n
\]
which completes the proof. 
\end{proof}

\begin{lemma}\label{lem:poly-to-linear}
    For any polynomial $P(x_1,\ldots,x_l)$ with natural coefficients, there is a linear function $L$ with natural coefficients such that for any $\bar{x} \in \mathbb{N}$,
    $
    \log(P(x_1,\ldots,x_l)+1)
    \leq
    L(\log(x_1+1),\ldots,\log(x_l+1))
    $.
\end{lemma}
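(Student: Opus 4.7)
The plan is to reduce the bound from an arbitrary polynomial to a single monomial and then take logarithms, exploiting the fact that $\log$ turns products into sums. First I would write $P(x_1,\ldots,x_l) = \sum_{j=1}^m M_j(x_1,\ldots,x_l)$ as a sum of monomials, where each monomial has the form $M_j(\bar x) = c_j \prod_{i=1}^l x_i^{a_{ji}}$ with $c_j \in \mathbb{N}$ and $a_{ji} \in \mathbb{N}$. Note that $m$, the $c_j$'s, and the $a_{ji}$'s are all constants determined by $P$.

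The key combinatorial observation is that $\sum_{j=1}^m b_j + 1 \leq \prod_{j=1}^m (b_j + 1)$ for any naturals $b_1,\ldots,b_m$, which follows by a straightforward induction on $m$. Applying this with $b_j = M_j(\bar x)$ gives $P(\bar x) + 1 \leq \prod_{j=1}^m (M_j(\bar x) + 1)$. Next, for each individual monomial I would bound
\[
M_j(\bar x) + 1 \;=\; c_j \prod_{i=1}^l x_i^{a_{ji}} + 1 \;\leq\; (c_j+1) \prod_{i=1}^l (x_i+1)^{a_{ji}},
\]
which holds since the right-hand side expands to $c_j \prod_i x_i^{a_{ji}}$ plus strictly positive leftover terms.

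Combining these two estimates and taking $\log$ on both sides, I obtain
\[
\log(P(\bar x)+1) \;\leq\; \sum_{j=1}^m \log(c_j+1) \;+\; \sum_{i=1}^l \Bigl(\sum_{j=1}^m a_{ji}\Bigr) \log(x_i+1).
\]
Defining $C := \sum_{j=1}^m \lceil \log(c_j+1) \rceil \in \mathbb{N}$ and $d_i := \sum_{j=1}^m a_{ji} \in \mathbb{N}$, the linear function $L(y_1,\ldots,y_l) := C + \sum_{i=1}^l d_i y_i$ has natural coefficients and satisfies the required inequality. There is no genuine obstacle here; the only minor subtlety is to replace the (possibly non-integer) constant $\sum_j \log(c_j+1)$ with a natural upper bound, which is why I round up to $\lceil \log(c_j+1) \rceil$.
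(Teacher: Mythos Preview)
Your proof is correct and follows essentially the same approach as the paper: decompose $P$ into monomials, replace the sum by a product via a simple inequality, and take logarithms. Your auxiliary inequalities $\sum b_j + 1 \le \prod (b_j+1)$ and $M_j(\bar x)+1 \le (c_j+1)\prod_i (x_i+1)^{a_{ji}}$ are slightly cleaner than the paper's route through $(x_j+2)^{e_{ij}+1}$ followed by $\log(x+2)\le \log(x+1)+1$, but the structure of the argument is the same.
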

\begin{proof}
If $P(\bar{x})$ is the zero polynomial, let $L=0$, and there is nothing to prove. Otherwise, let $P(x_1,\ldots,x_l) \Def \sum_{i = 1}^s \prod_{j=1}^{l} x_{j}^{e_{ij}}$, with $s \geq 1$, where each $e_{ij} \geq 0$. Consider the following inequalities. First, it is clear that $x^e+1 \leq (x+2)^{e+1}$ for any $x, e \in \mathbb{N}$. Second, since $a+b \leq ab$ for any $a, b \in \mathbb{R}^{\geq 2}$, we obtain $\sum_{i=1}^s a_i \leq \prod_{i=1}^s a_i$ for any $\{a_0, \ldots, a_s\} \subseteq \mathbb{R}^{\geq 2}$. Third, as $(x+2) \leq 2(x+1)$, we have $\log(x+2) \leq \log(x+1)+1$. Using these inequalities, we obtain:
\[
   \log(P(x_1,\ldots,x_l)+1) \leq  \log\Big(\sum_{i = 1}^s \prod_{j=1}^{l} (x_{j}^{e_{ij}} + 1)\Big)
    \leq 
    \log\Big(\sum_{i = 1}^s \prod_{j=1}^{l} (x_{j}+2) ^{e_{ij}+1}\Big)
\]
\[
    \leq 
    \log\Big(\prod_{i = 1}^s \prod_{j=1}^{l} (x_{j}+2) ^{e_{ij}+1}\Big)
    =
    \sum_{i = 1}^s \sum_{j=1}^{l} {(e_{ij}+1)} \log(x_{j}+2)
\]
\[
    \leq 
    \sum_{i = 1}^s \sum_{j=1}^{l} {(e_{ij}+1)} (\log(x_{j}+1)+1)
    \leq 
    a \cdot\sum_{j=1}^l 
     \log(x_j+1)
    + b
\]
for sufficiently large constants $a,b \in \NatSet$.
\end{proof}

\begin{lemma}\label{lem:mon-poly-e2-rq}
Let $k,m \in {\mathbb{N}^{ \geq 1}}$ be natural numbers. Then, there are linear functions $M_R$ and $M_L$ with natural coefficients such that $\RepLen{R^b(i, \mu, n)}_E
        \leq M_R(\RepLen{i}_E,\RepLen{\NormalOrdA}_E, 
    \RepLen{n}_E)$ and $\RepLen{L_{\mu}(n)}_E
        \leq M_L(\RepLen{\NormalOrdA}_E, 
    \RepLen{n}_E)$,
    for any $\mu \in \Psi_k^m$ and any $n, i \in \mathbb{N}$.    
\end{lemma}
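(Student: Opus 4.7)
The plan is to combine the polynomial bounds on the norms $\SumNorm{R^b(i,\mu,n)}$ and $L_\mu(n)$ from Lemma~\ref{the:computationOfGForOmegak} with the logarithmic relation between $\SumNorm{-}$ and $\RepLen{-}_E$ supplied by Lemma~\ref{lem:relate-norms-linear}, converting polynomials in the norms into linear functions in the representation lengths via Lemma~\ref{lem:poly-to-linear}.

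First I would handle the bound for $R^b$. Since $\Psi^m_k$ is a downset by Lemma~\ref{PhiIsDownset} and $R^b$ is obtained from $\mu$ by iterated application of the predecessor function, we have $R^b(i,\mu,n) \in \Psi^m_k$, so Lemma~\ref{lem:relate-norms-linear} gives constants $a,b$ such that
\[
\RepLen{R^b(i,\mu,n)}_E \leq a\log(\SumNorm{R^b(i,\mu,n)}+1) + b.
\]
Applying Lemma~\ref{the:computationOfGForOmegak}$(ii)$, the right-hand side is at most $a\log(P_R(i,\SumNorm{\mu},n)+1) + b$. By Lemma~\ref{lem:poly-to-linear}, there is a linear function $L_R$ with natural coefficients such that
\[
\log(P_R(i,\SumNorm{\mu},n)+1) \leq L_R\bigl(\log(i+1),\log(\SumNorm{\mu}+1),\log(n+1)\bigr).
\]
Using $\log(i+1)\leq \RepLen{i}_E$, $\log(n+1)\leq \RepLen{n}_E$, and $\log(\SumNorm{\mu}+1) \leq \RepLen{\mu}_E$ from Lemma~\ref{lem:relate-norms-linear}, I obtain $\RepLen{R^b(i,\mu,n)}_E \leq M_R(\RepLen{i}_E,\RepLen{\mu}_E,\RepLen{n}_E)$ for a suitable linear $M_R$ with natural coefficients.

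The bound for $L_\mu(n)$ is similar, but simpler since $L_\mu(n)$ is a natural number, for which $\RepLen{L_\mu(n)}_E = \RepLen{L_\mu(n)} \leq \log(L_\mu(n)+1)+1$. Applying Lemma~\ref{the:computationOfGForOmegak}$(i)$ yields $\RepLen{L_\mu(n)}_E \leq \log(P_L(\SumNorm{\mu},n)+1)+1$, and then Lemma~\ref{lem:poly-to-linear} together with Lemma~\ref{lem:relate-norms-linear} bound this by a linear function $M_L(\RepLen{\mu}_E,\RepLen{n}_E)$ with natural coefficients, as required.

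There is no serious obstacle here: the result is essentially a bookkeeping step that glues together Lemmas~\ref{the:computationOfGForOmegak}, \ref{lem:relate-norms-linear}, and~\ref{lem:poly-to-linear}. The only mild care needed is to keep the coefficients natural throughout (so that the final linear functions are monotone), which is automatic since all three lemmas produce bounds with natural coefficients, and to remember that for natural number arguments $\RepLen{-}_E$ coincides with the standard binary-length function $\RepLen{-}$, so the log-length relation holds without invoking Lemma~\ref{lem:relate-norms-linear}.
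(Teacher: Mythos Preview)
Your proposal is correct and follows essentially the same route as the paper's proof: combine Lemma~\ref{the:computationOfGForOmegak} with Lemma~\ref{lem:poly-to-linear} and Lemma~\ref{lem:relate-norms-linear} to convert polynomial norm bounds into linear length bounds. You are even slightly more explicit than the paper in justifying that $R^b(i,\mu,n) \in \Psi^m_k$ (via Lemma~\ref{PhiIsDownset}) before applying Lemma~\ref{lem:relate-norms-linear}, and in separating the numeral case for $L_\mu(n)$ from the ordinal case.
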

\begin{proof}
We only cover the case of $M_R$; the other is similar. By Lemma \ref{the:computationOfGForOmegak}, there is a polynomial $P_R$ with natural coefficients such that
$\SumNorm{R^b(i, \mu, n)} \leq P_R(i, \SumNorm{\NormalOrdA}, n)$,  for any 
$\mu \in \Psi_k^m$ and any $n, i \in \mathbb{N}$.
As $P_R$ is a polynomial with natural coefficients, by Lemma~\ref{lem:poly-to-linear}, there is a linear function $L$ with natural coefficients such that:
    \[
    \log(\SumNorm{R(i, \mu, n)}+1) \leq \log(P_R(i, 
    \SumNorm{\NormalOrdA}, 
    n)+1)
    \leq L(\log(i+1), \log(\SumNorm{\NormalOrdA}+1),\log(n+1)).
    \]
By Lemma~\ref{lem:relate-norms-linear}, there are positive natural numbers $a, b \in \mathbb{N}^{\geq 1}$ such that $\log(\SumNorm{\alpha}+1)\leq  \RepLen{\alpha}_E$ and $\RepLen{\alpha}_E \leq a \cdot \log(\SumNorm{\alpha}+1)+b$, for any $\alpha \in \Psi^m_k$. Define $M_R(x, y, z)=aL(x, y, z)+b$. Since $\RepLen{x}_E-1 \leq \log(x+1) \leq \RepLen{x}_E$ for any $x \in \mathbb{N}$, we reach $\RepLen{R^b(i, \mu, n)}_E
        \leq M_R(\RepLenCode{i}_E, \RepLen{\NormalOrdA}_E, \RepLen{n}_E)$,
    for any $\mu \in \Psi_k^m$ and any $n, i \in \mathbb{N}$. 
\end{proof}

\begin{theorem}\label{lem:space-to-comp-seq-linear}
Let $k, m \geq 1$ be natural numbers. Then, $\CodedFunc{R|_{\Psi^m_k}}_E$, $\CodedFunc{L|_{\Psi^m_k}}_E$ and $\CodedFunc{G|_{\Psi^m_k}}_E$ 
 are all in $\GrzClassSig{2}$.
\end{theorem}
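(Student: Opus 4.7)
The plan is to mirror the proof of Theorem~\ref{lem:space-to-comp-seq} for the $\Phi^m_{k-3}$ case, but now leveraging the linear-space machinery developed for $\Psi^m_k$. Specifically, I will appeal to the closure of $\GrzClassSig{2}$ under length-bounded primitive recursion and bounded $\Sigma$-minimization (Theorem~\ref{the:Esigma-closure}), which, for $k=2$, admits linear bounding functions. Lemma~\ref{lem:representable-n-geq-3III} provides the two foundational ingredients: deciding whether a string encodes an ordinal of $\Psi^m_k$ (and classifying it as zero, successor, or limit) is in linear space, and $\CodedFunc{p|_{\Psi^m_k}}_E \in \GrzClassSig{2}$. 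Lemma~\ref{lem:mon-poly-e2-rq} supplies the required linear bounding functions $M_R$ and $M_L$.

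First I would handle $\CodedFunc{R^b|_{\Psi^m_k}}_E$. Define it by primitive recursion on its first argument using $\CodedFunc{p|_{\Psi^m_k}}_E$, returning $\bot$ outside the domain. By Lemma~\ref{lem:mon-poly-e2-rq}, the length of the output is bounded by a linear function $1+M_R$ of the input lengths, so closure under length-bounded primitive recursion yields $\CodedFunc{R^b|_{\Psi^m_k}}_E \in \GrzClassSig{2}$. Next, for $\CodedFunc{L|_{\Psi^m_k}}_E$, define the auxiliary function
\[
L_R(v,w) := \Bmin_{i \leq 2^{M_L(\RepLen{v}_E,\RepLen{w}_E)}} [\CodedFunc{R^b|_{\Psi^m_k}}_E(\IntermCode{i},v,w)=\IntermCode{0}].
\]
Here Lemma~\ref{lem:mon-poly-e2-rq} guarantees that for genuine inputs $v = \IntermCode{\mu}_E$ and $w = \IntermCode{n}_E$ the witness $L_\mu(n)$ is bounded by $2^{M_L(\RepLen{\mu}_E,\RepLen{n}_E)}$, which is of the form $2^{v(\bar x)}$ with $v$ linear, so bounded $\Sigma$-minimization applies and $L_R \in \GrzClassSig{2}$. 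Guarding by the linear-space domain-recognizer from Lemma~\ref{lem:representable-n-geq-3III} gives $\CodedFunc{L|_{\Psi^m_k}}_E \in \GrzClassSig{2}$.

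Given these two, $\CodedFunc{R|_{\Psi^m_k}}_E$ follows at once from the identity $R(i,\mu,n) = R^b(L_\mu(n)\dotminus i,\mu,n)$ together with $\dotminus \in \GrzClass{2}$ and the polynomial-time (hence linear-space, after encoding) recognition of numerals. Finally, for $\CodedFunc{G|_{\Psi^m_k}}_E$, I follow the same counting strategy as in the proof of Theorem~\ref{lem:space-to-comp-seq}: iterate along the enumeration produced by $\CodedFunc{R|_{\Psi^m_k}}_E$, incrementing a counter exactly when the current ordinal is a successor. The counter is bounded by $L_\mu(n)$, so its representation has length at most $M_L(\RepLen{v}_E,\RepLen{w}_E)+O(1)$, a linear bound, and length-bounded primitive recursion places this function in $\GrzClassSig{2}$ as well.

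The main obstacle, and the reason this proof can be short where the $\Phi^m_{k-3}$ case required substantial work, is already absorbed into the preparatory lemmas: the delicate step was producing a \emph{linear} bound on $L_\mu(n)$ and on $\RepLen{R^b(i,\mu,n)}_E$ in terms of the input code lengths, which relied on Lemma~\ref{lem:relate-norms-linear} relating $\RepLen{\alpha}_E$ to $\log(\SumNorm{\alpha}+1)$ and on the polynomial bounds of Lemma~\ref{the:computationOfGForOmegak}. Once these linear bounds are in hand, the Grzegorczyk-hierarchy placement is a routine instantiation of Theorem~\ref{the:Esigma-closure}, and the only mildly subtle point is that in the bounded-minimization step the witness $L_\mu(n)$ is polynomial in $\SumNorm{\mu}$ and $n$, and thus exponential in the code length, which is exactly the shape $2^{v(\bar x)}$ demanded by Theorem~\ref{the:Esigma-closure}(ii) in the $k=2$ case.
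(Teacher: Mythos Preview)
Your proposal is correct and follows essentially the same approach as the paper: mirror the proof of Theorem~\ref{lem:space-to-comp-seq}, replacing the $\GrzClassSig{k}$ ingredients by their linear-space analogues from Lemma~\ref{lem:representable-n-geq-3III} and Lemma~\ref{lem:mon-poly-e2-rq}, and applying the $k=2$ cases of Theorem~\ref{the:Esigma-closure}. You have even made explicit the one point the paper leaves implicit, namely that the bound $L_\mu(n)\leq 2^{M_L(\RepLen{\mu}_E,\RepLen{n}_E)}$ has exactly the shape $2^{v(\bar x)}$ with $v$ linear that bounded $\Sigma$-minimization requires at level $2$.
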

\begin{proof}
The proof is similar to that of Theorem~\ref{lem:space-to-comp-seq}, replacing $\GrzClassSig{k}$ with $\GrzClassSig{2}$, i.e., linear-space functions. The main points are as follows. First, by Lemma~\ref{lem:representable-n-geq-3III}, checking whether a string is a code of an ordinal in $\Psi^m_k$ and, if so, whether it is zero, a successor, or a limit, can be done in linear space. Moreover, by the same lemma, the function $\CodedFunc{p|_{\Psi^m_k}}$ is computable in linear space. Second, by Lemma~\ref{lem:mon-poly-e2-rq}, there exist linear functions $M_R$ and $M_L$ with natural coefficients such that 
\[
\RepLen{R^b(i, \mu, n)}_E \leq M_R(\RepLenCode{i}_E, \RepLenCode{\mu}_E, \RepLenCode{n}_E)
\quad \text{and} \quad
L_{\mu}(n) \leq 2^{M_L(\RepLenCode{\mu}_E, \RepLenCode{n}_E)}
\]
for any $\mu \in \Psi^m_k$ and any $n, i \in \mathbb{N}$. These bounds are precisely what we need (see Theorem~\ref{the:Esigma-closure}) to apply the closure of $\GrzClassSig{2}$ under length-bounded recursion and bounded $\Sigma$-minimization.
\end{proof}

\subsection{Bounding lemmas}\label{subsec:bounding-lemma}

In this subsection, we complete the third part of our strategy, as outlined in Section~\ref{sec:main-theorem}, by establishing an upper bound on the length of the outputs of the functions in $\mathcal{C}_{\mathsf{A}}$ for $\mathsf{A} = \Phi^m_{k-3}$ (with $k \geq 3$ and $m \geq 1$) and $\mathsf{A} = \Psi_k^m$ (with $k,m \geq 1$), using functions in $\GrzClass{k}$ and linear functions, respectively.

\begin{lemma}
\label{lem:poly-norm-codes-loglog}
Let $k,m \in \NatSet$ with $k \geq 3$ and $m \geq 1$. Then, for any $f \in \ClassName{\Phi^{m}_{k-3}}$, there is a monotone function $B_f \in \mathcal{E}_k$ such that 
\[
f(\bar\NormalOrdA,\bar\SafeOrdA;\bar\SafeNumbA) \leq
    B_f(
    \RepLenCode{\bar \NormalOrdA},
    \bar \SafeOrdA
    )
    +
    \max_i \SafeNumbA_i,
\]
for any $\bar\NormalOrdA \in \Phi^{m}_{k-3}$ and any $\bar n, \bar a \in \NatSet$.
    Consequently,
    there is a monotone function $E_f \in \mathcal{E}_k$ such that
$\RepLenCode{f(\bar\NormalOrdA,\bar\SafeOrdA;\bar \SafeNumbA)}
    \leq E_f(
    \RepLenCode{\bar \NormalOrdA},
    \RepLenCode{\bar \SafeOrdA},
    \RepLen{\bar \SafeNumbA}
    )
$, for any $\bar\NormalOrdA \in \Phi_{k-3}^{m}$ and any $\bar{n}, \bar{a} \in \NatSet$.
\end{lemma}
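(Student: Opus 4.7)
The plan is to prove the value bound by structural induction on the construction of $f \in \ClassName{\Phi^{m}_{k-3}}$, maintaining the stronger invariant that the witnessing $B_f$ is both monotone and in $\mathcal{E}_k$. The base cases are immediate: the constant $0$, the predecessor, the conditional, and any projection onto a safe variable all satisfy the bound with $B_f = 0$, the successor with $B_f = 1$, and a projection onto a normal variable with $B_f(\bar n) = n_j$. For constant substitution and structural rules one simply re-uses or rearranges the inductive bound. For safe composition $f(\bar\mu,\bar n;\bar a)=h(\bar\mu,\bar s(\bar\mu,\bar n;);\bar t(\bar\mu,\bar n;\bar a))$, the IH on each $s_i$ (which has no safe inputs) gives $s_i \leq B_{s_i}(\RepLenCode{\bar\mu},\bar n)$, the IH on each $t_j$ gives $t_j \leq B_{t_j}(\RepLenCode{\bar\mu},\bar n)+\max_k a_k$, and plugging these into the IH for $h$ (using the monotonicity of $B_h$) yields $f \leq B_h(\RepLenCode{\bar\mu},B_{\bar s}(\RepLenCode{\bar\mu},\bar n))+\max_j B_{t_j}(\RepLenCode{\bar\mu},\bar n)+\max_k a_k$; closure of $\mathcal{E}_k$ under composition gives the required $B_f$.

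The hard case, and the main obstacle, is predicative ordinal recursion with base $g$, step functions $h_{\mathsf{suc}}, h_{\mathsf{lim}}$, and selector $q(\bar n;)$. The idea is to trace the recursion along the sequence of $q(\bar n;)$-predecessors of $\mu$: define $v_i := f(R(i,\mu,q(\bar n;)),\bar\nu,\bar n;\bar a)$ for $i=0,\ldots,L_\mu(q(\bar n;))$, so that $v_0 = g(\bar\nu,\bar n;\bar a)$ and $v_{L_\mu(q(\bar n;))} = f(\mu,\bar\nu,\bar n;\bar a)$. Applying the IH to $g$ gives $v_0 \leq B_g(\RepLenCode{\bar\nu},\bar n) + \max_j a_j$, and applying the IH to $h_{\mathsf{suc}}$ or $h_{\mathsf{lim}}$ (depending on whether $R(i,\mu,q(\bar n;))$ is a successor or limit) with safe inputs $v_i,\bar a$ yields
\[
v_{i+1} \leq B_h\bigl(\RepLenCode{R(i,\mu,q(\bar n;))},\RepLenCode{\bar\nu},\bar n\bigr) + \max(v_i,\max_j a_j),
\]
where $B_h := \max(B_{h_{\mathsf{suc}}},B_{h_{\mathsf{lim}}}) \in \mathcal{E}_k$. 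The subtle point is that the safe term $\max_j a_j$ must not accumulate across the $L_\mu(q(\bar n;))$-many steps of the recursion. This is resolved by observing that $v_i \geq \max_j a_j$ as soon as one step has been taken (since $v_0$ already absorbs $\max_j a_j$ and $B_g$ is non-negative), so $\max(v_i,\max_j a_j)=v_i$, and a straightforward induction gives
\[
v_i \leq B_g(\RepLenCode{\bar\nu},\bar n) + \sum_{j<i} B_h\bigl(\RepLenCode{R(j,\mu,q(\bar n;))},\RepLenCode{\bar\nu},\bar n\bigr) + \max_j a_j.
\]

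To turn this into a uniform bound in $\mathcal{E}_k$, I would invoke three previous results. First, Lemma~\ref{lem:bounds-rec-sequence} gives $M_R \in \mathcal{E}_3 \subseteq \mathcal{E}_k$ with $\RepLenCode{R^b(j,\mu,n)} \leq M_R(\RepLenCode{j},\RepLenCode{\mu},\RepLenCode{n})$, and since $R(i,\mu,n)=R^b(L_\mu(n)\dotminus i,\mu,n)$, the same $M_R$ (with monotonicity) bounds $\RepLenCode{R(i,\mu,n)}$. Second, Lemma~\ref{lem:bound-on-g} gives $L_\mu(n) \leq J^{k-3}_{m-1}(\RepLenCode{\mu},n,0)$, which by Lemma~\ref{JIsInE} is in $\mathcal{E}_k$. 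Third, since $q(\bar n;)$ has no safe inputs, the IH yields $q(\bar n;) \leq B_q(\bar n)$. Substituting these bounds into the above sum, pulling the factor of $L_\mu(q(\bar n;))$ in front of the maximum of the $B_h$-values along the chain, and using closure of $\mathcal{E}_k$ under composition gives a single monotone function $B_f \in \mathcal{E}_k$ such that $f(\mu,\bar\nu,\bar n;\bar a) \leq B_f(\RepLenCode{\mu},\RepLenCode{\bar\nu},\bar n) + \max_j a_j$, as required.

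For the consequence, I would observe that $\RepLen{x} \leq \log(x+1)+1$ and $n < 2^{\RepLen{n}}$ together yield
\[
\RepLenCode{f(\bar\mu,\bar n;\bar a)} \leq \log\bigl(B_f(\RepLenCode{\bar\mu},\bar n)+\max_j a_j+1\bigr)+1 \leq B_f\bigl(\RepLenCode{\bar\mu},2^{\RepLenCode{\bar n}}\bigr) + \max_j \RepLen{a_j} + 3,
\]
using subadditivity of $\log$ on a product. Since $2^{(\cdot)} \in \mathcal{E}_3 \subseteq \mathcal{E}_k$ and $\mathcal{E}_k$ is closed under composition, the right-hand side defines a monotone $E_f \in \mathcal{E}_k$ satisfying the second bound.
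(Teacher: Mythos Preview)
Your overall strategy matches the paper's proof exactly: structural induction on the construction of $f$, with the recursion case handled by unrolling along the $q(\bar n;)$-predecessor chain and bounding each step via the IH on $h_{\mathsf{suc}}$, $h_{\mathsf{lim}}$, together with the $\mathcal{E}_k$-bounds on $\RepLenCode{R(i,\mu,q(\bar n;))}$ and $L_\mu(q(\bar n;))$ from Lemmas~\ref{lem:bounds-rec-sequence}, \ref{lem:bound-on-g}, and~\ref{JIsInE}. The base cases, composition, constant substitution, and structural rules are fine.

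There is, however, a genuine error in your handling of the safe term. You write that ``$v_i \geq \max_j a_j$ as soon as one step has been taken (since $v_0$ already absorbs $\max_j a_j$),'' and hence $\max(v_i,\max_j a_j)=v_i$. This is false: the IH gives only the \emph{upper} bound $v_0 \leq B_g(\ldots)+\max_j a_j$, not a lower bound. If $g$ is the constant zero function then $v_0=0$, which need not dominate $\max_j a_j$. The correct argument (and what the paper does) is to carry out the induction on the \emph{upper bound} rather than on $v_i$ itself: assuming $v_i \leq B_g + i\cdot C + \max_j a_j$, one has
\[
\max(v_i,\max_j a_j) \leq \max\bigl(B_g + i\cdot C + \max_j a_j,\ \max_j a_j\bigr) = B_g + i\cdot C + \max_j a_j,
\]
since $B_g + i\cdot C \geq 0$. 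Plugging this in yields $v_{i+1} \leq B_g + (i+1)\cdot C + \max_j a_j$, which is exactly the bound you claim. So your stated inequality is correct; only the justification needs this small repair.

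A second minor imprecision: in the limit case the first ordinal argument to $h_{\mathsf{lim}}$ is $\mu(i+1)$, not $\mu(i)$ (the paper records this as $\mu'_i$). This is harmless for the bound, since both are controlled by the same $M_R$.

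Your derivation of the consequence is actually slightly sharper than the paper's: you keep the logarithm and obtain a bound additive in $\max_j \RepLen{a_j}$, whereas the paper uses the cruder $\RepLen{x}\leq 1+x$ and ends up with $\sum_l 2^{\RepLen{a_l}}$. Both are valid since $k\geq 3$.
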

\begin{proof}
For the first part of the claim, we proceed by induction on the construction of $f$ as an element of $\ClassName{\Phi^{m}_{k-3}}$.  
For the base case, we define $B_f$ as follows:
    \begin{itemize}
        \item (constant) For the nullary zero function, take $B_f$ as the constant $0$. 
        \item (projections) For any of the projections, take $B_f(\bar x) := \sum_{i} x_i$.
        \item (successor) 
        Take $B_f$ as the constant $1$. 
        \item (predecessor)
        Take $B_f$ as the constant $0$. It works 
        as $\Pred(;a) \leq a$.
        \item (conditional) 
            Take $B_f$ as the constant $0$.
     \end{itemize}
For the safe composition, if $f$ is constructed by:
    $$
    f(\bar \NormalOrdA, \bar \SafeOrdA; \bar \SafeNumbA)
		= h(\bar{\NormalOrdA},\bar s(\bar \NormalOrdA, \bar \SafeOrdA;);\bar t(\bar \NormalOrdA,\bar\SafeOrdA; \bar \SafeNumbA)),
    $$
first, consider the functions $B_h$, $\bar{B}_s$, and $\bar{B}_t$ obtained by using the induction hypothesis on $h$, $\bar{s}$, and $\bar{t}$, respectively. Then, define 
$B(\bar x, \bar y) := B_h(\bar x, \bar{B}_s(\bar x,\bar{y}))$ and set $B_f(\bar x, \bar y):= B(\bar x, \bar y)+\sum_i B_{t_i}(\bar{x},\bar{y})$. Clearly $B_f$ is monotone and in $\mathcal{E}_k$. Moreover, using the induction hypothesis and the monotonicity of $B_h$, we have:
\begin{align*}
h(\bar{\NormalOrdA},\bar s(\bar \NormalOrdA, \bar \SafeOrdA;);\bar t(\bar \NormalOrdA,\bar\SafeOrdA; \bar \SafeNumbA)) 
&\leq B_h\big(\RepLenCode{\bar{\NormalOrdA}},
\bar{s}(\bar{\NormalOrdA}, \bar{n};)) + \max_i t_i(\bar{\NormalOrdA}, \bar{\SafeOrdA}; \bar{\SafeNumbA}) \\
&\leq B_h\big(\RepLenCode{\bar{\NormalOrdA}},
\bar{B}_s(\RepLenCode{\bar{\NormalOrdA}}, 
\bar{n})\big) 
+ \max_i t_i(\bar{\NormalOrdA}, \bar{\SafeOrdA};\bar{\SafeNumbA})\\
&= B\big(\RepLenCode{\bar{\NormalOrdA}}, 
\bar{n}\big) 
+ \max_i t_i(\bar{\NormalOrdA}, \bar{\SafeOrdA}; \bar{\SafeNumbA})\\
&\leq B\big(\RepLenCode{\bar{\NormalOrdA}}, 
\bar{n}\big) 
+ \max_i\big\{B_{t_i}(\RepLenCode{\bar{\NormalOrdA}}, 
\bar{n}) + \max_j a_j\big\} \\
&\leq B_f\big(\RepLenCode{\bar{\NormalOrdA}}, 
\bar{n}\big) + \max_j a_j.
\end{align*}
Therefore, $f(\bar\NormalOrdA,\bar\SafeOrdA;\bar\SafeNumbA) \leq
    B_f(\RepLenCode{\bar \NormalOrdA},
    \bar \SafeOrdA)
    +
    \max_j \SafeNumbA_j$.

For predicative ordinal recursion, assume that $f$ is defined by
\begin{align*}
f(0, \bar \NormalOrdB, \bar \SafeOrdA; \bar \SafeNumbA)
  &= g(\bar \NormalOrdB, \bar \SafeOrdA; \bar \SafeNumbA), \\
f(\NormalOrdA+1, \bar \NormalOrdB, \bar \SafeOrdA; \bar \SafeNumbA) 
  &= h_{\mathsf{suc}}\!\big(\NormalOrdA, \bar \NormalOrdB, \bar \SafeOrdA; 
     f(\NormalOrdA, \bar \NormalOrdB, \bar \SafeOrdA; \bar \SafeNumbA), \bar \SafeNumbA\big), \\
f(\langle \NormalOrdA_i \rangle_i, \bar \NormalOrdB, \bar \SafeOrdA; \bar \SafeNumbA)
  &= h_{\mathsf{lim}}\!\big(\langle \NormalOrdA_i \rangle_i, \bar \NormalOrdB, \bar \SafeOrdA;
     f(\NormalOrdA_{q(\bar \SafeOrdA;)}, \bar \NormalOrdB, \bar \SafeOrdA; \bar \SafeNumbA),
     \bar \SafeNumbA\big).
\end{align*}
By the induction hypothesis, there exists $B_q \in \mathcal{E}_k$ such that 
$q(\bar n;) \leq B_q(\bar n)$. By Lemma~\ref{lem:bounds-rec-sequence}, there are monotone 
functions $M_L, M_R \in \mathcal{E}_k$ such that
\[
\RepLenCode{R^b(i, \mu, n)} \leq M_R(\RepLenCode{i}, \RepLenCode{\mu}, \RepLenCode{n})
\quad\text{and}\quad
L_\mu(n) \leq M_L(\RepLenCode{\mu}, \RepLenCode{n}),
\]
for all $\mu \in \Phi_{k-3}^m$ and $n,i \in \mathbb{N}$. Define
\[
M_{L,q}(x, \bar z) := M_L(x, 1+B_q(\bar z)), 
\quad
M_{R,q}(x, \bar z) := M_R(1+M_{L,q}(x,\bar z), x, 1+B_q(\bar z)).
\]
By monotonicity of $M_L$ and the fact that $\RepLenCode{b} \leq 1+b$ for all $b \in \mathbb{N}$,
we obtain
\[
L_\mu(q(\bar n)) \leq M_{L,q}(\RepLenCode{\mu}, \bar n),
\quad
\RepLenCode{R^b(i, \mu, q(\bar n))} \leq M_{R,q}(\RepLenCode{\mu}, \bar n).
\]
Since this bound is independent of $i$, and since $R(i, \mu, q(\bar n))$ traverses the same sequence 
of $q(\bar n)$-predecessors of $\mu$, the same bound applies to $R(i,\mu,q(\bar n))$.
Define
\[
C(v,\bar x,\bar z) :=
  B_{h_{\mathsf{suc}}}(M_{R,q}(v,\bar z), \bar x, \bar z) +
  B_{h_{\mathsf{lim}}}(M_{R,q}(v,\bar z), \bar x, \bar z),
\]
and set
\[
B_f(v,\bar x,\bar z) :=
  B_g(\bar x,\bar z) + M_{L,q}(v,\bar z)\cdot C(v,\bar x,\bar z).
\]
It is immediate that $B_f$ is monotone and $B_f \in \mathcal{E}_k$.
For simplicity, write $\mu(i) := R(i,\mu,q(\bar n))$. By the induction hypothesis,
\begin{align*}
f(\mu(0), \bar \NormalOrdB, \bar \SafeOrdA; \bar \SafeNumbA)
  &= g(\bar \NormalOrdB, \bar \SafeOrdA; \bar \SafeNumbA)
   \leq B_g(\RepLenCode{\bar \NormalOrdB}, \bar n) + \max_l a_l, \\
f(\NormalOrdA(i+1), \bar \NormalOrdB, \bar \SafeOrdA; \bar \SafeNumbA)
  &\leq B_H(\RepLenCode{\mu'_i}, \RepLenCode{\bar \NormalOrdB}, \bar n) 
   + \max\bigl\{ f(\NormalOrdA(i), \bar \NormalOrdB, \bar \SafeOrdA; \bar \SafeNumbA), \max_l a_l \bigr\},
\end{align*}
where
\[
(\mu'_i,H) =
\begin{cases}
(\mu(i), h_{\mathsf{suc}}), & \text{if $\NormalOrdA(i+1)$ is a successor}, \\
(\mu(i+1), h_{\mathsf{lim}}), & \text{if $\NormalOrdA(i+1)$ is a limit}.
\end{cases}
\]
Since $\mu(j) = R(j,\mu,q(\bar n))$, we have 
$\RepLenCode{\mu(j)} \leq M_{R,q}(\RepLenCode{\mu},\bar n)$ for all $j$, 
and thus
$B_H(\RepLenCode{\mu'_i}, \RepLenCode{\bar \NormalOrdB}, \bar n) 
  \leq C(\RepLenCode{\mu}, \RepLenCode{\bar \NormalOrdB}, \bar n)$.
Therefore,
\begin{align*}
f(\mu(0), \bar \NormalOrdB, \bar \SafeOrdA; \bar \SafeNumbA)
  &\leq B_g(\RepLenCode{\bar \NormalOrdB}, \bar n) + \max_l a_l, \\
f(\NormalOrdA(i+1), \bar \NormalOrdB, \bar \SafeOrdA; \bar \SafeNumbA)
  &\leq C(\RepLenCode{\mu}, \RepLenCode{\bar \NormalOrdB}, \bar n)
       + \max\{ f(\NormalOrdA(i), \bar \NormalOrdB, \bar \SafeOrdA; \bar \SafeNumbA), \max_l a_l\}.
\end{align*}
By induction on $i$, it follows that
\[
f(\NormalOrdA(i), \bar \NormalOrdB, \bar \SafeOrdA; \bar \SafeNumbA)
  \leq B_g(\RepLenCode{\bar \NormalOrdB}, \bar n) 
       + i \, C(\RepLenCode{\mu}, \RepLenCode{\bar \NormalOrdB}, \bar n)
       + \max_l a_l. \qquad (*)
\]
Recall that $L_\alpha(m)$ is the least $j \in \mathbb{N}$ such that $R^b(j,\alpha,m)=0$. 
Hence $R(L_{\alpha}(m), \alpha, m)=R^b(0, \alpha, m)=\alpha$ which implies $R(L_\mu(q(\bar n)), \mu, q(\bar n)) = \mu$.
Since $\mu(L_\mu(q(\bar n)))=
R(L_\mu(q(\bar n)), \mu, q(\bar n))$ by definition, substituting $i = L_\mu(q(\bar n))$ into $(*)$ and using 
$L_\mu(q(\bar n)) \leq M_{L,q}(\RepLenCode{\mu}, \bar n)$, we conclude
\[
f(\NormalOrdA, \bar \NormalOrdB, \bar \SafeOrdA; \bar \SafeNumbA)
  \leq B_g(\RepLenCode{\bar \NormalOrdB}, \bar n)
       + M_{L,q}(\RepLenCode{\mu}, \bar n)\, C(\RepLenCode{\mu}, \RepLenCode{\bar \NormalOrdB}, \bar n)
       + \max_l a_l,
\]
which implies
$f(\NormalOrdA, \bar \NormalOrdB, \bar \SafeOrdA; \bar \SafeNumbA)
  \leq B_f(\RepLenCode{\mu}, \RepLenCode{\bar \NormalOrdB}, \bar n) + \max_l a_l$.

If $f$ is obtained by constant substitution, i.e.,
\[
f(\NormalOrdA_1,\ldots,\NormalOrdA_{i-1},\NormalOrdA_{i+1},\ldots,\NormalOrdA_{k}, \bar \SafeOrdA; \bar \SafeNumbA)
=
g(\NormalOrdA_1,\ldots,\NormalOrdA_{i-1},\alpha,\NormalOrdA_{i+1},\ldots,\NormalOrdA_{k},\bar\SafeOrdA;\bar\SafeNumbA),
\]
then, by the induction hypothesis, there exists monotone
$B_g \in \mathcal{E}_k$
such that
\[
g(\bar\NormalOrdA,\bar\SafeOrdA;\bar\SafeNumbA)
\leq 
B_g(\RepLen{\bar{\NormalOrdA}},\bar{n}) + \max_l a_l.
\]
It is then sufficient to define
\[
B_f(x_1,\ldots,x_{i-1},x_{i+1},\ldots,x_k,\bar z)
\Def 
B_g(x_1,\ldots,x_{i-1}, \RepLen{\alpha}, x_{i+1},\ldots,x_k,\bar z),
\]
which is clearly monotone and belongs to $\mathcal{E}_k$.

The case of structural rules follows similarly, namely by modifying in the obvious way 
the function obtained via the induction hypothesis. This completes the proof of the first part.

For the second part, using the first part, the monotonicity of $B_f$, and the facts that $\RepLenCode{a} \leq 1+a$ and $m \leq 2^{\RepLenCode{m}}$ for every $a, m \in \mathbb{N}$, we reach
\[
\RepLenCode{f(\bar\NormalOrdA,\bar\SafeOrdA;\bar \SafeNumbA)}
    \leq \RepLenCode{B_f(
    \RepLenCode{\bar \NormalOrdA},
    \bar \SafeOrdA
    )
    +
    \max_i \SafeNumbA_i}
    \leq
    1+
    B_f(
    \RepLenCode{\bar \NormalOrdA},
    \bar \SafeOrdA
    )+\max_i \SafeNumbA_i
    \]
    \[
    \leq 1+B_f(
    \RepLenCode{\bar \NormalOrdA},
    2^{\RepLenCode{\bar\SafeOrdA}}
    )
    +\sum_l 2^{\RepLenCode{a_l}}.
    \]
It suffices now to observe that $
E_f(\bar x, \bar y, \bar z) \Def 1 + B_f(\bar x, \bar 2^{\bar y}) + \sum_l 2^{z_l}
$ is monotone, belongs to $\GrzClass{k}$ as $k \geq 3$, and satisfies the required property.
\end{proof}

For $\mathsf{A} = \Psi_k^m$, where $k,m \geq 1$, we first bound the value of the function by a polynomial in terms of the numeral inputs and the norms of the ordinal inputs, rather than the lengths of their encodings. We then apply Lemma~\ref{lem:relate-norms-linear} and Lemma~\ref{lem:poly-to-linear} to connect $\SumNorm{\mu}$ with $\RepLenCode{\mu}_E$ via a logarithmic relationship, which in turn yields a linear bound on the length of the functions.

\begin{lemma}\label{lem:poly-bound-value-omega-k}
Let $k,m \in \mathbb{N}^{\geq 1}$. For any 
$f \in \ClassName{\Psi_k^m}$, 
there is 
a polynomial $P_f$ with natural coefficients such that 
$f(\bar\NormalOrdA,\bar\SafeOrdA;\bar \SafeNumbA)
	\leq P_f(
\SumNorm{\bar\NormalOrdA},
\bar{n}
	)
	+
	\max_i {a_i}$,
for any $\bar\NormalOrdA \in \Psi_k^m$ and any $\bar{n},\bar\SafeNumbA \in \NatSet$.
\end{lemma}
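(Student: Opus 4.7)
The proof proceeds by structural induction on $f \in \ClassName{\Psi_k^m}$, in close parallel with the argument of Lemma~\ref{lem:poly-norm-codes-loglog}, but with $\GrzClass{k}$-bounds replaced throughout by polynomials with natural coefficients and with the encoding lengths $\RepLenCode{\cdot}$ replaced by the sum norms $\SumNorm{\cdot}$ of Definition~\ref{def:sum-norm-tuples}. Since polynomials with natural coefficients are closed under substitution, addition, and multiplication, the compositional cases are routine; the real work is confined to the predicative ordinal recursion step, where I must check that iterating a polynomial bound $L_\mu(q(\bar n))$ many times still yields a polynomial.

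For the basic cases, I take $P_f=0$ for the zero constant, predecessor and conditional; $P_f=1$ for successor; and $P_f(\bar n)=\sum_i n_i$ for projections. Structural rules and constant substitution are handled by minor syntactic modifications of the $P_g$ obtained from the induction hypothesis (for constant substitution I plug $\SumNorm{\alpha}$ in for the substituted variable). For safe composition $f(\bar\mu,\bar n;\bar a)=h(\bar\mu,\bar s(\bar\mu,\bar n;);\bar t(\bar\mu,\bar n;\bar a))$, I compose: by the induction hypothesis I obtain polynomials $P_h,\bar P_s,\bar P_t$, and I set
\[
P_f(\bar x,\bar n) := P_h\!\bigl(\bar x,\bar P_s(\bar x,\bar n)\bigr) + \sum_i P_{t_i}(\bar x,\bar n),
\]
which is a polynomial with natural coefficients and gives the required bound since the $\max_j a_j$ term coming from the bound on the $t_i$'s absorbs additively into $h$'s max.

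The principal case is predicative ordinal recursion. Let $f$ be defined by recursion from $g,h_{\mathsf{suc}},h_{\mathsf{lim}},q$, and, abbreviating $\mu(i):=R(i,\mu,q(\bar n))$, let $P_g,P_{h_{\mathsf{suc}}},P_{h_{\mathsf{lim}}},P_q$ be the polynomials obtained by the induction hypothesis. By Lemma~\ref{the:computationOfGForOmegak}, there exist polynomials $P_L,P_R$ with natural coefficients such that $L_\mu(n)\le P_L(\SumNorm{\mu},n)$ and $\SumNorm{R^b(i,\mu,n)}\le P_R(i,\SumNorm{\mu},n)$. Because $R(i,\mu,n)=R^b(L_\mu(n)\dotminus i,\mu,n)$, the monotonicity of $P_R$ together with $L_\mu(q(\bar n))\le P_L(\SumNorm{\mu},P_q(\bar n))$ yields a single polynomial $P^\ast(\SumNorm{\mu},\bar n)$ bounding $\SumNorm{\mu(i)}$ uniformly in $i$. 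Setting
\[
C(x,\bar y,\bar n) := P_{h_{\mathsf{suc}}}\!\bigl(P^\ast(x,\bar n),\bar y,\bar n\bigr) + P_{h_{\mathsf{lim}}}\!\bigl(P^\ast(x,\bar n),\bar y,\bar n\bigr),
\]
an easy induction on $i$ — using that in each recursive step the value $f(\mu(i),\bar\nu,\bar n;\bar a)$ enters only in a safe position of $h_{\mathsf{suc}}$ or $h_{\mathsf{lim}}$, so it is absorbed by the outer max and adds at most $C(\SumNorm{\mu},\SumNorm{\bar\nu},\bar n)$ — gives
\[
f(\mu(i),\bar\nu,\bar n;\bar a) \;\le\; P_g(\SumNorm{\bar\nu},\bar n) + i\cdot C(\SumNorm{\mu},\SumNorm{\bar\nu},\bar n) + \max_j a_j.
\]
Instantiating at $i=L_\mu(q(\bar n))$ and replacing it by $P_L(\SumNorm{\mu},P_q(\bar n))$ yields the polynomial
\[
P_f(x,\bar y,\bar n) := P_g(\bar y,\bar n) + P_L(x,P_q(\bar n))\cdot C(x,\bar y,\bar n),
\]
which has natural coefficients and satisfies the claim.

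The main obstacle is the bookkeeping in the recursion case: I must confirm that the recursive call's contribution at each stage is only additive (not multiplicative) in $f$, so that unrolling the recursion over at most $P_L(\SumNorm{\mu},P_q(\bar n))$ many steps keeps the final bound polynomial rather than, say, exponential. This is precisely what the safe-position discipline of predicative ordinal recursion guarantees: the previous value $f(\mu(i),\ldots)$ is fed into a safe argument of $h_{\mathsf{suc}}$ or $h_{\mathsf{lim}}$, whose induction-hypothesis bound adds a single $\max_l a_l$ summand (now containing $f(\mu(i),\ldots)$), so the iteration becomes linear accumulation of a polynomial quantity. Every other ingredient — closure of polynomials under composition and the polynomial bounds from Lemma~\ref{the:computationOfGForOmegak} — is then straightforward to assemble.
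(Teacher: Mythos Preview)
Your proposal is correct and follows essentially the same approach as the paper: the paper's proof simply says to mimic the argument of Lemma~\ref{lem:poly-norm-codes-loglog}, replacing the $\GrzClass{k}$-bounds on $\RepLenCode{R^b(i,\mu,n)}$ and $L_\mu(n)$ by the polynomial bounds on $\SumNorm{R^b(i,\mu,n)}$ and $L_\mu(n)$ supplied by Lemma~\ref{the:computationOfGForOmegak}, which is exactly what you have done. Your explicit unfolding of the recursion case, with the uniform bound $P^\ast$ on $\SumNorm{\mu(i)}$ and the linear accumulation $P_g+i\cdot C+\max_j a_j$, reproduces precisely the structure of the corresponding step in Lemma~\ref{lem:poly-norm-codes-loglog}.
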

\begin{proof}
The argument is analogous to that of Lemma~\ref{lem:poly-norm-codes-loglog}. 
The only difference is that, in this case, we use Lemma~\ref{the:computationOfGForOmegak} to polynomially bound 
$\SumNorm{R(i, \mu, q(\bar n))}$ and $L_{\mu}(q(\bar n))$ in terms of $\SumNorm{\mu}$ and $\bar n$.
\end{proof}

\begin{corollary}\label{cor:len-code-linear-bound}
Let $k,m \in \mathbb{N}_{\geq 1}$. Then, for every 
$f \in \ClassName{\Psi_k^m}$, there exists a linear function 
$L_f$ with natural coefficients such that
\[
    \RepLen{f(\bar\NormalOrdA,\bar\SafeOrdA;\bar\SafeNumbA)}_E
    \;\leq\;
    L_f\bigl(\RepLen{\bar\NormalOrdA}_E,\,
    \RepLen{\bar\SafeOrdA}_E,\,
    \RepLen{\bar\SafeNumbA}_E\bigr),
\]
for all $\bar\NormalOrdA \in \Psi_k^m$ and all 
$\bar n, \bar\SafeNumbA \in \NatSet$.
\end{corollary}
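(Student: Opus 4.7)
The plan is to chain together the polynomial value bound from Lemma~\ref{lem:poly-bound-value-omega-k} with the logarithm-to-linear conversion of Lemma~\ref{lem:poly-to-linear} and the norm-to-length estimate of Lemma~\ref{lem:relate-norms-linear}. The whole argument is bookkeeping; all the substantial content is already in those three lemmas.

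First I would note that, since $f$ outputs a natural number, $\RepLen{f(\bar\NormalOrdA,\bar\SafeOrdA;\bar\SafeNumbA)}_E$ is the usual binary length of its output, and hence is bounded by $\log(f(\bar\NormalOrdA,\bar\SafeOrdA;\bar\SafeNumbA)+1)+1$. Applying Lemma~\ref{lem:poly-bound-value-omega-k} gives a polynomial $P_f$ with natural coefficients such that $f(\bar\NormalOrdA,\bar\SafeOrdA;\bar\SafeNumbA) \leq P_f(\SumNorm{\bar\NormalOrdA},\bar\SafeOrdA) + \max_l a_l$. Since $\max_l a_l \leq \sum_l a_l$, the function $Q_f(\bar x,\bar y,\bar z) := P_f(\bar x,\bar y) + \sum_l z_l + 1$ is a polynomial with natural coefficients satisfying $f(\bar\NormalOrdA,\bar\SafeOrdA;\bar\SafeNumbA) + 1 \leq Q_f(\SumNorm{\bar\NormalOrdA},\bar\SafeOrdA,\bar\SafeNumbA)$.

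Next I would apply Lemma~\ref{lem:poly-to-linear} to $Q_f$ to obtain a linear function $L'$ with natural coefficients such that
\[
\log(Q_f(\SumNorm{\bar\NormalOrdA},\bar\SafeOrdA,\bar\SafeNumbA)+1) \leq L'(\log(\SumNorm{\bar\NormalOrdA}+1),\log(\bar\SafeOrdA+1),\log(\bar\SafeNumbA+1)).
\]
To convert the logarithmic arguments to length functions, I would use Lemma~\ref{lem:relate-norms-linear}, which yields $\log(\SumNorm{\alpha}+1) \leq \RepLen{\alpha}_E$ for any $\alpha \in \Psi_k^m$, together with the elementary estimate $\log(n+1) \leq \RepLen{n} = \RepLen{n}_E$ for natural numbers. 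By the monotonicity of $L'$ (which follows from its coefficients being natural), these substitutions preserve the bound.

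Combining these steps gives
\[
\RepLen{f(\bar\NormalOrdA,\bar\SafeOrdA;\bar\SafeNumbA)}_E \leq L'(\RepLen{\bar\NormalOrdA}_E,\RepLen{\bar\SafeOrdA}_E,\RepLen{\bar\SafeNumbA}_E) + 2,
\]
and setting $L_f := L' + 2$ produces a linear function with natural coefficients satisfying the required bound. There is no genuine obstacle here: the only thing to be careful about is that when $P_f$ is the zero polynomial or has degree $0$, the ``$+1$'' inside the logarithm and the additive constants must be absorbed into the constant term of $L_f$, which is harmless since we allow natural (in particular, nonzero) coefficients.
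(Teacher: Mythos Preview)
Your proposal is correct and follows essentially the same approach as the paper's proof: both chain Lemma~\ref{lem:poly-bound-value-omega-k}, Lemma~\ref{lem:poly-to-linear}, and Lemma~\ref{lem:relate-norms-linear} in the same order, with only cosmetic differences (you explicitly fold $\max_l a_l$ into a polynomial $Q_f$ before applying Lemma~\ref{lem:poly-to-linear}, whereas the paper does this implicitly). The additive constant you obtain is slightly looser than necessary ($+1$ would suffice in place of $+2$), but this is harmless.
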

\begin{proof}
By Lemma~\ref{lem:poly-bound-value-omega-k}, there is a polynomial function 
$P_f$ with natural coefficients such that
$f(\bar\NormalOrdA,\bar\SafeOrdA;\bar \SafeNumbA)
\leq P_f(\SumNorm{\bar\NormalOrdA}, \bar\SafeOrdA, \bar{\SafeNumbA})$,
for any $\bar\NormalOrdA \in \Psi_k^m$ and any $\bar\SafeOrdA, \bar\SafeNumbA \in \NatSet$. Therefore, by Lemma~\ref{lem:poly-to-linear},
there is a linear map $L$ with natural coefficients
such that
\[
\log\bigl(1 + f(\bar\NormalOrdA,\bar\SafeOrdA;\bar \SafeNumbA)\bigr) 
\leq 
\log \bigl(1 + P_f(\SumNorm{\bar\NormalOrdA}, \bar\SafeOrdA, \bar{\SafeNumbA})\bigr)
\leq L(\log(\SumNorm{\bar \NormalOrdA}+1), \log(\bar \SafeOrdA+1), \log(\bar{\SafeNumbA}+1)).
\]
By Lemma~\ref{lem:relate-norms-linear}, 
$\log(\SumNorm{\alpha}+1) \leq \RepLen{\alpha}_E$, for any $\alpha \in \Psi^m_k$. Moreover, note that $\RepLen{x}_E-1 \leq \log(x+1) \leq \RepLen{x}_E$ for any $x \in \mathbb{N}$. Define $L_f(x, y, z) \Def L(x, y, z)+1$. As $L_f$ has natural coefficients, both $L$ and, consequently, $L_f$ are monotone. Therefore:
\[
\RepLen{f(\bar\NormalOrdA,\bar\SafeOrdA;\bar \SafeNumbA)}_E 
\leq 
\log\bigl(1 + f(\bar\NormalOrdA,\bar\SafeOrdA;\bar \SafeNumbA)\bigr) + 1
\leq L_f(\RepLen{\bar\NormalOrdA}_E, \RepLen{\bar\SafeOrdA}_E, \RepLen{\bar{\SafeNumbA}}_E),
\]
which completes the proof.
\end{proof}

\subsection{Proof of $\PredFuncClass{\mathsf{A}} \subseteq \mathcal{E}_{l(\mathsf{A})+2}$}\label{subsecSimulation}

In this subsection, we employ all the ingredients developed so far to show that $\PredFuncClass{\Phi_{k-3}^{m}} \subseteq \mathcal{E}_{k}$, for $k \geq 3$ and $m \geq 1$, and $\PredFuncClass{\Psi_k^m} \subseteq \mathcal{E}_{2}$, for $k,m \geq 1$, and then conclude that $\PredFuncClass{\mathsf{A}} \subseteq \mathcal{E}_{l(\mathsf{A})+2}$, for any bounded $\mathsf{A} \subseteq \Phi_{\omega}$.

\begin{lemma}\label{lem:coded-func-in-grz-space}\
   \begin{itemize}
    \item[$(i)$] 
    Let $k, m \in \mathbb{N}$ with $k \geq 3$ and $m \geq 1$. Then,
    $\CodedFunc{f} \in \GrzClassSig{k}$ for any $f \in \ClassName{\Phi_{k-3}^{m}}$. Therefore, $\PredFuncClass{\Phi_{k-3}^{m}} \subseteq \mathcal{E}_{k}$.
    \item[$(ii)$] 
    Let $k, m \in \mathbb{N}^{\geq 1}$. Then, 
    $\CodedFunc{f}_E \in \GrzClassSig{2}$
    for any 
    $f \in \ClassName{\Psi_k^m}$. Therefore, 
    $\PredFuncClass{\Psi_k^m} \subseteq \mathcal{E}_{2}$.
\end{itemize}
\end{lemma}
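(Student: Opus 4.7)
The plan is to prove both $(i)$ and $(ii)$ by structural induction on the construction of $f$ as an element of $\ClassName{\Phi^{m}_{k-3}}$ (resp.\ $\ClassName{\Psi^{m}_{k}}$), since the two arguments are structurally identical and differ only in which encoding (standard $\IntermCode{-}$ or $\IntermCode{-}_E$), which level of $\GrzClassSig{-}$, and which auxiliary results are invoked (Theorem~\ref{lem:space-to-comp-seq} and Lemma~\ref{lem:poly-norm-codes-loglog} for $(i)$; Theorem~\ref{lem:space-to-comp-seq-linear} and Corollary~\ref{cor:len-code-linear-bound} for $(ii)$). The base cases for the initial functions (zero, projections, successor, predecessor, and conditional) are immediate: up to the harmless polynomial-time decoding of the input strings (justified by Lemma~\ref{lem:representable-n-geq-3}, resp.\ Lemma~\ref{lem:representable-n-geq-3III}), these coded versions compute their outputs in linear space, and hence lie in $\GrzClassSig{2} \subseteq \GrzClassSig{k}$.

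For safe composition, I will use the closure of $\GrzClassSig{k}$ under composition and the induction hypothesis applied to $h,\bar{s},\bar{t}$. Constant substitution is handled by hardwiring the fixed code $\IntermCode{\alpha}$ (resp.\ $\IntermCode{\alpha}_E$) into the appropriate argument of the function obtained by induction; this operation is in $\GrzClassSig{2}$. The three structural rules (exchange, weakening, contraction on ordinal positions) reduce to trivial rearrangements of arguments and are also in $\GrzClassSig{2}$. Throughout, one also checks that the encoded function returns $\bot$ whenever the inputs are not genuine codes of ordinals in the prescribed downset, which is decidable in the appropriate space/time by Lemma~\ref{lem:representable-n-geq-3} (resp.\ Lemma~\ref{lem:representable-n-geq-3III}).

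The main case, and the hard part, is predicative ordinal recursion. Here, I will simulate the recursion along the sequence of $q(\bar n)$-predecessors of $\mu$ using the enumeration function $R$. Concretely, I define an auxiliary function $\tilde{f}$ by length-bounded primitive recursion on a numeral argument $i$ (encoded in binary), setting $\tilde{f}(\IntermCode{0},\dots) = \CodedFunc{g}(\dots)$ and, at step $i+1$, computing $\CodedFunc{R}(\IntermCode{i+1},\IntermCode{\mu},\CodedFunc{q}(\bar n))$; then, depending on whether this ordinal is a successor or a limit (tested in polynomial time by Lemma~\ref{lem:representable-n-geq-3}), applying $\CodedFunc{h_{\mathsf{suc}}}$ or $\CodedFunc{h_{\mathsf{lim}}}$ to the previous value. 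Finally, $\CodedFunc{f}(\IntermCode{\mu},\bar v; \bar w) = \tilde{f}(\CodedFunc{L}(\IntermCode{\mu},\CodedFunc{q}(\bar n)),\dots)$, since $R(L_\mu(q(\bar n)),\mu,q(\bar n)) = \mu$. By Theorem~\ref{lem:space-to-comp-seq} (resp.\ Theorem~\ref{lem:space-to-comp-seq-linear}), both $\CodedFunc{R|_{\Phi^m_{k-3}}}$ and $\CodedFunc{L|_{\Phi^m_{k-3}}}$ are in $\GrzClassSig{k}$ (resp.\ $\GrzClassSig{2}$ for the $\Psi^m_k$ version), and by induction so are $\CodedFunc{g},\CodedFunc{h_{\mathsf{suc}}},\CodedFunc{h_{\mathsf{lim}}},\CodedFunc{q}$. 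To legitimately apply closure under length-bounded primitive recursion (Theorem~\ref{the:Esigma-closure}), I need a bounding function in $\GrzClass{k}$ (resp.\ a linear bound) for the length of $\tilde{f}$'s output; this is supplied by Lemma~\ref{lem:poly-norm-codes-loglog} (resp.\ Corollary~\ref{cor:len-code-linear-bound}) applied to $f$ itself, which gives $\RepLenCode{f(\mu,\bar\nu,\bar n;\bar a)} \leq E_f(\RepLenCode{\mu},\RepLenCode{\bar\nu},\RepLenCode{\bar n},\RepLenCode{\bar a})$ with $E_f \in \mathcal{E}_k$ (resp.\ a linear $L_f$), and by monotonicity this bound transfers uniformly to all intermediate values $\tilde{f}(\IntermCode{i},\dots)$ since each of them is a value of $f$ at some $R(j,\mu,q(\bar n)) \in \Phi^m_{k-3}$ of smaller code length.

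The final sentence of each part follows immediately: if $f \in \PredFuncClass{\mathsf{A}}$ is a numeral function over numbers, then $\CodedFunc{f}$ (resp.\ $\CodedFunc{f}_E$) is in the appropriate $\GrzClassSig{}$ class, and by Remark~\ref{rem:equiv-E-sig-E} (which applies equally to the $\IntermCode{-}_E$ encoding since it too is linear-space inter-translatable with the binary encoding of numbers) we recover $f \in \mathcal{E}_k$ (resp.\ $\mathcal{E}_2$). The main technical subtlety I anticipate is the careful verification that the length bound from Lemma~\ref{lem:poly-norm-codes-loglog} is uniform enough to cover all intermediate stages of the simulated recursion, not merely the final value, which is why having a monotone $E_f$ (resp.\ $L_f$) depending only on the code lengths of the inputs is essential.
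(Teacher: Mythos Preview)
Your overall strategy matches the paper's proof almost exactly: structural induction on $f$, with the predicative ordinal recursion case simulated by length-bounded primitive recursion along the sequence $R(i,\mu,q(\bar n))$, capped off by substituting $i=L_\mu(q(\bar n))$. The invocations of Theorem~\ref{lem:space-to-comp-seq}, Theorem~\ref{lem:space-to-comp-seq-linear}, Lemma~\ref{lem:poly-norm-codes-loglog}, Corollary~\ref{cor:len-code-linear-bound}, and Remark~\ref{rem:equiv-E-sig-E} are all the right ones.

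There is one concrete gap. You write that the bound $E_f$ ``transfers uniformly to all intermediate values $\tilde f(\IntermCode{i},\dots)$ since each of them is a value of $f$ at some $R(j,\mu,q(\bar n)) \in \Phi^m_{k-3}$ of smaller code length.'' The claim that $\RepLenCode{R(j,\mu,q(\bar n))} \leq \RepLenCode{\mu}$ is false in general. For instance, if $\mu=\phi_1(0)$ then $\IntermCode{\mu}$ is a short string, but its $n$-predecessor $\phi_0^{(n)}(0)$ has code length growing linearly in $n$. So monotonicity of $E_f$ alone does not give you a bound in terms of $\RepLenCode{\mu}$.

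The fix, which is exactly what the paper does, is to first bound $\RepLenCode{R(i,\mu,q(\bar n))}$ by a function in $\GrzClass{k}$ of $\RepLenCode{i}$, $\RepLenCode{\mu}$, and $\RepLenCode{\bar n}$ (either by Lemma~\ref{lem:bounds-rec-sequence}, or by observing that $\CodedFunc{R|_{\Phi^m_{k-3}}}\in\GrzClassSig{k}$ implies its output length is bounded by its running time, hence by some monotone $N\in\GrzClass{k}$). Then the uniform bound for $\tilde f$ becomes $E_f\bigl(N(\RepLenCode{i},\RepLenCode{\mu},E_q(\RepLenCode{\bar n})),\RepLenCode{\bar\nu},\RepLenCode{\bar n},\RepLenCode{\bar a}\bigr)$, which is in $\GrzClass{k}$ by closure under composition. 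The same issue and the same fix apply in part $(ii)$, using the linear bound on $\RepLen{R^b(i,\mu,n)}_E$ from Lemma~\ref{lem:mon-poly-e2-rq}.
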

\begin{proof}
For $(i)$, we proceed by induction on the construction of $f$ as an element of $\ClassName{\Phi_{k-3}^{m}}$. The claim for the basic functions is immediate. 
For the inductive step, if $f$ is constructed by safe composition, the claim follows directly from the induction hypothesis and the fact that $\GrzClassSig{k}$ is closed under composition. 
If $f$ is defined by the following predicative ordinal recursion:
\begin{align*}
			f(0, \bar \NormalOrdB, \bar \SafeOrdA; \bar \SafeNumbA)
			&= g(\bar \NormalOrdB,
			\bar \SafeOrdA; \bar \SafeNumbA)\\
			f(\NormalOrdA+1,\bar \NormalOrdB, \bar \SafeOrdA; \bar \SafeNumbA) &= h_{\mathsf{suc}}(\NormalOrdA, \bar \NormalOrdB,\bar \SafeOrdA; 
			f(\eta, \bar \NormalOrdB, \bar \SafeOrdA; \bar \SafeNumbA), \bar \SafeNumbA)\\
			f(\langle \NormalOrdA_i \rangle_i, \bar \NormalOrdB,\bar \SafeOrdA;\bar \SafeNumbA)
			&= h_{\mathsf{lim}}(\langle \NormalOrdA_i \rangle_i, \bar \NormalOrdB, \bar \SafeOrdA;
			f(\NormalOrdA_{q(\bar \SafeOrdA;)}, \bar \NormalOrdB, \bar \SafeOrdA; \bar \SafeNumbA),
			\bar \SafeNumbA
			),
		\end{align*}
by the induction hypothesis, the functions $\CodedFunc{q}$, $\CodedFunc{g}$, $\CodedFunc{h_{\mathsf{suc}}}$, and $\CodedFunc{h_{\mathsf{lim}}}$ are all in $\GrzClassSig{k}$.  
To prove that $\CodedFunc{f} \in \GrzClassSig{k}$, we begin with an informal explanation of our strategy, ignoring the encoding of ordinals and numbers. For simplicity, denote $R(i, \mu, q(\bar n))$ by $\NormalOrdA (i)$. Observe that the recursion in the definition of $f(\mu, \bar \nu, \bar n; \bar a)$ actually operates on the sequence $\{\mu(i)\}_{i=0}^{L_{\mu}(q(\bar n))}$ of $q(\bar n)$-predecessors of $\mu$. Hence, it suffices to construct $f(\mu(i), \bar \nu, \bar n; \bar a)$ as a function of $i$ and then substitute $i=L_{\mu}(q(\bar n))$. We already know that the function $\CodedFunc{L|_{\Phi_{k-3}^m}}$ belongs to $\GrzClassSig{k}$. Define 
$
    F(i, \mu, \bar \nu, \bar n, \bar a)$ as $ f(\mu(i), \bar \nu, \bar n; \bar a),
$
and notice that $F$ is constructible by a primitive recursion on $i$, using $g$, $h_{\mathsf{suc}}$, and $h_{\mathsf{lim}}$. Finally, by Lemma~\ref{lem:poly-norm-codes-loglog}, the length of $F(i, \mu, \bar \nu, \bar n, \bar a)$ is bounded by a function in $\GrzClass{k}$ in terms of $\RepLenCode{\mu(i)}$, $\RepLenCode{\bar \nu}$, $\RepLenCode{\bar n}$, and $\RepLenCode{\bar a}$. Since $\RepLenCode{\mu(i)}$ can be bounded in terms of $\RepLenCode{\mu}$, $\RepLenCode{i}$, and $\RepLenCode{\bar n}$ by a function in $\GrzClass{k}$, we may conclude, by applying the closure of $\GrzClassSig{k}$ under length-bounded primitive recursion, that $F \in \GrzClassSig{k}$, which completes the proof.

In the following, we implement this strategy in detail, bringing in the encoding of ordinals and numbers. First, recall from Lemma~\ref{lem:representable-n-geq-3} that deciding whether a string in $\Sigma^*$ encodes an ordinal in $\Phi^m_{k-3}$, and if so, whether it is zero, a successor, or a limit, can be done in polynomial time. Moreover, the same lemma ensures that
$
\CodedFunc{p|_{\Phi^m_{k-3}}} \in \GrzClassSig{3} \subseteq \GrzClassSig{k}$, as $k \geq 3$.
Now, define a function $I(b)$ on $\Sigma^*$ which outputs $b$ if $b=\IntermCode{\alpha}$ for some $\alpha \in \Phi^{m}_{k-3}$, and $\bot$ otherwise. Next, anticipating that $\CodedFunc{h_{\mathsf{suc}}}$ at $\IntermCode{\alpha+1}$ uses $\IntermCode{\alpha}$, and $\CodedFunc{h_{\mathsf{lim}}}$ at $\IntermCode{\langle\alpha_i\rangle}$ uses $\IntermCode{\langle\alpha_i\rangle}$ itself,
define $S(b)$ on $\Sigma^*$ by
\[
S(b) =
\begin{cases}
\CodedFunc{p|_{\Phi^m_{k-3}}}(b, \IntermCode{0}), & \text{if } b=\IntermCode{\alpha} \text{ for some successor ordinal } \alpha \in \Phi^m_{k-3},\\[4pt]
I(b), & \text{if } b=\IntermCode{\alpha} \text{ for some limit ordinal } \alpha \in \Phi^m_{k-3},\\[4pt]
\bot, & \text{otherwise}.
\end{cases}
\]
From the above discussion, it follows that both $I$ and $S$ belong to $\GrzClassSig{3} \subseteq \GrzClassSig{k}$.

Now we define a function responsible for the recursive calls that determine $f(\mu(i+1), \bar \nu, \bar n; \bar a)$ in terms of $f(\mu(i), \bar \nu, \bar n; \bar a)$, for any $i < L_{\mu}(q(\bar n))$. The task is simply to decide whether $\mu(i+1)$ is a successor or a limit, and then apply either $h_{\mathsf{suc}}$ or $h_{\mathsf{lim}}$.

To implement this idea, in the presence of the codings, first define the relation $B(y, u, \bar w)$ on $\Sigma^*$ stating the existence of $i, \bar n \in \mathbb{N}$ and $\mu \in \Phi_{k-3}^m$ such that $y=\IntermCode{i}$, $u=\IntermCode{\mu}$, $\bar w=\IntermCode{\bar n}$ and $i < L_\mu(q(\bar n))$. By  Lemma~\ref{lem:representable-n-geq-3}, Theorem~\ref{lem:space-to-comp-seq}, and the induction hypothesis, the relation $B$ is computable in space $\GrzClass{k}$. Now, define
\[
J(y,u,\bar v,\bar w,x,\bar z) \Def
\begin{cases}
H\bigl(S(\CodedFunc{R|_{\Phi^{m}_{k-3}}}(y,u,\CodedFunc{q}(\bar w))),\bar v,\bar w,x,\bar z\bigr), & B(y, u, \bar w),\\[4pt]
\bot, & \text{otherwise}.
\end{cases}
\]
where $H$ is $\CodedFunc{h_{\mathsf{suc}}}$ if 
$\CodedFunc{R|_{\Phi^m_{k-3}}}(y,u,\CodedFunc{q}(\bar w))$
is a code for a successor ordinal, and $H$ is $\CodedFunc{h_{\mathsf{lim}}}$ otherwise.
This function $J$ is clearly in $\GrzClassSig{k}$ by the induction hypothesis,
Lemma~\ref{lem:representable-n-geq-3}, and Theorem~\ref{lem:space-to-comp-seq}.

Intuitively, $y$ holds the code of the index $i$, $x$ holds the code of the value of the recursive call to $\CodedFunc{f}$ at each step, and $u,\bar v,\bar w,$ and $\bar z$ hold the codes of the inputs of $\CodedFunc{f}$, namely $\lceil \mu \rceil$, $\lceil \bar \nu \rceil$, $\lceil \bar n \rceil$, and $\lceil \bar a \rceil$, respectively. Thus, for $i < L_{\mu}(q(\bar n))$, the function $J$ is responsible for invoking either $\CodedFunc{h_{\mathsf{suc}}}$ or $\CodedFunc{h_{\mathsf{lim}}}$, depending on whether $\mu(i+1)$ is a successor or a limit.

Now we can define $f(\mu(i), \bar \nu, \bar n; \bar a)$ using primitive recursion on $i$. Formally, define
\[
F(y,u,\bar v,\bar w,\bar z)
\Def 
\begin{cases}
    \CodedFunc{g}(\bar v,\bar w,\bar z),
    &\text{if } y = \IntermCode{0},\\[6pt]
    J(y,u,\bar v,\bar w,F(\IntermCode{i},u,\bar v,\bar w,\bar z),\bar z),
    &\text{if } y = \IntermCode{i+1},\\[6pt]
    \bot,
    &\text{otherwise}.
\end{cases}
\]
It is not hard to see that
\[
F(\IntermCode{i}, 
\IntermCode{\NormalOrdA},
\IntermCode{\bar \NormalOrdB},\IntermCode{\bar \SafeOrdA},
\IntermCode{\bar \SafeNumbA})
=
\IntermCode{f(\NormalOrdA(i), \bar \NormalOrdB,\bar \SafeOrdA;\bar \SafeNumbA)}, 
\qquad (*)
\]
for any $i \leq L_{\mu}(q(\bar n))$.

To show that $F \in \GrzClassSig{k}$, it suffices to find an upper bound for the length of $F$. For this purpose, by Theorem~\ref{lem:space-to-comp-seq}, we have $\CodedFunc{R|_{\Phi^m_{k-3}}} \in \GrzClassSig{k}$. Therefore, it is computable in time $\GrzClass{k}$, which means that there exists a monotone function $N \in \GrzClass{k}$ such that 
$\RepLenCode{R(i, \alpha, m)} \leq N(\RepLenCode{i}, \RepLenCode{\alpha}, \RepLenCode{m})$ 
for any $\alpha \in \Phi^m_{k-3}$ and any $m, i \in \mathbb{N}$. Hence, by Lemma~\ref{lem:poly-norm-codes-loglog} for $q$ and the monotonicity of $N$, we reach  
\[
\RepLenCode{R(i, \mu, q(\bar n))} \leq N(\RepLenCode{i}, \RepLenCode{\mu}, E_q(\RepLenCode{\bar n})).
\] 
Now, since the value of $F$ is either $\bot$ or $\IntermCode{f(\NormalOrdA(i), \bar \NormalOrdB, \bar \SafeOrdA; \bar \SafeNumbA)}$, for some $i \leq L_{\mu}(q(\bar n))$, by Lemma~\ref{lem:poly-norm-codes-loglog} we have the following bound:
\[
\RepLenCode{
F(y,u,\bar v, \bar w, \bar z)
}
\leq 
1 + E_f\bigl(N(\RepLenCode{y}, \RepLen{u}, E_q(\RepLen{\bar w})), \RepLen{\bar v}, 
\RepLen{\bar w}, \RepLen{\bar z}\bigr).
\] 
Since $E_f, E_q, N \in \GrzClass{k}$ and $\GrzClassSig{k}$ is closed under length-bounded primitive recursion by Theorem~\ref{the:Esigma-closure}, we conclude that $F \in \GrzClassSig{k}$.

Finally, substituting $i = L_{\mu}(q(\bar n))$ in $(*)$, and noting that $F$ returns $\bot$ for any inputs that are not codes of ordinals in $\Phi^m_{k-3}$ or numbers, we obtain
\[
\CodedFunc{f}(u,\bar v, \bar w, \bar z) = F(\CodedFunc{L|_{\Phi^m_{k-3}}}(u, \CodedFunc{q}(\bar w)), u, \bar v, \bar w, \bar z).
\]
By Theorem~\ref{lem:space-to-comp-seq}, $\CodedFunc{L|_{\Phi^m_{k-3}}} \in \GrzClassSig{k}$. Therefore, we conclude that $\CodedFunc{f} \in \GrzClassSig{k}$.

Finally, since the cases where $f$ is obtained by constant substitution or structural rules are straightforward, we can conclude that for any $f \in \ClassName{\Phi_{k-3}^{m}}$, we have $\CodedFunc{f} \in \GrzClassSig{k}$. This completes the first part of $(i)$. For the second part, since functions in $\PredFuncClass{\Phi^m_{k-3}} \subseteq \ClassName{\Phi_{k-3}^{m}}$ have no ordinal inputs, they are numeral functions whose lifts are in $\GrzClassSig{k}$. Therefore, by Remark~\ref{rem:equiv-E-sig-E}, they must be in $\GrzClass{k}$.

The proof of part $(ii)$ is similar to that of $(i)$. It suffices to use linear-space algorithms and the linear bounds from Lemma~\ref{lem:representable-n-geq-3III}, Theorem~\ref{lem:space-to-comp-seq-linear}, and Corollary~\ref{cor:len-code-linear-bound}.
\end{proof}

\begin{theorem}
Let $\mathsf{A} \subseteq \Phi_{\omega}$ be a bounded downset of ordinals. Then $\PredFuncClass{\mathsf{A}} \subseteq  \GrzClass{l(\mathsf{A})+2}$.
\end{theorem}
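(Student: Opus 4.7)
The plan is to reduce the theorem to the already-established bounding result Lemma~\ref{lem:coded-func-in-grz-space} by expressing the downset $\mathsf{A}$ as an increasing union of the finitely-stratified classes $\Psi^m_k$ or $\Phi^m_k$, and then invoking the continuity of $\PredFuncClass{-}$ (Lemma~\ref{lem:PredInLimit}). I would split into two cases according to the value of $l(\mathsf{A})$.

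In the case $l(\mathsf{A}) = 0$, by Definition~\ref{definitionOfl} we have $\mathsf{A} \subseteq \Psi_{\omega}$. By Lemma~\ref{MonotonicityOfPsi}, $\Psi_{\omega} = \bigcup_{k \geq 1} \Psi_k = \bigcup_{k \geq 1} \bigcup_{m \geq 1} \Psi^m_k$. Define the increasing chain $\mathsf{A}_n \Def \mathsf{A} \cap \Psi^n_n$ (for $n \geq 1$), which is a downset because each $\Psi^n_n$ is a downset by Lemma~\ref{PhiIsDownset}. Then $\mathsf{A} = \bigcup_{n \geq 1} \mathsf{A}_n$. By monotonicity (Lemma~\ref{MonotonicityOfPredR}) and Lemma~\ref{lem:coded-func-in-grz-space}$(ii)$, we obtain $\PredFuncClass{\mathsf{A}_n} \subseteq \PredFuncClass{\Psi^n_n} \subseteq \GrzClass{2}$. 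Applying continuity (Lemma~\ref{lem:PredInLimit}) gives $\PredFuncClass{\mathsf{A}} = \bigcup_{n \geq 1} \PredFuncClass{\mathsf{A}_n} \subseteq \GrzClass{2} = \GrzClass{l(\mathsf{A})+2}$.

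In the case $l(\mathsf{A}) \geq 1$, by Definition~\ref{definitionOfl} we have $\mathsf{A} \subseteq \Phi_{l(\mathsf{A})}$, and by Lemma~\ref{MonotonicityOfPhi} we have $\Phi_{l(\mathsf{A})} = \bigcup_{m \geq 1} \Phi^m_{l(\mathsf{A})-1}$. Setting $\mathsf{A}_m \Def \mathsf{A} \cap \Phi^m_{l(\mathsf{A})-1}$ (which is a downset by Lemma~\ref{PhiIsDownset}), we obtain $\mathsf{A} = \bigcup_{m \geq 1} \mathsf{A}_m$ with $\mathsf{A}_m \subseteq \mathsf{A}_{m+1}$. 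Instantiating Lemma~\ref{lem:coded-func-in-grz-space}$(i)$ with $k = l(\mathsf{A})+2$ (so that $k - 3 = l(\mathsf{A}) - 1 \geq 0$ and $k \geq 3$), we obtain $\PredFuncClass{\Phi^m_{l(\mathsf{A})-1}} \subseteq \GrzClass{l(\mathsf{A})+2}$, and hence $\PredFuncClass{\mathsf{A}_m} \subseteq \GrzClass{l(\mathsf{A})+2}$ by monotonicity. Continuity then yields $\PredFuncClass{\mathsf{A}} = \bigcup_{m \geq 1} \PredFuncClass{\mathsf{A}_m} \subseteq \GrzClass{l(\mathsf{A})+2}$.

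Since all heavy lifting (the length bounds for outputs, the primitive-recursive simulation of ordinal recursion, and the placement of $R$ and $L$ in the appropriate Grzegorczyk class) is already encapsulated in Lemma~\ref{lem:coded-func-in-grz-space}, the only conceptual point in the present proof is to make sure the approximating downsets $\mathsf{A}_n$ (or $\mathsf{A}_m$) are themselves downsets, so that $\PredFuncClass{\mathsf{A}_n}$ is well-defined and fits the hypothesis of continuity; this follows immediately from Lemma~\ref{PhiIsDownset} together with the fact that the intersection of two downsets is a downset. There is no substantive obstacle, only case bookkeeping.
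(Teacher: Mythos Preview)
Your proof is correct and follows essentially the same approach as the paper: both rely on Lemma~\ref{lem:coded-func-in-grz-space}, the continuity Lemma~\ref{lem:PredInLimit}, monotonicity (Lemma~\ref{MonotonicityOfPredR}), and the stratifications from Lemmas~\ref{MonotonicityOfPhi}, \ref{MonotonicityOfPsi}, and~\ref{PhiIsDownset}. The only cosmetic difference is that the paper first establishes $\PredFuncClass{\Phi_k} \subseteq \GrzClass{k+2}$ and $\PredFuncClass{\Psi_\omega} \subseteq \GrzClass{2}$ (via continuity over the $\Phi^m_{k-1}$ and $\Psi^m_k$ stratifications) and then invokes monotonicity for the given $\mathsf{A}$, whereas you intersect $\mathsf{A}$ directly with the stratification; one small point to note is that your diagonal chain $\Psi^n_n$ being increasing requires $\Psi^m_k \subseteq \Psi^m_{k+1}$, which is not literally stated in Lemma~\ref{MonotonicityOfPsi} but is immediate from the definition.
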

\begin{proof}
First, we prove that $\PredFuncClass{\Phi_{k}} \subseteq \mathcal{E}_{k+2}$ for any $k \geq 1$ and that $\PredFuncClass{\Psi_{\omega}} \allowbreak\subseteq \mathcal{E}_{2}$. For the first claim, note that, by Lemma~\ref{lem:coded-func-in-grz-space}, for any $m \geq 1$, we have $\PredFuncClass{\Phi^m_{k-1}} \subseteq \mathcal{E}_{k+2}$.  
Next, by Lemma~\ref{MonotonicityOfPhi}, we have $\Phi_k = \bigcup_{m \geq 1} \Phi^m_{k-1}$ and $\Phi_{k-1}^m \subseteq \Phi^{m+1}_{k-1}$ for all $m \geq 1$. Therefore, by Lemma~\ref{lem:PredInLimit}, it follows that  
$\PredFuncClass{\Phi_k} = \bigcup_{m \geq 1} \PredFuncClass{\Phi^m_{k-1}}$.  
Consequently, $\PredFuncClass{\Phi_{k}} \subseteq \mathcal{E}_{k+2}$.
For the second claim, for any 
$k \geq 1$ and $m \geq 1$, we have 
$\PredFuncClass{\Psi_k^m} \subseteq \mathcal{E}_{2}$, by Lemma \ref{lem:coded-func-in-grz-space}.
Now, by Lemma \ref{MonotonicityOfPsi},  
$\Psi_{k}^m \subseteq \Psi_k^{m+1}$, for any $k, m \geq 1$. Moreover, for any $k \geq 1$, we have $\Psi_k = \bigcup_{m \geq 1} \Psi_k^m$.
Thus, by Lemma~\ref{lem:PredInLimit}, we reach $\PredFuncClass{\Psi_k} = \bigcup_{m \geq 1}\PredFuncClass{\Psi_k^m}$ which implies $\PredFuncClass{\Psi_k} \subseteq \GrzClass{2}$.
Similarly,
we have that
$\Psi_{k} \subseteq \Psi_{k+1}$
for any $k \geq 1$
and $\Psi_\omega = \bigcup_{k \geq 1} \Psi_k$, by Lemma \ref{MonotonicityOfPsi}. Thus,
$\PredFuncClass{\Psi_\omega} = \bigcup_{k \geq 1}\PredFuncClass{\Psi_k}$. Therefore,
$\PredFuncClass{\Psi_\omega}\subseteq\GrzClass{2}$. 

Now, if $l(\mathsf{A}) \geq 1$, then by definition we have $\mathsf{A} \subseteq \Phi_{l(\mathsf{A})}$. Hence, by Lemma~\ref{MonotonicityOfPredR}, it follows that 
$\PredFuncClass{\mathsf{A}} \subseteq \PredFuncClass{\Phi_{l(\mathsf{A})}}$. 
Therefore, setting $k = l(\mathsf{A})$ in the above claim yields
$\PredFuncClass{\mathsf{A}} \subseteq \mathcal{E}_{l(\mathsf{A})+2}$.
If $l(\mathsf{A})=0$, by definition $\mathsf{A} \subseteq \Psi_{\omega}$. By the same line of argument, we reach  $\PredFuncClass{\mathsf{A}} \subseteq \mathcal{E}_{2}$.
\end{proof}

\section{Conclusion and Future Work}

In this paper, we generalized Bellantoni-Cook's predicative recursive functions by extending their predicative recursion on natural numbers to predicative ordinal recursion on any ordinal in a given downset $\mathsf{A}$ of constructive ordinals. We denote this class of functions by $\PredFuncClass{\mathsf{A}}$. We then introduced the downset $\Phi_{\omega}$ (resp.\ $\Phi_k$) of constructive ordinals as the constructive counterpart of the set-theoretic ordinals below $\bm{\phi}_{\bm{\omega}}(\bm{0})$ (resp.\ $\bm{\phi}_{k}(\bm{0})$), where $\bm{\phi}_{k}$ is the $k$th Veblen function. Finally, we established a complete classification of $\PredFuncClass{\mathsf{A}}$ for any downset $\mathsf{A} \subseteq \Phi_{\omega}$ that contains at least one infinite constructive ordinal. More precisely, we showed that for any such downset, if $\mathsf{A}$ is not contained in any $\Phi_k$, then $\PredFuncClass{\mathsf{A}}$ coincides with the class of all primitive recursive functions; otherwise,
$
\PredFuncClass{\mathsf{A}} \;=\; \GrzClass{l(\mathsf{A})+2},
$
where $l(\mathsf{A})$ is a natural number measuring the level of $\mathsf{A}$ in $\Phi_{\omega}$, and $\GrzClass{k}$ denotes the $k$th level of the Grzegorczyk hierarchy. In particular, we obtained a machine-independent and structural characterization of $\GrzClass{k}$ as 
$\PredFuncClass{\Phi_{k-2}}$ for any $k \geq 3$. For $\GrzClass{2}$, we generalized Bellantoni-Cook's characterization to 
$\PredFuncClass{\mathsf{D}_{\omega^\omega}}=\PredFuncClass{\mathsf{D}_{\omega^m}}$ for any $m \geq 2$, where $\mathsf{D}_{\alpha}$ is the set of all constructive ordinals strictly below $\alpha$. This shows that even if we extend predicative ordinal recursion from $\omega$ to ordinals below $\omega^m$ for $m \geq 2$, or to any constructive ordinal below $\omega^{\omega}$, the resulting functions still belong to $\mathcal{E}_2$.

For future work, we outline three directions. First, we aim to extend the present classification beyond $\Phi_{\omega}$ to encompass broader classes of constructive ordinals. 
Alternatively, 
this may be viewed as extending
the characterization of the $\mathcal{E}_k$'s to $\mathcal{E}_{\alpha}$ for infinite $\alpha$. Second, by using a suitable form of predicative recursion on arbitrary well-founded partial orders \cite{curzi2022cyclic}, we conjecture that the predicatively recursive functions on a well-founded partial order correspond to the predicatively recursive functions on its associated ordinal (i.e., its height). Proving this conjecture 
would generalize the main classification of this paper to all well-founded structures whose height is below $\bm{\phi}_{\bm{\omega}}(\bm{0})$. Third, we plan to develop the arithmetical theories corresponding to these predicative ordinal recursive functions. These absolutely predicative theories of arithmetic 
would be 
defined in terms of different comprehension schemes. A similar approach, using separation in the types of involved terms, has already been proposed and studied \cite{ostrin2005elementary}. After developing such theories, we intend to apply the machinery of ordinal analysis to them, with the aim of characterizing their provably total recursive functions in terms of predicative ordinal recursive functions.
For instance, a predicative arithmetical theory with $\Delta^0_{\infty}$-comprehension can be regarded as the absolutely predicative version of $\mathsf{PA}$. Therefore, we expect its proof-theoretic ordinal to be $\bm{\epsilon_0}$ and, since $\Phi_1$ is the set of constructive representations of ordinals below $\bm{\epsilon_0}$, the provably recursive functions of the theory must correspond to $\PredFuncClass{\Phi_1} = \mathcal{E}_3$, i.e., the Kalmár elementary functions. This is consistent with similar results obtained for other proposals of absolutely predicative versions of Peano arithmetic; see \cite{ostrin2005elementary}.

\subsection*{Acknowledgements}

A. Akbar Tabatabai and R. Ramanayake acknowledge the financial support of the Dutch Research Council (NWO) project OCENW.M.22.258. R. Ramanayake also acknowledges the CogniGron research center, and Ubbo Emmius Funds (Univ. of Groningen).

%\section{A Slow-growing Characterization of $\mathcal{E}_k$}
%\label{sec:slow-gro-charac}
%\input{tex/slow-growing-char-Ek}

%\section{The Main Theorem}
%\label{sec:e-subset-predr}
%\input{tex/E-subset-predR.tex}

%\subsection{Bounding Lemmas}
%\label{sec:bounding-lemmas}
%\input{tex/bounding-lemmas.tex}

%\subsection{The Simulation}
%\label{sec:predr-subset-e}
%\input{tex/predR-subset-E.tex}

 \bibliographystyle{elsarticle-num} 
 \bibliography{cas-refs}

%% else use the following coding to input the bibitems directly in the
%% TeX file.

% \begin{thebibliography}{00}

% %% \bibitem{label}
% %% Text of bibliographic item

% \bibitem{}

% \end{thebibliography}
\end{document}